   \numberwithin{equation}{section}
      \theoremstyle{plain}
      \newtheorem{theorem}{Theorem}[section]
      \newtheorem{proposition}[theorem]{Proposition}
      \newtheorem{lemma}[theorem]{Lemma}
      \newtheorem{corollary}[theorem]{Corollary}
      \newtheorem{assumption}[theorem]{Assumption}
      \theoremstyle{definition}
      \newtheorem{definition}[theorem]{Definition}
      \newtheorem{example}[theorem]{Example}
      \newtheorem{remark}[theorem]{Remark}
      \newcommand{\E}{{\mathcal E}}
      \def\@setcopyright{}
      \def\serieslogo@{}
\def\keywords{\xdef\@thefnmark{}\@footnotetext}
\begin{document}

   \author{Matthias Hofmann}
   \address{Grupo de Física Matemática, 
	Faculdade de Ci\^{e}ncias da Universidade de Lisboa,
	Campo Grande, Edifício C6, 1749-016 Lisboa, Portugal}
	\email{mhofmann@fc.ul.pt}

   \title[An Existence Theory For Nonlinear Equations on Metric Graphs]{An existence theory for nonlinear equations on metric graphs via energy methods}

   \begin{abstract}
   \begin{tiny}
   The purpose of this paper is to develop a general  existence theory for constrained minimization problems for functionals defined on function spaces on metric measure spaces $(\mathcal M, d, \mu)$. We apply this theory to functionals defined on metric graphs $\mathcal G$, in particular $L^2$-constrained minimization problems for functionals of the form
   \begin{equation}
   E(u) = \frac{1}{2} a(u,u) - \frac{1}{q}\int_{\mathcal K} |u|^q \, \mathrm dx,
   \end{equation}
   where $q>2$, $a(\cdot, \cdot)$ is a suitable symmetric sesquilinear form  on some function space on $\mathcal G$ and $\mathcal K \subseteq \mathcal G$ is given.   
   We show how the existence of solutions can be obtained via decomposition methods using spectral properties of the operator $A$ associated with the form $a(\cdot, \cdot)$ and discuss the spectral quantities involved.  An example that we consider is the higher-order variant of the stationary NLS (nonlinear Schrödinger) energy functional with potential $V\in L^2+ L^\infty(\mathcal G)$
   \begin{equation}
   E^{(k)}(u)= \frac{1}{2} \int_{\mathcal G} |u^{(k)}|^2+ V(x) |u|^2 \, \mathrm dx - \frac{1}{p} \int_{\mathcal K} |u|^q \, \mathrm dx
   \end{equation} 
   defined on a class of higher-order Sobolev spaces $H^k(\mathcal G)$ that we introduce. When $\mathcal K$ is a bounded subgraph, one has localized nonlinearities, which we treat as a special case. When $k=1$ we also consider metric graphs with infinite edge set as well as magnetic potentials. Then the operator $A$ associated to the linear form is a Schrödinger operator, and in the $L^2$-subcritical case $2<q<6$, we obtain generalizations of existence results for the NLS functional as for instance obtained by Adami, Serra and Tilli [JFA 271 (2016), 201-223], and Cacciapuoti, Finco and Noja [Nonlinearity 30 (2017), 3271-3303], among others. 
   
\end{tiny}
   \end{abstract}
   
   \keywords{\textit{Keywords:} Quantum Graphs;  Nonlinear Schrödinger operators; Polylaplacians; Spectral Theory of Schrödinger Operators; Persson Theory; concentration-compactness techniques\\
   \textit{Mathematical Subject Classifications:} 35R02 (35J10, 35J20, 35J35, 35J60, 35Q55, 49J40, 81Q35)}
   \maketitle
\begin{tiny}
\tableofcontents
\end{tiny}
\newpage
\section{Introduction}

In recent years, there has been a growth of interest in functionals on metric graphs $\mathcal G$ of the stationary NLS (Nonlinear Schrödinger) energy functional
\begin{equation}\label{eq:introminprobfunc}
E_{\text{NLS}}(u, \mathcal G)= \frac{1}{2} \int_{\mathcal G} |u'|^2\, \mathrm dx - \frac{\mu}{q} \int_{\mathcal G} |u|^q \, \mathrm dx, \qquad u\in H^1(\mathcal G),\; \|u\|_{L^2}^2 =1,\; q>2, \; \mu >0
\end{equation}
and associated ground states of the stationary NLS energy functional, i.e. minimizers for the constrained minimization problem
\begin{equation}\label{eq:introminprob}
E_{\text{NLS}}(\mathcal G):= \inf_{\substack{u\in H^1(\mathcal G)\\ \|u\|_{L^2}^2=1}} E_{\text{NLS}}(u, \mathcal G), \qquad 2<q< 6.
\end{equation}

A metric graph (\cite{berkolaiko2013introduction}, \cite{Mu19}) is a network  of several, possibly unbounded intervals (the edge set $\mathcal E$), whose endpoints are glued together according to their correspondence in the network (the vertex set $\mathcal V$), i.e. two endpoints are glued together when they correspond to the same vertex in the underlying combinatorial graph $(\mathcal V, \mathcal E)$. The topological space admits a natural metric structure with metric $d$ based on the Euclidean metric on each interval. Introducing a Lebesgue measure $\mu$, a metric graph admits then the structure of a metric measure space $(\mathcal G, d, \mu)$. The spaces $L^p(\mathcal G)$ and $H^1(\mathcal G)$ etc. are defined in a natural way.

Minimizers of \eqref{eq:introminprob} are solutions to the stationary nonlinear Schrödinger equation on $\mathcal G$ given by
\begin{equation}
\begin{cases}
-u'' + \lambda u = \mu |u|^{q-2} u \qquad \text{edgewise,}\vspace{.5em}\\
\begin{gathered}u \text{ is continuous on }\mathcal G \text{ and satisfies the Kirchhoff condition}\\
\sum_{e\in \mathcal E:e\succ \mathsf v} \frac{\partial u}{\partial \nu} \Big |_e(\mathsf v)=0, \qquad \forall \mathsf v\in \mathcal V,
\end{gathered}
\end{cases}
\end{equation}
where $\lambda\in \mathbb R$ is a lagrange multiplier, $\mu>0$,  $e\succ \mathsf v$ denotes the relation that the edge $e$ is adjacent to the vertex $\mathsf v\in \mathcal V$ and $\frac{\partial u}{\partial \nu}|_e(\mathsf v)$ denotes the inward pointing derivative at $\mathsf v$ towards the interior of the edge~$e$. When $\mathcal G= \mathbb R$  existence of minimizers in \eqref{eq:introminprob} can be deduced by standard techniques\footnote{See  \cite[§13]{ambrosetti} for a proof based on rearrangement techniques.}. Nevertheless, on general noncompact graphs existence results are  harder to obtain due to the lack of a concept of translation invariance. In \cite{adami2015nls} it was shown on the one hand that under certain  topological configurations the problem does not admit a minimizer; on the other, in a later paper the same authors derive an existence principle based on a comparison inequality:

\begin{theorem}[\cite{adami2016threshold}]\label{thm:introast2016}
Let $\mathcal G$ be a noncompact metric graph with finitely many edges and $2<q<6$. Assume \begin{equation}\label{eq:introadamiestimate}
E_{\text{NLS}}(\mathcal G) < E_{\text{NLS}}(\mathbb R),
\end{equation} 
then there exists a minimizer for $E_{\text{NLS}}(\mathcal G)$.
\end{theorem}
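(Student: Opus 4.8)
The plan is to run the direct method of the calculus of variations, using concentration--compactness to control the loss of mass along the escaping half-line ends, and to consume the threshold hypothesis \eqref{eq:introadamiestimate} precisely in a strict binding inequality. Throughout write $\ell_{\mathcal G}(m):=\inf\{E_{\mathrm{NLS}}(u,\mathcal G):\ \|u\|_{L^2}^2=m\}$, so that $\ell_{\mathcal G}(1)=E_{\mathrm{NLS}}(\mathcal G)$, and define $\ell_{\R}(m)$ analogously. First I would record the two structural facts that the range $2<q<6$ provides. The subcritical Gagliardo--Nirenberg inequality on $\mathcal G$ gives $\|u\|_{L^q}^q\le C\|u'\|_{L^2}^{(q-2)/2}\|u\|_{L^2}^{(q+2)/2}$, whence $E_{\mathrm{NLS}}(\cdot,\mathcal G)$ is coercive and bounded below on the mass constraint, so a minimizing sequence $(u_n)$ is bounded in $H^1(\mathcal G)$. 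Testing with a single bump shows $\ell_{\mathcal G}(1)<0$. Passing to a subsequence, $u_n\rightharpoonup u$ in $H^1(\mathcal G)$, and since $\mathcal G$ has finitely many edges the embedding $H^1\hookrightarrow C$ is compact on every bounded subgraph, so $u_n\to u$ in $L^q_{\mathrm{loc}}$ and a.e. Set $m:=\|u\|_{L^2}^2\in[0,1]$; the whole problem is to show $m=1$.

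Next I would set up the dichotomy. Cutting off at increasing distance from a fixed vertex splits $u_n$ into a part supported in a bounded subgraph, converging to $u$ (mass $m$), and a tail supported on the finitely many half-line ends (mass $1-m$). The key geometric point, and the first delicate step, is that mass escaping to infinity sits far from every vertex and therefore asymptotically ``sees'' the full line rather than a half-line; its energy is consequently governed by $\ell_{\R}$, not by the strictly more favourable half-line problem. With the standard asymptotic additivity of mass and energy (Brezis--Lieb together with negligibility of the cut-off cross terms) this yields
\begin{equation}
E_{\mathrm{NLS}}(\mathcal G)=\lim_n E_{\mathrm{NLS}}(u_n,\mathcal G)\ \ge\ \ell_{\mathcal G}(m)+\ell_{\R}(1-m).
\end{equation}
The case $m=0$ is then immediately excluded: it would give $E_{\mathrm{NLS}}(\mathcal G)\ge\ell_{\R}(1)=E_{\mathrm{NLS}}(\R)$, contradicting \eqref{eq:introadamiestimate}.

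The heart of the argument, and the step I expect to be the main obstacle, is the strict binding inequality
\[
\ell_{\mathcal G}(m)+\ell_{\R}(1-m)>E_{\mathrm{NLS}}(\mathcal G)\qquad(0<m<1),
\]
which together with the displayed lower bound forces $m\in\{0,1\}$ and hence $m=1$. I would prove it from three ingredients, each tied to a part of the hypotheses. The exact homogeneity of the line problem gives $\ell_{\R}(\theta)=\theta^{\gamma}E_{\mathrm{NLS}}(\R)$ with $\gamma=(q+2)/(6-q)>1$, using $q<6$. On the graph, where no spatial scaling is available, an amplitude rescaling $v=\sqrt{t}\,u$ with $t=1/m\ge1$ preserves the constraint and, since $t^{q/2}\ge t$ for $q>2$, satisfies $E_{\mathrm{NLS}}(v,\mathcal G)\le t\,E_{\mathrm{NLS}}(u,\mathcal G)-\tfrac{\mu}{q}(t^{q/2}-t)\|u\|_{L^q}^q$; as near-minimizers have $\|u\|_{L^q}^q$ bounded below (forced by $\ell_{\mathcal G}(m)<0$), this produces a strict gain $\ell_{\mathcal G}(m)\ge m\,E_{\mathrm{NLS}}(\mathcal G)+c(m)$ with $c(m)>0$. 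Finally the threshold hypothesis $E_{\mathrm{NLS}}(\mathcal G)<E_{\mathrm{NLS}}(\R)$ and the elementary bound $(1-m)^{\gamma}\le 1-m$ combine with these to give the strict inequality; the whole delicacy is that both endpoints of the range $2<q<6$ and the strict hypothesis are consumed exactly here, and one must be careful that the escaping mass is charged to $\R$ and not to a half-line.

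Finally I would upgrade convergence. With $m=1$ the weak limit already carries the full mass, so $u_n\to u$ in $L^2(\mathcal G)$; interpolating via Gagliardo--Nirenberg against the $H^1$ bound gives $u_n\to u$ in $L^q(\mathcal G)$, so the nonlinear term passes to the limit, while weak lower semicontinuity handles the kinetic term. Hence $E_{\mathrm{NLS}}(u,\mathcal G)\le E_{\mathrm{NLS}}(\mathcal G)$, and since $u$ is admissible ($\|u\|_{L^2}^2=1$) equality holds and $u$ is the desired minimizer.
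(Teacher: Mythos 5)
Your proposal is correct, but it takes a genuinely different route from the one this paper uses to recover Theorem~\ref{thm:introast2016} (see Example~\ref{ex:NLSclassic}). The paper runs its abstract two-step machinery: Theorem~\ref{thm:main1} gives a binary dichotomy (the weak limit of a minimizing sequence is either $0$ or already a minimizer), where the intermediate case $0<m<1$ is excluded by strict subadditivity of the graph level function $t\mapsto E_t$, with \emph{both} split pieces charged to the graph energy; strict subadditivity itself is proved indirectly (Theorem~\ref{thm:updatefill}): if $E_t=tE_1$ then minimizing sequences lose their $L^q$-mass, hence vanish, hence $E_t\ge 0$, contradicting negativity. Then Theorem~\ref{thm:main2} shows that vanishing forces $E_c=\widetilde{E_c}$, Lemma~\ref{lem:decayingpotential} identifies $\widetilde{E_c}=E_{\text{NLS}}(\mathbb R)$ for finite graphs, and Corollary~\ref{cor:existence} concludes under \eqref{eq:introadamiestimate}. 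You instead establish a single quantitative splitting inequality, $E_{\text{NLS}}(\mathcal G)\ge \ell_{\mathcal G}(m)+\ell_{\mathbb R}(1-m)$, charging the escaping mass directly to the line, and then a strict binding inequality built from the exact homogeneity $\ell_{\mathbb R}(\theta)=\theta^{(q+2)/(6-q)}E_{\text{NLS}}(\mathbb R)$ together with amplitude-scaling concavity of $\ell_{\mathcal G}$; the hypothesis \eqref{eq:introadamiestimate} then kills every $m<1$ simultaneously, so you never need strict subadditivity of $\ell_{\mathcal G}$ as a standalone lemma --- non-strict concavity suffices, strictness being supplied by the hypothesis itself. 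Both proofs hinge on the same geometric lemma, which you correctly isolated as the delicate step: tails supported far out on the rays vanish near the vertices, so they can be translated and reassembled into a single function on $\mathbb R$ of the same mass and energy, and the escaping mass is therefore charged to $E_{\text{NLS}}(\mathbb R)$ rather than to the cheaper half-line problem; this is exactly the content of Lemma~\ref{lem:decayingpotential}. Both arguments also share the Br\'ezis--Lieb step and IMS-type cut-offs with derivatives of order $1/n$. What your route buys is directness and explicit constants for this specific functional; what the paper's route buys is generality: its indirect subadditivity argument and ionization-threshold formalism require no explicit scaling law on the line, and so carry over to potentials, magnetic terms, higher-order operators, localized nonlinearities and locally finite graphs, which is the point of the paper's abstract setting.
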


This result can be used to obtain existence results on concrete graphs $\mathcal G$ via construction of so called competitors, i.e. test functions $u\in H^1(\mathcal G)$ for which $E_{\text{NLS}}(u, \mathcal G) < E_{\text{NLS}}(\mathbb R)$. This allows to deduce existence of minimizers in certain situations as shown in \cite{tentarelli2016nls} and \cite{adami2017negative}.

A variant of this problem with potential was considered in \cite{cacciapuoti2017ground} and \cite{cacciapuoti2018existence}, where the energy functional was given by
\begin{equation}\label{eq:introfunctionaltocons}
E_{\text{NLS}}^{V}(u) = \frac{1}{2} \int_{\mathcal G} \left | u'\right |^2+ V|u|^2 \, \mathrm dx -\frac{\mu}{q} \int_{\mathcal G} |u|^q \, \mathrm dx, \qquad \|u\|_{L^2}^2=1
\end{equation}
with $V\in L^1+ L^\infty(\mathcal G)$, i.e. there exist $V_1\in L^1(\mathcal G)$ and $V_\infty(\mathcal G)$ such that $V=V_1+V_\infty$.
In \cite{cacciapuoti2017ground} the existence of minimizers of \eqref{eq:introfunctionaltocons} was related to the existence of eigenvalues of the Schrödinger operator $-\Delta+V$ below the essential spectrum:

\begin{theorem}[\cite{cacciapuoti2017ground}]\label{thm:introcfn2017}
Let $\mathcal G$ be a noncompact metric graph with finitely many edges and $V\in L^1+ L^\infty(\mathcal G)$ with $V_- \in L^r(\mathcal G)$ for $r\in[1,1+ \frac{2}{q-2}]$ and $2<q\le 6$. Assume 
\begin{equation}\label{eq:introJameswants}
\inf \sigma(-\Delta+V) < \inf \sigma_{\text{ess}}(-\Delta+V_-)=0.
\end{equation}
Then there exists $\mu^*>0$ such that for $\mu \in (0, \mu^*)$ the functional \eqref{eq:introfunctionaltocons} is bounded below and the associated constrained minimization problem
\begin{equation}
E_{\text{NLS}}^{V}:= \inf_{\substack{u\in H^1(\mathcal G)\\ \|u\|_{L^2}^2=1}} E_{\text{NLS}}^V(u)
\end{equation} 
admits a minimizer.
\end{theorem}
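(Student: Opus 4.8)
The plan is to run the direct method of the calculus of variations, with the loss of compactness along the noncompact ends of $\mathcal G$ being defeated by the spectral gap hypothesis \eqref{eq:introJameswants}. Throughout write $Q(u)=\int_{\mathcal G}|u'|^2+V|u|^2\,\mathrm dx$ for the quadratic form of $-\Delta+V$, so that $E^V_{\text{NLS}}(u)=\tfrac12 Q(u)-\tfrac{\mu}{q}\int_{\mathcal G}|u|^q\,\mathrm dx$, and set $\mathcal E(\rho):=\inf\{E^V_{\text{NLS}}(u):\|u\|_{L^2}^2=\rho\}$, so that the target infimum is $\mathcal E(1)$. I would organise the argument into three steps: (i) boundedness from below and coercivity on the constraint for small $\mu$; (ii) extraction of a bounded minimizing sequence and its weak limit; and (iii) recovery of compactness of that sequence, which is where the hypotheses genuinely enter.

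For step (i) I would exploit that $V_-\in L^r(\mathcal G)$ with $r\in[1,1+\tfrac{2}{q-2}]$ is precisely the integrability making the negative part of the potential subquadratic. By H\"older, $\int_{\mathcal G} V_-|u|^2\,\mathrm dx\le\|V_-\|_{L^r}\|u\|_{L^{2r'}}^2$, and the one-dimensional Gagliardo--Nirenberg inequality on the graph, $\|u\|_{L^p}^p\le C\|u'\|_{L^2}^{p/2-1}\|u\|_{L^2}^{p/2+1}$, turns this on the constraint $\|u\|_{L^2}^2=1$ into a bound $\int_{\mathcal G} V_-|u|^2\,\mathrm dx\le C\|u'\|_{L^2}^{1-1/r'}$ with exponent $<2$; the same inequality controls the nonlinearity by $C\mu\|u'\|_{L^2}^{q/2-1}$. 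Since $q\le 6$ forces $q/2-1\le 2$, choosing $\mu<\mu^{*}$ makes $\tfrac12\|u'\|_{L^2}^2$ dominate both correction terms, so $E^V_{\text{NLS}}$ is bounded below and coercive on the sphere; the $L^2$-critical endpoint $q=6$ is exactly where the smallness of $\mu$ is required.

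Consequently a minimizing sequence $(u_n)$ is bounded in $H^1(\mathcal G)$, and along a subsequence $u_n\rightharpoonup u$ in $H^1$, with $u_n\to u$ in $L^p_{\text{loc}}$ and a.e.\ by local Rellich compactness. To upgrade this to strong $L^2$ convergence I would run a concentration--compactness dichotomy on the densities $|u_n|^2$ and rule out the two bad scenarios. \emph{Vanishing} is excluded by showing $\mathcal E(1)<0$: the assumption $\inf\sigma(-\Delta+V)<0$ yields a normalized (approximate) ground state $\psi$ with $Q(\psi)<0$, whence $E^V_{\text{NLS}}(\psi)<0$. \emph{Splitting} — a fraction $\alpha\in(0,1)$ of the mass staying while $1-\alpha$ escapes to infinity — is the main obstacle and is where \eqref{eq:introJameswants} is truly used. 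Here I would invoke a Persson-type characterization of the bottom of the essential spectrum as the limiting infimum of $Q$ over functions supported near infinity; combined with $\inf\sigma_{\text{ess}}(-\Delta+V_-)=0$ and the infinitesimal form-boundedness of $V_-$ coming from $V_-\in L^r$, this forces the quadratic contribution of a suitably cut-off escaping piece $w_n$ to satisfy $\liminf Q(w_n)\ge 0$, while its nonlinear gain is $O(\mu)$. Comparing with a localized competitor carrying the full mass, which realizes a strictly negative energy by $\inf\sigma(-\Delta+V)<0$, produces the strict binding inequality $\mathcal E(1)<\mathcal E(\alpha)+\mathcal E_\infty(1-\alpha)$ for every $\alpha\in(0,1)$, where $\mathcal E_\infty$ denotes the energy of the problem at infinity; this contradicts the energy decomposition generated by a genuine split.

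With both vanishing and splitting excluded, the subsequence concentrates: $u_n\to u$ strongly in $L^2(\mathcal G)$, hence, by interpolation against the $H^1$ bound, in $L^q(\mathcal G)$ as well. Weak lower semicontinuity of $Q$ then gives $E^V_{\text{NLS}}(u)\le\liminf_n E^V_{\text{NLS}}(u_n)=\mathcal E(1)$, while $\|u\|_{L^2}^2=1$ keeps $u$ admissible, so $u$ attains the minimum. I expect the delicate point to be the splitting/binding step: making rigorous that the escaping mass feels only the essential spectrum — so that its quadratic energy is asymptotically nonnegative — requires care with the $L^1+L^\infty$ (resp.\ $L^2+L^\infty$) decomposition of $V$, a clean cut-off localization of $(u_n)$, and the Persson lower bound, and it is precisely the \emph{strict} inequality in \eqref{eq:introJameswants} that buys the strict subadditivity needed to close the argument.
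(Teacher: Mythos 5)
Your proposal follows the same circle of ideas as the paper's framework (Gagliardo--Nirenberg coercivity with smallness of $\mu$, Br\'ezis--Lieb/concentration--compactness splitting, Persson's theorem to control mass near infinity, a perturbation argument in $\mu$), but there is a genuine hole in your case analysis, and it sits exactly at the crux of the theorem. After excluding Lions-vanishing and dichotomy with $\alpha\in(0,1)$, you conclude that ``the subsequence concentrates: $u_n\to u$ strongly in $L^2(\mathcal G)$.'' This does not follow. Concentration--compactness ``compactness'' only says the mass stays in balls $B_R(y_n)$ around \emph{some} centers $y_n$; on $\mathbb R^N$ one recenters by translation invariance, but on a noncompact metric graph with a potential there is no translation invariance (a point the paper stresses in its introduction), so the centers may escape to infinity along a ray, in which case $u_n\rightharpoonup 0$ and no minimizer is produced. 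This runaway-bubble scenario is excluded by neither of your two exclusions: it is not Lions-vanishing, it is not a split with $\alpha\in(0,1)$, and the argument ``$\mathcal E(1)<0$'' is powerless against it, because a soliton sliding off along a ray has limiting energy equal to that of the problem at infinity, $\mathcal E_\infty(1)$, which is itself negative: for decaying or compactly supported $V$ (legitimate instances of the hypotheses) one has $\mathcal E_\infty(1)=E_{\text{NLS}}(\mathbb R)=\gamma_q\mu^{4/(6-q)}<0$ (cf.\ Lemma~\ref{lem:decayingpotential} and Corollary~\ref{cor:mainresult3}).

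What is missing is precisely the full-mass comparison $\mathcal E(1)<\mathcal E_\infty(1)$ --- in the paper's language $E_c<\widetilde{E_c}$, the ionization-threshold inequality of Corollary~\ref{cor:existence} --- and this is exactly where the smallness $\mu<\mu^*$ and the strict inequality in \eqref{eq:introJameswants} must be spent: by Persson's theorem (Theorem~\ref{thm:persson}) together with Gagliardo--Nirenberg one gets $\mathcal E_\infty(1)\ge-\delta(\mu)$ with $\delta(\mu)\to0$ as $\mu\to0$, while $\mathcal E(1)\le\tfrac12\inf\sigma(-\Delta+V)<0$ uniformly in small $\mu$; this is the paper's Proposition~\ref{prop:second} and the source of the explicit threshold $\mu^*=(\Sigma_0/\gamma_q)^{3/2-q/4}$ in Remark~\ref{rmk:introcac2018}. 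Note that the paper's scheme avoids your gap structurally: there, ``vanishing'' means $u_n\rightharpoonup0$ in $H^1(\mathcal G)$, which covers spreading \emph{and} runaway concentration at once, Theorem~\ref{thm:main2} shows that a vanishing minimizing sequence forces $E_c=\widetilde{E_c}$, and the splitting case is absorbed into the dichotomy of Theorem~\ref{thm:main1} via strict subadditivity of $t\mapsto E_t$ (concavity plus existence of a linear ground state) and Br\'ezis--Lieb. Your own Persson-plus-$O(\mu)$ estimate, applied to an escaping piece carrying the \emph{full} mass (the case $\alpha=0$ you omit), would deliver exactly the missing inequality, so the repair lies within your toolkit --- but as written the proof does not close.
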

In a sense the inequality \eqref{eq:introJameswants} replaces the inequality \eqref{eq:introadamiestimate} in Theorem~\ref{thm:introast2016} to achieve the existence results.
\begin{remark}\label{rmk:introcac2018}
 \cite{cacciapuoti2018existence} quantifies the result in Theorem~\ref{thm:introcfn2017}: given the stationary NLS ground state energy on the real line
\begin{equation}
    \gamma_q:= \inf_{\substack{u\in H^1(\mathcal G)\\ \|u\|_{L^2}^2=1}}  \frac{1}{2} \int_{\mathbb R} \left | u'\right |^2\, \mathrm dx -\frac{1}{q} \int_{\mathcal G} |u|^q \, \mathrm dx<0
\end{equation}
one can choose $\mu^*>0$ as
\begin{equation}
    \mu^*= (\Sigma_0/\gamma_q)^{\frac{3}{2}- \frac{q}{4}}.
\end{equation}
\end{remark}



Our goal in this paper is threefold. Firstly, we develop a general existence theory in a far more abstract setting which can be applied to a variety of problems as for example $E_{\text{NLS}}$ and $E_{\text{NLS}}^{V}$, providing in particular a unified approach to these problems. However, this theory is not limited to metric graphs, and may be also applied to functionals defined on function spaces on combinatorial graphs or general domains in $\mathbb R^n$.  
Secondly, we use the flexibility of  this existence theory to obtain generalizations of the results in \cite{adami2016threshold}, \cite{cacciapuoti2017ground} and \cite{cacciapuoti2018existence} in several directions by considering more general graphs and higher-order derivatives in the functionals. This paper also tackles different variants of the problems, including the case of decaying potentials and localized nonlinearities, i.e. we replace the set of integration in the term corresponding to the nonlinearity  by a bounded subgraph $\mathcal K \subset \mathcal G$, as well as a variant with magnetic potential and higher-order derivatives. Thirdly, we provide a spectral theoretical foundation for this type of existence theory.


Let us be now more precise about the abstract setting we will consider. Let $(\mathcal M, d, \mu)$ be a nonempty metric measure space. Assume $p\in [1,\infty]$ and let $X(\mathcal M)\subset L^p(\mathcal M)$  be a Banach space continuously and locally compactly imbedded in $L^p(\mathcal M)$, i.e. for any precompact, connected subset $K$, the restriction $X(K)$ is compactly imbedded in $L^p(K)$. In the case of a metric graph $\mathcal G$ a prototype would be $H^1(\mathcal G)$, but we will also apply this to higher-order Sobolev spaces $H^k(\mathcal G)$ with $k\in \mathbb N$ and $H^1(\Omega)$ for an open subset $\Omega \subset \mathbb R^N$ with $N\in \mathbb N$. 

 In §2 we establish a general existence theory for constrained minimization problems for functionals of the form
 \begin{equation}\label{eq:intfortheconstrained}
     E_c:= \inf_{\substack{u\in X(\mathcal M)\\\|u\|_{L^p}^p=c}} E(u)
 \end{equation}
with  $E\in C(X(\mathcal M), \mathbb R)$, $c>0$ and $E(0)=0$, such that the mapping 
$t\mapsto E_t$ 
is continuous for $t\ge 0$. To motivate our approach, let us briefly revisit a classical method in $\mathbb R^N$ that has served as inspiration to obtain results on metric graphs in previous works.  In general, one cannot expect existence of minimizers when $X(\mathcal M)$ is only locally compactly imbedded into $L^p$ due to the lack of globally strongly convergent subsequences. P.L. Lions  introduced in \cite{lions1984concentration} a very effective dichotomy principle based on concentration compactness for functionals defined on $\mathbb R^N$ to tackle this major difficulty.  
We will make some technical assumptions (see Definition~\ref{df:technicalassumptionsdich}, and Definition~\ref{df:definitionsuperadditivitypart}) that guarantee  a dichotomy result (Theorem~\ref{thm:main1}) for the constrained minimization problem \eqref{eq:intfortheconstrained} in the flavor of Lion's original result, as has also appeared in adapted form in \cite{adami2017negative} and \cite{cacciapuoti2017ground} in specific applications. Namely, due to the subadditivity  of the functional either the so called \emph{concentration function} for a minimizing sequence tends to zero or one has in fact existence of minimizers. 

In principle there are two strategies to recover strongly convergent minimizing sequences. Traditionally, one uses translation invariance to recover non-vanishing minimizing sequences from vanishing ones;  however, on general metric measure spaces this is not possible. The second possibility is to exclude the case of vanishing minimizing sequences altogether by other means.  
Under the correct assumptions, including the structural assumption that roughly speaking $E\in C(X(\mathcal M), \mathbb R)$ is of the form
\begin{equation}
    E(u)= \frac{1}{2} a(u,u) + \text{ nonlinear perturbation },
\end{equation}
where $a(\cdot, \cdot)$ is a suitable sesquilinear form associated to some self-adjoint operator $A$, we draw connections to spectral theoretical quantities of $A$ to exclude the case when minimizing sequences of \eqref{eq:intfortheconstrained} vanish.
In particular, this covers the problems considered in \cite{adami2017negative} and \cite{cacciapuoti2017ground}. 

More specifically, we show as a consequence of Theorem~\ref{thm:main2} (see Corollary \ref{cor:existence})  that existence holds  for ground state energies that satisfy the additional relation
\begin{equation}\label{eq:j}
    E_c < \widetilde{E}_c := \lim_{n\to \infty} \inf_{\substack{u\in X(\mathcal M)\\ \operatorname{supp} u\subset \mathcal M\setminus K_n, \; \|u\|_{L^p}^p=c}} E(u),
\end{equation}
where $K_n:= \{x\in \mathcal M|\, d(x,K)<n\}$ is the expanding ball around some precompact set $K$; this will turn out to be a generalization of \eqref{eq:introadamiestimate}. But \eqref{eq:j} has a natural spectral theoretical interpretation. In fact, given a Schrödinger operator $A=-\Delta+V$ on $\mathbb R^N$ there exists an analogous result for the linear ground state problem. As a consequence of Persson's Theorem (see for instance \cite[§14.4]{IMS1996}) ground states of $A$ exist if
\begin{equation}\label{eq:j2}
    \inf_{\substack{u\in D(A)\\ \|u\|_2^2=1}} \langle Au, u\rangle < \lim_{n\to \infty} \inf_{\substack{u\in D(A)\\ \operatorname{supp} u\subset \mathbb R^N\setminus K_n(0), \; \|u\|_{L^2}^2=1}} \langle Au, u\rangle,
\end{equation}
which is equivalent to \eqref{eq:introJameswants} (cf. §\ref{subsubsec:perssontypetheorypoly}). In our applications, we will use \eqref{eq:j2} and a perturbation argument to show \eqref{eq:j} for small nonlinearities, although in some cases we can remove or specify this restriction. 
In this context, the IMS localization formula (see \cite{sigal1982IMS}) and analogues  for similar problems which we will develop will be useful tools (see §4.3 and §5.3). We note that unlike \cite{adami2016threshold} the general existence principle does not rely on symmetrization techniques.
\medbreak

As alluded to, the functionals \eqref{eq:introminprobfunc} and \eqref{eq:introfunctionaltocons} as considered in \cite{adami2016threshold} and \cite{cacciapuoti2017ground} satisfy the prerequisites of this theory (see Example~\ref{ex:NLSclassic} and Example~\ref{ex:shouldbeputinintroduction}). In fact, one application of the existence theory constructed in §\ref{sec:A general Existence Principle} will be to a natural generalization of \eqref{eq:introfunctionaltocons}, namely the higher-order stationary NLS energy functional in §4. We will also generalize existence results on the stationary NLS energy functional with magnetic potential for general locally finite graph, using the abstract structural assumptions of the spaces considered. 
 
Let us now be more precise about the operators we investigate in this context. Given a metric graph $\mathcal G$ we define the higher-order stationary NLS energy functional
\begin{equation}\label{eq:introkfunc}
E^{(k)}(u) = \frac{1}{2} \int_{\mathcal G} |u^{(k)}|^2 + V|u|^2 \, \mathrm dx - \frac{\mu}{q} \int_{\mathcal G} |u|^q\, \mathrm dx, \qquad \begin{multlined}\mu>0,\quad 2<q<4k+2,\\
V\in L^2+L^\infty(\mathcal G)\end{multlined}
\end{equation}
and consider the ground state problem
\begin{equation}\label{eq:introkground}
E^{(k)} = \inf_{\substack{u\in H^k(\mathcal G)\\ \|u\|_{L^2}^2 =1}} \frac{1}{2} \int_{\mathcal G} |u^{(k)}|^2 + V|u|^2 \, \mathrm dx - \frac{\mu}{q} \int_{\mathcal G} |u|^q\, \mathrm dx,
\end{equation}
with $H^k(\mathcal G)$ being a higher-order Sobolev space as defined in §3.4. An application of the general existence principle on the existence of minimizers in \eqref{eq:introkground} is the following:
\begin{theorem}\label{thm:intromainnowcloser}
Let $\mathcal G$ be a noncompact metric graph with finitely many edges. Assume that either
\begin{enumerate}[(i)]
\item there exists $V=V_2+V_\infty$ such that $V_2\in L^2(\mathcal G)$ and $V_\infty \in L^\infty (\mathcal G)$ and
$$V_\infty(x)\to 0\qquad (x\to \infty)$$ 
on all edges of infinite length, or
\item $A=(-\Delta)^k+V$ admits a ground state, i.e. $\inf \sigma(A)$ is an eigenvalue. 
\end{enumerate}
Then $E^{(k)}$ is strictly subadditive, and if additionally
\begin{equation}\label{eq:intromainnowcloser}
E^{(k)} < \widetilde{E^{(k)}} := \lim_{n\to \infty}\inf_{\substack{u\in H^k(\mathcal G)\\\|u\|_{L^2}^2=1,\; \operatorname{supp} u \subset \mathcal G\setminus K_n}} E^{(k)}(u),
\end{equation}
then $E^{(k)}$ admits a minimizer.
\end{theorem}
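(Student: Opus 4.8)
\noindent\emph{Proof proposal.}
The plan is to realize $E^{(k)}$ as an instance of the abstract constrained problem \eqref{eq:intfortheconstrained} with $\mathcal M=\mathcal G$, $X(\mathcal M)=H^k(\mathcal G)$, $p=2$, $c=1$, and $a(u,v)=\langle u^{(k)},v^{(k)}\rangle_{L^2}+\langle Vu,v\rangle_{L^2}$ the form of the self-adjoint operator $A=(-\Delta)^k+V$, the nonlinear perturbation being $u\mapsto -\tfrac{\mu}{q}\int_{\mathcal G}|u|^q\,\mathrm dx$, and then to invoke Corollary~\ref{cor:existence}. Thus the real work is twofold: verify the hypotheses of Definition~\ref{df:technicalassumptionsdich} and Definition~\ref{df:definitionsuperadditivitypart}, and establish the strict subadditivity of $t\mapsto E^{(k)}_t$; the existence conclusion is then immediate from Corollary~\ref{cor:existence} together with the standing assumption \eqref{eq:intromainnowcloser}.

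First I would check the structural and technical assumptions. Semiboundedness of $a$ and the identification of its form domain with $H^k(\mathcal G)$ follow by treating $V_2\in L^2(\mathcal G)$ as an infinitesimally form-bounded perturbation of $(-\Delta)^k$, using the one-dimensional embeddings $H^k(\mathcal G)\hookrightarrow L^\infty(\mathcal G)\cap L^4(\mathcal G)$ available on a graph with finitely many edges, while $V_\infty\in L^\infty$ is trivially bounded. Continuity of $E^{(k)}$ on $H^k(\mathcal G)$, boundedness below on each sphere $\{\|u\|_{L^2}^2=t\}$, and the continuity of $t\mapsto E^{(k)}_t$ all reduce to the Gagliardo--Nirenberg inequality in one dimension: the exponent $\tfrac{q-2}{2k}$ of $\|u^{(k)}\|_{L^2}$ in the resulting bound on $\int|u|^q$ is strictly less than $2$ precisely because of the subcriticality $2<q<4k+2$, so the defocusing quadratic term dominates. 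Finally, the local compactness of $H^k(\mathcal G)\hookrightarrow L^2(\mathcal G)$ on precompact connected subgraphs is the one-dimensional Rellich--Kondrachov theorem applied edgewise.

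The heart of the argument is strict subadditivity, which I would obtain from the strict spectral inequality $E^{(k)}_t<\tfrac{t}{2}\inf\sigma(A)$ for every $t>0$. Granting it, for a minimizing sequence $u_n$ on $\{\|u\|_{L^2}^2=t\}$ the identity $\tfrac{\mu}{q}\int|u_n|^q=\tfrac12 a(u_n,u_n)-E^{(k)}(u_n)\ge \tfrac{t}{2}\inf\sigma(A)-E^{(k)}(u_n)$ forces $\liminf_n\int|u_n|^q>0$; feeding the rescaled competitor $\sqrt\theta\,u_n$ (of mass $\theta t$) into $E^{(k)}(\sqrt\theta\,u_n)=\theta E^{(k)}(u_n)-(\theta^{q/2}-\theta)\tfrac{\mu}{q}\int|u_n|^q$ and using $q>2$ yields the strict sub-homogeneity $E^{(k)}_{\theta t}<\theta E^{(k)}_t$ for $\theta>1$. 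A convex-combination argument with $\theta_i=(c_1+c_2)/c_i$ upgrades this to $E^{(k)}_{c_1+c_2}<E^{(k)}_{c_1}+E^{(k)}_{c_2}$; note that this route avoids any appeal to translation invariance, which is unavailable in the presence of $V$.

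It remains to prove the spectral inequality under (i) or (ii), which I expect to be the main obstacle. Under (ii) it is immediate: testing with a bottom eigenfunction $\phi_0$ normalized to mass $t$ gives $E^{(k)}_t\le E^{(k)}(\phi_0)=\tfrac{t}{2}\inf\sigma(A)-\tfrac{\mu}{q}\int|\phi_0|^q<\tfrac{t}{2}\inf\sigma(A)$ since $\phi_0\not\equiv 0$. Under (i) I would first invoke a Persson-type theorem for the polyharmonic operator to identify $\inf\sigma_{\mathrm{ess}}(A)=\inf\sigma_{\mathrm{ess}}((-\Delta)^k)=0$ from the decay of $V_\infty$ on the infinite edges and the relative compactness furnished by $V_2\in L^2$; since $\inf\sigma(A)\le\inf\sigma_{\mathrm{ess}}(A)=0$ always, either $\inf\sigma(A)<0$, whence it is an isolated eigenvalue and case (ii) applies, or $\inf\sigma(A)=0$, in which case it suffices to exhibit a competitor of negative energy. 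For the latter I would fix a compactly supported $\varphi$ on an infinite edge with $\|\varphi\|_{L^2}^2=t$ and $\tfrac12\|\varphi^{(k)}\|_{L^2}^2-\tfrac{\mu}{q}\|\varphi\|_{L^q}^q<0$ --- possible by subcriticality via the scaling $\varphi_\lambda=\lambda^{1/2}\varphi(\lambda\,\cdot)$, since $\tfrac{q-2}{2}<2k$ --- and translate it out towards infinity, where $\int V|\varphi(\cdot-R)|^2\to 0$ as $R\to\infty$ because $V_\infty\to 0$ there and the $L^2$-tail of $V_2$ vanishes, giving $E^{(k)}_t<0=\tfrac{t}{2}\inf\sigma(A)$. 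The delicate points are the higher-order Persson/IMS-localization bookkeeping with an $L^2$-potential and the uniform control of cross terms when transplanting $\varphi$ onto $\mathcal G$; with these in hand, strict subadditivity holds, the dichotomy of Theorem~\ref{thm:main1} excludes splitting, the gap \eqref{eq:intromainnowcloser} excludes vanishing, and Corollary~\ref{cor:existence} delivers the minimizer.
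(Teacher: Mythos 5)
Your proposal is correct in substance and its top-level skeleton is the paper's own: cast $E^{(k)}$ into the abstract framework of \S2 (coercivity, continuity and local compactness via the higher-order Gagliardo--Nirenberg inequality, cf. Proposition~\ref{prop:higherordergag} and Lemma~\ref{lem:energyestimateneu}), prove strict subadditivity of $t\mapsto E^{(k)}_t$ by comparison with the linear ground-state energy, and invoke Corollary~\ref{cor:existence} together with \eqref{eq:intromainnowcloser}. Your case (ii) argument coincides with Lemma~\ref{lem:preconditions}: there strictness is obtained by contradiction (if $E_t=tE_1$, minimizing sequences lose all $L^q$ mass, forcing $E_t\ge\tfrac{t}{2}\Sigma_0$, against the ground-state competitor giving $E_t<\tfrac{t}{2}\Sigma_0$), which is exactly your sub-homogeneity argument run in reverse; the two are interchangeable. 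Where you genuinely deviate is case (i). The paper (Theorem~\ref{thm:decayingpotential}) does not pass through Persson theory there: it proves the threshold identity $\widetilde{\Sigma}^{(\mu,k)}=E^{(k)}(\mathbb R)$ for decaying potentials (Lemma~\ref{lem:decayingpotential}), notes $E^{(k)}(\mathbb R)<0$ (Corollary~\ref{cor:mainresult3}), and then argues that a failure of strict subadditivity would produce a vanishing minimizing sequence whose energy is $\ge 0$ by partition-of-unity superadditivity plus the decay of $V$, contradicting $E^{(k)}_t\le \widetilde{\Sigma}^{(\mu,k)}_t<0$. You instead identify $\inf\sigma_{\mathrm{ess}}(A)=0$ via a Persson-type theorem (which the paper does prove, Theorem~\ref{thm:persson} and Proposition~\ref{prop:Iwantthistoendf}, but deploys for Theorem~\ref{thm:intromain1} rather than here) and split: either $\inf\sigma(A)<0$, so a ground state exists and (ii) applies, or $\inf\sigma(A)=0$ and a subcritically scaled bump translated out along a ray gives $E^{(k)}_t<0$. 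Both routes rest on the same IMS-type decomposition formula and the same translated-test-function computation, so the technical cost is comparable; yours buys a single unified spectral inequality $E^{(k)}_t<\tfrac{t}{2}\inf\sigma(A)$ covering both hypotheses, while the paper's buys the explicit threshold value $E^{(k)}(\mathbb R)$, which is useful in its own right for verifying \eqref{eq:intromainnowcloser} in examples.

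One caveat you should repair in the write-up: your final step ``the gap excludes vanishing'' is an application of Theorem~\ref{thm:main2}, and that theorem requires precisely the $2$-superadditivity of $E^{(k)}$ with respect to the vanishing-compatible sequence of partitions of unity of Example~\ref{ex:unity2} --- i.e.\ the higher-order commutator/cutoff estimates carried out in Lemma~\ref{lem:preconditions} --- together with weak limit superadditivity (Br\'ezis--Lieb), which Theorem~\ref{thm:main1} needs. Your verification paragraph covers coercivity, continuity and local compactness, but neither of these two conditions; you only gesture at the former as ``IMS bookkeeping'' attached to the Persson step. They are not optional: without the partition-of-unity superadditivity neither your Persson argument nor the vanishing-exclusion step goes through, so the plan should state and prove that lemma explicitly (the paper's proof via Lemma~\ref{lem:decomposition2}, integration by parts and the $\|\Psi_n^{(j)}\|_\infty=O(n^{-j})$ bounds is the template).
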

When $k=1$ the energy functional \eqref{eq:introkfunc} reduces to the stationary NLS energy functional and we derive conditions for which the theory is applicable. Minimizers of \eqref{eq:introkground} satisfy the stationary higher-order nonlinear Schrödinger equation
\begin{equation}
\begin{cases}
(-1)^k u_e^{(2k)} + \left ( V+\lambda\right ) u_e = \mu |u_e|^{q-1} u_e, \qquad \forall e \in \mathcal E\vspace{1em}\\
\begin{multlined}
u^{(i)} \in C(\mathcal G)\quad \text{ for all } i\le 2k-1 \text{ even} \qquad \text{\it (Continuity)}\\
\qquad \land \quad\sum_{e:e\succ \mathsf v} u^{(k)}_e(\mathsf v) =0 \quad \forall i\le 2k-1 \text{ odd } \;\forall \mathsf v\in V\\
\text{\it (Kirchhoff condition)}.
\end{multlined}
\end{cases}
\end{equation}
for some Lagrange multiplier $\lambda\in \mathbb R$. While to the best of our knowledge this functional has not yet been considered on metric graphs, the stationary higher-order nonlinear Schrödinger equation on the real line of 4\textsuperscript{th} order is for instance related to traveling wave solutions of the nonlinear higher-order Schrödinger equation for the pulse envelope with higher-order dispersion  as shown in \cite[§II]{kruglov2019exact}. For combinatorial locally finite metric graphs a discussion on the existence of solutions of the nonlinear higher-order Schrödinger equation of 4\textsuperscript{th} order was  for instance considered very recently in \cite{hanshaozhao2019}.   

A minor difficulty in defining \eqref{eq:introkground} is that one needs to define higher-order Sobolev spaces $H^k(\mathcal G)$, as to date no standard way to define these spaces has emerged. We will define them in such a way that the formal Polylaplacian
\begin{equation}\label{eq:operators1}
\begin{gathered}
A=(-\Delta)^k+V\\ D(A) = H^{2k}(\mathcal G)
\end{gathered}
\end{equation}
is a self-adjoint operator on $L^2(\mathcal G)$. We remark that the choice is not necessarily unique. A discussion of self-adjoint realizations for the Bilaplacian on metric graphs can be for instance found in \cite{gregorio2017bi}.  

Analogously to \eqref{eq:introminprob} and \eqref{eq:introfunctionaltocons}, we will refer to the minimizers of $E^{(k)}$ as ground states.  Theorem~\ref{thm:intromainnowcloser} generalizes Theorem~\ref{thm:introast2016} since \eqref{eq:introminprobfunc} satisfies the prerequisites of Theorem~\ref{thm:intromainnowcloser}. Indeed, one can show with a test function argument (see Example~\ref{ex:NLSclassic}) that if $\mathcal G$ is a metric graph with finitely many edges then
\begin{equation}
\widetilde{E_{\text{NLS}}}(\mathcal G) := \lim_{n\to \infty}\inf_{\substack{u\in H^1(\mathcal G)\\\|u\|_{L^2}^2=1,\; \operatorname{supp} u \subset \mathcal G\setminus K_n}} E_{\text{NLS}}(u, \mathcal G)= E_{\text{NLS}}(\mathbb R)
\end{equation} 
and we recover Theorem~\ref{thm:introast2016}. 

Under the assumption that eigenvalues exist below the essential spectrum, i.e.
\begin{equation}
\inf \sigma((-\Delta)^k+ V) <\inf \sigma_{\text{ess}}((-\Delta)^k+V),
\end{equation}
by a perturbation argument one can ensure that \eqref{eq:intromainnowcloser} is satisfied for small nonlinearities and deduce a generalization of Theorem \ref{thm:introcfn2017}:
\begin{theorem}\label{thm:intromain1}
Let $\mathcal G$ be a noncompact metric graph with finite edge set. Let $V\in L^2+L^\infty(\mathcal G)$. Then $(-\Delta)^k+V: D((-\Delta)^k+V) \subset L^2(\mathcal G) \to L^2(\mathcal G)$ is a self-adjoint operator. Furthermore, if
\begin{equation}\label{eq:introimportantinequality}
\inf \sigma ( (-\Delta)^k +V) < \inf \sigma_{\text{ess}}((-\Delta)^k +V)
\end{equation}
then \eqref{eq:introkfunc} admits a ground state for sufficiently small $\mu>0$. 
\end{theorem}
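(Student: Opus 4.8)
The plan is to prove Theorem~\ref{thm:intromain1} in two stages: first establish self-adjointness of the Polylaplacian $A = (-\Delta)^k + V$ for $V \in L^2 + L^\infty(\mathcal G)$, and then invoke Theorem~\ref{thm:intromainnowcloser} together with a perturbation argument to deduce existence of a ground state for small $\mu$. For self-adjointness, I would define $A_0 = (-\Delta)^k$ on $D(A_0) = H^{2k}(\mathcal G)$ and argue that it is self-adjoint by construction of the higher-order Sobolev spaces (deferred to §3.4). The potential $V = V_2 + V_\infty$ is then treated as a perturbation: since $V_\infty \in L^\infty(\mathcal G)$ is bounded and hence $A_0$-bounded with relative bound $0$, and $V_2 \in L^2(\mathcal G)$ can be shown to be infinitesimally $A_0$-bounded using the continuous imbedding $H^k(\mathcal G) \hookrightarrow L^\infty(\mathcal G)$ (giving $\|V_2 u\|_{L^2} \le \|V_2\|_{L^2} \|u\|_{L^\infty} \le \varepsilon \|A_0 u\| + C_\varepsilon \|u\|$), the Kato--Rellich theorem yields that $A = A_0 + V$ is self-adjoint on $D(A_0) = H^{2k}(\mathcal G)$.

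\emph{For the existence part}, the strategy is to verify the two hypotheses of Theorem~\ref{thm:intromainnowcloser}. The spectral inequality \eqref{eq:introimportantinequality}, namely $\inf\sigma(A) < \inf\sigma_{\text{ess}}(A)$, means precisely that $\inf\sigma(A)$ is an isolated eigenvalue of finite multiplicity below the essential spectrum; hence $A$ admits a ground state and hypothesis~(ii) of Theorem~\ref{thm:intromainnowcloser} holds, giving strict subadditivity of $E^{(k)}$. It then remains to verify the strict inequality \eqref{eq:intromainnowcloser}, i.e. $E^{(k)} < \widetilde{E^{(k)}}$. Here I would exploit the linear analogue: Persson's theorem (referenced in the introduction) identifies
\begin{equation}
\lim_{n\to\infty}\inf_{\substack{u\in D(A),\ \operatorname{supp} u\subset \mathcal G\setminus K_n\\ \|u\|_{L^2}^2=1}} \langle Au, u\rangle = \inf\sigma_{\text{ess}}(A),
\end{equation}
so that \eqref{eq:introimportantinequality} gives a strict gap at the linear level. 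The plan is to transfer this gap to the nonlinear functional by a perturbation argument: writing $E^{(k)}(u) = \tfrac12\langle Au, u\rangle - \tfrac{\mu}{q}\int_{\mathcal K}|u|^q$, one controls the nonlinear term uniformly (again via $H^k \hookrightarrow L^\infty$ and Gagliardo--Nirenberg-type interpolation in the subcritical range $2 < q < 4k+2$) and shows that for $\mu$ small both $E^{(k)}$ and $\widetilde{E^{(k)}}$ lie within $O(\mu^{\alpha})$ of their linear counterparts $\inf\sigma(A)/2$ and $\inf\sigma_{\text{ess}}(A)/2$ respectively. Since the linear gap is strict and the nonlinear corrections vanish as $\mu \to 0$, the inequality \eqref{eq:intromainnowcloser} persists for all sufficiently small $\mu > 0$.

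\emph{The main obstacle} I anticipate is making the perturbation argument for \eqref{eq:intromainnowcloser} quantitative and uniform. One must show that the correction to $\widetilde{E^{(k)}}$ coming from the nonlinearity is bounded in a way that is \emph{uniform in the cutoff index} $n$, so that the limit defining $\widetilde{E^{(k)}}$ does not destroy the estimate; this requires a uniform interpolation inequality $\int_{\mathcal K}|u|^q \le C\|u^{(k)}\|_{L^2}^{\theta q}\|u\|_{L^2}^{(1-\theta)q}$ valid on the far-field pieces $\mathcal G \setminus K_n$ with a constant $C$ independent of $n$. Establishing such a graph-adapted Gagliardo--Nirenberg inequality on the noncompact ends, together with the correct scaling that uses subcriticality $q < 4k+2$ to absorb the nonlinear term into the quadratic form, is the technical heart of the argument. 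A secondary subtlety is to verify that $\inf\sigma(A)$ being below $\inf\sigma_{\text{ess}}(A)$ indeed yields an eigenvalue (ground state) rather than merely an infimum; this follows from the spectral theorem since any spectral value below the essential spectrum is an isolated eigenvalue of finite multiplicity, but it should be stated explicitly to justify hypothesis~(ii).
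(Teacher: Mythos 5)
Your proposal follows essentially the same route as the paper: self-adjointness of $A=(-\Delta)^k+V$, strict subadditivity of $E^{(k)}$ obtained from the existence of a linear ground state (hypothesis (ii) of Theorem~\ref{thm:intromainnowcloser}, implemented in the paper's Lemma~\ref{lem:preconditions}), and a perturbation argument transferring the linear spectral gap to the strict inequality $E^{(k)}<\widetilde{E^{(k)}}$ for small $\mu$, which is precisely Proposition~\ref{prop:second} combined with Theorem~\ref{thm:bigresult2}. Your Kato--Rellich treatment of the potential is, if anything, more explicit than what the paper records for general $k$.

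The one genuine gap is your appeal to ``Persson's theorem'' as a citable result. The theorem referenced in the introduction concerns Schr\"odinger operators on $\mathbb R^N$; for the Polylaplacian on a metric graph the identification
\begin{equation}
\lim_{n\to \infty} \inf_{\substack{u\in D(A),\ \|u\|_{2}^2=1\\ \operatorname{supp} u\subset \mathcal G\setminus K_n}} \langle Au, u\rangle = \inf \sigma_{\text{ess}}(A)
\end{equation}
is not available in the literature --- it is one of the paper's own contributions (Theorem~\ref{thm:persson}), whose proof requires the decomposition formula for the Polylaplacian (Lemma~\ref{lem:decomposition3}), the graph-adapted partitions of unity of Example~\ref{ex:unity2}, and compactness of $f(A+i)^{-1}$. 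You would have to prove this, not cite it. The gap is fillable with moderate effort because only one direction is actually needed here: hypothesis \eqref{eq:introimportantinequality} yields the gap $\Sigma_0<\Sigma$ as soon as one knows $\inf\sigma_{\text{ess}}(A)\le \Sigma$, and this direction is established by an abstract spectral-projector and compactness argument that does not use the graph structure at all (the second half of the proof of Theorem~\ref{thm:persson}); the reverse direction, which is the one requiring the IMS-type decomposition formula, is not needed for this theorem.

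Two smaller remarks. The obstacle you single out --- a Gagliardo--Nirenberg inequality on the far-field pieces $\mathcal G\setminus K_n$ with constant uniform in $n$ --- is not actually an obstacle: a function supported in $\mathcal G\setminus K_n$ is still an element of $H^k(\mathcal G)$, so one simply applies the global inequality of Proposition~\ref{prop:higherordergag}, whose constant is fixed once and for all. Likewise, your two-sided $O(\mu^\alpha)$ estimate is more than is needed: since the nonlinear term is nonpositive, $E^{(k)}\le \Sigma_0/2$ holds trivially, and only the lower bound on $\widetilde{E^{(k)}}$ requires the interpolation and absorption argument; this is exactly how Proposition~\ref{prop:second} proceeds.
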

Note that Theorem~\ref{thm:introcfn2017} also includes the critical case $q=6$, which is considered as a special case, since the functional is bounded below only for sufficiently small $\mu>0$. It is reasonable to expect that a similar result as in Theorem~\ref{thm:intromain1} holds in this particular case.

\begin{figure}[htb]
\centering
\includegraphics[scale=1]{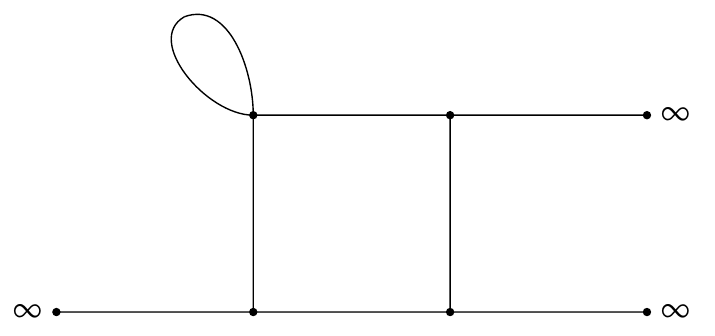}
\hspace{2em}
\includegraphics[scale=1.5]{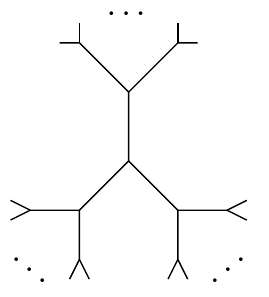}
\caption{An illustration for the classes of graphs that are considered.
To the left a finite metric graph, sometimes referred to as starlike, consisting of a core graph and attached rays 
and to the right an infinite tree graph as considered in Theorem~\ref{thm:introlastlastlast} as an example for a locally finite metric graph, i.e. finite on any precompact set.}\label{fig:introstuff}
\end{figure}

The results in Theorem \ref{thm:intromainnowcloser} and Theorem \ref{thm:intromain1} are shown for metric graphs with finitely many edges, which we refer to as finite metric graphs throughout the paper. Such graphs consist of a finite number (possibly zero) of edges of infinite length, i.e. half-lines, which we call rays, and a complement, which is compact, and which we will call the core of the graph. In \cite{cacciapuoti2017ground}, \cite{cacciapuoti2018existence} such graphs are called starlike 
(see also Figure~\ref{fig:introstuff}-left). 

Our theory also allows us to handle more general graphs, however in the case $k=1$. It remains an open question if in the general case above one can show similar existence results. If $k=1$ the minimization problem \eqref{eq:introkground} reduces to the existence of ground states of the stationary NLS energy functional. For the next result we will consider a class of graphs with countable edge set, which is finite when restricted to any precompact subset. We will refer to such graphs as locally finite metric graphs in the following. Moreover, to illustrate the scope  of our techniques, we will consider (without much extra effort) the more general situation of a magnetic Schrödinger operator.

On locally finite metric graphs we consider the following variant of the NLS energy functional
\begin{equation}\label{eq:introNLSfunc}
E_{\text{NLS}}^{(\mathcal K)}(u) = \frac{1}{2} \int_{\mathcal G} \left |\left (i \frac{\mathrm d}{\mathrm dx} + M\right )u\right |^2+ V|u|^2 \, \mathrm dx - \frac{\mu}{p} \int_{\mathcal K} |u|^q \, \mathrm dx,\qquad  \begin{multlined}\mu>0,\quad 2<q<4k+2,\\
V\in L^2+L^\infty(\mathcal G)\end{multlined}
\end{equation}
where $\mathcal K\subseteq \mathcal G$ is a subgraph of $\mathcal G$. 
In this context, we consider the magnetic Schrödinger operator with external potential
\begin{equation}\label{eq:operators2}
\begin{gathered}
A^M= \left ( i \frac{\mathrm d}{\mathrm dx} + M\right )^2+ V
\end{gathered}
\end{equation}
for $M\in H^1+ W^{1,\infty}(\mathcal G)$ and $V\in L^2+ L^\infty(\mathcal G)$ with its natural domain of definition, which we describe in detail in §5.1.

The following theorem is an analog of Theorem~\ref{thm:intromain1}. Interestingly, if one considers localized nonlinearities, i.e. $\mathcal K$ is a bounded subgraph of $\mathcal G$, then the existence result can be shown independent of the parameter $\mu>0$ in the nonlinearity:
\begin{theorem}\label{thm:intromain2}
Let $\mathcal G$ be a noncompact locally finite metric graph and $\mathcal K\subseteq \mathcal G$ a connected subgraph. Let $V\in L^2+L^\infty(\mathcal G)$ and $M\in H^1+W^{1,\infty}(\mathcal G)$. Suppose $A^M=(i \frac{\mathrm d}{\mathrm dx}+ M)^2+V$ admits a ground state that does not vanish identically on $\mathcal K$.
\begin{enumerate}[(i)]
\item If $\inf \sigma(A^M) < \inf \sigma_{\text{ess}}(A^M)$, then
\begin{equation}
E_{\text{NLS}}^{(\mathcal K)}:= \inf_{\substack{u\in H^1(\mathcal G)\\ \|u\|_{L^2}^2=1}}\frac{1}{2} \int_{\mathcal G} \left |\left (i \frac{\mathrm d}{\mathrm dx} + M\right )u\right |^2+ V|u|^2 \, \mathrm dx - \frac{\mu}{q} \int_{\mathcal K} |u|^q \, \mathrm dx
\end{equation}
admits a minimizer for sufficiently small $\mu>0$. 
\item If $\mathcal K$ is a bounded subgraph of $\mathcal G$, then minimizers exist for all $\mu >0$. 
\end{enumerate}
\end{theorem}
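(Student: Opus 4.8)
The plan is to realize $E_{\text{NLS}}^{(\mathcal K)}$ as an instance of the abstract constrained problem \eqref{eq:intfortheconstrained} with $X(\mathcal M)=H^1(\mathcal G)$, $p=2$ and $c=1$, and to obtain a minimizer from Corollary~\ref{cor:existence} once the strict inequality $E_{\text{NLS}}^{(\mathcal K)} < \widetilde E$ is verified, where
\[
\widetilde E := \lim_{n\to\infty}\ \inf_{\substack{u\in H^1(\mathcal G),\ \|u\|_{L^2}^2=1\\ \operatorname{supp} u\subset \mathcal G\setminus K_n}} E_{\text{NLS}}^{(\mathcal K)}(u).
\]
First I would record that, by the construction in \S5.1, the form $a^M(u,u)=\int_{\mathcal G}|(i\tfrac{\mathrm d}{\mathrm dx}+M)u|^2+V|u|^2\,\mathrm dx$ is closed and bounded below with form domain $H^1(\mathcal G)$, so that $E_{\text{NLS}}^{(\mathcal K)}(u)=\tfrac12 a^M(u,u)-\tfrac{\mu}{q}\int_{\mathcal K}|u|^q\,\mathrm dx$ has the structural shape $\tfrac12 a(u,u)+\text{(nonlinear perturbation)}$ demanded by the theory. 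Because $2<q<6$ is $L^2$-subcritical, the Gagliardo–Nirenberg inequality applied to $|u|$ — which lies in $H^1$ with $\||u|'\|_{L^2}\le \|(i\tfrac{\mathrm d}{\mathrm dx}+M)u\|_{L^2}$ by the diamagnetic inequality, the $L^2$-part of $V$ being absorbed by relative form-boundedness — yields that $E_{\text{NLS}}^{(\mathcal K)}$ is continuous and bounded below and that $t\mapsto E_t$ is continuous; together with the hypothesis that $A^M$ possesses a ground state this verifies the structural and subadditivity hypotheses of Definition~\ref{df:technicalassumptionsdich} and Definition~\ref{df:definitionsuperadditivitypart} underlying Theorem~\ref{thm:main2}, so that Corollary~\ref{cor:existence} is applicable.

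Second I would pin down the two quantities to be compared. Let $\phi\in D(A^M)\subset H^1(\mathcal G)$ be a ground state normalized by $\|\phi\|_{L^2}^2=1$, so that $a^M(\phi,\phi)=\langle A^M\phi,\phi\rangle=\lambda_0$ with $\lambda_0:=\inf\sigma(A^M)$. Using $\phi$ as a competitor and exploiting that $\phi$ does not vanish identically on $\mathcal K$, whence $\int_{\mathcal K}|\phi|^q\,\mathrm dx>0$, gives for every $\mu>0$
\[
E_{\text{NLS}}^{(\mathcal K)}\ \le\ E_{\text{NLS}}^{(\mathcal K)}(\phi)\ =\ \tfrac12\lambda_0-\tfrac{\mu}{q}\int_{\mathcal K}|\phi|^q\,\mathrm dx\ <\ \tfrac12\lambda_0 .
\]
For the linear part of $\widetilde E$ I would invoke the Persson-type characterization of \S5.3, namely $\lim_{n}\inf\{a^M(u,u):\|u\|_{L^2}^2=1,\ \operatorname{supp} u\subset\mathcal G\setminus K_n\}=\inf\sigma_{\text{ess}}(A^M)$, recalling that $\lambda_0=\inf\sigma(A^M)\le\inf\sigma_{\text{ess}}(A^M)$ always holds.

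For part (ii), boundedness of $\mathcal K$ gives an $N$ with $\mathcal K\subset K_n$ for all $n\ge N$, so every admissible escape competitor vanishes on $\mathcal K$ and the nonlinear term disappears entirely; hence $\widetilde E=\tfrac12\inf\sigma_{\text{ess}}(A^M)\ge\tfrac12\lambda_0$, and combined with the previous display $E_{\text{NLS}}^{(\mathcal K)}<\tfrac12\lambda_0\le\widetilde E$ for all $\mu>0$, so Corollary~\ref{cor:existence} produces a minimizer. For part (i) the nonlinearity no longer vanishes on escape functions, so I would instead bound it from below: by Gagliardo–Nirenberg and the diamagnetic inequality one has $\tfrac{\mu}{q}\int_{\mathcal K}|u|^q\le C\mu\,(a^M(u,u)+C')^{(q-2)/4}$, and since $(q-2)/4<1$ Young's inequality yields $\widetilde E\ge \tfrac12\inf\sigma_{\text{ess}}(A^M)-\eta(\mu)$ with $\eta(\mu)\to0$ as $\mu\to0^+$. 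The spectral gap hypothesis makes $\delta:=\tfrac12\big(\inf\sigma_{\text{ess}}(A^M)-\lambda_0\big)>0$ a fixed positive number, and choosing $\mu$ small enough that $\eta(\mu)<\delta$ gives $E_{\text{NLS}}^{(\mathcal K)}\le\tfrac12\lambda_0=\tfrac12\inf\sigma_{\text{ess}}(A^M)-\delta<\tfrac12\inf\sigma_{\text{ess}}(A^M)-\eta(\mu)\le\widetilde E$, whence Corollary~\ref{cor:existence} again applies.

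The main obstacle is the uniform lower bound on $\widetilde E$ in part (i): one must show the subtracted, possibly non-localized nonlinear term cannot depress the escape energy below $\tfrac12\inf\sigma_{\text{ess}}(A^M)$ by more than $o(1)$ as $\mu\to0^+$, uniformly over the location of the support $\mathcal G\setminus K_n$. This rests on marrying the Persson characterization of $\inf\sigma_{\text{ess}}(A^M)$ with the $L^2$-subcritical Gagliardo–Nirenberg estimate, absorbing the magnetic gradient through the diamagnetic inequality and the $L^2$-part of $V$ through relative form-boundedness; by contrast part (ii) is comparatively soft, since boundedness of $\mathcal K$ annihilates the nonlinear term on escape sequences and the strict drop furnished by $\phi$ then holds for every $\mu>0$ without a smallness restriction.
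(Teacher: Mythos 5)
Your overall architecture coincides with the paper's: the ground-state competitor argument giving $E_{\text{NLS}}^{(\mathcal K)}<\tfrac12\inf\sigma(A^M)$ is the paper's Lemma~\ref{lem:energyinequality}; the identification of the linear escape threshold with $\inf\sigma_{\text{ess}}(A^M)$ is the Persson-type Theorem~\ref{thm:persson2}; your part (ii) argument (bounded $\mathcal K$ kills the nonlinearity on escape competitors, so the threshold is at least $\tfrac12\Sigma_0$ and the strict drop holds for every $\mu$) is Theorem~\ref{thm:bigresulttt}; and your part (i) Gagliardo--Nirenberg/Young perturbation bound is Proposition~\ref{prop:first}. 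Those comparison steps are correct as you state them (the paper avoids the diamagnetic inequality by proving a magnetic Gagliardo--Nirenberg inequality, Proposition~\ref{cor:Gagliardo-nirenberg}, via gauge transform; either route works).

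The genuine gap is in your first paragraph: it is not true that Gagliardo--Nirenberg, form-boundedness of $V$ and existence of a ground state ``verify the structural and subadditivity hypotheses of Definition~\ref{df:technicalassumptionsdich} and Definition~\ref{df:definitionsuperadditivitypart}''. Corollary~\ref{cor:existence} needs three things you never prove: (a) weak limit superadditivity (this one is soft, via Br\'ezis--Lieb); (b) strict subadditivity of $t\mapsto E_t$; and, crucially, (c) $2$-superadditivity of $E_{\text{NLS}}^{(\mathcal K)}$ with respect to a vanishing-compatible sequence of partitions of unity. Item (c) is where most of the work of §5 lies, and it does not follow from anything you invoke: on a locally finite graph there is no compact core, so one must build Lipschitz partitions $\Psi_n,\widetilde{\Psi_n}$ with $\Psi_n^2+\widetilde{\Psi_n}^2\equiv1$ and $\|\Psi_n'\|_\infty=O(1/n)$ (Lemma~\ref{lem:unity}, Example~\ref{ex:first2}, resting on the characterization $W^{1,\infty}=C^{0,1}_b$ of Proposition~\ref{prop:lipschitz}), and prove the IMS-type decomposition formula
\begin{equation}
a(fu,fu)=\tfrac12\bigl(a(u,f^2u)+a(f^2u,u)\bigr)+\langle|f'|^2u,u\rangle_{L^2}
\end{equation}
for the magnetic form (Lemma~\ref{lem:lemma5.7}, which in turn needs the density of boundedly supported $\widetilde{H^2}$ functions, Proposition~\ref{prop:densitylocfinite}); only then does the energy of a vanishing sequence split along $\{\Psi_n,\widetilde{\Psi_n}\}$ up to errors of order $1/n^2$ (Lemma~\ref{lem:prework}). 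Item (b) also requires assembly: concavity gives $E_t\ge tE_1$, and strictness is proved by contradiction --- if $E_t=tE_1$ then minimizing sequences for $E_t$ lose their $L^q$ mass on $\mathcal K$, forcing $E_t\ge\tfrac12\Sigma_0 t$, which contradicts exactly the strict inequality your ground-state competitor provides. You produced that key inequality but never used it to conclude subadditivity. Without (b) and (c), Theorem~\ref{thm:main2} cannot be invoked, and the inequality $E_{\text{NLS}}^{(\mathcal K)}<\widetilde E$ --- which is all your proposal actually establishes --- does not by itself yield a minimizer.
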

In §5.6 we are going to show that for a tree graph $\mathcal G$ the ground states of Schrödinger operators with magnetic potential do not vanish anywhere on $\mathcal G$. 
Then,  given a \emph{decaying potential} $V\in L^2+ L^\infty(\mathcal G)$ with $V=V_2+V_\infty$, such that $V_2\in L^2(\mathcal G)$ and $V_\infty\in L^\infty(\mathcal G)$ satisfying
\begin{equation}\label{eq:introdecaying}
\sup_{x\in \mathcal G\setminus K} |V_\infty(x)| \to 0 \qquad (n\to \infty),
\end{equation} 
we show:
\begin{theorem}\label{thm:introlastlastlast}
Let $\mathcal G$ be a noncompact locally finite tree graph with finitely many vertices of degree $1$. Suppose $M\in H^1+W^{1,\infty}(\mathcal G)$ and $V\in L^2+ L^\infty(\mathcal G)$ that satisfies \eqref{eq:introdecaying}. Then \eqref{eq:introNLSfunc} admits a minimizer if 
\begin{equation}
E_{\text{NLS}}^{(\mathcal K)} =\inf_{\substack{u\in H^1(\mathcal G)\\ \|u\|_{L^2}^2=1}}\frac{1}{2} \int_{\mathcal G} \left |\left (i \frac{\mathrm d}{\mathrm dx} + M\right )u\right |^2+ V|u|^2 \, \mathrm dx - \frac{\mu}{q} \int_{\mathcal K} |u|^q \, \mathrm dx< E_{\text{NLS}}(\mathbb R).
\end{equation}
In particular, if 
$$\inf \sigma\left ( \left ( i \frac{\mathrm d}{\mathrm dx} +M\right )^2 +V\right )<0,$$ then we have existence of minimizers of $E_{\text{NLS}}^{(\mathcal K)}$ for $0 < \mu \le (\Sigma_0/\gamma_q)^{\frac{3}{2}-\frac{p}{4}}$ as in Remark~\ref{rmk:introcac2018}.
\end{theorem}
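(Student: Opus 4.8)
The plan is to apply the abstract existence criterion to $E_{\text{NLS}}^{(\mathcal K)}$ viewed as a functional on $X(\mathcal G)=H^1(\mathcal G)$ with $p=2$ and $c=1$. The technical and superadditivity hypotheses of Definitions~\ref{df:technicalassumptionsdich} and \ref{df:definitionsuperadditivitypart} hold for magnetic Schrödinger functionals of this type (verified in §5), so Corollary~\ref{cor:existence} applies: a minimizer exists as soon as
\[
E_{\text{NLS}}^{(\mathcal K)} < \widetilde{E_{\text{NLS}}}^{(\mathcal K)},
\]
where $\widetilde{E_{\text{NLS}}}^{(\mathcal K)}$ is the energy at infinity defined as in \eqref{eq:j} (cf. the analogous quantity in Theorem~\ref{thm:intromainnowcloser}). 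Thus the first part of the theorem reduces entirely to the identification $\widetilde{E_{\text{NLS}}}^{(\mathcal K)} = E_{\text{NLS}}(\mathbb R)$; granting this, the hypothesis $E_{\text{NLS}}^{(\mathcal K)} < E_{\text{NLS}}(\mathbb R)$ is literally the strict inequality required by Corollary~\ref{cor:existence}.

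Before computing $\widetilde E$ I would remove the magnetic potential by a gauge transformation, which is available precisely because $\mathcal G$ is a tree. Fixing a root and integrating $M$ along the unique path to each point yields a single-valued real phase $\phi$ with $\phi'=-M$ (no monodromy, since a tree has no cycles); as $M\in H^1+W^{1,\infty}(\mathcal G)\subset L^\infty(\mathcal G)$, the multiplication operator $u\mapsto e^{i\phi}u$ is a unitary of $H^1(\mathcal G)$ sending $(i\tfrac{\mathrm d}{\mathrm dx}+M)u$ to $i(e^{i\phi}u)'$ while leaving $\|u\|_{L^2}$, $\int_{\mathcal G}V|u|^2$ and $\int_{\mathcal K}|u|^q$ invariant. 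Consequently $E_{\text{NLS}}^{(\mathcal K)}$ and $\widetilde{E_{\text{NLS}}}^{(\mathcal K)}$ agree with the corresponding quantities for the non-magnetic functional $\tfrac12\int_{\mathcal G}|u'|^2+V|u|^2\,\mathrm dx-\tfrac{\mu}{q}\int_{\mathcal K}|u|^q\,\mathrm dx$, and I may assume $M\equiv 0$ throughout the energy-at-infinity analysis.

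The heart of the argument is the identity $\widetilde E=E_{\text{NLS}}(\mathbb R)$. For the upper bound $\widetilde E\le E_{\text{NLS}}(\mathbb R)$ I translate a real-line soliton out to infinity along an unbounded path of the (noncompact, locally finite) tree; the decay \eqref{eq:introdecaying} together with $\|V_2\|_{L^2(\mathcal G\setminus K_n)}\to 0$ makes the potential contribution $o(1)$, while the nonlinear term over $\mathcal K$ can only decrease the energy, so admissible test functions supported in $\mathcal G\setminus K_n$ realise energies tending to $E_{\text{NLS}}(\mathbb R)$. The reverse bound $\widetilde E\ge E_{\text{NLS}}(\mathbb R)$ is where the hypothesis of finitely many degree-$1$ vertices is indispensable: for $n$ large, $\mathcal G\setminus K_n$ contains no vertex of degree $1$, so for any admissible $u$ the superlevel sets of $|u|$ meet $\mathcal G$ in at least two points, and the Adami–Serra–Tilli symmetric rearrangement $\hat u$ onto $\mathbb R$ applies, giving $\|\hat u'\|_{L^2}\le\|u'\|_{L^2}$ and $\|\hat u\|_{L^q}=\|u\|_{L^q}$; since $\int_{\mathcal K}|u|^q\le\|u\|_{L^q}^q$ and $\int V|u|^2=o(1)$, one obtains $E_{\text{NLS}}^{(\mathcal K)}(u)\ge E_{\text{NLS}}(\mathbb R)-o(1)$ uniformly as $n\to\infty$. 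I expect this lower bound to be the main obstacle: it is exactly the step that rules out the strictly lower energy a half-line end would permit (an even reflection computes $E_{\text{NLS}}(\mathbb R^+)=2^{2(q-2)/(6-q)}\gamma_q<\gamma_q$), and it is the reason both the absence of degree-$1$ vertices at infinity and a genuinely decaying potential are assumed.

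For the final "in particular" assertion I use the linear ground state. The decay \eqref{eq:introdecaying} forces $\inf\sigma_{\text{ess}}(A^M)=0$, so $\inf\sigma(A^M)=\Sigma_0<0$ is a negative eigenvalue with normalized eigenfunction $\psi$; by §5.6 the magnetic ground state on a tree does not vanish, in particular $\int_{\mathcal K}|\psi|^q>0$. Testing with $\psi$ gives $E_{\text{NLS}}^{(\mathcal K)}\le\tfrac12\Sigma_0-\tfrac{\mu}{q}\int_{\mathcal K}|\psi|^q<\tfrac12\Sigma_0$. On the other hand the mass-preserving scaling $u\mapsto\sigma^{1/2}u(\sigma\,\cdot)$ yields $E_{\text{NLS}}(\mathbb R)=\mu^{4/(6-q)}\gamma_q$, so $\tfrac12\Sigma_0\le E_{\text{NLS}}(\mathbb R)$ exactly when $\mu^{4/(6-q)}\le\Sigma_0/(2\gamma_q)$, i.e. when $\mu\le(\Sigma_0/\gamma_q)^{3/2-q/4}$ up to the normalisation of $\Sigma_0$. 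For such $\mu$ the strict gain from the nonlinear term gives $E_{\text{NLS}}^{(\mathcal K)}<E_{\text{NLS}}(\mathbb R)$, and the first part of the theorem produces a minimizer, establishing existence for $0<\mu\le(\Sigma_0/\gamma_q)^{3/2-q/4}$ as in Remark~\ref{rmk:introcac2018}.
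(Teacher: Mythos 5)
Your strategy is essentially the paper's: reduce to the abstract criterion of Corollary~\ref{cor:existence}, remove the magnetic potential by the gauge transform (available on trees, as in §5.6 and \eqref{eq:unitaryequivalence}), bound the threshold energy from below by $E_{\text{NLS}}(\mathbb R)$ via rearrangement, and get the quantified statement by testing with the non-vanishing linear ground state and scaling, as in Example~\ref{ex:shouldbeputinintroduction} and Remark~\ref{rmk:introcac2018}. The only structural difference is organizational: the paper runs a dichotomy (either the set $\overline{\mathcal G}$ of points admitting two disjoint rays to infinity is nonempty, in which case $\mathcal G$ satisfies the \textbf{(\=H)}-condition of Remark~\ref{rmk:unrooted}, or $\overline{\mathcal G}=\emptyset$ and $\mathcal G$ is a finite metric graph handled by Example~\ref{ex:shouldbeputinintroduction}), whereas you argue directly on competitors supported in $\mathcal G\setminus K_n$, using that all degree-one vertices eventually lie in $K_n$; both routes terminate in the Adami--Serra--Tilli level-set/rearrangement estimate, and yours is legitimate on a tree because each component of $\mathcal G\setminus K_n$ attaches to $K_n$ at a single point where $u$ vanishes.

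There is, however, one claim in your write-up that is false as stated, although it is not load-bearing. The identity $\widetilde{E_{\text{NLS}}}^{(\mathcal K)}=E_{\text{NLS}}(\mathbb R)$ cannot hold when $\mathcal K$ is a bounded subgraph: once $R$ is so large that $\mathcal K\subset K_R$, every admissible $u$ with $\operatorname{supp} u\subset \mathcal G\setminus K_R$ has nonlinear term identically zero, so
\begin{equation}
E_{\text{NLS}}^{(\mathcal K)}(u)=\tfrac12 a(u,u)\ge \tfrac12 \Sigma_R \longrightarrow \tfrac12\Sigma \ge 0 > E_{\text{NLS}}(\mathbb R),
\end{equation}
by Proposition~\ref{prop:Iwantthistoend} for decaying potentials; your translated soliton loses its nonlinear energy entirely and does not realize values near $E_{\text{NLS}}(\mathbb R)$. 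The ``upper bound'' direction is correct only in the global case $\mathcal K=\mathcal G$ (cf. Remark~\ref{rmk:decayingpotential}). Fortunately only the lower bound $\widetilde{E_{\text{NLS}}}^{(\mathcal K)}\ge E_{\text{NLS}}(\mathbb R)$ is needed, since together with the hypothesis $E_{\text{NLS}}^{(\mathcal K)}<E_{\text{NLS}}(\mathbb R)$ it already gives the strict inequality required by Corollary~\ref{cor:existence}; and that is exactly the direction your rearrangement argument proves. So the proof survives once the claimed equality is weakened to an inequality. A second, minor gloss: the strict subadditivity required by Theorem~\ref{thm:main1} is not automatic ``from §5'' in the localized case, since Lemma~\ref{lem:prework} presupposes a ground state of $A$ not vanishing on $\mathcal K$. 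Under the stated hypotheses this must be assembled: either $\Sigma_0\ge 0$, and then the scaling argument of Lemma~\ref{lem:prework} already yields a contradiction from $E_t=tE_1<0\le \Sigma_0 t/2$; or $\Sigma_0<0$, and then Persson theory (Theorem~\ref{thm:persson2}) together with Proposition~\ref{prop:Iwantthistoend} produces an eigenvalue below the essential spectrum whose eigenfunction is non-vanishing on a tree by §5.6 --- the same ingredients you invoke, but only in the ``in particular'' step.
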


This paper is organized as follows. In §\ref{sec:A general Existence Principle} we present the general existence theory that is the foundation for all of our results and demonstrate some basic applications to the stationary NLS energy functional on domains $\Omega\subset \mathbb R^d$, which gives a new proof of a number of results in this context.  In §\ref{sec:Sobolev spaces on graphs} we introduce higher-order Sobolev spaces and obtain inequalities on Sobolev spaces on metric graphs including variants of Sobolev and Gagliardo--Nirenberg inequalities. We also discuss basic properties of these spaces such as density results and a characterization of $W^{1,\infty}$ via uniformly bounded Lipschitz functions. In §4 we discuss the application of this existence theory to $E^{(k)}$ on finite metric graphs and prove Theorem~\ref{thm:intromainnowcloser} and Theorem~\ref{thm:intromain1}. In §4.1 we formalize the problem and show basic properties of the functional. In §4.2 we construct suitable partitions of unity and prove  a decomposition formula in §4.3, which we use to show that the existence theory is applicable. In §4.4 we prove some existence result on ground states for the higher-order stationary NLS energy functional on the real line and show in §4.5 that the existence theory is applicable for decaying potentials.  In §5 we discuss existence results for ground states of $E_{\text{NLS}}^{(\mathcal K)}$, where $\mathcal K\subseteq \mathcal G$,  on locally finite metric graphs. In §4.6 and §5.5 we discuss the energy inequality that is essential for the existence theory and relate it to spectral estimates by developing a Persson type theory for the operators in \eqref{eq:operators1} and \eqref{eq:operators2}. This will also conclude the proof of Theorem~\ref{thm:intromain2}. In particular, we discuss sufficient conditions for the potential $V$ such that \eqref{eq:introimportantinequality} is satisfied. In §5.6 we finish the paper with an application of the existence results to infinite metric trees via reduction of the problem to one without magnetic potential and prove Theorem~\ref{thm:introlastlastlast}.

Let us finish the introduction by mentioning a few other recent results on related topics. 
For a general reference on metric graphs we refer to \cite{berkolaiko2013introduction}. For a broad overview of spectral theory of operators we refer to \cite{reedmethods}. We refer to \cite{exner2018spectral} for a recent article on spectral theory for metric graphs with infinitely many edges.  The stationary energy functional
\begin{equation}
E_{\text{NLS}}^{(\mathcal K)} (u) = \frac{1}{2} \int_{\mathcal G} |u'|^2 \, \mathrm dx - \frac{\mu}{q} \int_{\mathcal K} |u|^q\, \mathrm dx, \qquad \|u\|_{L^2}^2 =1.
\end{equation}
with $\mathcal K=\mathcal G$ was considered  in \cite{adami2012stationary}, \cite{adami2015nls}, \cite{adami2016threshold}, \cite{adami2017negative} among others. A variant of the problem with localized nonlinearities in the $L^2$-subcritical case was considered in \cite{tentarelli2016nls} and for the $L^2$-critical case extended in \cite{dovetta2018ground} and \cite{dovetta20182}, where the area of integration in the nonlinearity is taken to be a bounded subgraph $\mathcal K$.  A very recent survey on results on the stationary NLS energy functional with localized nonlinearity can be found in \cite{borrelli2019overview}. Recently, classes of graphs that do not necessarily consist of finitely many edges have also been considered. For instance, \cite{dovetta2019nls} deals with a certain class of infinite tree graphs, which fall into the category of the  locally finite metric graphs that we consider here. We would also like to mention the results obtained by \cite{Akduman2019} for the NLS energy functional with growing potentials for a class of general metric graphs satisfying certain volume growth assumptions using a generalized Nehari approach.

\textbf{Acknowledgments.} I want to thank James Kennedy, who helped me shape the article and for all the helpful discussions with him. I thank Hugo Tavares, who suggested studying the NLS energy functional and provided me with references and overall knowledge on the topic and giving a few helpful suggestions. I thank Marcel Griesemer for helpful discussions as the  approach on the real line is loosely based on discussions we had in the past in the case of the NLS energy functional on the real line.  The work was supported by the Funda\c{c}\~ao para a Ci\^encia e a Tecnologia, Portugal, via the program ``Bolseiro de Investiga\c{c}\~ao'', reference PD/BD/128412/2017, and FCT project UID/MAT/00208/2019.

\section{A general Existence Principle}\label{sec:A general Existence Principle}
In this section we derive an existence theory for ground states of functionals as in \eqref{eq:introkfunc} and \eqref{eq:introNLSfunc}. To do so, we derive a more general existence principle for functionals on function spaces defined on metric measure spaces, which we will apply later to the functionals introduced before to discuss the existence of minimizers.  In §2.1 we prove a dichotomy result for minimizing sequences and discuss in §2.2 in this context the existence principle based on threshold energies under additional assumptions.

\subsection{A dichotomy result}
In the following we work with an abstract space $X(\mathcal M)$, namely a function space on a metric measure space $(\mathcal M, d, \mu)$: 
\begin{assumption}\label{as:assumption1}
Let $p\in [1,\infty)$. Let $(\mathcal M, d)$ be a metric space with a locally finite Borel measure $\mu$ on $\mathcal M$. Assume $X= X(\mathcal M)\subset L^p(\mathcal M)$ is a nontrivial Banach function space continuously and locally compactly imbedded  in $L^p(\mathcal M)$, i.e. $\mathcal M$ restricted to
$$K_R(y) := \{x\in \mathcal M|\operatorname{dist}(x,y) \le R\} $$ 
is compactly imbedded in $L^p(K_R(y))$ for all $R>0$ and $y\in \mathcal M$.
\end{assumption}

\begin{remark}
Our prototype to satisfy Assumption \ref{as:assumption1} is $X(\mathcal G)= H^1(\mathcal G)$ where $\mathcal G$ is a connected, locally finite metric graph. However, it is for instance also satisfied by $X(\Omega) = H^1(\Omega)$ for a bounded domain $\Omega \subset \mathbb R^N$ with $N\in \mathbb N$.
\end{remark}

For our results we need some further structural assumptions on the functional we consider:

\begin{definition}\label{df:technicalassumptionsdich}
Let $p\ge 2$ and let $\mathcal M$ and $X=X(\mathcal M)$ be as in Assumption \ref{as:assumption1}. Let $E \in C(X(\mathcal M), \mathbb R)$ such that $E(0)=0$ and 
\begin{equation}
E_t := \inf_{\substack{u\in X(\mathcal M)\\ \|u\|_{p}^p=t}} E(u)>-\infty
\end{equation}
for any $t\ge 0$ and $E(0)=0$. We say:
\begin{enumerate}[(1)]
\item $t\mapsto E_t$ is \emph{strictly subadditive} if
\begin{equation}
E_{t_1+t_2} < E_{t_1} + E_{t_2}, \qquad \forall t_1, t_2 >0.
\end{equation}
\item $E$ is \emph{weak limit superadditive} in $X$ if for all $c>0$ any weakly convergent minimizing sequence $u_n \rightharpoonup u$  in $X(\mathcal M)$ of $E_c$ satisfies  
\begin{equation}
\limsup_{n\to \infty} E(u_n) \ge E(u) + \limsup_{n\to \infty} E(u_n -u).
\end{equation}
up to a subsequence
\end{enumerate}
\end{definition}

\begin{theorem}\label{thm:main1}
Let $p\in [2,\infty)$, $c>0$ and let $\mathcal M$, $X=X(\mathcal M)$ be as in Assumption \ref{as:assumption1}.  Let $E\in C(X(\mathcal M), \mathbb R)$ be a weak limit superadditive functional in $X$. Let
\begin{equation}
t\mapsto E_t = \inf_{\substack{u\in X(\mathcal M)\\ \|u\|_{p}^p=t}} E(u)
\end{equation} 
be a strictly subadditive, continuous function of $t\in [0,c]$. Let $u_n$ be a minimizing sequence of $E_c$, and assume there exists $u\in X$ such that up to a subsequence $u_n \rightharpoonup u$ weakly in $X$. Then either $u\equiv 0$, or $u_n \to u$ strongly in $L^p(\mathcal M)$ and $u\not\equiv 0$ is a minimizer. 
\end{theorem}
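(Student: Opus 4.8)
\emph{Proof plan.} The statement is a dichotomy, so the plan is to take the vanishing alternative $u\equiv 0$ for granted and concentrate on showing that the surviving weak limit $u\not\equiv 0$ forces \emph{both} strong $L^p$-convergence \emph{and} minimality. Assume therefore $u\not\equiv 0$ and set $c_1:=\|u\|_p^p>0$. The first step is to split the $L^p$-mass. Since $u_n\rightharpoonup u$ in $X$, the sequence is bounded in $X$, and local compactness of $X\hookrightarrow L^p$ gives, on each ball $K_R(y)$, strong convergence $u_n\to u$ in $L^p(K_R(y))$. Exhausting $\mathcal M$ by countably many such balls and extracting a diagonal subsequence, I obtain $u_n\to u$ $\mu$-a.e.\ on $\mathcal M$. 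The Brezis--Lieb lemma, applicable because $\|u_n\|_p^p=c$ is bounded and $u_n\to u$ a.e., then yields
\[
c=\|u_n\|_p^p=\|u\|_p^p+\|u_n-u\|_p^p+o(1),
\]
so that $\|u_n-u\|_p^p\to c_2:=c-c_1\ge 0$.

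The second step splits the energy. Since $E$ is weak limit superadditive, after passing to a further subsequence
\[
E_c=\lim_n E(u_n)=\limsup_n E(u_n)\ge E(u)+\limsup_n E(u_n-u).
\]
I now bound each term below by the corresponding infimum: $E(u)\ge E_{c_1}$ because $\|u\|_p^p=c_1$, and $E(u_n-u)\ge E_{\|u_n-u\|_p^p}$ for every $n$. As $\|u_n-u\|_p^p\to c_2$ and $t\mapsto E_t$ is continuous on $[0,c]$, the latter gives $\limsup_n E(u_n-u)\ge E_{c_2}$. Combining these,
\[
E_c\ge E_{c_1}+E_{c_2}.
\]
If $c_2>0$, then $c_1,c_2>0$ and strict subadditivity forces $E_c=E_{c_1+c_2}<E_{c_1}+E_{c_2}$, contradicting the line above. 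Hence $c_2=0$.

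From $c_2=0$ I read off $\|u_n-u\|_p^p\to 0$, i.e.\ $u_n\to u$ strongly in $L^p(\mathcal M)$, and also $c_1=c$, so $\|u\|_p^p=c$. To see that $u$ is a minimizer I re-use the superadditivity inequality with $c_2=0$: since $E_0=E(0)=0$, continuity of $t\mapsto E_t$ gives $\limsup_n E(u_n-u)\ge E_0=0$, whence $E_c\ge E(u)$; on the other hand $\|u\|_p^p=c$ gives $E(u)\ge E_c$. Therefore $E(u)=E_c$ and $u\not\equiv 0$ is a minimizer, which settles the non-vanishing alternative and hence the theorem.

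The step I expect to be the main obstacle is the passage from \emph{local} to \emph{global} compactness underpinning the Brezis--Lieb splitting: the hypothesis only supplies strong $L^p$-convergence on each fixed ball, and upgrading this to a.e.\ convergence on all of $\mathcal M$ requires covering $\mathcal M$ by countably many balls together with a diagonal extraction, which tacitly uses separability of $(\mathcal M,d)$ (valid in all the intended applications). The only remaining care is bookkeeping of subsequences, since Brezis--Lieb, weak limit superadditivity, and the minimizing property each allow passage to a subsequence; this is harmless because all the limiting quantities involved ($E_c$, $c_1$, $c_2$) are independent of the chosen subsequence.
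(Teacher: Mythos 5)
Your proof is correct and follows essentially the same route as the paper's: local compactness plus a diagonal extraction give a.e.\ convergence, the Br\'ezis--Lieb lemma splits the mass, and weak limit superadditivity combined with continuity and strict subadditivity of $t\mapsto E_t$ forces $\|u_n-u\|_p^p\to 0$ and $E(u)=E_c$. The only cosmetic difference is that you isolate the case $c_2>0$ and derive a contradiction, whereas the paper runs the same ingredients as a single closed chain of inequalities $E_c \ge E_{\|u\|_p^p}+E_{\limsup_n\|u_n-u\|_p^p}\ge E_c$ and reads off when equality can hold.
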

\begin{remark}
Theorem \ref{thm:main1} gives rise to a dichotomy. If the requirements of Theorem \ref{thm:main1} are satisfied, then a minimizing sequence satisfies either $u_n \rightharpoonup 0$ in $X$ or there exists a strongly $L^p$ convergent subsequence converging to a minimizer of $E_c$. In case a minimizing sequence does not strongly converge towards a minimizer of $E_c$ from $u_n \rightharpoonup 0$ in $X(\mathcal M)$ we infer $\|u_n\|_{L^p(K)}\to 0$ on any bounded subset $K$ of $\mathcal M$. In particular, since $\|u_n\|_{p}^p=c$ for all $n\in \mathbb N$ the mass needs to move outside any compact set hence. 
\end{remark}

\begin{definition}
In virtue of Theorem \ref{thm:main1} we say a minimizing sequence of $E_c$ is \emph{vanishing} if $u_n  \rightharpoonup 0$ in $X$ and \emph{non-vanishing} otherwise.
\end{definition}
\begin{proof}[Proof of Theorem \ref{thm:main1}]
For $c>0$, suppose $u_n \in X(\mathcal M)$ be a minimizing sequence of $E_c$. Let $u\in X(\mathcal M)$, such that $u_n \rightharpoonup u$ weakly in $X$ with $u\neq 0$. Then since $u_n \to u$ in $L^{p}_{\text{loc}}$ 
we deduce $u\neq 0$ and
\begin{equation}
c\ge \|u\|_{p}^p>0.
\end{equation}
Up to a subsequence $u_n \to u$ pointwise almost everywhere, and from the Br\'ezis--Lieb Lemma (see \cite{brezis1983relation}) 
we conclude
\begin{equation}
\|u\|_p^p + \limsup_{n\to \infty} \|u_n-u\|_p^p =c.
\end{equation}
By weak limit superadditivity, strict subadditivity, and continuity of $t\mapsto E_t$ we deduce that up to a subsequence
\begin{equation}
\begin{aligned}
E_c &\ge E(u) + \limsup_{n\to \infty} E(u-u_n)\\
&\ge E_{\|u\|_p^p} + \limsup_{n\to \infty} E_{\|u-u_n\|_p^p}\\
&\ge E_{\|u\|_p^p} + E_{\limsup_{n\to \infty} \|u_n-u\|_p^p} \ge E_c.
\end{aligned}
\end{equation}
where equality is only attained when $\|u\|_p^p=c$ and $\limsup_{n\to \infty} \|u_n-u\|_p^p=0$. 
Thus $\|u\|_p^p=c$ and we conclude
\begin{equation}
E_c = E(u)
\end{equation}
and $u$ is a minimizer of $E_c$.
\end{proof}

\begin{example}[Subcritical NLS ground states]\label{ex:subcriticalR}
Let $N\in \mathbb N$. Suppose $\Omega\subset \mathbb R^N$ is a connected, unbounded, open set, then with the Euclidean metric $d$ and Lebesgue measure $\mathrm dx$ the triplet $(\Omega, d, \mathrm dx)$ defines a metric measure space. For every precompact open set $K\subset \Omega$ the Rellich--Kondrachov  theorem asserts that $H^1(K)$ compactly imbeds in $L^p(K)$ for $1\le p < p^*$ with
$$p^*:=\frac{np}{n-p}.$$  
If $N=1,2$ we have $p^*=\infty$ and $H^1(K)$ compactly imbeds to $L^p(K)$ for all $1\le p<\infty$. 
In particular, Assumption~\ref{as:assumption1} is satisfied for $1\le p < p^*$.

Consider the NLS energy functional
\begin{equation}
\begin{aligned}
    E_{\text{NLS}}(u)&:= \frac{1}{2} \int_{\Omega} |\nabla u|^2\, \mathrm dx -\frac{\mu}{q} \int_{\Omega} |u|^q\, \mathrm dx \\
    D(E_{\text{NLS}})&:= \{u\in H^1_0(\Omega) |\|u\|_{2}^2=1\}
\end{aligned}
\end{equation}
for $\mu>0$ and $2<q<2+\frac{4}{N}$.  We are going to demonstrate in the following that this functional satisfies continuity, subadditivity and weak superadditivity and that Theorem~\ref{thm:main1} is in fact applicable.

By Gagliardo--Nirenberg inequality we have
\begin{equation}
    \|u\|_q^q\le \|u\|_2^\alpha \|u\|_{H^1}^{1-\alpha}
\end{equation}
for $\alpha=\frac{N(q-2)}{2q}$. Hence for sufficiently small $\varepsilon>0$ there exists $C_{\varepsilon}>0$ such that
\begin{equation}
    E_{\text{NLS}}(u)\ge \left ( \frac{1}{2}-\varepsilon\right ) \int_{\Omega} |u'|^2\, \mathrm dx - C_\varepsilon\ge -C_\varepsilon
\end{equation}
and $E_{\text{NLS}}$ is bounded below.

Define
\begin{equation}
    t\mapsto E_t:= \inf_{\substack{u\in H_0^1(\Omega)\\ \|u\|_{2}^2=t}} E_{\text{NLS}}(u),
\end{equation}
then since $t\mapsto E(t^{1/2}u)$ is concave for each fixed $u\in D(E_{\text{NLS}})$ and $t\in (0,1)$, we deduce
\begin{equation}
    E_{\text{NLS}}(t^{1/2}u)\le t E_{\text{NLS}}(u) 
\end{equation}
and hence $E_t \le t E_1$. For a contradiction, suppose $E_t= t E_1$ for some $t\in(0,1)$, then we have
\begin{equation}\label{eq:helphere}
    E_t= t\inf_{u\in D(E_{\text{NLS}})} \frac{1}{2} \int_{\Omega} |\nabla u|^2\, \mathrm dx - t^{\frac{q-2}{2}} \frac{\mu}{q} \int_{\Omega} |u|^q\, \mathrm dx. 
\end{equation}

Let $u_n\in D(E_{\text{NLS}})$ be such that $E_{\text{NLS}}(u_n)\to E_1$. 
With \eqref{eq:helphere} we deduce
$$
\int_{\Omega} |u_n|^q\, \mathrm dx \to 0
$$
since otherwise 
$$  E_t \le \lim_{n\to \infty} E_{\text{NLS}}(t^{1/2}u_n)< \lim_{n\to \infty} tE_{\text{NLS}}(u_n)=t E_1.$$
In particular, we infer $E_1\ge 0$. Then $E_t$ is strictly subadditive if the ground state energy is negative, i.e.
\begin{equation}
E_1=\min_{u\in D(E_{NLS})} E_{\text{NLS}}(u)<0,
\end{equation}
since $E_t < t E_1$ and we have
$$E_t + E_{1-t} < E_1.$$
In fact, for sufficiently large $\mu>0$ this condition is always satisfied by a test function argument. In fact, suppose $\phi\in C_c^\infty(\Omega)$ with $\|\phi\|_2^2=1$, then for sufficiently big $\mu>0$ we have
\begin{equation}
    E_1 \le \frac{1}{2} \int_{\Omega} |\nabla \phi|^2\, \mathrm dx - \frac{\mu}{q} \int_{\Omega} |\phi|^q\, \mathrm dx<0.
\end{equation}

We remark, that in fact $t\mapsto E_t$ is concave as the infimum of concave functions and therefore in particular continous. The weak superadditivity is then an immediate consequence of the Br\'ezis--Lieb Lemma and weak convergence. In fact, there exists then a subsequence such that
\begin{equation}
\begin{aligned}
    \lim_{n\to \infty} \|u_n\|_{H^1}^2 &= \|u\|^2    +\lim_{n\to \infty} \|u-u_n\|_{H^1}^2\\
    \lim_{n\to \infty} \|u_n\|_{q}^q &= \|u\|_{q}^q + \lim_{n\to \infty} \|u-u_n\|_q^q
\end{aligned}
\end{equation}
and we have
\begin{equation}
    \lim_{n\to \infty} E(u_n) = E(u) + \lim_{n\to \infty} E(u-u_n) 
\end{equation}
for a subsequence of $u_n$.
\end{example}

\begin{example}[NLS with potential] \label{ex:subcriticalRpotential}
Suppose $V\in L^{\frac{2^*}{2^*-2}}+ L^\infty(\Omega)$, then the NLS energy functional with external potential is defined via
\begin{equation}
    \begin{aligned}
        E_{\text{NLS}}^V(u)&:= \frac{1}{2} \int_{\Omega} |\nabla u|^2 + V|u|^2\, \mathrm dx -\frac{\mu}{q} \int_{\Omega} |u|^q\, \mathrm dx \\
    D(E_{\text{NLS}}^V)&:= \{u\in H^1_0(\Omega) |\|u\|_{2}^2=1\}
    \end{aligned}
\end{equation}
for $\mu>0$ and $2<q<2+\frac{4}{N}$.
Suppose $V=V_{1}+V_2$ with $V_1\in L^{\frac{2^*}{2^*-2}}(\Omega)$ and $V_2\in L^\infty(\Omega)$ such that $\|V_1\|_{\frac{2^*}{2^*-2}}<\varepsilon$ with $\varepsilon>0$ sufficiently small. With the Hölder inequality we compute
\begin{equation}
\begin{aligned}
    \left | \int_\Omega V |u|^2\, \mathrm dx \right | &\le \|V_1\|_{L^{\frac{2^*}{2^*-2}}} \|u\|^2_{L^{2^*}} + \|V_2\|_{L^\infty} \|u\|^2_{L^2} \\
    &\le C(\Omega)\varepsilon \|u\|_{H^1}^2 + \|V_2\|_{L^\infty} \|u\|^2_{L^2}.
\end{aligned}
\end{equation}
Then as in (1) we infer that $E_{\text{NLS}}^V$ is bounded below and as in (1) we infer that
\begin{equation}
    t\mapsto E_t^V:= \inf_{\substack{u\in H_0^1(\Omega)\\ \|u\|_{2}^2=t}} E_{\text{NLS}}(u)
\end{equation}
is strictly subadditive, if 
$$E_1^V<\inf_{u\in D(E_{\text{NLS}})} \frac{1}{2}\int_\Omega |\nabla u|^2+ V|u|^2\,\mathrm dx,$$
which is satisfied for sufficiently large $\mu>0$. In fact, by the same arguments as in Example~\ref{ex:subcriticalR} we also can infer continuity and weak limit superadditivity and Theorem~\ref{thm:main1} is applicable.
\end{example}

\subsection{Vanishing sequences and Ionization Energies}
As in the previous section we consider $\mathcal M$ to be a metric measure space and $X(\mathcal M)\subset L^p(\mathcal M)$ to be a Banach space which is locally compactly imbedded in $L^p(\mathcal M)$. In the following we want to introduce partitions of unity and therefore assume the following:
\begin{assumption}\label{as:assumption2}
Let $(\mathcal M, d)$ be a metric space with locally finite Borel measure $\mu$ on $\mathcal M$ and $X(\mathcal M$ as in Assumption~\ref{as:assumption1}. Then we assume $Y(\mathcal M)$ to be a set of $\mu$ measurable functions on $\mathcal M$ such that $X(\mathcal M)$ is invariant with respect to multiplication of elements in $Y(\mathcal M)$
\end{assumption}

In this section we show that the existence  of vanishing sequences gives a bound from below on the ground state energy $E_c$, 
which allows us, under stronger assumptions, to deduce an existence result from Theorem~\ref{thm:main1}. Let us first introduce partitions of unity on metric spaces.

\begin{definition}
Let $Y(\mathcal M)$ be as in Assumption \ref{as:assumption2}. Assume $\cup_{O\in \mathcal O} O=\mathcal G$ is a locally finite open covering $\mathcal O$ of $\mathcal M$. Then we say a family of nonnegative functions $\psi_O\in Y(\mathcal G)$ is a partition of unity subordinate to $\mathcal O$ if
\begin{equation}
\operatorname{supp} \psi_O \subset O, \quad \forall O\in \mathcal O \quad\land \quad \bigcup_{O\in \mathcal O} \operatorname{supp} \psi_O = \mathcal G \quad \land \quad 0 \le \psi_O \le 1
\end{equation}
and $\sum_{O\in \mathcal O} \psi_O(x)\neq 0$ for all $x\in \mathcal M$ and
$$
\Psi_O(x)= 1, \qquad \forall x\in \operatorname{supp} \Psi_O \setminus \bigcup_{\widehat O\in \mathcal O \setminus \{O\}} \operatorname{supp} \Psi_{\widehat{O}}.
$$
\end{definition}

In the following we define for $R> 0$ the open and closed $R$-neighborhoods of a subset $K\subset \mathcal M$ by
\begin{equation}\label{eq:core}
\begin{aligned}
K_{R} &:= \{x\in \mathcal M| \operatorname{dist} (x, K)< R\}\\
\overline{K_{R}} &:= \{x\in \mathcal M|\operatorname{dist} (x, K) \le R\}.
\end{aligned}
\end{equation}

Given a vanishing sequence, the following property of a functional characterizes decomposability with regards to sequences of partition of unity with increasing core:

\begin{definition}
Let $k \in \mathbb N$ and let $\mathcal M$ be a metric space. Let $K$ be a bounded subset of $\mathcal M$ and $K_n$ be defined by \eqref{eq:core} for $n\in \mathbb N$. We say a sequence of open coverings $\mathcal O_n=\{O_n^{(1)}, \ldots, O_n^{(k)}\}$ consisting of $k$ open subsets (not necessarily connected) is \emph{vanishing-compatible}, if
\begin{equation}
K_n\cap O_n^{(i)}= \emptyset, \qquad \forall i\in \{2,\ldots, k\}
\end{equation}
and $O_n^{(1)}$ is bounded for all $n$.
\end{definition}

In particular, $K_n \subset O_n^{(1)}$. That is, for a sequence of open coverings $\mathcal O_n=\{O_n^{(1)}, \ldots, O_n^{(k)}\}$ all its members except $O_n^{(1)}$ move away from $K$. Furthermore, this notion does not depend on the choice of $K$, i.e. up to a subsequence any sequence of open coverings is vanishing-compatible for any other $K$.

\begin{definition}\label{df:definitionsuperadditivitypart}
Let $k\in \mathbb N$ and $\mathcal O_n=\{O_n^{(1)}, \ldots, O_n^{(k)}\}$ be a vanishing-compatible sequence of open coverings.  Then we say  $E\in C(X(\mathcal M), \mathbb R)$ is $k$-superadditive with respect to a sequence of partitions of unity $$\{\psi_{O}\}_{O\in \mathcal O_n}=\left \{\psi_{O_n^{(1)}},\ldots, \psi_{O_n^{(k)}}\right \}$$ 
if for any vanishing sequence $(v_n)$,  
\begin{equation}
\limsup_{n\to \infty} E(v_n) \ge \sum_{i=1}^k \limsup_{n\to \infty} E(\psi_{O_{n}^{(i)}} v_n)
\end{equation}
up to a subsequence.
\end{definition}

\begin{example}\label{ex:subcriticalRoh}
Let $n\in \mathbb N$ and $N\in \mathbb N$. Suppose $\Omega\subset \mathbb R^N$ is a connected, unbounded, open set, then with the Euclidean metric $d$ and Lebesgue measure $\mathrm dx$ the triplet $(\Omega, d, \mathrm dx)$ defines a metric measure space. If $X(\Omega)= H^k_0(\Omega)$ and $Y(\Omega)= W^{k,\infty}(\Omega)$, then $X(\Omega)$ is invariant by multiplication of elements in $Y(\Omega)$. In fact, for $f\in H^1(\Omega)$ and $g\in W^{1,\infty}(\Omega)$ by the product rule we have $fg\in H^1(\Omega)$ and
\begin{equation}
\nabla (fg)= (\nabla f) g+ (\nabla g) f.
\end{equation}
Let $\Psi\in C^\infty(\mathbb R)$ with $\operatorname{supp} \Psi\subset [-2,2]$, such that $0\le \Psi \le 1$ and $\Psi \equiv 1$ on $[-1,1]$. Consider the open covering $\mathcal O$ defined by $K_{2n}(0), \Omega\setminus K_n(0)$, then we can define a partition of unity subordinate to $\mathcal O$ given by
\begin{equation}\label{eq:unityclassic}
    \Psi_n(x) := \Psi\left (\frac{\|x\|}{n}\right ),\qquad  \widehat{\Psi}_n := 1- \Psi_n
\end{equation}
and by construction $\Psi_n+\widehat{\Psi}_n\equiv 1$.
\end{example}
This gives rise to our second main result: 
\begin{theorem}\label{thm:main2}
Let $p\in [2,\infty)$, $c>0$ and let $(\mathcal M, \mu)$, $X(\mathcal M)$ and $Y(\mathcal M)$ satisfy Assumption \ref{as:assumption1} and Assumption \ref{as:assumption2}. Let $K$ be a bounded, connected, nonempty set in $\mathcal M$. Let $E\in C(X(\mathcal M), \mathbb R)$, such that
\begin{equation}
t\mapsto E_t = \inf_{\substack{u\in X(\mathcal M)\\ \|u\|_{p}^p=t}} E(u)
\end{equation} 
is continuous and assume $E$ to be $2$-superadditive with respect to a sequence of partitions of unity $\{\psi_{O}\}_{O\in \mathcal O_n}$ in $Y(\mathcal M)$ subordinate to a vanishing-compatible sequence of open coverings $\mathcal O_n=(O_1^{(n)}, O_2^{(n)})$. If there exists a minimizing sequence which is vanishing, then 
\begin{equation}
E_c = \lim_{R\to \infty} \inf_{\substack{u\in X(\mathcal M), \|u\|_{p}^p=c\\ \operatorname{supp} u \subset \mathcal M \setminus K_R}} E(u) =: \widetilde{E_c}.
\end{equation}
\end{theorem}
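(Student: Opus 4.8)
The plan is to prove the two inequalities $\widetilde{E_c}\ge E_c$ and $\widetilde{E_c}\le E_c$ separately. The first is soft and requires none of the structural hypotheses: for each fixed $R$ the constraint $\operatorname{supp} u\subset \mathcal M\setminus K_R$ only shrinks the admissible class inside $\{u:\|u\|_p^p=c\}$, so the $R$-truncated infimum dominates $E_c$; since enlarging $R$ further shrinks the feasible set, that infimum is nondecreasing in $R$, whence the limit $\widetilde{E_c}$ exists in $(-\infty,+\infty]$ and satisfies $\widetilde{E_c}\ge E_c$. Everything therefore reduces to producing, for each $R$, admissible competitors supported in $\mathcal M\setminus K_R$ whose energy approaches $E_c$.

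For the reverse inequality I would take the vanishing minimizing sequence $u_n$ granted by hypothesis, so $u_n\rightharpoonup 0$ in $X$, $\|u_n\|_p^p=c$ and $E(u_n)\to E_c$; by the local compact imbedding (the Remark following Theorem~\ref{thm:main1}) this yields $\|u_n\|_{L^p(K')}\to 0$ on every fixed bounded set $K'$. The engine is the $2$-superadditivity of Definition~\ref{df:definitionsuperadditivitypart}, applied to the vanishing sequence $u_n$ with the given partition of unity $\{\psi_{O_n^{(1)}},\psi_{O_n^{(2)}}\}$ subordinate to the vanishing-compatible coverings. Writing $v_n=\psi_{O_n^{(1)}}u_n$ and $w_n=\psi_{O_n^{(2)}}u_n$, and passing to a subsequence along which the energies and the masses $\nu_n=\|v_n\|_p^p$, $\mu_n=\|w_n\|_p^p$ all converge, I obtain
\[
E_c=\limsup_{n\to\infty}E(u_n)\ \ge\ \limsup_{n\to\infty}E(v_n)+\limsup_{n\to\infty}E(w_n).
\]
Here the far piece $w_n$ is supported in $O_n^{(2)}\subset\mathcal M\setminus K_n\subset\mathcal M\setminus K_R$ for every $n\ge R$, hence is admissible for the $R$-truncated problem, while the core piece $v_n$ lives on the bounded set $O_n^{(1)}\supset K_n$.

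To close the argument I would bound each piece by its constrained infimum, $E(v_n)\ge E_{\nu_n}$ and $E(w_n)\ge E_{\mu_n}$, and invoke continuity of $t\mapsto E_t$ together with $E_0=E(0)=0$. The decisive auxiliary fact is that the mass deposited by $u_n$ in the core region vanishes, i.e.\ $\int_{O_n^{(1)}}|u_n|^p\,\mathrm d\mu\to 0$; since both $\nu_n$ and $c-\mu_n$ are bounded above by this quantity, it forces simultaneously $\nu_n\to 0$ and $\mu_n\to c$. Granting this, continuity gives $\limsup E(v_n)\ge E_0=0$, so the superadditive splitting yields $\limsup E(w_n)\le E_c$; and since $w_n$ is supported in $\mathcal M\setminus K_R$ with mass $\mu_n\to c$, a mild adjustment of its mass to exactly $c$ (justified by the continuity of $t\mapsto E_t$) exhibits competitors for the $R$-truncated problem with energy $\le E_c+o(1)$. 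Letting $n\to\infty$ and then $R\to\infty$ gives $\widetilde{E_c}\le E_c$, hence equality.

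The main obstacle is precisely the mass bookkeeping $\int_{O_n^{(1)}}|u_n|^p\,\mathrm d\mu\to 0$. This is delicate because $O_n^{(1)}$ \emph{expands} with $n$ (it contains $K_n$), whereas the vanishing hypothesis only furnishes $L^p$-decay on \emph{fixed} bounded sets; a vanishing sequence whose mass concentrates in a thin shell near the moving boundary of $O_n^{(1)}$ would defeat a naive estimate. I would resolve this either by a diagonal extraction matching the growth rate of $O_n^{(1)}$ against the $L^p$-decay of $u_n$, or more robustly by first replacing the moving partition by a fixed cutoff $\eta_R$ (equal to $0$ on $K_R$ and $1$ off $K_{2R}$, with $\eta_R\in Y(\mathcal M)$): then the discarded part $(1-\eta_R)u_n$ lives on the \emph{fixed} bounded set $K_{2R}$ and hence $\|(1-\eta_R)u_n\|_p\to 0$ automatically, so $\eta_R u_n$ carries mass $\to c$ and is supported in $\mathcal M\setminus K_R$. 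In this second route the remaining work shifts to controlling the energy $\limsup_n E(\eta_R u_n)\le E_c$ through the superadditive (IMS-type) splitting; verifying that this truncation is asymptotically lossless in energy, uniformly enough to pass $R\to\infty$, is the crux, with continuity of $t\mapsto E_t$ and $E(0)=0$ as the ingredients that make the truncated masses and energies match in the limit.
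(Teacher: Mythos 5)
Your proposal is correct and follows essentially the same route as the paper's own proof: the soft inequality $\widetilde{E_c}\ge E_c$, then the $2$-superadditive splitting of a vanishing minimizing sequence, with the mass bookkeeping $\nu_n\to 0$, $\mu_n\to c$ forced by $\int_{O_n^{(1)}}|u_n|^p\,\mathrm d\mu\to 0$, which the paper secures exactly by the diagonal extraction you name as your first resolution of the ``main obstacle,'' and the core piece discarded via continuity of $t\mapsto E_t$ and $E_0=0$. The only place you are (rightly) more cautious than the paper is the final renormalization of the far piece $\psi_{O_n^{(2)}}u_n$ from mass $\mu_n$ to exactly $c$ before comparing with $\widetilde{E_c}$ --- a step the paper's proof passes over silently in the inequality $\limsup_{n\to\infty}E(\psi_{O_n^{(2)}}u_n)\ge\widetilde{E_c}$.
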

\begin{proof}
Let $u_n$ be a vanishing sequence. Assume $(O_n^{(1)}, O_n^{(2)})$ to be such that
\begin{equation}
K \subset O_n^{(1)}
\end{equation} 
and $O_n^{(1)}$ is bounded.

For each fixed $m\in \mathbb N$ we have
\[
\int_{O_m^{(1)}} |u_n|^p\, \mathrm d\mu \to 0 \qquad (n\to \infty).
\]
Then for any $m\in \mathbb N$ we find an $n_m$, such that for $n>n_m$
\[
\int_{O_m^{(1)}} |u_n|^p \, \mathrm d\mu \le \frac{1}{m}. 
\]
Using a diagonal argument we deduce the existence of a subsequence of $u_n$, still denoted by $u_n$, such that
\[
\int_{O_n^{(1)}} |u_n|^p \, \mathrm d\mu \to 0 \qquad (n\to \infty).
\]
In particular,
\begin{equation}
\begin{aligned}
0\le \int_{O_n^{(1)}} |\psi_{O_n^{(1)}}u_n|^p\, \mathrm d\mu &\le \int_{O_n^{(1)}} |u_n|^p\, \mathrm d\mu\\
c -\int_{O_n^{(1)}} |u_n|^p\, \mathrm d\mu \le    \int_{O_n^{(2)}} |\psi_{O_n^{(2)}}u_n|^p \, \mathrm d\mu&\le \int_{\mathcal M} |u_n|^p\, \mathrm d\mu =c
\end{aligned}
\end{equation}
and we obtain
\begin{equation}
\begin{aligned}
\int_{O_n^{(1)}} |\psi_{O_n^{(1)}}u_n|^p \, \mathrm d\mu &\to 0 \qquad (n\to \infty)\\
\int_{O_n^{(2)}} |\psi_{O_n^{(2)}}u_n|^p \, \mathrm d\mu &\to c \qquad (n\to \infty).
\end{aligned}
\end{equation}
Then by superadditivity we have
\begin{equation}
\begin{aligned}
E_c &= \lim_{n\to \infty} E(u_n) \\
&\ge \limsup_{n\to \infty} E\left (\psi_{O_{n}^{(2)}} u_n\right )\ge  \widetilde {E_c}.
\end{aligned}
\end{equation} 
This concludes the inequality $E_c \ge \widetilde{E_c}$. The reverse inequality is trivial since
\begin{equation}
E_c \le \inf_{\substack{u\in X(\mathcal M), \|u\|_{p}^p=c\\ \operatorname{supp} u \subset \mathcal M \setminus K_R}} E(u)
\end{equation}
for all $R>0$.
\end{proof}
\begin{corollary}\label{cor:existence}
Suppose the assumptions in Theorem \ref{thm:main1} and Theorem \ref{thm:main2} are satisfied and
\begin{equation}
E_c <\widetilde{E_c},
\end{equation}
then a minimizer of $E_c$ exists, and any minimizing sequence for $E_c$ admits a subsequence converging in $L^p$ towards a minimizer of $E_c$.
\end{corollary}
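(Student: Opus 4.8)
The plan is to derive the corollary purely by combining the dichotomy of Theorem~\ref{thm:main1} with the contrapositive of Theorem~\ref{thm:main2}; once both results are available the argument is essentially logical. The guiding observation is that the strict inequality $E_c < \widetilde{E_c}$ rules out the vanishing alternative globally: if some minimizing sequence were vanishing, Theorem~\ref{thm:main2} would force $E_c = \widetilde{E_c}$, contradicting the hypothesis. Hence every minimizing sequence must be non-vanishing, and Theorem~\ref{thm:main1} then upgrades non-vanishing to strong $L^p$-convergence to a minimizer.

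Concretely, I would proceed as follows. Fix an arbitrary minimizing sequence $(u_n)$ for $E_c$. First I would extract from it a subsequence converging weakly in $X(\mathcal M)$ to some $u$; this is the single step that needs justification beyond the quoted theorems (see below). Applying Theorem~\ref{thm:main1} to this subsequence yields the dichotomy: either $u \equiv 0$, or $u_n \to u$ strongly in $L^p(\mathcal M)$ with $u$ a minimizer. Suppose, for contradiction, that $u \equiv 0$. Then $u_n \rightharpoonup 0$ in $X$, i.e. the subsequence is vanishing in the sense of the preceding definition, so by Theorem~\ref{thm:main2} we obtain $E_c = \widetilde{E_c}$, contradicting $E_c < \widetilde{E_c}$. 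Therefore $u \not\equiv 0$, and the second branch of the dichotomy gives strong $L^p$-convergence of the subsequence to a minimizer $u$. In particular a minimizer exists, and since $(u_n)$ was an arbitrary minimizing sequence, every minimizing sequence admits an $L^p$-convergent subsequence whose limit minimizes $E_c$.

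The one genuine gap is the extraction of a weakly convergent subsequence, which is assumed rather than proved in the statement of Theorem~\ref{thm:main1}: I would supply it from boundedness of minimizing sequences in $X(\mathcal M)$ together with reflexivity of $X$. In all the applications considered here $X$ is a Hilbert space, hence reflexive, and boundedness follows from the coercivity estimates already established for the functionals (as in Example~\ref{ex:subcriticalR}), where a Gagliardo--Nirenberg bound controls the $L^q$-term and forces $\|u_n\|_X$ to stay bounded along a minimizing sequence at fixed mass $\|u_n\|_p^p = c$. I would therefore either carry boundedness and reflexivity as a standing hypothesis or verify them case by case; this is the only point requiring care, as the remainder is a direct concatenation of the two theorems. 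A minor additional remark is that the vanishing exclusion applies to the chosen sequence and all of its subsequences uniformly, so no diagonal bookkeeping is needed to obtain the ``any minimizing sequence'' conclusion.
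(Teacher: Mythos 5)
Your proposal is correct and is essentially the argument the paper intends: the corollary is stated as an immediate consequence of combining the dichotomy of Theorem~\ref{thm:main1} with the contrapositive of Theorem~\ref{thm:main2}, exactly as you do. Your one flagged caveat --- extraction of a weakly convergent subsequence --- is also handled in the paper the way you suggest, namely by verifying coercivity/boundedness of minimizing sequences in the applications (e.g.\ Lemma~\ref{lem:energyestimateneu}) and using weak compactness of the Hilbert spaces involved, so it is a hypothesis to be checked case by case rather than a gap in the corollary itself.
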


Throughout the rest of the paper, given a functional $E\in C(X(\mathcal G), \mathbb R)$, we define the corresponding threshold energy
\begin{equation}\label{eq:defion}
\widetilde{E_c} := \lim_{R\to \infty} \inf_{\substack{u\in X(\mathcal G), \, \|u\|_{p}^p=c\\ \operatorname{supp} u \subset \mathcal G\setminus K_R}}E(u).
\end{equation}
In the case of many-body quantum particle systems, a quantity similar to \eqref{eq:defion} refers to the ionization energy (see \cite{griesemer2004exponential} and \cite{simon1983semiclassical}). For this reason, throughout of the rest of the paper we are going to refer to the quantity in \eqref{eq:defion} also as the \emph{ionization threshold} or \emph{ionization energy}.

\begin{example}\label{ex:NLSclassicR}
Let $N\in \mathbb N$. Suppose $\Omega\subset \mathbb R^N$ is an open, unbounded domain.  Recall the stationary NLS energy functional in Example~\ref{ex:subcriticalRpotential}
\begin{equation}
    \begin{aligned}
        E_{\text{NLS}}^V(u)&:= \frac{1}{2} \int_{\Omega} |\nabla u|^2 + V|u|^2\, \mathrm dx -\frac{\mu}{q} \int_{\Omega} |u|^q\, \mathrm dx \\
    D(E_{\text{NLS}}^V)&:= \{u\in H^1_0(\Omega) |\|u\|_{2}^2=1\}
    \end{aligned}
\end{equation}
with $V\in L^{\frac{2^*}{2^*-2}}+L^\infty$ and $2<q<4+\frac{2}{N}$ as in Example~\ref{ex:subcriticalRpotential}. We consider the ground state problem
\begin{equation}
E_{\text{NLS}}^V= \inf_{u\in D(E_{\text{NLS}}^V)} E_{\text{NLS}}^V(u).
\end{equation}
We already showed that $E_{\text{NLS}}^V$ satisfies the preqrequisites of Theorem~\ref{thm:main1} in Example~\ref{ex:subcriticalRpotential}. Now to show that the prerequites of Theorem~\ref{thm:main2} are satisfied it suffices to show superadditivity with respect to a vanishing-compatible sequence of partitions of unity.

Suppose $u_n\in C_c^\infty(\Omega)$ is a vanishing sequence, then there exists a subsequence, still denoted by $u_n$ with abuse of notation, such that 
\begin{equation}\label{eq:NLSclassicRhelp}
    \int_{K_2n} |u_n|^q\to 0\qquad (n\to \infty).
\end{equation}
Recall $\Psi_n, \overline\Psi_n$ as in Example~\ref{ex:subcriticalRoh}, then the IMS formula\footnote{According to \cite{simon1983semiclassical} due to  Israel Michael Sigal.} states that
\begin{multline}
    \left (-\Delta +V\right ) u= \frac{\Psi_k}{\sqrt{\Psi_k^2+\widehat{\Psi}_k^2}} \left (-\Delta +V\right ) \frac{\Psi_k}{\sqrt{\Psi_k^2+\widehat{\Psi}_k^2}} u \\
    + \frac{\overline \Psi_k}{\sqrt{\Psi_k^2+\widehat{\Psi}_k^2}} \left (-\Delta +V\right ) \frac{\overline \Psi_k}{\sqrt{\Psi_k^2+\widehat{\Psi}_k^2}} \\
    +  \left |\frac{\mathrm d}{\mathrm dx}\frac{\Psi_k}{\sqrt{\Psi_k^2+\widehat{\Psi}_k^2}}\right |^2 u + \left |\frac{\mathrm d}{\mathrm dx}\frac{\overline \Psi_k}{\sqrt{\Psi_k^2+\widehat{\Psi}_k^2}}\right |^2 u
\end{multline}
For all $u\in C_c^\infty(\Omega)$ we compute with integration by parts
\begin{equation}\label{eq:NLSclassicRhelp2}
    E_{\text{NLS}}^V(u) = \frac{1}{2}\langle \left (-\Delta+ V\right ) u, u\rangle_{L^2}- \frac{\mu}{q} \|u\|_q^q.
\end{equation}
In particular, with \eqref{eq:NLSclassicRhelp} and \eqref{eq:NLSclassicRhelp2} we have
\begin{equation}
    \lim_{n\to \infty} E_{\text{NLS}}^V(u_n) = \lim_{n\to \infty} E_{\text{NLS}}^V\left ( \frac{\Psi_n}{\sqrt{\Psi_n^2+ \overline \Psi_n^2}} u_n \right ) + \lim_{n\to \infty} E_{\text{NLS}}^V \left ( \frac{\overline \Psi_n}{\sqrt{\Psi_n^2+ \overline \Psi_n^2}} u_n \right )
\end{equation}
Suppose $V\in L^{2^*}+L^\infty(\Omega)$ is a decaying potential, i.e. $\sup_{|x|\ge R}|V_\infty|\to 0$ as $R\to \infty$, then  
\begin{equation}
    \widetilde{E_{\text{NLS}}^V} = \widetilde{E_{\text{NLS}}^0}\ge E_{\text{NLS}}
\end{equation}
and with Corollary~\ref{cor:existence} we deduce existence of minimizers if
\begin{equation}
    E_{\text{NLS}}^V< E_{\text{NLS}}.
\end{equation}
Suppose $\Omega=\mathbb R^N$. Then by a translation argument we can further characterize the ionization threshold and
\begin{equation}
    \widetilde{E_{\text{NLS}}^0} =E_{\text{NLS}}(\mathbb R^N).
\end{equation}
\end{example}

\subsection{An existence result for translation-invariant functionals on strip type spaces}
In this section we are going to study general translation-invariant functionals defined on function spaces $X(\mathcal M)$ satisfying Assumption~\ref{as:assumption1} on strip type spaces, which we define as follows

\begin{definition}
We say $(\mathcal M, d, \mu)$ is a strip type metric measure space, if there exists a measure space $(\mathcal M', \mathrm dy)$ such that 
\begin{equation}
\begin{aligned}
    \mathcal M&= \mathbb R \times \mathcal M'\\
    \mu &= \mathrm dx \otimes \mathrm dy
\end{aligned}
\end{equation}
such that $I\times \mathcal M'\subset \mathcal M$ is precompact for each finite interval $I\subset \mathbb R$.
\end{definition}

\begin{theorem}\label{thm:main3}
Let $p\in [2,\infty)$, $c>0$, and $\mathcal M= \mathbb R\times \mathcal M'$ be a strip type metric measure space and satisfy Assumption \ref{as:assumption1} and Assumption \ref{as:assumption2}. Suppose $E\in C(X(\mathcal M), \mathbb R)$ is translation invariant, i.e. if $T_{\lambda} u(x,y)= u(x-\lambda,y)$ then  
$$E(u)= E(T_\lambda u)$$
for all $\lambda \in \mathbb R$. Let
\begin{equation}
t\mapsto E_t = \inf_{\substack{u\in X(\mathcal M)\\ \|u\|_p^p =t}} E(u)
\end{equation}
be a strictly subadditive functional in $X(\mathcal M)$. Assume $E$ to be superadditive with respect to a sequence of $3$-partitions of unity 
$\{\psi_O\}_{O\in \mathcal O_n}$ subordinate to the vanishing-compatible sequence of open coverings $$ \mathcal O_n = \{(-2n,2n)\times \mathcal M', (n, \infty)\times \mathcal M', (-\infty, -n)\times \mathcal M'\},$$ then it admits a minimizer.
\end{theorem}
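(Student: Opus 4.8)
The plan is to use translation invariance to recenter a minimizing sequence so that it does not vanish, and then to invoke the dichotomy of Theorem~\ref{thm:main1}. A preliminary observation is that translation invariance forces $\widetilde{E_c}=E_c$: given any admissible $u$ one translates its bulk into $(R,\infty)\times\mathcal M'\subset\mathcal M\setminus K_R$ without changing mass or energy, so $\widetilde{E_c}\le E_c$, while the reverse inequality is trivial. Hence Corollary~\ref{cor:existence} does not apply, and the whole content of the statement is to exclude vanishing minimizing sequences by a different mechanism.

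Let $u_n$ be a minimizing sequence for $E_c$, which I may assume bounded in $X$. I would introduce the Lions concentration function
\begin{equation}
Q(u_n):=\sup_{\lambda\in\mathbb R}\int_{(\lambda,\lambda+1)\times\mathcal M'}|u_n|^p\,\mathrm d\mu .
\end{equation}
If $\liminf_n Q(u_n)=:\delta>0$, then after passing to a subsequence I choose $\lambda_n$ with $\int_{(\lambda_n,\lambda_n+1)\times\mathcal M'}|u_n|^p\ge\delta/2$ and set $\tilde u_n:=T_{\lambda_n}u_n$. By translation invariance $\tilde u_n$ is still minimizing, and since $(0,1)\times\mathcal M'$ is precompact the local compact embedding gives $\tilde u_n\rightharpoonup\tilde u$ with $\int_{(0,1)\times\mathcal M'}|\tilde u|^p\ge\delta/2>0$, so $\tilde u\ne0$. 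Theorem~\ref{thm:main1} then furnishes $\tilde u_n\to\tilde u$ in $L^p$ with $\tilde u$ a minimizer.

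The heart of the argument is to rule out the remaining case $Q(u_n)\to0$, and this is exactly where the three-piece covering and strict subadditivity are used. For each $n$ I pick a median $t_n$, so that $\int_{(-\infty,t_n)\times\mathcal M'}|u_n|^p=\int_{(t_n,\infty)\times\mathcal M'}|u_n|^p=c/2$, and recenter $\hat u_n:=T_{-t_n}u_n$; this is again minimizing, and since $Q$ is translation invariant it is again vanishing, so $\hat u_n\rightharpoonup0$. Applying the assumed superadditivity with respect to $\mathcal O_n=\{(-2n,2n)\times\mathcal M',(n,\infty)\times\mathcal M',(-\infty,-n)\times\mathcal M'\}$ together with the diagonal argument from the proof of Theorem~\ref{thm:main2} (which forces $\int_{(-2n,2n)\times\mathcal M'}|\hat u_n|^p\to0$ along a subsequence), the median normalization yields $\|\psi_{O_n^{(2)}}\hat u_n\|_p^p\to c/2$ and $\|\psi_{O_n^{(3)}}\hat u_n\|_p^p\to c/2$, while the central piece has vanishing mass. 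Using $E(v)\ge E_{\|v\|_p^p}$ and continuity of $t\mapsto E_t$ in each piece, superadditivity gives
\begin{equation}
E_c=\lim_{n\to\infty}E(\hat u_n)\ge E_{c/2}+E_{c/2},
\end{equation}
contradicting strict subadditivity $E_c<E_{c/2}+E_{c/2}$. Therefore no minimizing sequence vanishes, and the construction of the previous paragraph produces a minimizer.

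I expect the main obstacle to be the bookkeeping in the vanishing case: one must ensure that the median recentering is compatible with the fixed windows $(-2n,2n)$, so that after the diagonal extraction the two tail masses genuinely converge to $c/2$ and neither leaks into the growing central region nor splits unevenly. The three pieces are genuinely needed, since on a strip mass can escape towards both $+\infty$ and $-\infty$, and the symmetric median split is precisely what pits the two escaping halves against strict subadditivity. A secondary point is that the appeal to Theorem~\ref{thm:main1} presupposes weak limit superadditivity and continuity of $t\mapsto E_t$; if these are not taken as standing hypotheses they must be verified here, or else the final convergence reproved directly via the Br\'ezis--Lieb argument used in the proof of Theorem~\ref{thm:main1}.
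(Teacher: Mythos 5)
Your proof is correct and follows essentially the same route as the paper: translate the minimizing sequence so that its $L^p$ mass splits evenly at $x=0$, rule out vanishing by combining the assumed $3$-piece superadditivity with strict subadditivity to reach the contradiction $E_c \ge E_{c/2}+E_{c/2} > E_c$, and then invoke Theorem~\ref{thm:main1} for the resulting non-vanishing sequence. The Lions concentration function $Q$ is an inessential (though harmless) extra layer---the paper works directly with the median-normalized sequence---and your closing caveat about weak limit superadditivity, continuity of $t\mapsto E_t$, and weak sequential compactness matches hypotheses that the paper's own proof also uses implicitly without restating them.
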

\begin{proof}
 By Theorem \ref{thm:main2} we only need to construct non-vanishing minimizing sequences. Assume $u_n$ to be a minimizing sequence of the functional $E_c$. Then we may construct such a sequence by using the translation invariance of the functional. Indeed, we may assume up to translation invariance
\begin{equation}
\begin{aligned}
\int_{\mathcal M'}\int_{0}^\infty |u_n|^p\, \mathrm dx\, \mathrm dy &= \frac{c}{2}\\
\int_{\mathcal M'}\int_{-\infty}^0 |u_n|^p\, \mathrm dx\, \mathrm dy &= \frac{c}{2}.
\end{aligned}
\end{equation}
For a contradiction, assume $u_n$ is vanishing. Then since $u_n \to 0$ in $L^\infty_{\text{loc}}$ (up to a subsequence) due to a diagonal argument, we have that
\begin{equation}
\begin{aligned}
\int_{\mathcal M'}\int_{\mathbb R} |\psi_{(-2n,2n)} u_n|^p \, \mathrm dx\, \mathrm dy &\to 0 \qquad (n\to \infty) \\
\int_{\mathcal M'}\int_{\mathbb R} |\psi_{(n, \infty)} u_n|^p \, \mathrm dx\, \mathrm dy &\to \frac{c}{2}\qquad (n\to \infty) \\
\int_{\mathcal M'}\int_{\mathbb R} |\psi_{(-n, -\infty)} u_n|^p \, \mathrm dx\, \mathrm dy &\to \frac{c}{2} \qquad (n\to \infty)
\end{aligned}
\end{equation}
Then using the subadditivity of the functional and the strict subadditivity of $t\mapsto E_t$ we conclude
\begin{equation}
\begin{aligned}
E_c &= \lim_{n\to \infty} E(u_n) \\
&\ge \limsup_{n\to \infty} E\left ( \psi_{(-\infty, -n)} u_n\right ) + \limsup_{n\to \infty} E\left (\psi_{(n, \infty)} u_n\right ) \\
&\ge E_{c/2} + E_{c/2} > E_c.
\end{aligned}
\end{equation}
By contradiction after translating the $u_n$ if necessary we can find a non-vanishing subsequence. Passing to a further subsequence there exists a weakly convergent subsequence in $H^1(\mathcal G)$ that converges up to a further subsequence to a minimizer by Theorem \ref{thm:main1}.
\end{proof}

\begin{example}\label{ex:NLSclassicstripe}
Suppose $\Omega=\mathbb R\times [0,1]^N$ and $V(\cdot, y)\equiv V(y)$.  By Example~\ref{ex:subcriticalRpotential} and Example~\ref{ex:NLSclassicR} the NLS energy functional
\begin{equation}\label{eq:NLSclassicstripeeq}
    \begin{aligned}
        E_{\text{NLS}}^V(u)&:= \frac{1}{2} \int_{\Omega} |\nabla u|^2 + V|u|^2\, \mathrm dx\, \mathrm dy -\frac{\mu}{q} \int_{\Omega} |u|^q\, \mathrm dx\, \mathrm dy \\
    D(E_{\text{NLS}}^V)&:= \{u\in H^1_0(\Omega) |\|u\|_{2}^2=1\},
    \end{aligned}
\end{equation}
with $V\in L^{\frac{2^*}{2^*-2}}+L^\infty(\mathcal G)$ and $2<q<2+ \frac{4}{N}$, satisfies the prerequisites of Theorem~\ref{thm:main3} and we have existence of ground states of \eqref{eq:NLSclassicstripeeq} for all $\mu>0$.

To see this, suppose $\phi \in C_c^\infty(\Omega)$ such that $\|\phi\|_2^2=1$, then we rescale to obtain test functions
\begin{equation}
\phi_{\lambda}(x,y):= \lambda^{1/2} \phi(\lambda x, y)
\end{equation}
and we compute
\begin{equation}\label{eq:prething}
    E_{NLS}^V(\phi_{\lambda})= \frac{1}{2} \int_{\Omega} |\nabla_y \phi|^2+V|\phi|^2\,\mathrm dx\, \mathrm dy + \lambda^2\int_{\Omega} |\nabla_x \phi|^2 - \lambda^{\frac{q-2}{2}} \int_{\Omega} |\phi|^q\, \mathrm dx\, \mathrm dy
\end{equation}

Suppose $u_2$ is the minimizer of
\begin{equation}
    \lambda([0,1]^N):=\min_{\substack{u\in H^1([0,1]^N)\\\|u\|_2^2=1}} \int_{[0,1]^N} |\nabla u|^2+V |u|^2\, \mathrm dy
\end{equation}
whose existence can be shown for instance by the direct method in the calculus of variations, then suppose $\phi(x,y):= u_1(x) u_2(y)$ with $x\in \mathbb R$ and $y\in [0,1]^N$, then with \eqref{eq:prething} we compute
\begin{equation}\label{eq:postthing}
\begin{aligned}
    E_{NLS}^V(\phi_{\lambda})&= \frac{1}{2} \int_{[0,1]^N} |\nabla u_2|^2+V|u_2|^2\, \mathrm dy + \lambda^2\int_{\mathbb R} |\nabla u_1|^2\, \mathrm dx - \lambda^{\frac{q-2}{2}} \int_{\Omega} |\phi|^q\, \mathrm dx\, \mathrm dy\\
    &< \frac{\lambda([0,1]^N)}{2} = \frac{1}{2} \inf_{\substack{u\in H^1([0,1]^N\setminus \{0\}}} \frac{\int_{[0,1]^N} |\nabla u|^2+V |u|^2\, \mathrm dy}{\int_{[0,1]^N} |u|^2\, \mathrm dy}  \\
    &\le \frac{1}{2} \inf_{\substack{u\in H^1(\Omega)\\ \|u\|_2^2=1}} \int_{\mathbb R}\int_{[0,1]^N} |\nabla u|^2+ V|u|^2\, \mathrm dx\, \mathrm dy.
\end{aligned}
\end{equation}
for sufficiently small $\lambda>0$. In particular, $E_{\text{NLS}}^V$ is strictly subadditive for all $\mu>0$ due to the arguments in Example~\ref{ex:subcriticalR}.
\end{example}


\section{Sobolev spaces on graphs} \label{sec:Sobolev spaces on graphs}
In this section we define metric graphs and define Sobolev spaces on metric graphs and prove Sobolev inequalities on these spaces.
\subsection{Metric graphs}
Let $\mathcal G=(\mathcal V,\mathcal E)$ be a  metric graph, where each edge $e\in \mathcal E$ is associated with an interval $I_e$ of length $l_e\in (0,\infty]$, where $I_e=[0,\infty)$ if $l_e=\infty$ and $I_e=[0, l_e]$ otherwise. We assume every vertex to be at least of degree one.  For every $e\in \E$ joining two vertices we arbitrarily associate one of the vertices to $0$ and the other to $l_e<\infty$ respectively on the interval $I_e$; in principle this imposes an orientation on the graph, but all the spaces and operators we consider will be unaffected by this. However, we always assume that the half-line $I_e=[0,\infty)$, which we also call a ray, is attached to the remaining part of the graph at $x_e=0$, and the vertex of the graph corresponding to $x_e=\infty$ is called a vertex at infinity. In particular, there are no edges between vertices at infinity. We denote by $\mathcal E_\infty\subset \mathcal E$ the set of all rays.  

A connected metric graph $\mathcal G$ admits a natural structure of a metric space.  Namely, the distance between two points on a graph is given by the length of the shortest path among all paths connecting the two points. A more detailed introduction to metric graphs can be found for instance in \cite{berkolaiko2013introduction}.

We consider two classes of noncompact graphs:
\begin{definition} \label{df:localfin}
Let $\mathcal G$ be a connected metric graph. Then we say
\begin{enumerate}[(1)]
\item 
$\mathcal G$  is a \emph{finite metric graph} if there are at most finitely many edges; 
\item $\mathcal G$ is a \emph{locally finite metric graph} if $\E$ is a countable set such that any bounded subset of the graph intersects at most finitely many edges.
\end{enumerate}
\end{definition}
We will always take our graphs to be connected, locally finite noncompact metric graphs. Note that a finite metric graph $\mathcal G$ is compact if and only if $\mathcal G$ does not admit any rays, that is, there are no edges of $\mathcal G$ that are half-lines. Note that a locally finite metric graph is compact, if and only if the graph is bounded. We see immediately that all compact locally finite metric graphs are finite, and all finite metric graphs are locally finite. In particular for compact graphs the notions in Definition \ref{df:localfin} coincide. 
\subsection{First-order Sobolev spaces}
Let $\mathcal G$ be any locally finite metric graph. Here and in the rest of the article denote 
\begin{equation}\label{eq:short}
u_e:=u\big |_e.
\end{equation} 
We denote by $C(\mathcal G)$ the set of continuous complex-valued functions in $\mathcal G$ and define for $1\le p <\infty$
\begin{equation}
\begin{aligned}
L^p(\mathcal G) &= \left \{u\in \bigoplus_{e\in \E} L^p(I_e)\bigg | \|u\|_p^p:=\sum_{e\in \E} \left \|u_e \right \|_{p}^p<\infty\right \}, \\
W^{1,p}(\mathcal G)&=\left \{u\in C(\mathcal G) \bigg| u_e \in W^{1,p}(I_e)\; \land\; \|u\|_{W^{1,p}}^p:=\sum_{e\in \E} \|u_e \|_{W^{1,p}(I_e)}^p <\infty \right \}.
\end{aligned}
\end{equation} 
Then we set $H^1(\mathcal G) = W^{1,2}(\mathcal G)$ as usual. 

For $q=\infty$ we need to adapt the definition above slightly:
\begin{equation}
\begin{aligned}
L^\infty(\mathcal G) &= \left \{u\in \bigoplus_{e\in \E} L^\infty(I_e)\bigg | \|u\|_\infty:=\sup_{e\in \E} \left \|u_e \right \|_{\infty}<\infty\right \}, \\
W^{1,\infty}(\mathcal G)&=\bigg \{u\in C(\mathcal G) | u_e \in W^{1,\infty}(I_e)\; \\
&\qquad\qquad \land\; \|u\|_{W^{1,\infty}}:=\sup_{e\in \E} \|u_e \|_{W^{1,\infty}(I_e)} <\infty \bigg \}.
\end{aligned}
\end{equation}

\subsection{Gagliardo--Nirenberg inequality with magnetic potential}
The goal of this section is to show a Gagliardo--Nirenberg inequality for locally finite, connected metric graphs. With future applications in mind we actually consider a modified Gagliardo--Nirenberg inequality:
\begin{proposition}\label{cor:Gagliardo-nirenberg}
Let $\mathcal G$ be a locally finite, connected metric graph and $M\in C(\mathcal G)$. For $p\in [2,\infty)$ there exists a constant $C>0$  independent of $M$ such that
\begin{equation}\label{eq:Gagliardo-nirenberg}
\|u\|_{p}^p \le C \left \|\left ( i \frac{\mathrm d}{\mathrm dx}+M\right )u\right \|_{2}^{\frac{p-2}{2}} \| u\|_{2}^{\frac{p+2}{2}},
\end{equation}
for all $u\in H^1(\mathcal G)$.
\end{proposition}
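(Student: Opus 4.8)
The plan is to remove the magnetic potential first, reduce to a nonnegative real function, prove a uniform $L^\infty$ bound, and close by interpolation. Since $M$ is a (real-valued) magnetic potential, the pointwise diamagnetic inequality
\[
\left| \frac{\mathrm d}{\mathrm dx} |u| \right| \le \left| \left( i \frac{\mathrm d}{\mathrm dx} + M \right) u \right| \qquad \text{a.e. on each edge}
\]
holds: writing $u = |u| e^{i\theta}$ edgewise one computes $|(i u' + M u)|^2 = \big||u|'\big|^2 + |u|^2 (M - \theta')^2 \ge \big||u|'\big|^2$. Hence $\big\| |u|' \big\|_2 \le \big\| (i \tfrac{\mathrm d}{\mathrm dx} + M) u \big\|_2$, while $\| |u| \|_p = \|u\|_p$ and $\| |u| \|_2 = \|u\|_2$. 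Since $|u| \in H^1(\mathcal G)$ and $|u| \ge 0$, it suffices to prove $\|v\|_p^p \le C \|v'\|_2^{(p-2)/2} \|v\|_2^{(p+2)/2}$ for real $v \ge 0$ in $H^1(\mathcal G)$ with an absolute constant $C$; substituting $v = |u|$ then produces a constant independent of $M$.

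The core of the argument, and the step I expect to be the main obstacle, is the uniform bound
\[
\|v\|_\infty^2 \le 2 \|v\|_2 \|v'\|_2 .
\]
Setting $f := v^2 \in W^{1,1}(\mathcal G)$, Cauchy--Schwarz gives $\int_{\mathcal G} |f'| = \int_{\mathcal G} |2 v v'| \le 2 \|v\|_2 \|v'\|_2$, so it remains to show $\|f\|_\infty \le \int_{\mathcal G} |f'|$. On $\mathbb R$ this is immediate by integrating $f'$ from infinity, but on a graph the topology is the difficulty, and I would argue by a path estimate. As $\mathcal G$ is noncompact (our standing assumption) and locally finite it has infinite total length; since $f \in L^1(\mathcal G)$ this forces $\inf_{\mathcal G} f = 0$. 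Given $\varepsilon > 0$, choose $x^\ast$ with $f(x^\ast) > \|f\|_\infty - \varepsilon$ and $y$ with $f(y) < \varepsilon$; by connectedness there is a \emph{simple} path $P$ from $x^\ast$ to $y$, and since such a path traverses each edge at most once,
\[
f(x^\ast) - f(y) = -\int_P f' \le \int_P |f'| \le \int_{\mathcal G} |f'| .
\]
Letting $\varepsilon \to 0$ yields $\|f\|_\infty \le \int_{\mathcal G} |f'|$, hence the claimed $L^\infty$ bound.

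Finally I would interpolate. Using $p \ge 2$,
\[
\|v\|_p^p = \int_{\mathcal G} v^{p-2} v^2 \le \|v\|_\infty^{p-2} \|v\|_2^2 \le \big( 2 \|v\|_2 \|v'\|_2 \big)^{\frac{p-2}{2}} \|v\|_2^2 = 2^{\frac{p-2}{2}} \|v'\|_2^{\frac{p-2}{2}} \|v\|_2^{\frac{p+2}{2}},
\]
where I used $\frac{p-2}{2} + 2 = \frac{p+2}{2}$. Taking $C = 2^{(p-2)/2}$ and combining with the diamagnetic bound of the first step gives the assertion with $C$ independent of $M$. The only routine points left to check are that $|u| \in H^1(\mathcal G)$ and the chain rule for $f = |u|^2$, which reduce edgewise to standard one-dimensional facts and can be obtained by regularizing $|u|$ as $\sqrt{u \bar u + \delta^2}$ and letting $\delta \to 0$.
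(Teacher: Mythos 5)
Your proof is correct, but it takes a genuinely different route from the paper's. The paper removes the magnetic potential by a gauge transformation, which is only globally well-defined on simply connected graphs; it therefore first treats tree graphs, where the resulting non-magnetic Gagliardo--Nirenberg inequality is quoted from the symmetrization arguments of \cite{adami2015nls} (adapted to locally finite graphs), and then passes to arbitrary graphs by cutting cycles at a discrete set of points to obtain a tree $\widetilde{\mathcal G}$ whose $H^1$ space carries an isometric copy of $H^1(\mathcal G)$. You instead remove $M$ with the pointwise diamagnetic inequality $\left|\tfrac{\mathrm d}{\mathrm dx}|u|\right| \le \left|\left(i\tfrac{\mathrm d}{\mathrm dx}+M\right)u\right|$, which works on any graph at once --- no tree reduction, no cutting --- and makes the $M$-independence of the constant manifest; you then prove the non-magnetic inequality from first principles via the bound $\|v\|_\infty^2\le 2\|v\|_2\|v'\|_2$, established by the simple-path argument (a near-maximum point joined to a point where $v^2<\varepsilon$, which exists because local finiteness plus the paper's standing noncompactness assumption force infinite total length, hence $\inf v=0$), and you close by interpolation. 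Two remarks: the noncompactness you invoke is indeed a standing assumption of the paper and is genuinely needed (constant functions violate the inequality with $M\equiv 0$ on compact graphs); and both your proof and the paper's implicitly require $M$ to be real-valued --- yours for the diamagnetic inequality, the paper's for unitarity of the gauge transform --- which is the intended reading of a magnetic potential. What your route buys is a self-contained, elementary proof with the explicit constant $2^{(p-2)/2}$; what the paper's buys is brevity, by outsourcing the core inequality to the symmetrization literature at the cost of the tree-plus-cutting detour.
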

\begin{remark}
The inequality reduces to the usual Gagliardo--Nirenberg inequality when $M\equiv 0$.
\end{remark}
\begin{proof}[Proof of Proposition \ref{cor:Gagliardo-nirenberg}]
Suppose $\mathcal G$ is a tree graph at first. Then using the unitary gauge transform $G: H^1(\mathcal G) \to H^1(\mathcal G)$ (see also §5.7 for details) we deduce that \eqref{eq:Gagliardo-nirenberg} is equivalent to
\begin{equation}\label{eq:Gagliardo-nirenberg2}
\|u\|_{p}^p \le C \left \|u'\right \|_{2}^{\frac{p-2}{2}} \| u\|_{2}^{\frac{p+2}{2}},
\end{equation}
which can be shown via symmetrization methods as considered in \cite{adami2015nls}; although this was shown there for finite metric graphs, the proof can be simply adapted to locally finite ones. In particular, the constant $C>0$ can be chosen independent of $M$. 
Cutting the graph at a discrete set of points on the metric graph, i.e.  we can find a tree graph $\widetilde{\mathcal G}$ such that identifying a discrete set of points on the graph results in a graph isometric isomorph to $\mathcal G$. Hence, there exists lifts of the norms on $H^1(\mathcal G)$ to $H^1(\widetilde{\mathcal G})$ preserving the norms and \eqref{eq:Gagliardo-nirenberg} also holds for $H^1(\widetilde{\mathcal G})$ and the constant $C>0$ can be chosen independent of $M\in C(\mathcal G)$.
\end{proof}
Similarly, one can also show the following Sobolev inequality.
\begin{proposition}
Let $\mathcal G$ be a locally finite, connected metric graph and $M\in C(\mathcal G)$. Let $p\in [2,\infty]$ then there exists a constant $C>0$  independent of $M$ such that
\begin{equation}\label{eq:Sobolevinequality}
\|u\|_{p} \le C \left (\int_{\mathcal G} \left |\left ( i \frac{\mathrm d}{\mathrm dx} + M\right ) u\right |^2\, \mathrm dx+\int_{\mathcal G} |u|^2\, \mathrm dx\right )^{1/2}, 
\end{equation} 
for all $u\in H^1(\mathcal G)$.
\end{proposition}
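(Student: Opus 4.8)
The plan is to treat the two regimes $p\in[2,\infty)$ and $p=\infty$ separately, since the finite exponents follow almost immediately from the Gagliardo--Nirenberg inequality already established, whereas the endpoint $p=\infty$ needs a short separate argument.

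For $p\in[2,\infty)$ I would simply combine Proposition~\ref{cor:Gagliardo-nirenberg} with a weighted arithmetic--geometric mean inequality. Writing $a:=\|(i\frac{\mathrm d}{\mathrm dx}+M)u\|_2$ and $b:=\|u\|_2$, taking $p$-th roots in \eqref{eq:Gagliardo-nirenberg} gives
\[
\|u\|_p\le C^{1/p}\,a^{\frac{p-2}{2p}}\,b^{\frac{p+2}{2p}},
\]
and the two exponents sum to $1$. Hence by the weighted AM--GM inequality $a^{s}b^{t}\le s a + t b\le a+b$ for $s,t\ge0$ with $s+t=1$, followed by $a+b\le\sqrt2\,(a^2+b^2)^{1/2}$, I obtain \eqref{eq:Sobolevinequality} with a new constant $\sqrt2\,C^{1/p}$, which is again independent of $M$ because the constant in Proposition~\ref{cor:Gagliardo-nirenberg} is.

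For the endpoint $p=\infty$ I would first remove the magnetic potential exactly as in the proof of Proposition~\ref{cor:Gagliardo-nirenberg}: on a tree the gauge transform $G$ (see §5.7) produces $w=Gu$ with $|w|=|u|$ pointwise and $|w'|=|(i\frac{\mathrm d}{\mathrm dx}+M)u|$ a.e., so it suffices to prove the field-free bound $\|w\|_\infty\le(\|w'\|_2^2+\|w\|_2^2)^{1/2}$, and the general case is recovered by cutting $\mathcal G$ at a discrete set of points to a tree $\widetilde{\mathcal G}$ and lifting as before. For the field-free bound I would fix any $x_0\in\mathcal G$; since $\mathcal G$ is noncompact and locally finite it is unbounded and hence of infinite total length, so for every $\varepsilon>0$ the set $\{|w|\ge\varepsilon\}$ has finite measure and, by connectedness and continuity of $w\in H^1(\mathcal G)$, there is a point $y$ with $|w(y)|<\varepsilon$ joined to $x_0$ by a simple path $P\subset\mathcal G$. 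The fundamental theorem of calculus along $P$ gives
\[
|w(x_0)|^2\le |w(y)|^2+2\int_P |w|\,|w'|\le \varepsilon^2+2\|w\|_2\|w'\|_2,
\]
and letting $\varepsilon\to0$ together with $2\|w\|_2\|w'\|_2\le\|w\|_2^2+\|w'\|_2^2$ yields $\|w\|_\infty\le\|w\|_{H^1}$, which translates back into \eqref{eq:Sobolevinequality} for $u$ with $C=1$.

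The only genuinely delicate point is the endpoint argument: one must keep the constant independent both of $M$ and of the graph geometry (in particular of the presence of arbitrarily short edges). The gauge/cutting reduction disposes of the $M$-dependence, and the FTC estimate is automatically uniform, since attaining a large value of $|w|$ necessarily costs $L^2$-norm of $w'$ or $w$; the essential structural input is the noncompactness of $\mathcal G$, which guarantees a far-away point where $|w|$ is small. Alternatively, as the remark preceding the statement suggests, one could mirror the symmetrization route of \cite{adami2015nls} used for Proposition~\ref{cor:Gagliardo-nirenberg}, reducing both the finite and the infinite exponent cases to the half-line, but the argument above seems more direct for the endpoint.
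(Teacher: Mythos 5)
Your proof is correct, and it is more self-contained than the paper's, which disposes of the whole statement in two lines: exactly as in Proposition~\ref{cor:Gagliardo-nirenberg}, gauge away $M$ on a tree, invoke the known Sobolev inequality in the absence of $M$, and recover general graphs by cutting and lifting. Your treatment differs in two respects. For $p\in[2,\infty)$ you do not touch the gauge transform again: you interpolate directly from the magnetic Gagliardo--Nirenberg inequality \eqref{eq:Gagliardo-nirenberg} by weighted AM--GM, which inherits the $M$-independence of the constant automatically; this is a cleaner logical route, since Proposition~\ref{cor:Gagliardo-nirenberg} is already on record. For $p=\infty$ you follow the same reduction as the paper (gauge transform on a tree, then cutting), but you then actually prove the field-free endpoint bound $\|w\|_\infty^2\le\|w'\|_2^2+\|w\|_2^2$ via the fundamental theorem of calculus along a simple path to a point where $|w|<\varepsilon$. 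This makes explicit what the paper leaves buried in ``the known result'': the essential structural input is noncompactness, which is a standing assumption of the paper (\S 3.1) and is genuinely needed (on a compact graph, \eqref{eq:Gagliardo-nirenberg} already fails for constant functions); by local finiteness it forces infinite total length, so an $L^2$ function must come arbitrarily close to zero somewhere, which is exactly what your FTC step requires, and it yields the explicit constant $C=1$ at the endpoint. One further simplification is available: since $M$ is real-valued, the pointwise diamagnetic inequality $\bigl|\,|u|'\,\bigr|\le\bigl|(i\tfrac{\mathrm d}{\mathrm dx}+M)u\bigr|$ a.e. lets you run the same FTC argument on $|u|\in H^1(\mathcal G)$ directly, dispensing with both the gauge transform and the cutting argument in the case $p=\infty$.
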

\begin{proof}

The aproach is similar as before, indeeed we can use the known result in absence of $M$ and use a gauge transform to show that \eqref{eq:Sobolevinequality} holds with a constant $C>0$ independent of the potential $M\in C(\mathcal G)$.
\end{proof}

\subsection{Higher-order Sobolev spaces}
In this section we introduce the notion of higher-order Sobolev spaces on graphs for $p\in [1,\infty)$. Let $\mathcal G$ be a locally finite metric graph.  One naive way of doing so is simply defining it analogously as in $W^{1,p}(\mathcal G)$
\begin{equation}
\widetilde {W^{k,p}}(\mathcal G) := \left \{u\in C(\mathcal G) \bigg | u_e \in W^{k,p} (I_e)\quad \forall e\in E\; \land\; \|u \|_{W^{k,p}}^p:=\sum_{e\in E} \|u_e \|_{W^{k,p}(I_e)}^p <\infty \right \}
\end{equation}
Then for $u\in \widetilde{W^{k,p}}(\mathcal G)$ we always have $u_e \in C^{k-1}(I_e)$ for all $e\in \mathcal E$. However, we also want to specify a condition on the higher-order derivatives. We define 
\begin{multline}
W^{k,p}(\mathcal G) = \{u\in \widetilde{W^{k,p}}(\mathcal G)| u^{(j)} \in C(\mathcal G)\quad \forall j\le k-1 \text{ even}\\
\land \quad\sum_{e:e\succ \mathsf v} \frac{\partial^j}{\partial\nu^j}u_e(\mathsf v) =0 \quad \forall j\le k-1 \text{ odd }\, \forall \mathsf v\in V\},
\end{multline}
where $e:e\succ \mathsf v$ denotes the set of edges $e$ adjacent to a vertex $\mathsf v$.
This definition is natural in the sense that if we consider a dummy vertex $\hat v$ of degree $2$, i.e. subdividing an edge $e\in \mathcal E$ connecting two vertices $v_1, v_2$ into two edges $e_1, e_2$ connecting $v_1, \hat v$ and  $\hat v, v_2$ respectively such that the total length of the graph is preserved, then the Kirchhoff condition simply reduces to a continuity statement of the derivatives. As usual we define $\widetilde{H^k}(\mathcal G)= \widetilde{W^{k,2}}(\mathcal G)$ and $H^k(\mathcal G)= W^{k,2}(\mathcal G)$.   

\begin{remark}
While the Sobolev spaces as defined here are domains of self-adjoint realizations of differential operators on $L^2(\mathcal G)$, the definitions are not necessarily canonical. We refer to \cite{gregorio2017bi} for a discussion on self-adjoint extension of the Bilaplacian, and a discussion for $W^{2,p}$ spaces on graphs. 
\end{remark}

In this context, we are going to define some useful related spaces:
\begin{equation}\label{eq:sobolev0}
\begin{gathered}
\widetilde{W^{k,p}_{0}}(\mathcal G) := \{u\in \widetilde{W^{k,p}}(\mathcal G)| u^{(l)}(\mathsf v)=0, \quad \forall 1\le l\le k-1, \quad \forall \mathsf v\in \mathcal V\}\\
\widetilde{W^{k,p}_{c}} (\mathcal G):= \left \{u\in \widetilde{W^{k,p}_0}(\mathcal G)| \operatorname{supp}(u) \text{ compact}\right \}.
\end{gathered}
\end{equation}
Of special importantance will be the following test function spaces:
\begin{equation}
\begin{split}
\widetilde{C^\infty(\mathcal G)} &:= \bigcap_{k\in \mathbb N}\widetilde{W^{k, \infty}}(\mathcal G)\\
\widetilde{C^\infty_b(\mathcal G)} &:= \bigcap_{k\in \mathbb N}\widetilde{W_0^{k, \infty}}(\mathcal G)\\
\widetilde{C^\infty_c(\mathcal G)} &:= \bigcap_{k\in \mathbb N}\widetilde{W_c^{k, \infty}}(\mathcal G).
\end{split}
\end{equation}

\subsection{Higher-order Gagliardo--Nirenberg inequality for finite metric graphs}
Let $\mathcal G$ be a finite metric graph. Consider the norm on $H^k(\mathcal G)$ defined as
\begin{equation}
|u|_{H^k} := \left (\int_{\mathcal G} |u^{(k)}|^2+ |u|^2\, \mathrm dx \right )^{1/2}. 
\end{equation}
Then  due to the Gagliardo--Nirenberg interpolation inequality on intervals (see e.g. \cite[Theorem 7.41]{leoni2017first}) applied edgewise
\begin{equation}\label{eq:ineedthis}
|u|_{H^k}^2 \le \|u\|_{H^k}^2\le C |u|_{H^k}^2
\end{equation}
and we conclude that $\|\cdot\|_{H^k}$ and $|\cdot|_{H^k}$ are equivalent norms in $H^k(\mathcal G)$.

\begin{proposition}\label{prop:higherordergag}
Let $k\in \mathbb N$ and $\mathcal G$ be a finite metric graph. Then
\begin{equation}\label{eq:propgagk}
\|u\|_{p}^p \le C \|u\|_2^{\frac{(2k-1)p+2}{2k}} |u|_{H^k}^{\frac{p-2}{2k}}
\end{equation}
for all $u\in H^k(\mathcal G)$.
\end{proposition}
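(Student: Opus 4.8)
The plan is to reduce the inequality to a single sup-norm interpolation estimate and then to prove the latter edgewise, using crucially that a finite metric graph has only finitely many edges. Since $p\ge 2$, the elementary bound
\[
\|u\|_p^p = \int_{\mathcal G} |u|^p\, \mathrm dx \le \|u\|_\infty^{p-2} \int_{\mathcal G} |u|^2\, \mathrm dx = \|u\|_\infty^{p-2}\,\|u\|_2^2
\]
shows that it suffices to establish the homogeneous estimate
\begin{equation}\label{eq:planlinfty}
\|u\|_\infty^2 \le C\, |u|_{H^k}^{1/k}\, \|u\|_2^{(2k-1)/k}.
\end{equation}
Indeed, raising \eqref{eq:planlinfty} to the power $(p-2)/2$ and substituting yields
\[
\|u\|_p^p \le C\, |u|_{H^k}^{\frac{p-2}{2k}}\, \|u\|_2^{2+\frac{(2k-1)(p-2)}{2k}} = C\, |u|_{H^k}^{\frac{p-2}{2k}}\, \|u\|_2^{\frac{(2k-1)p+2}{2k}},
\]
using the simplification $(2k-1)(p-2)+4k = (2k-1)p+2$; this is exactly \eqref{eq:propgagk}. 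So the entire proposition reduces to \eqref{eq:planlinfty}.

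To prove \eqref{eq:planlinfty} I would argue one edge at a time. On each interval $I_e$ (bounded or a half-line) the one-dimensional Gagliardo--Nirenberg interpolation inequality \cite[Theorem 7.41]{leoni2017first} provides a constant $C_e>0$ with
\[
\|u_e\|_{L^\infty(I_e)} \le C_e\, \|u_e\|_{H^k(I_e)}^{1/(2k)}\, \|u_e\|_{L^2(I_e)}^{1-1/(2k)},
\]
the exponent $1/(2k)$ being the one forced by the scaling of the embedding $H^k(I)\hookrightarrow L^\infty(I)$ in one dimension. Squaring and using the trivial bounds $\|u_e\|_{H^k(I_e)} \le \|u\|_{H^k(\mathcal G)}$ and $\|u_e\|_{L^2(I_e)} \le \|u\|_2$ gives
\[
\|u_e\|_{L^\infty(I_e)}^2 \le C_e^2\, \|u\|_{H^k(\mathcal G)}^{1/k}\, \|u\|_2^{(2k-1)/k}.
\]
Since $\mathcal G$ is a finite metric graph, $\|u\|_{L^\infty(\mathcal G)} = \max_{e\in\mathcal E}\|u_e\|_{L^\infty(I_e)}$ is a maximum over finitely many edges; taking the maximum of the finitely many constants $C_e^2$ and invoking the norm equivalence $\|u\|_{H^k(\mathcal G)} \le C'\,|u|_{H^k}$ from \eqref{eq:ineedthis} then yields \eqref{eq:planlinfty}.

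The finiteness of the edge set is exactly what lets this crude form succeed. The one point to get right is that the homogeneous inequality must be formulated with $|u|_{H^k}$ (which contains $\|u\|_2$) rather than with $\|u^{(k)}\|_2$ alone: on the bounded core of $\mathcal G$ a constant function has vanishing top-order derivative but nonzero sup-norm, so a purely top-order right-hand side would fail, and it is the $\|u\|_2$-contribution inside $|u|_{H^k}$, together with the positive lower bound on the lengths of the finitely many core edges, that absorbs this. I expect no genuine obstacle beyond bookkeeping; the only care needed is in citing the correct one-dimensional interpolation exponent and in ensuring the edge constants $C_e$ stay finite, which holds because there are finitely many edges, the rays being handled by the scale-invariant half-line version of the inequality.
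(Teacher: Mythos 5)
Your proof is correct, but it takes a genuinely different route from the paper's. The paper's own argument is a two-step chain: first, the graph-level first-order Gagliardo--Nirenberg inequality (Proposition \ref{cor:Gagliardo-nirenberg} with $M\equiv 0$, which ultimately rests on the symmetrization argument of \cite{adami2015nls}) gives $\|u\|_p^p \le C_1 \|u\|_2^{\frac{p}{2}+1}\|u'\|_2^{\frac{p}{2}-1}$; then the intermediate derivative is interpolated edgewise, $\|u'\|_2 \le C\, |u|_{H^k}^{1/k}\|u\|_2^{(k-1)/k}$, which produces exactly the exponents of \eqref{eq:propgagk}. You bypass the graph-level inequality entirely: your chain is the elementary bound $\|u\|_p^p \le \|u\|_\infty^{p-2}\|u\|_2^2$, an edgewise one-dimensional sup-norm interpolation with the scale-correct exponent $\frac{1}{2k}$, a maximum over the finitely many edges, and the norm equivalence \eqref{eq:ineedthis}; your exponent bookkeeping, including the identity $(2k-1)(p-2)+4k=(2k-1)p+2$, checks out. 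As for what each approach buys: yours is more elementary and self-contained --- it needs neither symmetrization nor any global structure of the graph, and it isolates exactly where finiteness enters, namely that the edge constants $C_e$ admit a finite maximum because the bounded edges have a positive minimum length while the rays carry a universal scale-invariant constant; the paper's proof is shorter in context because it recycles Proposition \ref{cor:Gagliardo-nirenberg} and \eqref{eq:ineedthis}, and its first step carries a constant independent of the graph's geometry, although the subsequent edgewise interpolation reintroduces the same edge-length dependence, so neither route yields a graph-uniform constant in the end. Your closing observation that the right-hand side must involve the inhomogeneous seminorm $|u|_{H^k}$ rather than $\|u^{(k)}\|_2$ alone --- constants on a compact core would otherwise be a counterexample --- is also correct and consistent with the paper's formulation.
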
  
\begin{proof}
From the Gagliardo--Nirenberg inequality on metric graphs and Gagliardo--Nirenberg interpolation inequality on intervals we compute
\begin{equation}
\begin{aligned}
\|u\|_{p}^p &\le  C_1 \|u\|_{2}^{\frac{p}{2}+1}\|u'\|_{2}^{\frac{p}{2}-1}\\
&\le C_k \left \|u\right \|_{2}^{\frac{(2k-1)p+2}{2k}} \big |u\big |_{H^{k}}^{\frac{p-2}{2k}}.
\end{aligned}
\end{equation}
\end{proof}

\subsection{On the density of Sobolev spaces}
\begin{proposition}\label{prop:density}
Let $\mathcal G$ be a finite, connected metric graph and $p\in [1,\infty)$, then $W^{m,p}(\mathcal G)$ is dense in $W^{n,p}(\mathcal G)$ for $m\ge n\ge 1$.
\end{proposition}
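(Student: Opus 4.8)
The plan is to show that, given $u\in W^{n,p}(\mathcal G)$ and $\varepsilon>0$, one can produce $v\in W^{m,p}(\mathcal G)$ with $\|u-v\|_{W^{n,p}}<\varepsilon$; since $W^{m,p}(\mathcal G)\subseteq W^{n,p}(\mathcal G)$ (more derivatives in $L^p$ on each edge with $\|\cdot\|_{W^{n,p}}\le\|\cdot\|_{W^{m,p}}$, and the vertex conditions defining $W^{m,p}$ are a superset of those defining $W^{n,p}$ because $m-1\ge n-1$), this is precisely the asserted density. The central difficulty is that a naive edgewise smoothing of $u$ destroys the continuity and Kirchhoff conditions at the vertices. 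My strategy is to isolate all of the vertex data of $u$ into a single explicit correction term $g\in W^{m,p}(\mathcal G)$ and thereby reduce the problem to approximating a remainder whose relevant traces \emph{vanish}, which can then be done edge by edge with no gluing constraints.

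Since $u\in W^{n,p}(\mathcal G)$, on each edge $u_e\in W^{n,p}(I_e)\hookrightarrow C^{n-1}(\overline{I_e})$, so the traces $u_e^{(j)}(\mathsf v)$ exist for $0\le j\le n-1$ at every vertex $\mathsf v$, and by definition of $W^{n,p}(\mathcal G)$ they obey the continuity relations for even $j$ and the Kirchhoff relations for odd $j$. First I would fix, for each vertex $\mathsf v$ and each incident edge $e$, the Hermite--Taylor polynomial $P_{e,\mathsf v}$ of degree $m-1$ in the edge coordinate whose derivatives at $\mathsf v$ equal $u_e^{(j)}(\mathsf v)$ for $0\le j\le n-1$ and equal $0$ for $n\le j\le m-1$, and set $g_e:=\sum_{\mathsf v:\,e\succ\mathsf v}\chi_{e,\mathsf v}\,P_{e,\mathsf v}$, where $\chi_{e,\mathsf v}$ is a smooth cutoff equal to $1$ near $\mathsf v$ and supported in a neighbourhood of $\mathsf v$ in $I_e$ too small to meet the other endpoint. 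Because $\chi_{e,\mathsf v}\equiv 1$ near $\mathsf v$, one has $g_e^{(j)}(\mathsf v)=P_{e,\mathsf v}^{(j)}(\mathsf v)$ for all $j$, so $g$ reproduces exactly the trace data of $u$ up to order $n-1$ and has vanishing traces of orders $n,\dots,m-1$. These prescribed values satisfy the $W^{m,p}$ vertex conditions: for $j\le n-1$ they inherit continuity and Kirchhoff from $u$, and for $n\le j\le m-1$ they are identically zero (so continuity reads $0=0$ and Kirchhoff is a sum of zeros). As each $g_e$ is smooth and compactly supported in $I_e$ and there are finitely many edges, $g\in W^{m,p}(\mathcal G)$.

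Next I set $w:=u-g\in W^{n,p}(\mathcal G)$. By construction $w_e^{(j)}(\mathsf v)=0$ for all $0\le j\le n-1$ and all vertices, so on each edge $w_e\in W^{n,p}_0(I_e)$ by the classical characterization $W^{n,p}_0(I)=\{f\in W^{n,p}(I):f^{(j)}|_{\partial I}=0,\ 0\le j\le n-1\}$, valid both for bounded intervals and for half-lines (see e.g. \cite{leoni2017first}). Hence each $w_e$ is a $W^{n,p}(I_e)$-limit of functions $\varphi_e\in C_c^\infty(I_e^{\circ})$ supported in the \emph{open} edge. Choosing $\varphi_e$ with $\|w_e-\varphi_e\|_{W^{n,p}(I_e)}$ small and defining $\tilde w$ edgewise by $\tilde w_e=\varphi_e$, the function $\tilde w$ vanishes near every vertex, so all its derivative traces vanish and $\tilde w\in W^{m,p}(\mathcal G)$ trivially. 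Finally $v:=g+\tilde w\in W^{m,p}(\mathcal G)$ by linearity, and $\|u-v\|_{W^{n,p}}=\|w-\tilde w\|_{W^{n,p}}=\big(\sum_{e\in\mathcal E}\|w_e-\varphi_e\|_{W^{n,p}(I_e)}^p\big)^{1/p}<\varepsilon$.

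The main obstacle is the edgewise characterization of $W^{n,p}_0$ by vanishing derivative traces, together with the bookkeeping needed to verify that matching $u$'s ordinary derivatives in each edge coordinate genuinely reproduces the signed normal-derivative Kirchhoff conditions; once the correction $g$ absorbs all of this vertex data, the remaining approximation is purely one-dimensional and local. I would also emphasize that finiteness of $\mathcal E$ is used essentially to sum the finitely many edge errors: for a genuinely locally finite graph with infinitely many edges one would additionally need a tail estimate controlling $\sum_{e}\|w_e\|_{W^{n,p}(I_e)}^p$, which is why the statement is restricted to finite graphs.
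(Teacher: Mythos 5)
Your proof is correct, and it takes a genuinely different route from the paper's. The paper reduces to the consecutive case ($W^{k+1,p}$ dense in $W^{k,p}$, then iterate) and starts from an edgewise smooth approximating sequence $u_n$, which breaks the vertex conditions; it then repairs them by adding explicit polynomial bumps $v_{n,\hat k,\mathsf v}$ supported in shrinking neighbourhoods of the vertices, with coefficients tuned to the trace deficits of $u_n$, and the bulk of its proof consists of Leibniz-rule and change-of-variables estimates showing that these corrections vanish in $W^{k,p}$ as $n\to\infty$. You invert this order: the vertex data of $u$ is absorbed once and exactly into a fixed function $g\in W^{m,p}(\mathcal G)$ (cutoffs times Hermite--Taylor polynomials), so the remainder $w=u-g$ has vanishing traces up to order $n-1$ and lies edgewise in $W^{n,p}_0(I_e)$, and the approximation step becomes the textbook density of $C_c^\infty(I_e^\circ)$ in $W^{n,p}_0(I_e)$, with no compatibility left to check at the vertices. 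What your decomposition buys: no quantitative decay estimates for corrections at all (your $g$ is independent of $\varepsilon$), and the general case $m\ge n$ in one step instead of an induction on the order. The price is that you invoke the trace characterization $W^{n,p}_0(I)=\{f\in W^{n,p}(I):\, f^{(j)}|_{\partial I}=0,\ 0\le j\le n-1\}$ on intervals and half-lines as an external (standard) ingredient, whereas the paper's computation is self-contained modulo edgewise density of smooth functions. Both proofs use finiteness of $\mathcal E$ identically (finitely many vertex corrections, an $\ell^p$ sum of finitely many edge errors), and both would need the same minor bookkeeping refinement to handle loop edges, where trace data must be indexed by the two endpoints of $I_e$ rather than by the vertex alone.
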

\begin{remark}
For $n=0$ this corresponds to the fact that $W^{m,p}$ is dense in $L^p$ for $m\ge 1$.
\end{remark}
\begin{proof}[Proof of Proposition~\ref{prop:density}]
It suffices to prove that $W^{k+1,p}(\mathcal G)$ is dense in $W^{k,p}(\mathcal G)$.  To this end, let $u\in W^{k}(\mathcal G)$ arbitrary and $u_n$ be an edgewise approximating sequence in $\oplus_{e\in \mathcal E} C^\infty(I_e)\cap W^{k+1,p}(I_e)$ such that
\begin{equation}\label{eq:ineedthisnow}
\sum_{e\in \mathcal E} \left \|u_n -u\big |_e\right \|_{W^{k,p}} \le \frac{1}{2^n}
\end{equation}
for all $n\in \mathbb N$. The general idea is to construct sequences $v_n \in \oplus_{e\in \mathcal E} W^{k+1,p}(I_e)$ such that $u_n + v_n \in W^{k+1,p}(\mathcal G)$ and 
$$u_n + v_n \stackrel{W^{k,p}}\longrightarrow u\qquad (n\to \infty).$$
For fixed $\mathsf v\in \mathcal V$ and for $n$ satisfying 
\begin{equation}
\frac{2}{n} \le \min_{e\in \mathcal E} |I_e|
\end{equation}
for all $e\succ \mathsf v$ and $\hat k \in \{0, \ldots, k\}$ we define 
\begin{equation}\label{eq:testfunctions}
v_{n,\hat k, \mathsf v}(x)\big |_e=\begin{cases}
\frac{c_{n, \hat k, \mathsf v}}{\hat k!} x_{\mathsf v}^{\hat k} \left (1-nx_{\mathsf v}\right )^{k+1}, &\quad \text{for }x\in e \text{ with } x_{\mathsf v}:=\operatorname{dist}(x,\mathsf v)\le \frac{1}{n} \\
0, &\quad \text{otherwise}.
\end{cases}
\end{equation}
where $c_{n, \hat k, \mathsf v}$ is given by
\begin{equation}
\begin{aligned}
\text{for }\hat k=0:\qquad c_{n, 0, \mathsf v}&= u-u_n \big |_e (0_{\mathsf v})\\
\text{for } 1\le \hat k \le k-1:\qquad c_{n, \hat k, \mathsf v} &= u^{(\hat k)}-u_n^{(\hat k)} \big |_e(0_{\mathsf v}) - \sum_{\ell=0}^{\hat k-1}  (k+1)_{\ell} (-n)^{\ell}  c_{n,\ell, \mathsf v}.
\end{aligned}
\end{equation}
We can extend the functions $v_{n,\hat k, \mathsf v}$ by zero on the rest of the graph. With the Leibniz rule for $1\le \ell \le k+1$ we compute
\begin{equation}\label{eq:leibnizcomp}
v_{n,\hat k, v}^{(\ell)} (x)\big |_e= \chi_{\{x_{\mathsf v}\le \frac{1}{n}\}}\sum_{m=0}^\ell \frac 1{\hat k!}{\binom{\ell}{m}} c_{n, \hat k, \mathsf v} (-n)^{\ell-m}(\hat k)_m  (k)_{\ell-m}  x_{\mathsf v}^{\hat k - m}(1-n x_{\mathsf v})^{k+m+1-\ell}\
\end{equation}
Then
\begin{equation}
\tilde v_n := \sum_{\ell=0}^{k-1}\sum_{\mathsf v\in \mathcal V} v_{n,\ell, \mathsf v}
\end{equation}
satisfies $\tilde v_n \in \oplus_{e\in \mathcal E} W^{k+1,p}(I_e)$ and $u_n+\tilde v_n \in W^{k,p}(\mathcal G)$ since  $u_n+\tilde v_n$ coincides in all $k-1$ derivatives with $u$ by construction. Indeed, the restrictions of the $k$\textsuperscript{th} derivatives at the vertices are of rank $\le 2|\mathcal E|$. Then we can find $c_{n,k,\mathsf v}$ for all $\mathsf v\in \mathcal V$
\begin{equation}
v_{n,k,\mathsf v}\big |_e (x) =\begin{cases}
\frac{c_{n, k, \mathsf v}}{k!} x_{\mathsf v}^{k} \left (1-\max\{n, c_{n,k, \mathsf v}^{2}\}x_{\mathsf v}\right )^{k+1}, &\quad \text{for }x\in e \text{ with } \\
&x_{\mathsf v}\le \min\{n^{-1}, c_{n,k, \mathsf v}^{-2}\}  \\
0, &\quad \text{otherwise}.
\end{cases}
\end{equation}
such that 
$$u_n + \tilde v_n + \sum_{\mathsf v\in \mathcal V} v_{n,k,\mathsf v}\in W^{k+1,p}(\mathcal G).$$
By assumption \eqref{eq:ineedthisnow} we deduce by applying the Sobolev imbedding edgewise
\begin{equation}\label{eq:additionalinequ}
\sum_{e\in \mathcal E} \left \|u_n-u\big |_e\right \|_{C^{k-1}} \le \frac{C}{2^n}
\end{equation}
for all $n\in \mathbb N$ and some $C>0$ and satisfies by construction
\begin{equation}\label{eq:trickher}
(-n)^\ell c_{n,\hat k, \mathsf v} \to 0 \qquad (n\to \infty)
\end{equation}  
for all $1\le \ell\le k$ and $\mathsf v\in \mathcal V$.  
By a change of variables we then compute for $0\le m \le \ell \le k$
\begin{gather}
 n^{\ell-m} \int_{I_e} x_{\mathsf v}^{\hat k-m} (1-n x_{\mathsf v})^{k+m+1-\ell}\, \mathrm dx_{\mathsf v}= n^{\ell-1 -\hat k} \int_0^1 t^{\hat k-m} (1-t)^{k+m+1-\ell}\, \mathrm dt\\
\begin{multlined}
c_{n,k, \mathsf v}\max\{n, c_{n,k, \mathsf v}^2\}^{\ell-m} \int_{I_e} x_{\mathsf v}^{k-m} (1-\max\{n, c_{n,k, \mathsf v}^2\} x_{\mathsf v})^{k+m+1-\ell}\, \mathrm dx_{\mathsf v}\\
= c_{n,k, \mathsf v}\min\{n^{-1}, c_{n,k, \mathsf v}^{-2}\}^{k+1-\ell} \int_0^1 t^{k-m} (1-t)^{k+m+1-\ell}\, \mathrm dt \longrightarrow 0 \\
(n\to\infty)
\end{multlined}
\end{gather}
and with \eqref{eq:leibnizcomp}  and \eqref{eq:trickher} we conclude
\begin{equation}
\begin{aligned}
&\left \|u- \left [u_n + \tilde v_n + \sum_{\mathsf v\in \mathcal V} v_{n,k,\mathsf v} \right ]\right \|_{W^{k,p}}\\
&\qquad \qquad\le \left \|u- u_n\right \|_{W^{k,p}} + \left \|\tilde v_n + \sum_{\mathsf v\in \mathcal V} v_{n,k,\mathsf v}\right \|_{W^{k,p}}\longrightarrow 0 \qquad (n\to \infty).
\end{aligned}
\end{equation}

\end{proof}
\begin{lemma}\label{prop:densitycompact}
Let $\mathcal G$ be a locally finite, connected metric graph and $p\in [1,\infty)$. Then
$$W^{1,p}_c(\mathcal G)= \{u\in W^{1,p}(\mathcal G)| \operatorname{supp} u \text{ is bounded}\} $$ 
is dense in $W^{1,p}(\mathcal G)$.
\end{lemma}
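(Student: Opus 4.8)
The plan is to approximate an arbitrary $u\in W^{1,p}(\mathcal G)$ by truncating it with a sequence of Lipschitz cutoff functions supported on larger and larger bounded neighbourhoods of a fixed point. First I would fix a vertex $\mathsf o\in\mathcal V$ and consider the distance function $\rho(x):=\operatorname{dist}(x,\mathsf o)$. Since $\mathcal G$ is connected, $\rho$ is finite everywhere and $1$-Lipschitz; restricted to any edge it is a minimum of two affine functions of the arclength parameter, hence $\rho_e\in W^{1,\infty}(I_e)$ with $|\rho_e'|\le 1$ almost everywhere. Fixing $\eta\in C^\infty(\mathbb R)$ with $0\le\eta\le 1$, $\eta\equiv 1$ on $(-\infty,1]$, $\eta\equiv 0$ on $[2,\infty)$ and $\|\eta'\|_\infty\le 2$, I set
\[
\chi_n(x):=\eta\!\left(\frac{\rho(x)}{n}\right).
\]
By construction $\chi_n$ is continuous on $\mathcal G$, satisfies $0\le\chi_n\le 1$, equals $1$ on $\{\rho<n\}$ and vanishes outside $\{\rho\le 2n\}$; moreover $\chi_n\in W^{1,\infty}(\mathcal G)$ with $|\chi_n'|\le 2/n$ almost everywhere.

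Next I would set $u_n:=\chi_n u$ and check that $u_n\in W^{1,p}_c(\mathcal G)$. The support of $u_n$ is contained in $\{\rho\le 2n\}$, which is a bounded subset of $\mathcal G$; by local finiteness it meets only finitely many edges, so $u_n$ has bounded support. On each edge the product rule gives $u_n|_e=\chi_n|_e\,u_e\in W^{1,p}(I_e)$ with $(u_n)_e'=\chi_n' u_e+\chi_n u_e'$, and $u_n$ is continuous as a product of continuous functions; hence $u_n\in W^{1,p}(\mathcal G)$ and therefore $u_n\in W^{1,p}_c(\mathcal G)$.

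It then remains to show $u_n\to u$ in $W^{1,p}(\mathcal G)$. Since $u-u_n=(1-\chi_n)u$ and $((1-\chi_n)u)'=(1-\chi_n)u'-\chi_n' u$ edgewise, it suffices to prove that the three quantities $\|(1-\chi_n)u\|_{p}$, $\|(1-\chi_n)u'\|_{p}$ and $\|\chi_n' u\|_{p}$ all tend to $0$ as $n\to\infty$. The first two are controlled by the tails $\int_{\{\rho\ge n\}}|u|^p$ and $\int_{\{\rho\ge n\}}|u'|^p$, which vanish in the limit because $|u|^p,|u'|^p\in L^1(\mathcal G)$ and $\bigcup_n\{\rho<n\}=\mathcal G$ by connectedness (dominated convergence). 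For the third, the cutoff-derivative term, I would use $|\chi_n'|\le 2/n$ together with $\operatorname{supp}\chi_n'\subset\{n\le\rho\le 2n\}$ to estimate
\[
\|\chi_n' u\|_p^p\le\Big(\tfrac{2}{n}\Big)^p\int_{\{n\le\rho\le 2n\}}|u|^p\,\mathrm dx\le\Big(\tfrac{2}{n}\Big)^p\|u\|_p^p\longrightarrow 0.
\]

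The genuinely delicate step is this cutoff-derivative term $\chi_n' u$: it is the only place where truncation fails to commute with differentiation, and controlling it is exactly why one interpolates the cutoff over a window of width comparable to $n$ (so that $|\chi_n'|\lesssim 1/n$) rather than over a window of fixed width. The factor $n^{-p}$ then defeats the merely \emph{bounded} mass of $u$ on the annulus $\{n\le\rho\le 2n\}$, which is what makes the argument work uniformly in $p\in[1,\infty)$. Everything else — continuity at the vertices, edgewise membership via the product rule, and the tail estimates — is routine once the cutoffs are set up correctly.
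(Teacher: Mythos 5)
Your proof is correct and takes essentially the same approach as the paper's: both truncate $u$ by a Lipschitz cutoff equal to $1$ on a ball of radius $n$, vanishing outside radius $2n$, with derivative of size $O(1/n)$, and then conclude from the tail estimates on $\int_{\{\rho\ge n\}}\left (|u|^p+|u'|^p\right )\mathrm dx$ together with the $n^{-p}$ bound on the cutoff-derivative term. The only cosmetic differences are that the paper measures distance to a bounded connected subgraph $K$ rather than to a fixed vertex, and builds the cutoff as a piecewise-linear $\min/\max$ expression instead of composing a smooth profile with the distance function.
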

\begin{proof}
Let $K$ be a bounded, connected subgraph of $\mathcal G$. For $R>0$ set
\begin{equation}
K_R :=\{x\in \mathcal G| \operatorname{dist}(x, K)<R\}.
\end{equation}
We define the cut-off functions $\psi_n$ via
\begin{equation}
\widetilde{\psi_n} := \frac{1}{n}\max \{n, \operatorname{dist}(x,K_n)\},\quad \psi_n:= 1- \widetilde{\psi_n} 
\end{equation}
For all $u\in W^{1,p}(\mathcal G)$ one then computes
\begin{equation}
\begin{aligned}
&\limsup_{n\to \infty}\|u- \psi_n u\|_{W^{1,p}}^p =  \limsup_{n\to \infty}\left [\int_{\mathcal G} \left |\frac{\mathrm d}{\mathrm dx} \widetilde{\psi_n} u\right |^p\, \mathrm dx+ \int_{\mathcal G} \left |\widetilde {\psi_n} u\right |^p\, \mathrm dx\right ]\\
&\qquad\le  \limsup_{n\to \infty} \left [\frac{2^p}{n^p} \int_{\mathcal G\setminus K_n} |u|^p\, \mathrm dx +2^p \int_{\mathcal G\setminus K_n} \left |\widetilde \psi_n u \right |^p\, \mathrm dx + \int_{\mathcal G \setminus K_n}   \left |\widetilde \psi_n u\right |^p\, \mathrm dx\right]=0,
\end{aligned}
\end{equation}
where in the equation we used
\begin{equation}
\int_{\mathcal G\setminus K_n} |\widetilde{\psi_n} u|^p \, \mathrm dx \le \int_{\mathcal G\setminus K_n} |u|^p \, \mathrm dx \to 0 \qquad (n\to \infty).
\end{equation}
As such 
$\psi_n u \to u$
in $W^{1,p}(\mathcal G)$ as $n\to \infty$.
\end{proof}

\begin{proposition}\label{prop:densitylocfinite}
Let $\mathcal G$ be a locally finite, connected metric graph and $p\in [1,\infty)$. Then
$$W^{2,p}_c(\mathcal G)= \{u\in W^{2,p}(\mathcal G)| \operatorname{supp} u \text{ is bounded}\} $$ 
is dense in $W^{1,p}$.
\end{proposition}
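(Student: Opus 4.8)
The plan is to prove the statement in two stages, combining the two density results already established. First I would reduce to the case of functions with bounded support, and then upgrade regularity from $W^{1,p}$ to $W^{2,p}$ on a suitable finite subgraph, transplanting the approximants back to $\mathcal G$ by a cut-off.

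For the reduction, let $u\in W^{1,p}(\mathcal G)$ be arbitrary. By Lemma~\ref{prop:densitycompact} there is a sequence $u_n\in W^{1,p}(\mathcal G)$ with bounded support and $u_n\to u$ in $W^{1,p}(\mathcal G)$. Hence it suffices to approximate each such $u_n$ in $W^{1,p}(\mathcal G)$ by elements of $W^{2,p}_c(\mathcal G)$, after which a diagonal argument closes the proof. So from now on I fix $u$ with $K:=\operatorname{supp} u$ bounded.

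For the second stage, since $\mathcal G$ is locally finite, $K$ meets only finitely many edges, so I would choose $R>0$ and set $\Gamma:=\overline{K_R}$, a finite, connected metric subgraph of $\mathcal G$ containing $K$ in its interior, taken large enough that $u$ vanishes on a neighborhood of every boundary vertex of $\Gamma$ (the vertices of $\Gamma$ at which edges of $\mathcal G$ leave $\Gamma$). The restriction $u|_\Gamma$ lies in $W^{1,p}(\Gamma)$, so by Proposition~\ref{prop:density} applied to the finite graph $\Gamma$ there exist $w_j\in W^{2,p}(\Gamma)$ with $w_j\to u|_\Gamma$ in $W^{1,p}(\Gamma)$. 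To restore bounded support and gluability, I would fix a smooth cut-off $\chi\in W^{2,\infty}(\mathcal G)$ with $\chi\equiv 1$ on $K$, $\operatorname{supp}\chi\subset\Gamma$ with $\chi\equiv 0$ near every boundary vertex of $\Gamma$, and $\chi$ locally constant near every vertex (so that $\chi$ itself satisfies continuity and the Kirchhoff condition). Then $\chi w_j\in W^{2,p}(\Gamma)$ by the product rule, it vanishes near the boundary vertices, and its extension by zero, still denoted $\chi w_j$, is a candidate element of $W^{2,p}_c(\mathcal G)$.

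The key point to check — and the main obstacle — is that this extension genuinely lies in $W^{2,p}(\mathcal G)$, i.e. that the vertex conditions survive; this is exactly where the choice of $\chi$ matters. At an interior vertex $\mathsf v$ of $\Gamma$, using $w_{j,e}(\mathsf v)=w_j(\mathsf v)$ and $\chi_e(\mathsf v)=\chi(\mathsf v)$ for every incident edge, the product rule gives
\begin{equation}
\sum_{e\succ \mathsf v}\partial_\nu(\chi w_j)_e(\mathsf v)=w_j(\mathsf v)\sum_{e\succ \mathsf v}\partial_\nu\chi_e(\mathsf v)+\chi(\mathsf v)\sum_{e\succ \mathsf v}\partial_\nu w_{j,e}(\mathsf v)=0,
\end{equation}
since both $\chi$ and $w_j$ satisfy the Kirchhoff condition; at the boundary vertices all relevant derivatives vanish because $\chi\equiv 0$ there, so extension by zero introduces no jump. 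Finally, since $\chi\equiv 1$ on $K=\operatorname{supp} u$ we have $\chi u=u$ on $\Gamma$, and multiplication by the fixed $\chi$ is bounded on $W^{1,p}$, whence
\begin{equation}
\|\chi w_j-u\|_{W^{1,p}(\mathcal G)}=\|\chi(w_j-u|_\Gamma)\|_{W^{1,p}(\Gamma)}\le C\|\chi\|_{W^{1,\infty}}\,\|w_j-u|_\Gamma\|_{W^{1,p}(\Gamma)}\longrightarrow 0.
\end{equation}
This yields the desired $W^{2,p}_c(\mathcal G)$ approximants of $u$ and, combined with the first stage, the density. The only genuinely delicate issue is the one just addressed: arranging the cut-off so that the finite-graph approximants can be reinserted into $\mathcal G$ without destroying the Kirchhoff condition at vertices, while keeping the support bounded.
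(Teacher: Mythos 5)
Your proof is correct and takes essentially the same route as the paper: first reduce to functions of bounded support via Lemma~\ref{prop:densitycompact}, then apply the finite-graph density result (Proposition~\ref{prop:density}) on a bounded subgraph and transplant the approximants back to $\mathcal G$, finishing with a diagonal argument. Your cut-off construction in fact makes rigorous exactly the step the paper compresses into the phrase ``after extending by zero on the whole graph'', namely that the reinserted $W^{2,p}$ approximants retain continuity and the Kirchhoff conditions at the vertices.
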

\begin{proof}
Let $u\in W^{1,p}(\mathcal G)$.  
By Lemma~\ref{prop:densitycompact} we can find a sequence $u_n\in W^{1,p}_c(\mathcal G)$ with $u_n \to u$ in $W^{1,p}$. Then by Proposition \ref{prop:density} for each $n$ we find  a sequence $u_{n,m} \in W^{2,p}(\mathcal G)$, after extending by zero on the whole graph, converging towards $u_n$ in $W^{1,p}(\mathcal G)$ as $m\to \infty$. Then one can construct a sequence in $W^{2,p}_c(\mathcal G)$  converging to $u$ in $W^{1,p}$ by a diagonal argument.
\end{proof}

\begin{remark}\label{rmk:densitylocfinite}
Proposition~\ref{prop:densitylocfinite} does not depend on the particular choice of vertex conditions. For instance, if $M\in H^1+ W^{1,\infty}(\mathcal G)$ then we may equally show
\begin{equation}
D_c(A^M)=\left \{u\in \widetilde{W^{2,p}}(\mathcal G) | \sum_{e\succ \mathsf v} \left ( \frac{\partial}{\partial\nu} - M\right ) u_e(\mathsf v) =0\, \text{ and } \operatorname{supp} u \text{ is bounded}\right \}
\end{equation}
is dense in $W^{1,p}(\mathcal G)$.
\end{remark}

\subsection{Characterization of $W^{1,\infty}$}
We give a characterization of $W^{1, \infty}$ on locally finite, connected metric graphs in the following:
\begin{proposition}\label{prop:lipschitz}
Let $\mathcal G$ be a locally finite, connected metric graph. Then $W^{1,\infty}(\mathcal G)= C^{0,1}_b(\mathcal G)$ is the set of uniformly bounded, Lipschitz continuous functions.
\end{proposition}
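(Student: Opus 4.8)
The goal is to prove that $W^{1,\infty}(\mathcal G) = C^{0,1}_b(\mathcal G)$, where the right-hand side denotes the uniformly bounded, Lipschitz continuous functions on the locally finite connected metric graph $\mathcal G$. This is a two-sided inclusion, and the plan is to establish each direction separately, exploiting the edgewise definition of $W^{1,\infty}(\mathcal G)$ together with the continuity requirement built into the space.

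For the inclusion $W^{1,\infty}(\mathcal G) \subseteq C^{0,1}_b(\mathcal G)$, I would argue as follows. Let $u \in W^{1,\infty}(\mathcal G)$. By definition $u \in C(\mathcal G)$, each restriction $u_e \in W^{1,\infty}(I_e)$, and $\|u\|_{W^{1,\infty}} = \sup_{e} \|u_e\|_{W^{1,\infty}(I_e)} < \infty$. The classical one-dimensional result identifies $W^{1,\infty}(I_e)$ with the Lipschitz functions on $I_e$, with Lipschitz constant and sup-norm both bounded by $\|u_e\|_{W^{1,\infty}(I_e)}$; in particular $u$ is uniformly bounded since $\sup_e \|u_e\|_\infty \le \|u\|_{W^{1,\infty}} < \infty$. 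The only genuine point to verify is \emph{global} Lipschitz continuity across vertices. Here I would use the metric structure: the distance $d(x,y)$ between two points is the length of the shortest path, which decomposes into edge-segments. Since $u$ is continuous on $\mathcal G$ and Lipschitz on each edge with a uniform constant $L := \|u\|_{W^{1,\infty}}$, one concatenates the estimate $|u(x) - u(y)| \le L \, d(x,y)$ along any path by the triangle inequality, taking the infimum over paths to obtain the global bound.

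For the reverse inclusion $C^{0,1}_b(\mathcal G) \subseteq W^{1,\infty}(\mathcal G)$, let $u$ be uniformly bounded and globally Lipschitz with constant $L$. Then $u$ is in particular continuous on $\mathcal G$, and its restriction to each edge $u_e$ is Lipschitz on $I_e$ with constant at most $L$ (the intrinsic metric on an edge agrees with the Euclidean one). By the one-dimensional characterization, $u_e \in W^{1,\infty}(I_e)$ with $\|u_e\|_{W^{1,\infty}(I_e)} \le \max\{\|u\|_\infty, L\}$, uniformly in $e$. Taking the supremum over edges gives $\|u\|_{W^{1,\infty}} < \infty$, so $u \in W^{1,\infty}(\mathcal G)$.

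The main obstacle, and the only step requiring care, is the passage from edgewise Lipschitz bounds to a global Lipschitz estimate across vertices in the first inclusion. On a single edge the identification is textbook; the subtlety is that a minimizing path between two points on different edges may pass through several vertices, and one must ensure the continuity of $u$ at each vertex lets the edgewise bounds chain together without loss. The uniformity of the constant $L$ over edges is what makes the telescoping sum collapse to $L\,d(x,y)$, and local finiteness guarantees that any bounded path meets only finitely many edges, so the concatenation is a finite sum. I would also remark that the boundedness hypothesis is essential and independent of the Lipschitz bound, since on a graph with an unbounded ray a Lipschitz function need not be bounded, which is precisely why $C^{0,1}_b$ rather than $C^{0,1}$ appears.
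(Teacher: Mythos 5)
Your proof is correct and follows the same overall strategy as the paper: both inclusions, with the only nontrivial point being the passage from edgewise bounds to a global Lipschitz estimate by chaining along paths and taking the infimum over all paths joining $x$ and $y$; the reverse inclusion is identical in both arguments. The one place where you diverge technically is the forward direction: the paper first proves the estimate $|u(x)-u(y)|\le \max_t |u'(\gamma(t))|\,L(\gamma)$ for functions that are $C^1$ edgewise by integrating $|u'|$ along the path, and then invokes ``density'' to extend it to $W^{1,\infty}(\mathcal G)$, whereas you invoke the classical one-dimensional identification $W^{1,\infty}(I_e)=C^{0,1}(I_e)$ on each edge and telescope across the (finitely many, by local finiteness) vertices met by the path. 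Your route is slightly more robust on this point: smooth functions are not norm-dense in $W^{1,\infty}$, so the paper's density step requires interpretation (e.g.\ mollification giving uniform convergence together with an essential-sup bound on the derivatives), while your telescoping argument uses only the textbook interval characterization and needs no approximation at all.
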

\begin{proof}
Assume $u\in W^{1,\infty}(\mathcal G)$. Let $x,y\in \mathcal G$ and $\gamma$ be a path of length $L(\gamma)$ connecting $x,y$, parametrized by arc length. In the first step let us assume $u\in C^1$ edgewise, then using the continuity of $u$ we have
\begin{equation}
|u(x)-u(y)| \le \int_0^{L(\gamma)} |u'(\gamma)|\, \mathrm d|\gamma| \le \max_{t} |u'(\gamma(t)| L(\gamma). 
\end{equation}
Due to density this holds also for $W^{1,\infty}(\mathcal G)$. Taking the infimum over all paths connecting $x,y$ we conclude
\begin{equation}
|u(x)-u(y)|\le \|u'\|_{\infty} \operatorname{dist}(x,y)
\end{equation}
and thus $u\in C^{0,1}_b(\mathcal G)$. 
On the other hand, let $u\in C^{0,1}_b(\mathcal G)$, then
\begin{equation}
\frac{|u(x)-u(y)|}{\operatorname{dist}(x,y)} \le  L
\end{equation}
for some constant $L>0$. On each edge $e\in \mathcal E$ then $u\in W^{1,\infty}(I_e)$ and $u'$ exists a.e. and
\begin{equation}
\|u'\|_{\infty} \le L.
\end{equation}
We conclude $u\in W^{1,\infty}(\mathcal G)$ since $u$ is also uniformly bounded by assumption. 
\end{proof}

\section[Application: Nonlinear Equations on Finite Graphs]{Application: Existence of Ground states of a class of Nonlinear Equations involving the Polylaplacian on Finite Graphs}
In this section, we give a first application of the results derived in §2 on finite metric graphs. In this context we show a decomposition formula for the Polylaplacian.
\subsection{Formulation of the problem}\label{sec:existence}
Let $\mathcal G$ be a connected, finite metric graph and let $K$ be a connected subgraph of $\mathcal G$.  For $k\in \mathbb N$ consider the energy functional 
\begin{equation}
E^{(k)}(u) = \frac{1}{2}\int_{\mathcal G} |u^{(k)}|^2 + V |u|^2 \, \mathrm dx - \frac{\mu}{p} \int_{\mathcal G} |u|^p\, \mathrm dx,
\end{equation}
with $2<p< 4k+2$ and $V\in L^2+ L^\infty(\mathcal G)$ and for $c>0$ consider the minimization problem
\begin{equation}\label{eq:minimizationeq}
E_{c}^{(k)} := \inf_{\substack{u\in H^k(\mathcal G)\\ \|u\|_{2}^2=c}} E^{(k)}(u).
\end{equation}

\begin{lemma}\label{lem:energyestimateneu}
Let $\mathcal G$ be a finite connected metric graph. The functional $E^{(k)}$ under the $L^2$-constraint $\|\cdot\|_{2}^2=c$ is bounded below for $2<p<4k+2$ and $c>0$. Moreover, for each $0<\varepsilon <1$ there exists a $C_\varepsilon>0$, such that
\begin{equation}
E^{(k)}(u) \ge \frac{1-\varepsilon}{2} \int_{\mathcal G} |u^{(k)}|^2 + V |u|^2 \, \mathrm dx - C_\varepsilon.
\end{equation}
\end{lemma}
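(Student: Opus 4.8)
The plan is to absorb both the superlinear term $\frac{\mu}{p}\int_{\mathcal G}|u|^p\,\mathrm dx$ and the potential term $\int_{\mathcal G}V|u|^2\,\mathrm dx$ into a small multiple of the leading energy $\int_{\mathcal G}|u^{(k)}|^2\,\mathrm dx$, at the cost of an additive constant, by combining the higher-order Gagliardo--Nirenberg inequality (Proposition~\ref{prop:higherordergag}) with Young's inequality. Throughout, the constraint $\|u\|_2^2=c$ turns every factor of $\|u\|_2$ into a constant depending only on $c$; it is precisely the subcriticality $p<4k+2$ that makes the whole scheme work.

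First I would bound the nonlinear term. By Proposition~\ref{prop:higherordergag},
\[
\frac{\mu}{p}\|u\|_p^p \le \frac{\mu C}{p}\,\|u\|_2^{\frac{(2k-1)p+2}{2k}}\,|u|_{H^k}^{\frac{p-2}{2k}} = a_1\,|u|_{H^k}^{\frac{p-2}{2k}},
\]
where $a_1=a_1(\mu,p,k,c)$ absorbs the now constant factor $\|u\|_2=c^{1/2}$. The key point is that $2<p<4k+2$ forces $0<\frac{p-2}{2k}<2$, so the exponent on $|u|_{H^k}$ lies strictly below $2$. Since $|u|_{H^k}^2=\int_{\mathcal G}|u^{(k)}|^2\,\mathrm dx+c$, Young's inequality in the form $a_1 t^{\theta}\le \delta t^2+C(\delta)$, valid for $0<\theta<2$, yields for every $\delta>0$
\[
\frac{\mu}{p}\|u\|_p^p \le \delta\int_{\mathcal G}|u^{(k)}|^2\,\mathrm dx + C_1(\delta).
\]

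Next I would treat the potential. Writing $V=V_2+V_\infty$ with $V_2\in L^2(\mathcal G)$ and $V_\infty\in L^\infty(\mathcal G)$, Hölder's inequality gives
\[
\Bigl|\int_{\mathcal G}V|u|^2\,\mathrm dx\Bigr| \le \|V_2\|_2\,\|u\|_4^2 + \|V_\infty\|_\infty\, c.
\]
Applying Proposition~\ref{prop:higherordergag} with $p=4$ (admissible since $4<4k+2$ for all $k\ge1$) bounds $\|u\|_4^2$ by a constant times $|u|_{H^k}^{1/(2k)}$, and as $\tfrac{1}{2k}\le\tfrac12<2$ the same Young argument produces, for every $\delta>0$,
\[
\Bigl|\int_{\mathcal G}V|u|^2\,\mathrm dx\Bigr| \le \delta\int_{\mathcal G}|u^{(k)}|^2\,\mathrm dx + C_2(\delta).
\]
Here the hypothesis $V\in L^2+L^\infty$ (rather than $L^1+L^\infty$) is exactly what makes the Hölder pairing with $\|u\|_4^2$ available, $L^4$ being the natural space controlled by $H^k$.

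Finally I would assemble the pieces. For $0<\varepsilon<1$ the asserted estimate is equivalent to $\frac{\mu}{p}\|u\|_p^p \le \frac{\varepsilon}{2}\int_{\mathcal G}(|u^{(k)}|^2+V|u|^2)\,\mathrm dx + C_\varepsilon$. Bounding $\frac{\mu}{p}\|u\|_p^p$ by the first inequality with $\delta=\varepsilon/4$, and bounding $\frac{\varepsilon}{2}\int_{\mathcal G}V|u|^2\,\mathrm dx$ from below by $-\frac{\varepsilon}{2}$ times the second inequality with $\delta=\tfrac12$, the derivative terms combine to leave $\frac{\varepsilon}{4}\int_{\mathcal G}|u^{(k)}|^2\,\mathrm dx$ on the right, which dominates the left-hand side as soon as $C_\varepsilon:=C_1(\varepsilon/4)+\frac{\varepsilon}{2}C_2(\tfrac12)$; this is the claimed inequality. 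Boundedness below then follows immediately, since the second inequality with $\delta=\tfrac12$ gives $\int_{\mathcal G}(|u^{(k)}|^2+V|u|^2)\,\mathrm dx\ge\tfrac12\int_{\mathcal G}|u^{(k)}|^2\,\mathrm dx-C_2\ge-C_2$, whence $E^{(k)}(u)\ge-\frac{1-\varepsilon}{2}C_2-C_\varepsilon>-\infty$. The only genuine obstacle is the exponent bookkeeping in the first step — confirming that subcriticality is exactly the condition $\frac{p-2}{2k}<2$ needed to absorb the superlinear term by the $k$-th order derivative energy; the remainder is routine interpolation and Young's inequality.
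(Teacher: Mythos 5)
Your proof is correct, and its engine is the same as the paper's: Proposition~\ref{prop:higherordergag} combined with Young's inequality, with the subcriticality $p<4k+2$ entering precisely as the condition $\frac{p-2}{2k}<2$. Where you genuinely differ is the treatment of the potential term. The paper chooses the decomposition $V=V_2+V_\infty$ with $\|V_2\|_{2}\le\varepsilon$ small, bounds $\|u\|_4^2\le C\|u\|_{H^k}^2$ by the Sobolev inequality (a bound quadratic in the norm, which is exactly why the small coefficient is needed), and then adds a constant to $V$ so that $\left(\int_{\mathcal G}|u^{(k)}|^2+V|u|^2\,\mathrm dx\right)^{1/2}$ becomes an equivalent norm on $H^k(\mathcal G)$; the Gagliardo--Nirenberg estimate and Young's inequality are then applied with this weighted norm. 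You instead keep an arbitrary fixed decomposition of $V$ and tame $\|V_2\|_2\|u\|_4^2$ by applying Proposition~\ref{prop:higherordergag} at $p=4$ under the mass constraint, which yields the sublinear power $|u|_{H^k}^{1/(2k)}$ and lets Young's inequality absorb the potential with no smallness requirement on $\|V_2\|_2$. Your route is marginally more elementary --- it avoids both the renormalized decomposition and the equivalent-norm step --- at the cost of one extra application of the interpolation inequality. One caveat: the paper's equivalent-norm construction is not wasted effort, since it is explicitly reused later (in Lemma~\ref{lem:preconditions} and Proposition~\ref{prop:second}); if one bypasses it here, it must still be established at those points.
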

\begin{proof}

Let $\varepsilon>0$. Consider a decomposition of $V\in L^2+ L^\infty$ such that
\begin{equation}
V = V_2+ V_\infty, \quad \|V_2\|_{2}\le\varepsilon.
\end{equation}
Then 
\begin{multline}
\int_{\mathcal G} \left |u^{(k)}\right |^2- \left \|V_\infty\right \|_\infty |u|^2\, \mathrm dx- \varepsilon \|u\|_{4}^2\\
\le \int_{\mathcal G} \left |u^{(k)}\right |^2+ V |u|^2\, \mathrm dx\\
\le \int_{\mathcal G} \left |u^{(k)}\right |^2+ \left \|V_\infty\right \|_\infty |u|^2\, \mathrm dx+ \varepsilon \|u\|_{4}^2.
\end{multline}
By the Sobolev inequality we infer
\begin{equation}
\|u\|_4^2 \le C_1  \|u\|_{H^k}^2 \le C_2 \left (\left |u^{(k)}\right |_2^2+ \left |u\right |_2^2 \right ). 
\end{equation}
Adding a constant to the potential if necessary we have that
\begin{equation}
\|u\| :=\left (\int_{\mathcal G} \left |u^{(k)}\right |^2+ V |u|^2\, \mathrm dx\right )^{1/2}
\end{equation}
defines an equivalent norm on $H^k(\mathcal G)$.

From Proposition~\ref{prop:higherordergag} we have
\begin{equation}
\|u\|_{L^p}^p\le C \left \|u\right \|_{L^2(\mathcal G)}^{\frac{(2k-1)p+2}{2k}} \big \|u\big \|^{\frac{p-2}{2k}}
\end{equation}
for some $C>0$. 
Let $0<\varepsilon <1$, then with Young's inequality we infer for all $u\in H^k(\mathcal G)$ with $\|u\|_2^2=c$
\begin{equation}
\frac{\mu}{p}\|u\|_{L^p}^p \le \frac{\varepsilon}{2} \|u\|^2 + C_{\varepsilon,c} 
\end{equation}
for some $C_{\varepsilon, c} >0$ and we obtain
\begin{equation}
E^{(k)} \ge \frac{1-\varepsilon}{2} \int_{\mathcal G} |u^{(k)}|^2 + V |u|^2 \, \mathrm dx - C_{\varepsilon,c}
\end{equation}
for $2< p<4k+2$. 
\end{proof}

\begin{proposition}\label{prop:NLSmult}
Let $\mathcal G$ be a finite, connected metric graph. Assume $u\in H^k(\mathcal G)$ is a minimizer of $E^{(k)}_c$, then $u\in H^{2k}(\mathcal G)$ and there exists $\lambda\in \mathbb R$ such that
\begin{equation}\label{eq:NLSELG}
(-1)^k u_e^{(2k)} + \left ( V+\lambda\right ) u_e = \mu |u_e|^{p-1} u_e
\end{equation}
for all $e\in \E$.
\end{proposition}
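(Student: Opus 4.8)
The plan is to apply the Lagrange multiplier rule to obtain a weak Euler--Lagrange equation, then to bootstrap regularity edgewise, and finally to extract the vertex conditions characterising $H^{2k}(\mathcal G)$. First I would check that $E^{(k)}$ and the constraint $G(u):=\|u\|_2^2$ are $C^1$ on $H^k(\mathcal G)$, regarded as a real Hilbert space. Differentiability of the quadratic part is immediate, and for the nonlinear term one uses the Sobolev inequality \eqref{eq:Sobolevinequality}, which on a finite metric graph gives $H^k(\mathcal G)\hookrightarrow L^r(\mathcal G)$ for every $r\in[2,\infty]$, so that $u\mapsto \frac{\mu}{p}\int_{\mathcal G}|u|^p\,\mathrm dx$ is $C^1$ with derivative $\mu\operatorname{Re}\int_{\mathcal G}|u|^{p-2}u\,\overline\varphi\,\mathrm dx$. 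Since $\|u\|_2^2=c>0$ forces $u\neq 0$, one has $G'(u)\neq 0$, so the Lagrange multiplier theorem yields $\lambda\in\mathbb R$ with $\langle (E^{(k)})'(u),\varphi\rangle=-\lambda\langle G'(u),\varphi\rangle$ for all $\varphi$. Testing with $\varphi$ and with $i\varphi$ removes the real parts and produces the complex weak equation
\begin{equation}
\int_{\mathcal G} u^{(k)}\overline{\varphi^{(k)}}+(V+\lambda)u\,\overline\varphi\,\mathrm dx=\mu\int_{\mathcal G}|u|^{p-2}u\,\overline\varphi\,\mathrm dx,\qquad \forall\,\varphi\in H^k(\mathcal G).
\end{equation}

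Next I would establish edgewise regularity. Set $f:=\mu|u|^{p-2}u-(V+\lambda)u$ and verify $f\in L^2(\mathcal G)$: the embedding $H^k(\mathcal G)\hookrightarrow L^\infty\cap L^{2(p-1)}(\mathcal G)$ gives $|u|^{p-2}u\in L^2(\mathcal G)$, while writing $V=V_2+V_\infty$ with $V_2\in L^2$, $V_\infty\in L^\infty$ and using $u\in L^2\cap L^\infty$ gives $Vu\in L^2(\mathcal G)$. Now test against $\varphi\in C^\infty$ supported in the interior of a single edge, extended by zero (such functions lie in $H^k(\mathcal G)$ since all vertex traces vanish); the weak equation localises to $\int_{I_e}u_e^{(k)}\overline{\varphi^{(k)}}=\int_{I_e}f_e\,\overline\varphi$, which says that $u_e^{(k)}$ has a weak $k$th derivative equal to $(-1)^k f_e\in L^2(I_e)$. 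Hence $u_e\in H^{2k}(I_e)$ on every edge, so $u\in\widetilde{H^{2k}}(\mathcal G)$, and the strong equation \eqref{eq:NLSELG} holds edgewise.

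Finally I would recover the vertex conditions. Knowing $u\in\widetilde{H^{2k}}(\mathcal G)$, I integrate by parts $k$ times in the first term of the weak equation on each edge; the resulting volume integral $(-1)^k\int_{\mathcal G}u^{(2k)}\overline\varphi\,\mathrm dx$ cancels against the remaining terms by \eqref{eq:NLSELG}, leaving only boundary contributions, namely
\begin{equation}
\sum_{\mathsf v\in\mathcal V}\;\sum_{e\succ\mathsf v}\;\sum_{i=0}^{k-1}(-1)^{k-1-i}\,\frac{\partial^{\,2k-1-i}}{\partial\nu^{\,2k-1-i}}u_e(\mathsf v)\,\overline{\frac{\partial^{\,i}}{\partial\nu^{\,i}}\varphi_e(\mathsf v)}=0
\end{equation}
for all $\varphi\in H^k(\mathcal G)$. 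Since $u$ already satisfies the continuity and Kirchhoff conditions of $H^k(\mathcal G)$ for orders $\le k-1$, I vary the admissible traces $\partial_\nu^{\,i}\varphi_e(\mathsf v)$: when $i$ is even, $\varphi^{(i)}$ is continuous at $\mathsf v$ and its common value is free, forcing the Kirchhoff condition $\sum_{e\succ\mathsf v}\partial_\nu^{\,2k-1-i}u_e(\mathsf v)=0$ on the odd-order derivative $2k-1-i$; when $i$ is odd, the Kirchhoff freedom of $\varphi^{(i)}$ forces continuity of the even-order derivative $u^{(2k-1-i)}$. These are precisely the complementary vertex conditions for orders $k\le j\le 2k-1$, which together with the $H^k$ conditions yield $u\in H^{2k}(\mathcal G)$. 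The main obstacle is exactly this last step: correctly bookkeeping the $k$ layers of boundary terms generated by repeated integration by parts (including signs coming from the inward normals and the orientation of each edge) and matching them against the precise linear constraints on the traces of $H^k$ test functions, so as to isolate each continuity and Kirchhoff condition separately. The remaining steps are routine once $f\in L^2(\mathcal G)$ and the $C^1$ structure have been checked.
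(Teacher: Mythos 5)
Your proposal is correct and follows essentially the same route as the paper's proof: Lagrange multipliers giving the weak Euler--Lagrange equation on $H^k(\mathcal G)$, interior (edgewise) elliptic regularity from test functions compactly supported inside single edges, and then integration by parts against general $H^k$ test functions to extract the complementary continuity and Kirchhoff conditions for orders $k\le j\le 2k-1$, which together with the $H^k$ conditions give $u\in H^{2k}(\mathcal G)$. Your write-up is in fact somewhat more explicit than the paper's (verifying $f\in L^2(\mathcal G)$, handling the complex structure by testing with $\varphi$ and $i\varphi$), but the underlying argument is the same.
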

\begin{proof}
Since $E^{(k)}\in C^1(H^k(\mathcal G), \mathbb R)$ and the $L^2$-constraint is also $C^1$, and $u$ is a constrained critical point we can compute the G\^ateaux derivative
\begin{equation}
\int_{\mathcal G} \left ( u^{(k)} \eta^{(k)}- u |u|^{p-2} \eta\right )\, \mathrm dx+  \int_{\mathcal G} \left ( V+\lambda\right ) u\eta\, \mathrm dx=0, \qquad \forall \eta\in H^k(\mathcal G)
\end{equation} 
where $\lambda$ is a Lagrange multiplier. Fixing an edge $e$, then with $\eta \in C_c^\infty(I_e)$ and integration by parts we deduce \eqref{eq:NLSELG} for each $e\in \E$ and by elliptic regularity $u\in \widetilde{H^{2k}}(\mathcal G)$. Fixing now $\mathsf v\in V$ and taking $\eta\in H^k(\mathcal G)$ to be locally supported near $\mathsf v$ and not supported at any other vertex, then by integration by parts we deduce
\begin{equation}
\sum_{j=1}^{k}(-1)^{j} \sum_{e\succ \mathsf v}\tfrac{\partial^{(k+j-1)}}{\partial^{(k+j-1)}\nu}u_e \tfrac{\partial^{(k-j)}}{\partial^{(k-j)}\nu}\eta_e(\mathsf v)=0.
\end{equation}  
Since the choice $\eta\in H^{k}$ is arbitrary we deduce
\begin{equation}
\begin{cases}
\sum_{e\succ \mathsf v} \frac{\partial^\ell}{\partial \nu^\ell}u_e(\mathsf v)=0, \qquad \forall k\le \ell \le 2k-1 \text{ odd},\\
u_{e_1}^{(\ell)}(\mathsf v) = u_{e_2}^{(\ell)}(\mathsf v), \qquad \forall k \le \ell \le 2k-1 \text{ even and } \forall e_1, e_2 \text{ adjacent at } \mathsf v
\end{cases}
\end{equation}
for all $\mathsf v\in \mathcal V$.
\end{proof}

\subsection{Partitions of unity in $\widetilde{C_b^\infty}$}
Let $\mathcal G$ be any locally finite, connected graph and $\mathcal O$ be a finite covering of $\mathcal O$. We construct a partition of unity in $\widetilde{C_b^\infty}(\mathcal G)$ by choosing appropriate partitions of unities subordinate to the covering. 
One rather different ``normalization'' of the usual partition of unity will be especially useful in applications:
\begin{lemma}\label{lem:unity2}
Let $\mathcal G$ be a metric graph. Consider any finite open covering $\mathcal O$ of $\mathcal G$. There exists a partition of unity subordinate to $\mathcal O$ in $\widetilde {C_b^\infty}$ satisfying
\begin{equation}\label{eq:normalizationt}
\sum_{O\in \mathcal O} \Psi_O^2 \equiv 1.
\end{equation}
\end{lemma}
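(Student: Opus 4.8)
The plan is to first produce an ordinary partition of unity subordinate to $\mathcal O$ in $\widetilde{C_b^\infty}(\mathcal G)$ and then renormalize it quadratically. Since $\mathcal O = \{O_1,\dots,O_N\}$ is a \emph{finite} open covering of $\mathcal G$, I would begin by constructing, for each $O_j$, a smooth bump-type function $\varphi_j \in \widetilde{C_b^\infty}(\mathcal G)$ with $\operatorname{supp}\varphi_j \subset O_j$, $0\le \varphi_j \le 1$, and such that the $\varphi_j$ jointly cover $\mathcal G$, i.e. $\sum_{j} \varphi_j(x) > 0$ for every $x\in\mathcal G$. On a metric graph this is routine: on each edge $I_e$ the sets $O_j\cap I_e$ are open subsets of an interval, and one can patch together edgewise smooth cut-offs that are constant ($\equiv 1$ and $\equiv 0$) near each vertex so that all derivatives vanish there; this is exactly what membership in $\widetilde{C_b^\infty}(\mathcal G) = \bigcap_k \widetilde{W_0^{k,\infty}}(\mathcal G)$ requires, since $\widetilde{W_0^{k,\infty}}$ demands vanishing of all derivatives of order $1,\dots,k-1$ at the vertices. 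The local finiteness of $\mathcal G$ guarantees that near any vertex only finitely many edges meet, so no uniform-bound issues arise and the $\widetilde{C_b^\infty}$ norms stay finite.

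Next I would renormalize. Set
\begin{equation}
\Phi(x) := \Big( \sum_{j=1}^N \varphi_j(x)^2 \Big)^{1/2}
\end{equation}
and define
\begin{equation}
\Psi_{O_j} := \frac{\varphi_j}{\Phi}.
\end{equation}
By construction $\Phi(x) > 0$ for all $x$, so $\Phi$ is bounded away from zero on any precompact set; in fact, using the edgewise-constant behaviour near vertices and the covering property, $\Phi$ is bounded below by a positive constant, so $1/\Phi$ is itself uniformly bounded. One then checks directly that
\begin{equation}
\sum_{j=1}^N \Psi_{O_j}^2 = \frac{1}{\Phi^2}\sum_{j=1}^N \varphi_j^2 = 1,
\end{equation}
which gives the desired normalization \eqref{eq:normalizationt}. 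The support condition $\operatorname{supp}\Psi_{O_j}\subset O_j$ is inherited from $\varphi_j$, and $0\le \Psi_{O_j}\le 1$ follows since $\varphi_j \le \Phi$ pointwise.

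The main obstacle is verifying that $\Psi_{O_j} \in \widetilde{C_b^\infty}(\mathcal G)$, i.e. that dividing by $\Phi$ preserves smoothness, uniform bounds on all derivatives, and the vanishing of derivatives at vertices. Smoothness on each edge is clear away from the zero set of $\Phi$, but since $\Phi$ never vanishes this is unproblematic; the quotient and products of $\widetilde{C_b^\infty}$ functions with a uniformly positive, uniformly-$\widetilde{C_b^\infty}$ denominator stay in $\widetilde{C_b^\infty}$ by the quotient and chain rules applied edgewise (one may invoke $\Phi$ being uniformly bounded below together with the algebra structure of $\widetilde{C_b^\infty}$). The vertex conditions require a short argument: because each $\varphi_j$ has all its derivatives of order $\ge 1$ vanishing at every vertex, and because near a vertex each $\varphi_j$ is locally constant, $\Phi$ is also locally constant near each vertex, so $\Psi_{O_j}$ equals a constant multiple of $\varphi_j$ there and inherits the vanishing of its higher derivatives at the vertices. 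This confirms $\Psi_{O_j}\in \widetilde{C_b^\infty}(\mathcal G)$ and completes the construction.
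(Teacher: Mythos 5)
Your proposal is correct and follows essentially the same route as the paper: the paper likewise starts from a smooth edgewise partition of unity (constant near the vertices) and renormalizes it quadratically, setting $\Psi_O = \psi_O/\sqrt{\sum_{O\in\mathcal O}\psi_O^2}$, exactly your $\varphi_j/\Phi$. One small remark: since you only assume $\sum_j\varphi_j>0$ pointwise, the uniform lower bound on $\Phi$ (which you need for the uniform derivative bounds defining $\widetilde{C_b^\infty}$ on a noncompact graph) is not automatic; the cleanest fix is to start, as the paper does, from a family with $\sum_j\varphi_j\equiv 1$, whence $\sum_j\varphi_j^2\ge 1/N$ by Cauchy--Schwarz.
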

\begin{proof}
Consider any smooth partition of unity $\{\psi_O\}_{O\in \mathcal O}$ on the graph subordinate to the open covering $\mathcal O$ satisfying
\begin{equation}
\sum_{O\in \mathcal O} \Psi_O \equiv 1. 
\end{equation}
Then we may define
\begin{equation}
\Psi_O:= \frac{\psi_O}{\sqrt{\sum_{O\in \mathcal O} \psi_O^2}}
\end{equation}
for all $O\in \mathcal O$, which is smooth restricted as functions on all edges since $\sum_{O\in \mathcal O} \psi_O^2(y)\neq 0$ for all $y\in\mathcal G$. Furthermore, it is constant in a neighborhood of any vertex and we infer $\Psi_O\in \widetilde{C_b^\infty}$. By construction we conclude 
\begin{equation}
\sum_{O\in \mathcal O} \Psi_O^2 \equiv 1.
\end{equation}
\end{proof}
\begin{remark}
The normalization in \eqref{eq:normalizationt} replaces in this context the typical  normalization, where one assumes
\begin{equation}
\sum_{O\in \mathcal O } \psi_O \equiv 1.
\end{equation}
Throughout the rest of the paper we will only work with partitions of unity  that satisfy the normalization \eqref{eq:normalizationt}.
\end{remark}

\begin{example}\label{ex:unity2}
Let $\mathcal G$ be a finite, connected metric graph with core $K= \mathcal G\setminus \mathcal E_\infty$. Consider on $\mathcal G$ the open covering $ \mathcal O$ given by $K_2$ and $\mathcal G\setminus K_1$, where $K_1$ and $K_2$ are the neighborhoods of $K$ given as in \eqref{eq:core}, such that $\mathcal G\setminus K_1$ only consists of disjoint rays. Consider the partition of unity subordinate to $\mathcal O$ from Lemma \ref{lem:unity2} given by $\psi_K, \{\psi_e\}_{e\in \mathcal E_\infty}$ respective to $K_2$ and $\mathcal G\setminus K_1$, then we define slight modifications
\begin{align}
\psi_{K,R}(x) &=\begin{cases}
1, &\qquad \text{on K}\\
\psi_{K}(x/R) &\qquad \text{on all rays } e\in \mathcal E_\infty;
\end{cases}\\
\psi_{e,R}(x) &=\begin{cases}
0, &\qquad \text{on }\mathcal G\setminus \{e\}\\
\psi_{e}(x/R) &\qquad \text{on } e\in \mathcal E_\infty.
\end{cases}
\end{align}
Then by definition, $\{\psi_{O,n}\}_{O\in \mathcal O}$ is a vanishing-compatible sequence of partitions of unity subordinate to the open coverings given by $K_{2n}$ and $\mathcal G\setminus K_{n}$. 
By Lemma \ref{lem:unity2} there exists a sequence of partitions of unity
$$\Psi_n:=\Psi_{K,n},\qquad  \widetilde{\Psi_n}:=\sum_{e\in \mathcal E_\infty}\Psi_{e,n}$$ 
in $\widetilde{C_b^\infty}$ satisfying
\begin{equation}
\Psi_{n}^2+ \widetilde{\Psi_n}^2\equiv 1.
\end{equation} 
\end{example}

\subsection{A decomposition formula}\label{sec:decompositionpoly}
In the following we identify a given function $f\in \widetilde{C_b^\infty}(\mathcal G)$ with its corresponding multiplication operator $\mathcal M_f \phi:= f\phi$. Let $A$ be an operator such that $fD(A) \subset D(A)$, then we can define the commutator $[A,f]=Af-fA$ and 
\begin{align}
fAf&= f^2 A + f [A, f]\\
fAf&= Af^2 + [A,f]f.
\end{align} 
Averaging the two preceding equations we conclude
\begin{equation}\label{eq:decompositionn}
fAf = \frac{1}{2} ( f^2 A + A f^2) + \frac{1}{2} ( f [A,f] - [A,f]f).
\end{equation}

\begin{lemma}\label{lem:IMSunity}
Let $\mathcal G$ be a locally finite, connected metric graph. Assume $\{\psi_k\}_{k=1}^N$  is  a family of function in $\widetilde{C_b^\infty}(\mathcal G)$ with $0 \le \psi_k\le 1$ for all $k\in \{1,\ldots, k\}$ and 
\begin{equation}
\sum_{k=1}^N \psi_k^2 \equiv 1.
\end{equation}
Assume $D(A)$ is invariant under multiplication by elements in $\widetilde{C_b^\infty}(\mathcal G)$, then
\begin{equation}\label{eq:decompositionadv}
A= \sum_{k=1}^N \psi_k A \psi_k -\frac{1}{2} ( \psi_k [A,\psi_k] - [A,\psi_k]\psi_k).
\end{equation}
\end{lemma}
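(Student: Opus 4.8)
The plan is to prove the decomposition formula \eqref{eq:decompositionadv} by summing the averaged identity \eqref{eq:decompositionn} over the family $\{\psi_k\}_{k=1}^N$ and then exploiting the normalization $\sum_{k=1}^N \psi_k^2 \equiv 1$. The invariance assumption $f D(A) \subset D(A)$ for $f \in \widetilde{C_b^\infty}(\mathcal G)$ is exactly what is needed to make every commutator $[A,\psi_k]$ well-defined on $D(A)$, so all operators below act on the common dense domain $D(A)$ and no domain issues obstruct the algebra.

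First I would apply \eqref{eq:decompositionn} to each $f = \psi_k$, obtaining
\begin{equation}
\psi_k A \psi_k = \frac{1}{2}(\psi_k^2 A + A \psi_k^2) + \frac{1}{2}(\psi_k [A,\psi_k] - [A,\psi_k]\psi_k).
\end{equation}
Summing over $k \in \{1,\ldots,N\}$ and using linearity of the sum, the first group of terms becomes
\begin{equation}
\sum_{k=1}^N \frac{1}{2}(\psi_k^2 A + A \psi_k^2) = \frac{1}{2}\left(\left(\sum_{k=1}^N \psi_k^2\right) A + A \left(\sum_{k=1}^N \psi_k^2\right)\right) = \frac{1}{2}(A + A) = A,
\end{equation}
where the normalization $\sum_{k=1}^N \psi_k^2 \equiv 1$ collapses each bracketed sum to the identity operator. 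Here I use implicitly that $\sum_k \psi_k^2 = 1$ as a multiplication operator is the identity, and that multiplication by the constant function $1$ commutes with $A$ on $D(A)$.

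Rearranging the summed identity then yields
\begin{equation}
\sum_{k=1}^N \psi_k A \psi_k = A + \sum_{k=1}^N \frac{1}{2}(\psi_k [A,\psi_k] - [A,\psi_k]\psi_k),
\end{equation}
and solving for $A$ gives precisely \eqref{eq:decompositionadv}. The argument is essentially algebraic, so I do not anticipate a serious obstacle; the only point requiring care is the interchange of the finite sum with the operator $A$ in the step that invokes the normalization. Since $N$ is finite and each $\psi_k^2 A$ and $A \psi_k^2$ is defined on all of $D(A)$, this interchange is justified termwise without any convergence concerns, and the finiteness of the covering (inherited from the hypotheses on $\mathcal G$ and the family $\{\psi_k\}$) ensures there are no subtleties from infinite sums.
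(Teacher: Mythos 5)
Your proof is correct and follows exactly the route the paper intends: the paper's own proof of this lemma is the one-line remark that it ``follows immediately'' from the averaged commutator identity \eqref{eq:decompositionn}, and your argument simply spells out the details of that step (apply the identity to each $\psi_k$, sum over $k$, and collapse $\sum_k \psi_k^2 \equiv 1$). Nothing further is needed.
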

\begin{proof}
Follows immediately with \eqref{eq:decompositionn}.
\end{proof}

We refer to \eqref{eq:decompositionadv} as a decomposition formula for $A$. In the following, we develop a decomposition formula for the Polylaplacian $A=(-\Delta)^k$.

Let $\mathcal G$ be a finite metric graph and let $k\in \mathbb N$. In the following, we define the Polylaplacian $A=(-\Delta)^k$ on $\mathcal G$ as an operator $A: D(A) \subset L^2(\mathcal G) \to L^2(\mathcal G)$ given by
\begin{gather}
\left ((-\Delta)^k u\right )_e :=  (-\Delta)^k u_e:= (-1)^k u_e^{(2k)}\\
D(A) = H^{2k}(\mathcal G).
\end{gather}

\begin{lemma}\label{lem:decomposition2}
Let $\mathcal G$ be a locally finite connected graph. Let $A=(-\Delta)^k$ with $D(A)= H^{2k}$, then 
\begin{enumerate}[(i)]
\item $fD(A)\subset D(A)$ for all $f\in \widetilde{C_b^\infty}(\mathcal G)$.
\item Let $f\in \widetilde{C_b^\infty}(\mathcal G)$, then the operator $fAf$ is given by 
\begin{equation}\label{eq:decomposition2}
\begin{aligned}
\left (fAf\right ) \phi &= \frac{1}{2} ( f^2 A + A f^2)\phi \\
&\qquad+ \frac{(-1)^{k+1}}{2} \sum_{m=1}^{2k-1} \sum_{n=1}^{2k-m} \frac{(2k)_{m+n}}{m! n!} f^{(m)} f^{(n)} \phi^{(2k-m-n)}
\end{aligned}
\end{equation}
for all $\phi \in D(A)$.
\end{enumerate}
\end{lemma}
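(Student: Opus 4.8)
The plan is to establish (i) first, because the operator identity \eqref{eq:decompositionn} on which (ii) rests is only meaningful once we know $fD(A)\subset D(A)$. Fix $\phi\in H^{2k}(\mathcal G)$ and $f\in\widetilde{C_b^\infty}(\mathcal G)$. Edgewise the Leibniz rule gives
\[
(f\phi)_e^{(j)}=\sum_{i=0}^{j}\binom{j}{i}f_e^{(i)}\phi_e^{(j-i)},
\]
and since each $f^{(i)}$ is uniformly bounded while $\phi_e^{(j-i)}\in L^2(I_e)$ for $j\le 2k$, we obtain $f\phi\in\widetilde{W^{2k,2}}(I_e)$ on every edge, hence $f\phi\in\widetilde{H^{2k}}(\mathcal G)$. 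The decisive structural property of $\widetilde{C_b^\infty}(\mathcal G)$ is that $f^{(i)}(\mathsf v)=0$ for every $i\ge1$ and every vertex $\mathsf v$, so the Leibniz expansion evaluated at a vertex collapses to $(f\phi)_e^{(j)}(\mathsf v)=f(\mathsf v)\,\phi_e^{(j)}(\mathsf v)$. From this I would read off both families of vertex conditions defining $H^{2k}(\mathcal G)$: for even $j\le 2k-1$ the continuity of $\phi^{(j)}$ across $\mathsf v$ transfers to $f\phi$ because $f(\mathsf v)$ is a single scalar, and for odd $j\le 2k-1$ the Kirchhoff sum factors as $f(\mathsf v)\sum_{e\succ\mathsf v}\frac{\partial^j}{\partial\nu^j}\phi_e(\mathsf v)=0$. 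Thus $f\phi\in D(A)$, and the same argument applied to $f^2\in\widetilde{C_b^\infty}(\mathcal G)$ shows $f^2D(A)\subset D(A)$.

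For (ii) I would invoke the averaged identity \eqref{eq:decompositionn}, now legitimate since both $f$ and $f^2$ preserve $D(A)$; it remains to identify the commutator term $\tfrac12\bigl(f[A,f]-[A,f]f\bigr)$ with the stated double sum. The starting point is the Leibniz expansion of the commutator itself: for $\phi\in D(A)$,
\[
[A,f]\phi=A(f\phi)-fA\phi=(-1)^k\sum_{j=1}^{2k}\binom{2k}{j}f^{(j)}\phi^{(2k-j)},
\]
the $j=0$ term having cancelled against $fA\phi$.

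Next I would compute $f[A,f]\phi-[A,f](f\phi)$ by expanding $(f\phi)^{(2k-j)}$ inside $[A,f](f\phi)$ with the Leibniz rule once more. Isolating the $i=0$ layer of that inner expansion yields exactly $(-1)^k\sum_{j}\binom{2k}{j}f f^{(j)}\phi^{(2k-j)}=f[A,f]\phi$, which cancels the first term; what survives is
\[
f[A,f]\phi-[A,f]f\phi=-(-1)^k\sum_{m=1}^{2k-1}\sum_{n=1}^{2k-m}\binom{2k}{m}\binom{2k-m}{n}f^{(m)}f^{(n)}\phi^{(2k-m-n)},
\]
after relabelling $m=j$, $n=i$ and noting that $n\ge1$ forces $m\le 2k-1$. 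The elementary identity $\binom{2k}{m}\binom{2k-m}{n}=\frac{(2k)!}{m!\,n!\,(2k-m-n)!}=\frac{(2k)_{m+n}}{m!\,n!}$ recasts this in the claimed shape, and halving it produces the prefactor $\frac{(-1)^{k+1}}{2}$, completing (ii).

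The main obstacle is the combinatorial bookkeeping in the penultimate step: one must manage two nested Leibniz expansions, check that precisely the leading ($i=0$) layer cancels against $f[A,f]\phi$, and confirm that the surviving double sum carries the correct index range and binomial weight. A secondary point to handle with care is the sign convention for the inward normal derivative at vertices in (i); but since the orientation enters $f\phi$ and $\phi$ identically and $f$ contributes only its vertex value $f(\mathsf v)$, these signs cancel and do not affect the Kirchhoff or continuity conclusions.
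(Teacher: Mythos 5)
Your proof is correct and follows essentially the same route as the paper's: expand $[A,f]$ by the Leibniz rule, expand $[A,f]f$ by a second application of the Leibniz rule, observe that the $n=0$ (your $i=0$) layer cancels against $f[A,f]$, identify $\binom{2k}{m}\binom{2k-m}{n}=\frac{(2k)_{m+n}}{m!\,n!}$, and combine the halved remainder with the averaged identity \eqref{eq:decompositionn}. Your verification of part (i), using that $f^{(i)}(\mathsf v)=0$ for all $i\ge 1$ so that the vertex conditions of $H^{2k}(\mathcal G)$ transfer from $\phi$ to $f\phi$, is actually more explicit than the paper's, which leaves the invariance of $D(A)$ implicit in the Leibniz computation.
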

\begin{proof}
We apply Leibniz' formula and compute
\begin{equation}
\begin{aligned}
[A,f]\phi &= (-\Delta)^k f \phi - f(-\Delta)^k \phi\\
&= (-1)^k \sum_{m=1}^{2k} \binom{2k}{l} f^{(m)} \phi^{(2k-m)}.
\end{aligned}
\end{equation}
Then we apply Leibniz' formula again and compute
\begin{equation}
\begin{aligned}
\left ([A,f]f\right )\phi &= (-1)^k \sum_{m=1}^{2k}\sum_{n=0}^{2k-m} \binom{2k}{m}\binom{2k-m}{n} f^{(m)} f^{(n)} \phi^{2k-m-n}\\
&= (-1)^k \sum_{m=1}^{2k} \sum_{n=0}^{2k-m} \frac{(2k)_{m+n}}{m! n!} f^{(m)} f^{(n)} \phi^{(2k-m-n)}
\end{aligned}
\end{equation}
and we conclude
\begin{equation}
\frac{1}{2} ( f [A,f] - [A,f]f)\phi=\frac{(-1)^{k+1}}{2} \sum_{m=1}^{2k-1} \sum_{n=1}^{2k-m} \frac{(2k)_{m+n}}{m! n!} f^{(m)} f^{(n)} \phi^{(2k-m-n)}.
\end{equation}
The statement follows upon combining this with \eqref{eq:decompositionadv}.
\end{proof}

Given the core $K= \mathcal G\setminus \E_\infty$ of $\mathcal G$ and $R>0$ we define 
\begin{equation}\label{eq:numerateR}
\begin{aligned}
D_R&:= \{\phi \in D(A) |\operatorname{supp}(\phi) \subset \mathcal G \setminus K_R\}\\
\Sigma_R&:= \inf\{\langle \phi, A\phi\rangle|\phi \in D_R, \|\phi\|_{2}^2=1\},
\end{aligned}
\end{equation} 
where $K_R$ was defined in \eqref{eq:core}.

For $R=0$ we set
\begin{equation}\label{eq:numerate0}
\begin{aligned}
D_0&:= D(A)\\
\Sigma_0&:= \inf\{\langle \phi, A\phi \rangle |\phi \in D(A), \|\phi\|_{2}^2=1\}.
\end{aligned}
\end{equation}
The last relevant quantity, which we will discuss later in §\ref{sec:onthreshold} is 
\begin{equation}\label{eq:numerate1}
\Sigma := \lim_{R\to \infty} \Sigma_R=\sup_{R>0} \Sigma_R.
\end{equation}

\begin{lemma}\label{lem:preconditions}
Let $\mathcal G$ be a finite, connected metric graph and let $V\in L^2+ L^\infty(\mathcal G)$. $E^{(k)}$ is weak limit superadditive, superadditive with respect to the partition of unity in Example \ref{ex:unity2}. Assume $A= (-\Delta)^k + V$ admits a ground state, then 
$t\mapsto E_t$ as defined in \eqref{eq:minimizationeq} is strictly subadditive. 
\end{lemma}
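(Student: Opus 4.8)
The plan is to verify the three asserted properties in turn. Throughout write $Q(u):=\tfrac12\int_{\mathcal G}|u^{(k)}|^2+V|u|^2\,\mathrm dx$ for the quadratic part (the form of $A$, defined and bounded below on $H^k(\mathcal G)$ by Lemma~\ref{lem:energyestimateneu}) and $N(u):=\tfrac{\mu}{p}\int_{\mathcal G}|u|^p\,\mathrm dx\ge 0$, so $E^{(k)}=Q-N$. For \emph{weak limit superadditivity}, let $u_n\rightharpoonup u$ be a weakly convergent minimizing sequence of $E^{(k)}_c$ and put $w_n:=u_n-u\rightharpoonup 0$. Since $w_n^{(k)}\rightharpoonup 0$ in $L^2$ and $u^{(k)}\in L^2$, the cross term $\int u^{(k)}\overline{w_n^{(k)}}$ vanishes in the limit; writing $V=V_2+V_\infty$ with $V_2\in L^2$, $V_\infty\in L^\infty$ and using the embedding $H^k(\mathcal G)\hookrightarrow L^\infty(\mathcal G)$ from \eqref{eq:Sobolevinequality} (so that $Vu\in L^2$ and $w_n\rightharpoonup 0$ in $L^2$) the cross term $\int Vu\,\overline{w_n}$ also vanishes, whence $Q(u_n)=Q(u)+Q(w_n)+o(1)$. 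Passing to a subsequence with $u_n\to u$ a.e.\ (local compactness), the Br\'ezis--Lieb lemma gives $\|u_n\|_p^p=\|u\|_p^p+\|w_n\|_p^p+o(1)$, so $E^{(k)}(u_n)=E^{(k)}(u)+E^{(k)}(w_n)+o(1)$ and the required inequality holds (in fact with equality).

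For \emph{superadditivity with respect to the partition} $\{\Psi_n,\widetilde{\Psi_n}\}$ of Example~\ref{ex:unity2}, normalized so that $\Psi_n^2+\widetilde{\Psi_n}^2\equiv 1$, let $v_n$ be a vanishing sequence, which is bounded in $H^k$. The potential term splits exactly, $\int V(\Psi_n^2+\widetilde{\Psi_n}^2)|v_n|^2=\int V|v_n|^2$. For the leading term I would invoke the decomposition formula (Lemma~\ref{lem:decomposition2}), or equivalently expand $(\Psi_n v_n)^{(k)}$ and $(\widetilde{\Psi_n}v_n)^{(k)}$ by Leibniz at the level of the quadratic form so that only $H^k$-regularity is used: every error term carries at least one derivative of $\Psi_n$ or $\widetilde{\Psi_n}$, which scale like $O(n^{-1})$, hence is controlled by $n^{-1}\|v_n\|_{H^k}^2\to 0$, giving $\int\!\big(|(\Psi_n v_n)^{(k)}|^2+|(\widetilde{\Psi_n}v_n)^{(k)}|^2\big)=\int|v_n^{(k)}|^2+o(1)$. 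For the nonlinearity, $|\Psi_n v_n|^p+|\widetilde{\Psi_n}v_n|^p=(\Psi_n^p+\widetilde{\Psi_n}^p)|v_n|^p$ with $\Psi_n^p+\widetilde{\Psi_n}^p\le\Psi_n^2+\widetilde{\Psi_n}^2=1$ for $p\ge 2$; the discrepancy $1-\Psi_n^p-\widetilde{\Psi_n}^p$ is supported in the transition annulus contained in $K_{2n}$, and since $v_n$ vanishes a diagonal argument (as in the proof of Theorem~\ref{thm:main2}) yields a subsequence with $\int_{K_{2n}}|v_n|^p\to 0$, so this discrepancy contributes $o(1)$. Combining the three pieces gives $E^{(k)}(v_n)=E^{(k)}(\Psi_n v_n)+E^{(k)}(\widetilde{\Psi_n}v_n)+o(1)$, and passing to a further subsequence along which both $E^{(k)}(\Psi_n v_n)$ and $E^{(k)}(\widetilde{\Psi_n}v_n)$ converge yields the superadditivity inequality.

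The \emph{strict subadditivity} is where the hypothesis enters. Let $\phi_0$ be a ground state of $A=(-\Delta)^k+V$ with $\|\phi_0\|_2=1$ and $A\phi_0=\Sigma_0\phi_0$, $\Sigma_0=\inf\sigma(A)$. Testing $E^{(k)}_c$ with $\sqrt c\,\phi_0$ gives $E^{(k)}_c\le \tfrac{c}{2}\Sigma_0-\tfrac{\mu}{p}c^{p/2}\|\phi_0\|_p^p<\tfrac{c}{2}\Sigma_0$, as $\phi_0\not\equiv 0$. On the other hand the variational principle gives $Q(u)\ge\tfrac{c}{2}\Sigma_0$ whenever $\|u\|_2^2=c$, so along any minimizing sequence $u_n$ the nonlinear part satisfies $N(u_n)=Q(u_n)-E^{(k)}(u_n)\ge \tfrac{c}{2}\Sigma_0-E^{(k)}(u_n)\to\delta:=\tfrac{c}{2}\Sigma_0-E^{(k)}_c>0$. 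Using the scaling $E^{(k)}(\sqrt\theta\,u_n)=\theta Q(u_n)-\theta^{p/2}N(u_n)=\theta E^{(k)}(u_n)-(\theta^{p/2}-\theta)N(u_n)$ and letting $n\to\infty$, I obtain for every $\theta>1$ the strict sub-homogeneity $E_{\theta c}\le \theta E_c-(\theta^{p/2}-\theta)\delta<\theta E_c$. The standard convex-combination argument (writing $t_1=\lambda_1(t_1{+}t_2)$, $t_2=\lambda_2(t_1{+}t_2)$ with $\lambda_1+\lambda_2=1$ and applying $E_{(1/\lambda_i)t_i}<(1/\lambda_i)E_{t_i}$) then upgrades this to $E_{t_1+t_2}<E_{t_1}+E_{t_2}$ for all $t_1,t_2>0$.

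The main obstacle is the partition superadditivity: one must simultaneously control the commutator-type error terms produced by differentiating the partition functions (handled by the decomposition formula and the $O(n^{-1})$ scaling, keeping everything at the quadratic-form level so as not to exceed the available $H^k$-regularity) and the loss of $L^p$-mass in the overlap region (handled by the diagonal argument exploiting that the sequence vanishes). By contrast the strict subadditivity reduces cleanly to the sub-homogeneity estimate, once the ground state is used to place $E^{(k)}_c$ strictly below $\tfrac{c}{2}\inf\sigma(A)$.
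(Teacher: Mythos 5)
Your proof is correct, and its skeleton matches the paper's: Br\'ezis--Lieb plus vanishing of cross terms under weak convergence for the weak limit superadditivity, smallness of the derivatives of the partition functions for the partition superadditivity, and the ground state of $A$ forcing $E_c<\tfrac{c}{2}\Sigma_0$ for the strict subadditivity. You deviate in two places, and both deviations are sound. For the partition superadditivity, the paper first passes to a minimizing sequence in $H^{2k}(\mathcal G)$ (via the density result, Proposition~\ref{prop:density}), applies the operator-level decomposition formula of Lemma~\ref{lem:decomposition2}, and integrates by parts; your Leibniz expansion of $\int_{\mathcal G}|(\Psi_n v_n)^{(k)}|^2\,\mathrm dx$ works directly at the level of the quadratic form on $H^k$, using only $\|\Psi_n^{(i)}\|_\infty=O(n^{-1})$ for $i\ge 1$, so it bypasses the density step altogether (your error is $O(n^{-1})\|v_n\|_{H^k}^2$ rather than the paper's $O(n^{-2})$ bound, which is immaterial since both vanish). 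What the paper's heavier route buys is reuse: the same operator-level formula underlies the Persson-type Theorem~\ref{thm:persson} later in the paper. For the strict subadditivity, the paper proves concavity of $t\mapsto E_t^{(k)}$, hence $E_t^{(k)}\ge tE_1^{(k)}$ on $[0,1]$, and rules out equality by contradiction: equality would force $\int_{\mathcal G}|u_n|^p\,\mathrm dx\to 0$ along minimizing sequences, hence $E_t^{(k)}\ge\tfrac{\Sigma_0 t}{2}$, contradicting the ground-state test function. You run the same two estimates forward instead: the ground state gives $\delta:=\tfrac{c}{2}\Sigma_0-E_c>0$, hence $\liminf_n N(u_n)\ge\delta$ along minimizing sequences, yielding the quantitative sub-homogeneity $E_{\theta c}\le\theta E_c-(\theta^{p/2}-\theta)\delta<\theta E_c$ for $\theta>1$, and then strict subadditivity by the convex-combination trick. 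The ingredients are identical, but your version is direct rather than by contradiction, produces an explicit gap, and handles arbitrary $t_1,t_2>0$ without the implicit rescaling to $t_1+t_2=1$ that the paper's formulation relies on.
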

\begin{proof}
\emph{Weak limit superadditivity.} We showed in the proof of Lemma~\ref{lem:energyestimateneu} that 
\begin{equation}
\|u\| =\left (\int_{\mathcal G} \left |u^{(k)}\right |^2+ V |u|^2\, \mathrm dx\right )^{1/2}
\end{equation}
defines an equivalent norm on $H^k(\mathcal G)$ upon adding a constant to $V$. 

Assume $u_n \rightharpoonup u$ in $H^k(\mathcal G)$ weakly, then up to a subsequence, which we still denote by $u_n$, by the Brezis--Lieb Lemma and weak convergence
\begin{equation}
\begin{gathered}
\limsup_{n\to \infty} \|u_n\|^2  = \|u\|^2 + \limsup_{n\to \infty} \|u-u_n\|,\\
\limsup_{n\to \infty} \int_{\mathcal G} |u_n|^p \, \mathrm dx= \int_{\mathcal G} |u|^p\, \mathrm dx + \limsup_{n\to \infty}\int_{\mathcal G} |u-u_n|^p \, \mathrm dx.
\end{gathered}
\end{equation}
Then
\begin{equation}
\limsup_{n\to \infty} E^{(k)} (u_n) =E^{(k)} (u)+ \limsup_{n\to \infty} E^{(k)} (u-u_n)
\end{equation}
and $E^{(k)}$ is weak limit superadditive.

\emph{Superadditivity with respect to a sequence of partitions of unity.}
Finally we need to show superadditivity with respect to the partition of unity $\{\Psi_n, \widetilde {\Psi_n}\}$  in Example \ref{ex:unity2}. Let $u_n\rightharpoonup 0$ be a vanishing sequence with $\|u_n\|_{L^2}^2=c$, then 
\begin{equation}
\|\Psi_n u_n\|_{L^2}^2 +\|\widetilde{\Psi_n} u_n\|_{L^2}^2= c
\end{equation}
 and up to a subsequence, still denoted by $u_n$,
\begin{gather}
\lim_{n\to \infty} \int_{K_{2n}} |u_n|^p\, \mathrm dx =0, \qquad \lim_{n\to \infty} \int_{K_{2n}} |u_n|^2\, \mathrm dx = 0\\
\lim_{n\to \infty} \int_{\mathcal G} |u_n|^p=\liminf_{n\to \infty}  \int_{\mathcal G} |u_n|^p = \limsup_{n\to \infty} \int_{\mathcal G} |u_n|^p \, \mathrm dx.
\end{gather}
Then from
\begin{equation}
\begin{aligned}
0&\le \liminf_{n\to \infty} \int_{K_{2n}} |\Psi_n u_n|^p\, \mathrm dx \le \limsup_{n\to \infty}  \int_{K_{2n}} |\Psi_n u_n|^p\, \mathrm dx \le \lim_{n\to \infty} \int_{K_{2n}} |u_n|^p\, \mathrm dx\\
0&\le \liminf_{n\to \infty} \int_{K_{2n}} |\widetilde{\Psi_n} u_n|^p\, \mathrm dx \le \limsup_{n\to \infty}  \int_{K_{2n}} |\widetilde{\Psi_n} u_n|^p\, \mathrm dx \le \lim_{n\to \infty} \int_{K_{2n}} |u_n|^p\, \mathrm dx
\end{aligned}
\end{equation}
we deduce 
\begin{gather}
\lim_{n\to \infty} \int_{\mathcal G} |\Psi_n u_n|^p\, \mathrm dx=\lim_{n\to \infty} \int_{\mathcal K_{2n}} |\Psi_n u_n|^p =0\\
\lim_{n\to \infty} \int_{K_{2n}} |\widetilde{\Psi_n} u_n|^p\, \mathrm dx=0.
\end{gather}
and in particular
\begin{equation}
\lim_{n\to \infty} \int_{\mathcal G} |u_n|^p{\mathrm dx} = \lim_{n \to \infty} \int_{\mathcal G\setminus K_{2n}} |\widetilde{\Psi_n} u_n|^p \, \mathrm dx  = \lim_{n\to \infty}  \int_{\mathcal G} |\widetilde{\Psi_n} u_n|^p.
\end{equation}

$H^{2k}(\mathcal G)$ is dense in $H^k(\mathcal G)$ by Proposition \ref{prop:density} and we may assume that there exists a minimizing sequence in $H^{2k}(\mathcal G)$. Let $u_n$ be a minimizing sequence in $H^{2k}(\mathcal G)$, then by Lemma~\ref{lem:decomposition2} we conclude passing to a subsequence, still denoted by $u_n$, using integration by parts and Young's inequality
\begin{equation}
\begin{aligned}
&\sum_{i=1}^{2k-1} \sum_{j=1}^{2k-m} \frac{(2k)_{m+n}}{i! j!} \left |\left \langle \Psi_n^{(i)} \Psi_n^{(j)} u_n^{(2k-i-j)}, \phi\right \rangle_{L^2} \right | \\
&\qquad=  \sum_{i=1}^{2k-1} \sum_{j=1}^{2k-m}  \frac{(2k)_{m+n}}{i! j!}  \left |\left\langle \frac{\mathrm d^{k-j}}{\mathrm dx^{k-j}} \Psi_n^{(i)} \Psi_n^{(j)}u_n, u_n^{(k-i)}\right \rangle_{L^2}\right |\\
&\qquad\le    \sum_{i=1}^{2k-1} \sum_{j=1}^{2k-m}  \frac{(2k)_{m+n}}{i! j!} \left \| \frac{\mathrm d^{k-j}}{\mathrm dx^{k-j}} \Psi_n^{(i)} \Psi_n^{(j)}u_n \right \|_{L^2} \left \|u_n^{(k-i)}\right \|_{L^2} \\ 
&\qquad \le \frac{C }{n^2}\sum_{i=1}^{2k-1} \sum_{j=1}^{2k-m} \frac{(2k)_{m+n}}{i! j!} \|u_n\|_{H^k}\to 0 \qquad (n\to \infty),
\end{aligned}
\end{equation}
and we infer
\begin{equation}
\begin{aligned}
E^{(k)}&= \lim_{n\to \infty} \frac{1}{2}\langle u_n, Au_n\rangle + \frac{\mu}{p}\|u_n\|_{p}^p\\
&= \limsup_{n\to \infty}  \frac{1}{2}\langle \Psi_n u_n, A \Psi_nu_n \rangle + \frac{\mu}{p}\|\Psi_n u_n \|_p^p  \\
&\qquad + \limsup_{n\to \infty}\frac{1}{2}\langle \widetilde{\Psi_n} u_n, A \widetilde{\Psi_n} u_n \rangle+ \frac{\mu}{p}\|\widetilde{\Psi_n} u_n \|_p^p\\
&= \limsup_{n\to \infty} E^{(k)} (\Psi_n u_n) + \limsup_{n\to \infty} E^{(k)} (\widetilde{\Psi_n} u_n)
\end{aligned}
\end{equation}
and $E^{(k)}$ is (super-)additive with respect to the partition of unity $\{\Psi_n, \widetilde{\Psi_n}\}$ in Example \ref{ex:unity2}. 

\emph{Subadditivity.} 
To show the subadditivity, note that
\begin{equation}\label{eq:scalingarg2}
E_t^{(k)} = t \inf_{\substack{u\in H^1\\ \|u\|_{L^2}^2=1}} \left \{\frac{1}{2}\int_{\mathcal G} \left |u^{(k)}\right |^2+ V |u|^2\, \mathrm dx - t^{\frac{p-2}{2}} \frac{\mu}{p} \int_{\mathcal G} |u|^p\, \mathrm dx \right \}.
\end{equation} 
We deduce the property by showing that $t\mapsto E_t^{(k)}$ is a concave function. Indeed, the scaling defines a concave function and passing to the limit we deduce concavity of the functional. Hence,
\begin{equation}\label{eq:concavityarg2}
E_t^{(k)} \ge t E_1^{(k)}, \qquad t\in [0,1],
\end{equation}
so that 
\begin{equation}
E_t^{(k)} + E_{1-t}^{(k)} \ge E_1^{(k)}, \qquad t\in [0,1].
\end{equation}
For the strictness in the inequality it suffices to show strictness in the inequality \eqref{eq:concavityarg2}. Assume
\begin{equation}
E_t^{(k)} = t E_1^{(k)}
\end{equation}
for some $t\in (0,1)$ and let $u_n$ be a minimizing sequence for $E_t^{(k)}$, then in particular due to \eqref{eq:scalingarg2} 
\begin{equation}\label{eq:recipe1}
\int_{\mathcal G} |u_n|^p\, \mathrm dx \to 0 \qquad (n\to \infty).
\end{equation}
By density we may assume $u_n \in D(A)$ and we deduce
\begin{equation}
\begin{aligned}
E_t^{(k)} &= \lim_{n\to \infty} \frac{1}{2} \langle A u_n, u_n\rangle - \frac{\mu}{p} \int_{\mathcal G} |u|^p\, \mathrm dx \\
&= \lim_{n\to \infty} \frac{1}{2} \langle A u_n, u_n \rangle \ge \frac{\Sigma_0 t}{2}.
\end{aligned}
\end{equation}
Now suppose that a ground state to $E^{(k)}$ exists, i.e. there exists $u\in D(A)$ with $\|u\|_2^2=t$ and $\langle Au, u\rangle = \Sigma_0 t$, then
\begin{equation}\label{eq:recipe2}
E^{(k)}_t \le \frac{1}{2}\langle Au, u\rangle - \frac{\mu}{p}\int_{\mathcal G} |u|^p\, \mathrm dx < \frac{\Sigma_0 t}{2}.
\end{equation} 
This is a contradiction,  and we conclude strict subadditivity in this case. 

\end{proof}

\begin{proposition}\label{prop:second}
Assume $\mathcal G$ is a finite, connected metric graph and $\Sigma_0 < \Sigma$ as defined in \eqref{eq:numerate0} and \eqref{eq:numerate1}. Then there exists $\hat \mu >0$, such that for all $\mu \in (0, \hat \mu)$
\begin{equation}
\widetilde \Sigma_0^{(\mu,k)} := \inf_{\substack{u \in D(A)\\ \|\phi\|_2^2=1}} E^{(k)}(u) < \lim_{n\to \infty} \inf_{\substack{u \in D_R(A)\\\|\phi\|_2^2=1}} E^{(k)}(u) =: \widetilde \Sigma^{(\mu,k)}.
\end{equation}
\end{proposition}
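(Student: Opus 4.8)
The plan is to treat the claim as a perturbation of the linear ($\mu=0$) problem. When $\mu=0$ the functional $E^{(k)}$ reduces to $\tfrac12\langle Au,u\rangle$, so $\widetilde\Sigma_0^{(0,k)}=\tfrac12\Sigma_0$ and $\widetilde\Sigma^{(0,k)}=\tfrac12\Sigma$ with $\Sigma_0,\Sigma$ as in \eqref{eq:numerate0} and \eqref{eq:numerate1}, and the hypothesis $\Sigma_0<\Sigma$ is exactly the strict inequality at $\mu=0$. The goal is to show this strict inequality is stable under the nonlinear perturbation: I will produce an easy upper bound on $\widetilde\Sigma_0^{(\mu,k)}$ together with a lower bound on $\widetilde\Sigma^{(\mu,k)}$ that tends to $\tfrac12\Sigma$ as $\mu\to0^+$.

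The upper bound is immediate: the nonlinear term $-\tfrac{\mu}{p}\|u\|_p^p$ is nonpositive for $\mu\ge0$, so $E^{(k)}(u)\le\tfrac12\langle Au,u\rangle$ for every $u$, and taking the infimum over the $L^2$-normalized elements of $D(A)$ gives $\widetilde\Sigma_0^{(\mu,k)}\le\tfrac12\Sigma_0$ for all $\mu\ge0$, with no smallness needed.

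The substance is the lower bound for the ionization threshold, for which I would reuse the estimate of Lemma~\ref{lem:energyestimateneu}. After adding a constant to $V$—which shifts $\Sigma_0$, $\Sigma$ and both energies by the same amount and hence leaves the asserted inequality unchanged—the quadratic form $\langle A\cdot,\cdot\rangle$ is coercive and equivalent to $|\cdot|_{H^k}^2$. Applying Proposition~\ref{prop:higherordergag} to an $L^2$-normalized $u$ and then Young's inequality, exactly as in the proof of Lemma~\ref{lem:energyestimateneu}, produces for each $\varepsilon\in(0,1)$ a constant $C_{\varepsilon,\mu}$ with
\[
E^{(k)}(u)\ge \frac{1-\varepsilon}{2}\,\langle Au,u\rangle - C_{\varepsilon,\mu}.
\]
The exponent $(p-2)/2k$ appearing in Proposition~\ref{prop:higherordergag} lies strictly below $2$ precisely because $2<p<4k+2$, so the nonlinearity is genuinely absorbed, and tracking the Young step shows that $C_{\varepsilon,\mu}$ depends on $\mu$ only through a positive power and hence $C_{\varepsilon,\mu}\to0$ as $\mu\to0^+$. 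Crucially, neither the Gagliardo--Nirenberg constant nor the norm equivalence depends on the support of $u$, so the bound is uniform over all $u$ with $\operatorname{supp} u\subset\mathcal G\setminus K_R$. For such $u$ one has $\langle Au,u\rangle\ge\Sigma_R$ by \eqref{eq:numerateR}, and since the coefficient $\tfrac{1-\varepsilon}{2}$ is positive,
\[
\inf_{\substack{u\in D_R\\ \|u\|_2^2=1}} E^{(k)}(u)\ge \frac{1-\varepsilon}{2}\,\Sigma_R - C_{\varepsilon,\mu}.
\]
Letting $R\to\infty$ and using $\Sigma_R\nearrow\Sigma$ from \eqref{eq:numerate1} gives $\widetilde\Sigma^{(\mu,k)}\ge \tfrac{1-\varepsilon}{2}\Sigma - C_{\varepsilon,\mu}$.

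To conclude, combine the two estimates using $\Sigma_0<\Sigma$. First fix $\varepsilon\in(0,1)$ small enough that $\tfrac{1-\varepsilon}{2}\Sigma>\tfrac12\Sigma_0$, leaving a fixed positive gap $\eta:=\tfrac{1-\varepsilon}{2}\Sigma-\tfrac12\Sigma_0$; then choose $\hat\mu>0$ so small that $C_{\varepsilon,\mu}<\eta$ for all $\mu\in(0,\hat\mu)$. For every such $\mu$,
\[
\widetilde\Sigma_0^{(\mu,k)}\le \frac12\Sigma_0 < \frac{1-\varepsilon}{2}\Sigma - C_{\varepsilon,\mu}\le \widetilde\Sigma^{(\mu,k)},
\]
which is the assertion. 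The main obstacle is the lower bound for $\widetilde\Sigma^{(\mu,k)}$: the nonlinear term must be absorbed into the quadratic form \emph{uniformly in the support radius} $R$, so that the passage $R\to\infty$ recovers the full threshold $\Sigma$ rather than some finite $\Sigma_R$. This uniformity is exactly what the $R$-independence of the constants in Proposition~\ref{prop:higherordergag} and of the norm equivalence supplies, and it is the structural reason the subcritical range $p<4k+2$ cannot be dispensed with.
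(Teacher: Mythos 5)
Your proof is correct and follows essentially the same route as the paper's: both bound $\widetilde\Sigma_0^{(\mu,k)}\le\tfrac12\Sigma_0$ trivially, and both obtain the lower bound $\widetilde\Sigma^{(\mu,k)}\ge\tfrac{1-\varepsilon}{2}\Sigma-(\text{small error})$ by normalizing $V$ so the form is coercive and then absorbing the nonlinearity via Proposition~\ref{prop:higherordergag} and Young's inequality, exactly as in Lemma~\ref{lem:energyestimateneu}. Your bookkeeping (keeping $C_{\varepsilon,\mu}\to0$ as $\mu\to0^+$ for fixed $\varepsilon$, and making the passage $\Sigma_R\nearrow\Sigma$ explicit) is if anything slightly cleaner than the paper's, but the substance is identical.
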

\begin{proof}
Without loss of generality we may assume  $\Sigma_0>0$ as we otherwise can simply add a constant to the functional. In particular,
\begin{equation}
\|u\|= \left ( \int_{\mathcal G} |u^{(k)}|^2 + V |u|^2\, \mathrm dx \right )^{1/2}
\end{equation}
defines an equivalent norm on $H^k$ as shown in Lemma~\ref{lem:energyestimateneu}. 
Let $\varepsilon>0$. By Proposition~\ref{prop:higherordergag}
\begin{equation}
\|u\|_{L^p}^p \le C \|u\|_{L^2(\mathcal G)}^{\frac{(2k-1)p+2}{2k}} \|u\|^{\frac{p-2}{2k}}
\end{equation}
for some $C>0$. In particular, for sufficiently small $\mu$
with Young's inequality we have
\begin{equation}
\frac{\mu}{p} \|u\|_{L^p}^p\le \frac{\varepsilon}{2} \int_{\mathcal G} |u^{(k)}|^2+ V|u|^2\, \mathrm dx + \frac{\widetilde{C} \varepsilon}{2}
\end{equation}
for sufficiently small $\mu>0$ we deduce 
\begin{equation}
E^{(k)}(u) \ge \frac{1- \varepsilon}{2} \left ( \int_{\mathcal G} |u^{(k)}|^2+ V|u|^2\, \mathrm dx\right )- \frac{\widetilde{C} \varepsilon}{2}.
\end{equation}
Hence,
\begin{equation}
\widetilde{\Sigma}^{(\mu,k)} -\widetilde{\Sigma}_0^{(\mu,k)} \ge  \frac{1-\varepsilon}{2} \Sigma-\frac{\varepsilon}{2} - \frac{1}{2}\Sigma_0 = \frac{1}{2} \left (\Sigma - \Sigma_0\right ) -\frac{\varepsilon}{2} \left (\widetilde{C}+ \Sigma\right ). 
\end{equation}
Since $\varepsilon$ can be chosen arbitrarily small and $\Sigma > \Sigma_0>0$, we have for sufficiently small $\mu$
\begin{equation}
\widetilde \Sigma^{(\mu,k)} > \widetilde \Sigma^{(\mu,k)}_0.
\end{equation}
\end{proof}

\begin{theorem}\label{thm:bigresult2}
Let $\mathcal G$ be a finite, connected graph and let $c>0$. 
Assume $\Sigma_0 < \Sigma$ as defined in \eqref{eq:numerate0} and \eqref{eq:numerate1}, then $E_c^{(k)}(\mathcal G)$ admits a minimizer for $\mu \in (0, \hat \mu)$ as in Proposition \ref{prop:second} and let $\hat \mu$ be as in Proposition \ref{prop:second}. Then $E_c^{(k)}$ admits a minimizer for any $\mu \in (0,\hat \mu)$.
\end{theorem}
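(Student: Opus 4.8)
The plan is to deduce the theorem from Corollary~\ref{cor:existence} by checking, for the functional $E^{(k)}$ under the constraint $\|u\|_2^2=c$, that all hypotheses of both Theorem~\ref{thm:main1} and Theorem~\ref{thm:main2} hold and that the strict threshold inequality $E_c^{(k)}<\widetilde{E_c^{(k)}}$ is satisfied for small $\mu$. Adding a constant to $V$ (which shifts $E^{(k)}$ by the fixed amount $\tfrac{\mathrm{const}\cdot c}{2}$ on the constraint set and shifts $\Sigma_0,\Sigma$ equally, preserving $\Sigma_0<\Sigma$), Lemma~\ref{lem:energyestimateneu} shows that $\big(\int_{\mathcal G}|u^{(k)}|^2+V|u|^2\big)^{1/2}$ is an equivalent norm on $H^k(\mathcal G)$ and that $E^{(k)}$ is bounded below. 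Hence every minimizing sequence for $E_c^{(k)}$ is bounded in $H^k(\mathcal G)$ and admits a weakly convergent subsequence, which supplies the weakly convergent minimizing sequence required by Theorem~\ref{thm:main1}.

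First I would show that $A=(-\Delta)^k+V$ admits a ground state. By the variational characterization $\Sigma_0=\inf\sigma(A)$, while the Persson-type identity $\Sigma=\inf\sigma_{\text{ess}}(A)$ (see §\ref{sec:onthreshold}) identifies the threshold $\Sigma$ with the bottom of the essential spectrum. Thus $\Sigma_0<\Sigma$ forces $\inf\sigma(A)$ strictly below $\sigma_{\text{ess}}(A)$, so $\Sigma_0$ is an isolated eigenvalue of finite multiplicity and the corresponding eigenfunction is a ground state. Equivalently, one may run the dichotomy directly on the quadratic form $u\mapsto\langle Au,u\rangle$: a vanishing minimizing sequence for $\Sigma_0$ would, by the decomposition formula of Lemma~\ref{lem:decomposition2} together with Theorem~\ref{thm:main2} applied at $\mu=0$, force $\Sigma_0\ge\Sigma$; since $\Sigma_0<\Sigma$ the surviving non-vanishing weak limit carries full mass (a Br\'ezis--Lieb splitting shows any mass lost to the vanishing part would again yield $\Sigma_0\ge\Sigma$) and is the ground state.

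With the ground state in hand, Lemma~\ref{lem:preconditions} supplies the remaining structural input: $E^{(k)}$ is weak limit superadditive and $2$-superadditive with respect to the partition of unity of Example~\ref{ex:unity2}, and $t\mapsto E_t^{(k)}$ is strictly subadditive and, being concave as established in that proof, continuous on $[0,c]$. Therefore the hypotheses of Theorem~\ref{thm:main1} and Theorem~\ref{thm:main2} are both met, and the dichotomy applies: any minimizing sequence either vanishes, in which case $E_c^{(k)}=\widetilde{E_c^{(k)}}$, or converges strongly in $L^p$ to a minimizer.

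It remains to exclude vanishing by verifying $E_c^{(k)}<\widetilde{E_c^{(k)}}$. This is precisely Proposition~\ref{prop:second} in the case $c=1$, whose proof — a higher-order Gagliardo--Nirenberg estimate (Proposition~\ref{prop:higherordergag}) followed by Young's inequality — extends verbatim to arbitrary mass $c>0$, producing a threshold $\hat\mu=\hat\mu(c)>0$ for which the strict inequality holds whenever $\mu\in(0,\hat\mu)$; alternatively one reduces to $c=1$ via the scaling $u=\sqrt{c}\,v$, which turns the mass-$c$ problem with coupling $\mu$ into the mass-$1$ problem with coupling $\mu c^{(p-2)/2}$ and rescales the ionization threshold consistently. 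For such $\mu$, Corollary~\ref{cor:existence} then yields a minimizer, and every minimizing sequence has an $L^p$-convergent subsequence whose limit is a minimizer. The main obstacle is the first step, namely converting the purely spectral hypothesis $\Sigma_0<\Sigma$ into the existence of a ground state of $A$, since it is this ground state that licenses the strict subadditivity of $t\mapsto E_t^{(k)}$ through Lemma~\ref{lem:preconditions} and thereby makes the dichotomy usable.
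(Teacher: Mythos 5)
Your proposal follows the paper's own proof essentially step for step: boundedness below and weak compactness of minimizing sequences from Lemma~\ref{lem:energyestimateneu}, the structural hypotheses of Theorem~\ref{thm:main1} and Theorem~\ref{thm:main2} (with $X(\mathcal G)=H^k(\mathcal G)$, $Y(\mathcal G)=\widetilde{C_b^\infty}(\mathcal G)$) from Lemma~\ref{lem:preconditions}, the threshold inequality from Proposition~\ref{prop:second}, and the conclusion via Corollary~\ref{cor:existence}. If anything you are more explicit than the paper, which invokes Lemma~\ref{lem:preconditions} without spelling out that $\Sigma_0<\Sigma$ yields the ground state of $A=(-\Delta)^k+V$ required by that lemma (via the Persson-type identification $\Sigma=\inf\sigma_{\text{ess}}(A)$ of \S\ref{sec:onthreshold}), and which cites Proposition~\ref{prop:second} without commenting on the normalization $\|u\|_2^2=1$ versus general mass $c$; both of your added remarks are correct and close genuine, if minor, gaps in the paper's exposition.
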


\begin{proof}
By Lemma \ref{lem:energyestimateneu} any minimizing sequence admits a weakly convergent subsequence. $E^{(k)}$ satisfies the prerequisites of Theorem \ref{thm:main1} and Theorem \ref{thm:main2} with $X(\mathcal G)= H^k(\mathcal G)$ and $Y(\mathcal G) = \widetilde{C_b^\infty}(\mathcal G)$ by Lemma \ref{lem:preconditions}. Then due to Proposition \ref{prop:second} the energy inequality in Corollary \ref{cor:existence} is satisfied. In particular we deduce existence of a minimizer of $E_c^{(k)}(\mathcal G)$ under the assumptions of the statement. 
\end{proof}

\subsection{The case $V\equiv 0$ on the real line}
Consider the infimization problem on the real line
\begin{equation}
E^{(k)}(\mathbb R) = \inf_{\substack{u\in H^1(\mathbb R)\\ \|u\|_2^2=1}} \frac{1}{2} \int_{\mathbb R} |u^{(k)}|^2\, \mathrm dx - \frac{\mu}{p} \int_{\mathbb R} |u|^p \, \mathrm dx.
\end{equation}
This is the special case when $V\equiv 0$ and $\mathcal G=\mathbb R$ in \eqref{eq:minimizationeq}.
In this case $E_c$ admits a minimizer due to Theorem \ref{thm:main3}:
\begin{theorem}\label{thm:bigresult3}
Let $V\equiv 0$ and $\mathcal G=\mathbb R$. The minimization problem
\begin{equation}
E^{(k)}(\mathbb R) = \inf_{\substack{u\in H^1(\mathbb R)\\ \left \|u\right \|_2^2=1}} E^{(k)}(u)
\end{equation}
admits a minimizer for all $\mu >0$. Furthermore, any minimizer of $E_c(\mathbb R)$ is $C^\infty_b$ and satisfies the Euler--Lagrange equation
\begin{equation}
(-1)^k u^{(2k)} + \lambda u = \mu |u|^{p-2}u, 
\end{equation}
where $\lambda \in \mathbb R$ is a Lagrange multiplier.
\end{theorem}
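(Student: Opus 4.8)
The plan is to obtain existence directly from the translation-invariant result Theorem~\ref{thm:main3}, and then to derive the Euler--Lagrange equation and regularity by the variational and bootstrapping arguments already used in this section. I regard $\mathbb R = \mathbb R \times \mathcal M'$ with $\mathcal M'$ a single point, so that $(\mathbb R, |\cdot|, \mathrm dx)$ is a strip type metric measure space, and I take $X(\mathbb R) = H^k(\mathbb R)$ and $Y(\mathbb R) = \widetilde{C_b^\infty}(\mathbb R)$, which satisfy Assumption~\ref{as:assumption1} and Assumption~\ref{as:assumption2}. Because $V \equiv 0$, the functional $E^{(k)}$ is manifestly invariant under the translations $T_\lambda$. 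Continuity of $E^{(k)}$ on $H^k(\mathbb R)$ and boundedness below under the constraint (so that $E_t > -\infty$) follow from the higher-order Gagliardo--Nirenberg inequality (Proposition~\ref{prop:higherordergag}) and Young's inequality, exactly as in Lemma~\ref{lem:energyestimateneu}, using $2 < p < 4k+2$. It then remains to verify the two hypotheses of Theorem~\ref{thm:main3}: strict subadditivity of $t \mapsto E_t^{(k)}$ and superadditivity with respect to a $3$-partition of unity subordinate to $\{(-2n,2n),(n,\infty),(-\infty,-n)\}$.

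For strict subadditivity I exploit the scaling $u \mapsto t^{1/2}u$, which gives
\begin{equation}
E_t^{(k)} = \inf_{\|v\|_2^2 = 1} \left\{ \frac{t}{2}\int_{\mathbb R} |v^{(k)}|^2\, \mathrm dx - \frac{\mu}{p}\, t^{p/2} \int_{\mathbb R} |v|^p\, \mathrm dx \right\}.
\end{equation}
First I show $E_t^{(k)} < 0$ for every $t > 0$: testing with $\phi_\lambda(x) = \lambda^{1/2}\phi(\lambda x)$ for a fixed $\phi \in C_c^\infty(\mathbb R)$ with $\|\phi\|_2^2 = t$ yields $E^{(k)}(\phi_\lambda) = \tfrac{\lambda^{2k}}{2}\int|\phi^{(k)}|^2\,\mathrm dx - \lambda^{(p-2)/2}\tfrac{\mu}{p}\int|\phi|^p\,\mathrm dx$, and since $(p-2)/2 < 2k$ precisely when $p < 4k+2$, the negative term dominates as $\lambda \to 0^+$. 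This is the step where both the subcriticality and the absence of a potential are essential, and it is what removes any smallness restriction on $\mu$. Given $E_t^{(k)} < 0$, the scaling identity together with $p > 2$ forces the strict sub-homogeneity $E_{\alpha t}^{(k)} < \alpha E_t^{(k)}$ for all $\alpha > 1$ (a minimizing sequence has $\|v\|_p^p$ bounded below precisely because $E_t^{(k)} < 0$), and the standard combination argument then gives $E_{t_1+t_2}^{(k)} < E_{t_1}^{(k)} + E_{t_2}^{(k)}$.

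For superadditivity I apply the decomposition formula for the Polylaplacian, Lemma~\ref{lem:decomposition2}, to the localizations $\psi_i u_n$ of a vanishing sequence, exactly as in the proof of Lemma~\ref{lem:preconditions} but now with the three-fold partition: the commutator and cross terms carry factors $\psi_i^{(a)}\psi_i^{(b)} = O(n^{-(a+b)})$ supported on the overlaps and vanish in the limit along the (bounded) minimizing sequence, leaving $\limsup_n E^{(k)}(u_n) \ge \sum_i \limsup_n E^{(k)}(\psi_i u_n)$. With both hypotheses verified, Theorem~\ref{thm:main3} produces a minimizer for every $\mu > 0$ and every $c > 0$.

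Finally, since $E^{(k)} \in C^1(H^k(\mathbb R),\mathbb R)$ and the constraint is $C^1$, a minimizer is a constrained critical point, so the Lagrange multiplier rule gives $\lambda \in \mathbb R$ with $\int u^{(k)}\eta^{(k)} + \lambda\int u\,\eta - \mu\int |u|^{p-2}u\,\eta = 0$ for all $\eta \in H^k(\mathbb R)$; testing against $C_c^\infty$ yields $(-1)^k u^{(2k)} + \lambda u = \mu|u|^{p-2}u$ weakly, as in Proposition~\ref{prop:NLSmult}. Regularity then follows by bootstrapping: $u \in H^k(\mathbb R) \hookrightarrow C^{k-1}(\mathbb R) \cap L^\infty(\mathbb R)$ puts the right-hand side in $L^2$, whence $u^{(2k)} \in L^2$ and $u \in H^{2k}(\mathbb R) \hookrightarrow C^{2k-1}(\mathbb R)$, and iterating raises the order. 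The step I expect to be the main obstacle is upgrading this to genuine $C^\infty_b$ smoothness: the nonlinearity $t \mapsto |t|^{p-2}t$ is only finitely differentiable at $t = 0$ for non-integer $p$, so unconditional smoothness requires controlling the zero set of $u$, for instance by showing that a minimizer may be taken nonnegative and is then strictly positive, which is delicate for $k \ge 2$ where the first-order rearrangement inequality $\||u|'\|_2 \le \|u'\|_2$ is no longer available.
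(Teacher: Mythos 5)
Your proposal is correct and follows essentially the same route as the paper's own proof: existence via Theorem~\ref{thm:main3}, strict subadditivity from the scaling test-function argument showing $E^{(k)}<0$ combined with the fact that minimizing sequences cannot have vanishing $L^p$ norm (the paper phrases this contrapositively, assuming $E_t = tE_1$ and deriving the contradiction $E_t \ge 0$), superadditivity via the decomposition formula as in Lemma~\ref{lem:preconditions}, and the Euler--Lagrange equation and bootstrap as in Proposition~\ref{prop:NLSmult}. Your closing caveat about genuine $C^\infty_b$ smoothness when $p$ is not an even integer flags a real subtlety that the paper's one-line appeal to ``elliptic regularity and a bootstrap argument'' glosses over, so your treatment is, if anything, the more careful one on that point.
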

\begin{proof}
Due to Lemma \ref{lem:preconditions} it suffices to show that
$$t\mapsto E_{t}^{(k)}= \inf_{\substack{u\in H^1(\mathbb R)\\ \|u\|_2^2=t}}\frac{1}{2} \int_{\mathbb R} |u^{(k)}|^2\, \mathrm dx - \frac{\mu}{p} \int_{\mathbb R} |u|^p \, \mathrm dx.$$
is strictly subadditive, then the prerequisites of Theorem \ref{thm:main3} are satisfied. The Euler--Lagrange equation is satisfied because of Proposition \ref{prop:NLSmult}. The regularity is due to elliptic regularity and a bootstrap argument.

For a contradiction, assume as in the proof of Lemma \ref{lem:preconditions}
\begin{equation}
E_t = t E_1
\end{equation}
for some $t\in [0,c]$. Assume $u_n$ is a minimizing sequence for $E_t$, then as in the proof of Lemma \ref{lem:preconditions} we have
\begin{equation}\label{eq:rcase}
\lim_{n\to \infty} \int_{\mathbb R} |u_n|^p\, \mathrm dx = 0.
\end{equation}
Given a test function $\phi \in C_c^\infty(\mathbb R)$ with $\|\phi\|_{L^2}^2=1$, we define the rescaling for $\lambda >0$
\begin{equation}
\phi_{\lambda} := \lambda^{1/2} \phi( \lambda x).
\end{equation}
Then $\|\phi_{\lambda}\|_{L^2}^2=1$  for all $\lambda >0$. Then
\begin{equation}
E^{(k)}(\phi_{\lambda}) = \lambda^{2k} \int_{\mathbb R} |\phi^{(k)}|^2\, \mathrm dx - \lambda^{\frac{p}{2}-1} \int_{\mathbb R} |\phi|^p \, \mathrm dx. 
\end{equation}
In particular for sufficiently small $\lambda>0$
\begin{equation}
E^{(k)} \le E^{(k)}(\phi_{\lambda}) <0.
\end{equation}
On the other hand with \eqref{eq:rcase}
\begin{equation}
E^{(k)} = \lim_{n\to \infty} E^{(k)}(u_n) = \lim_{n\to \infty} \int_{\mathcal G} |u_n^{(k)}|^2\, \mathrm dx\ge 0 
\end{equation}
and we conclude subadditivity due to the contradiction.  This concludes the proof.
\end{proof}

\begin{corollary}\label{cor:mainresult3} Let $V\equiv 0$. Then
$$E^{(k)}(\mathbb R)<0.$$
\end{corollary}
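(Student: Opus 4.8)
The plan is to exhibit an explicit family of admissible test functions with negative energy, which immediately bounds the infimum $E^{(k)}(\mathbb R)$ from above by a strictly negative number. In fact the required estimate already appears verbatim inside the proof of Theorem~\ref{thm:bigresult3}, so the corollary amounts to isolating that computation; I would simply record it.

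First I would fix any $\phi\in C_c^\infty(\mathbb R)$ with $\|\phi\|_{L^2}^2=1$ (a normalized bump, say) and introduce the mass-preserving rescaling $\phi_\lambda:=\lambda^{1/2}\phi(\lambda\,\cdot)$ for $\lambda>0$, so that $\|\phi_\lambda\|_{L^2}^2=1$ and $\phi_\lambda$ is admissible for every $\lambda$. A change of variables then gives
\begin{equation}
E^{(k)}(\phi_\lambda)=\frac{1}{2}\lambda^{2k}\int_{\mathbb R}|\phi^{(k)}|^2\,\mathrm dx-\frac{\mu}{p}\lambda^{\frac{p}{2}-1}\int_{\mathbb R}|\phi|^p\,\mathrm dx,
\end{equation}
since each of the $k$ derivatives contributes a factor $\lambda$ to the kinetic term (hence $\lambda^{2k}$ after squaring and integrating) while the $L^p$ term scales as $\lambda^{\frac{p}{2}-1}$.

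The decisive point is the comparison of the two exponents of $\lambda$. The subcriticality hypothesis $2<p<4k+2$ yields $0<\frac{p}{2}-1<2k$, so as $\lambda\to 0^+$ the nonnegative kinetic term, of order $\lambda^{2k}$, vanishes strictly faster than the negative nonlinear contribution $-\frac{\mu}{p}\lambda^{\frac{p}{2}-1}\int_{\mathbb R}|\phi|^p\,\mathrm dx$, which is of the lower order $\lambda^{\frac{p}{2}-1}$. Hence the nonlinearity dominates for all sufficiently small $\lambda>0$ and $E^{(k)}(\phi_\lambda)<0$; taking the infimum over admissible functions gives $E^{(k)}(\mathbb R)\le E^{(k)}(\phi_\lambda)<0$.

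I do not expect any genuine obstacle here: the only point requiring a line of care is verifying the strict inequality $\frac{p}{2}-1<2k$ from $p<4k+2$, which is precisely what guarantees that the negative term wins in the limit $\lambda\to 0^+$. The result is therefore an immediate corollary of the scaling analysis in Theorem~\ref{thm:bigresult3}.
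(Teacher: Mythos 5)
Your proposal is correct and is essentially the paper's own argument: the corollary is deduced from the rescaling computation $E^{(k)}(\phi_\lambda)=\tfrac12\lambda^{2k}\int_{\mathbb R}|\phi^{(k)}|^2\,\mathrm dx-\tfrac{\mu}{p}\lambda^{\frac{p}{2}-1}\int_{\mathbb R}|\phi|^p\,\mathrm dx$ appearing inside the proof of Theorem~\ref{thm:bigresult3}, where $2<p<4k+2$ forces the negative term to dominate as $\lambda\to 0^+$. Your identification of the exponent comparison $\tfrac{p}{2}-1<2k$ as the decisive point matches the paper exactly.
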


\subsection{Decaying potentials}
In the following we study the minimization problem on finite metric graphs $\mathcal G$ under the assumption that $V=V_2+V_\infty$ with $V_2\in L^2(\mathcal G), V_\infty\in L^\infty(\mathcal G)$ such that
\begin{equation}\label{eq:decayingpotential}
V_\infty(x) \to 0 \qquad (x\to \infty)
\end{equation}
on all of the rays.  Consider the quantitities
\begin{equation}
\begin{aligned}
\widetilde{\Sigma}_0^{(\mu,k)} &= \inf_{\substack{\phi \in D(A)\\ \|\phi\|_{L^2}^2 =1}} E^{(k)}(u)\\
\widetilde{\Sigma}^{(\mu,k)} &= \lim_{R\to \infty} \inf_{\substack{\phi \in D_R(A)\\ \|\phi\|_{L^2}^2 =1}} E^{(k)}(u)\\
E^{(k)}(\mathbb R) &= \inf_{\substack{u\in H^1(\mathbb R)\\ \|u\|_{L^2(\mathbb R)}^2=1}} \frac{1}{2} \int_{\mathbb R} |u^{(k)}|^2\, \mathrm dx - \frac{\mu}{p} \int_{\mathbb R} |u|^p \, \mathrm dx.
\end{aligned}
\end{equation}
\begin{lemma}\label{lem:decayingpotential}
Let $\mathcal G$ be a finite metric graph and assume that $V\in L^2+L^\infty(\mathcal G)$ satisfies \eqref{eq:decayingpotential}. Then
\begin{equation}
\widetilde{\Sigma}^{(\mu,k)} = E^{(k)}(\mathbb R).
\end{equation}
\end{lemma}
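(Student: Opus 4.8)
The plan is to prove the two inequalities $\widetilde{\Sigma}^{(\mu,k)}\le E^{(k)}(\mathbb R)$ and $\widetilde{\Sigma}^{(\mu,k)}\ge E^{(k)}(\mathbb R)$ separately. Write $\Sigma_R^{(\mu,k)}:=\inf\{E^{(k)}(\phi):\phi\in D_R(A),\ \|\phi\|_2^2=1\}$, so that by the inclusion $D_{R_2}(A)\subset D_{R_1}(A)$ for $R_1\le R_2$ the map $R\mapsto\Sigma_R^{(\mu,k)}$ is nondecreasing and $\widetilde{\Sigma}^{(\mu,k)}=\sup_{R>0}\Sigma_R^{(\mu,k)}$. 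Since $\mathcal G$ is a finite metric graph with core $K=\mathcal G\setminus\mathcal E_\infty$, every ray point at arclength $s$ has $\operatorname{dist}(\cdot,K)=s$, so for $R$ large $\mathcal G\setminus K_R$ is the disjoint union over the finitely many rays $e\in\mathcal E_\infty$ of the tails $\{x\in e:\operatorname{dist}(x,K)\ge R\}$, each isometric to a half-line. The argument will be driven by the two error quantities
\[
\varepsilon_R:=\sup_{x\in\mathcal G\setminus K_R}|V_\infty(x)|,\qquad \delta_R:=\|V_2\|_{L^2(\mathcal G\setminus K_R)},
\]
both of which tend to $0$ as $R\to\infty$, the first by the decay hypothesis \eqref{eq:decayingpotential} (finitely many rays), the second by dominated convergence applied to the $L^2$ tail.

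For the upper bound, fix $\varepsilon>0$ and, using density of $C_c^\infty(\mathbb R)$, choose $u$ with $\|u\|_2^2=1$, $\operatorname{supp}u\subset[-L,L]$ and $\frac12\int_{\mathbb R}|u^{(k)}|^2-\frac{\mu}{p}\int_{\mathbb R}|u|^p<E^{(k)}(\mathbb R)+\varepsilon$. Transplanting a translate of $u$ far out along a single ray $e_0$, i.e.\ setting $u_T(x)=u(s-T)$ in the arclength coordinate $s$ on $e_0$ and $u_T\equiv0$ elsewhere, produces for $T>R+L$ an admissible competitor $u_T\in D_R(A)$ with $\|u_T\|_2^2=1$ whose kinetic and nonlinear parts coincide with those of $u$ on $\mathbb R$; the only extra contribution is $\tfrac12\int_{e_0}V|u_T|^2$, bounded by $\tfrac12(\varepsilon_{T-L}+\delta_{T-L}\|u\|_{L^4}^2)\to0$ as $T\to\infty$. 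Hence $\Sigma_R^{(\mu,k)}\le E^{(k)}(\mathbb R)+2\varepsilon$ for every $R$, and letting $\varepsilon\to0$ gives $\widetilde{\Sigma}^{(\mu,k)}\le E^{(k)}(\mathbb R)$.

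The lower bound is the heart of the matter. Given $\phi\in D_R(A)$ with $\|\phi\|_2^2=1$, I would use the disjointness of the ray tails to write $\phi=\sum_{e\in\mathcal E_\infty}\phi_e$ with disjointly supported pieces. Since $\phi$ vanishes on the open set $K_R$ together with its derivatives up to order $k-1$ (indeed $\phi\in H^{2k}(\mathcal G)\hookrightarrow C^{2k-1}$ edgewise), each $\phi_e$ extends by zero to a genuine competitor $\widetilde\phi_e\in H^k(\mathbb R)$ of mass $m_e:=\|\phi_e\|_2^2$, and the kinetic and nonlinear terms split additively over $e$. Each free energy $\frac12\int_{\mathbb R}|\widetilde\phi_e^{(k)}|^2-\frac{\mu}{p}\int_{\mathbb R}|\widetilde\phi_e|^p$ is bounded below by the mass-$m_e$ real-line infimum $E^{(k)}_{m_e}(\mathbb R)$, and the strict subadditivity of $t\mapsto E^{(k)}_t(\mathbb R)$ established in Theorem~\ref{thm:bigresult3} yields $\sum_e E^{(k)}_{m_e}(\mathbb R)\ge E^{(k)}_{\sum_e m_e}(\mathbb R)=E^{(k)}(\mathbb R)$. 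Consequently the free energy of $\phi$ on $\mathcal G$ is at least $E^{(k)}(\mathbb R)$, whence $E^{(k)}(\phi)\ge E^{(k)}(\mathbb R)-\tfrac12(\varepsilon_R+\delta_R\|\phi\|_{L^4}^2)$.

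The main obstacle is then to absorb the a priori uncontrolled factor $\|\phi\|_{L^4}^2$. Here I would restrict attention to near-minimizers: by the upper bound $\Sigma_R^{(\mu,k)}\le E^{(k)}(\mathbb R)$, so any $\phi$ with $E^{(k)}(\phi)>E^{(k)}(\mathbb R)+1$ already satisfies the desired inequality trivially, while for $E^{(k)}(\phi)\le E^{(k)}(\mathbb R)+1$ the coercivity estimate of Lemma~\ref{lem:energyestimateneu} bounds $\|\phi\|_{H^k}$, and hence $\|\phi\|_{L^4}$ through the Sobolev inequality, by a constant $C_1$ \emph{independent of $R$} (the coercivity constants depend only on the fixed decomposition of $V$, the graph, $\mu$ and $p$). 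This gives $\Sigma_R^{(\mu,k)}\ge E^{(k)}(\mathbb R)-\tfrac12(\varepsilon_R+\delta_R C_1)$ for all $R$, and letting $R\to\infty$ yields $\widetilde{\Sigma}^{(\mu,k)}\ge E^{(k)}(\mathbb R)$, completing the proof.
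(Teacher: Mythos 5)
Your proof is correct, and your upper-bound direction (transplanting a compactly supported near-minimizer of $E^{(k)}(\mathbb R)$ far out along a single ray, with the potential contribution controlled by $\varepsilon_R$ and $\delta_R$) is essentially the argument the paper gives. The lower bound, however, takes a genuinely different route. The paper never compares the individual ray pieces with mass-constrained line energies: it approximates a minimizing sequence for $\widetilde{\Sigma}^{(\mu,k)}$ by smooth, compactly supported functions on the ray tails, transplants these finitely many pieces onto $\mathbb R$ with \emph{disjoint supports}, and observes that their sum $\widetilde u_n$ is then a single admissible competitor of full mass $1$, whence $\lim_n E^{(k)}(\widetilde u_n)\ge E^{(k)}(\mathbb R)$ directly; no structural information about $t\mapsto E^{(k)}_t(\mathbb R)$ is needed. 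You instead bound each piece below by $E^{(k)}_{m_e}(\mathbb R)$ and then invoke subadditivity of the line energy, $\sum_e E^{(k)}_{m_e}(\mathbb R)\ge E^{(k)}_1(\mathbb R)$; note that only the non-strict inequality is needed here, and it already follows from concavity of $t\mapsto E^{(k)}_t(\mathbb R)$ together with $E^{(k)}_0=0$, both available from the scaling argument in the proof of Theorem~\ref{thm:bigresult3}, so your appeal to that theorem is legitimate. The paper's recombination trick is the more self-contained of the two; your version is more modular (it would survive in situations where the pieces cannot be re-placed on a common line, e.g.\ different limiting problems on different rays) and is more explicit on a point the paper glosses over: the potential error term $\delta_R\|\phi\|_{L^4}^2$ requires a bound on $\|\phi\|_{L^4}$ that is uniform in $R$, which you correctly obtain by restricting to near-minimizers and using the $R$-independent coercivity of Lemma~\ref{lem:energyestimateneu}.
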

\begin{proof}
Assume $\phi$ is a minimizer of $E^{(k)}(\mathbb R)$, which exists due to Theorem~\ref{thm:bigresult3}. Due to density, we can  consider a minimizing sequence $u_n$ for $E^{(k)}(\mathbb R)$ in $C_c^\infty(\mathbb R)$ satisfying $\|u_n\|_{2}^2=1$, such that $u_n \to \phi$ strongly in $H^1$ as $n\to \infty$. Now by translation invariance we may assume that $u_n$ is supported in $[n,\infty)$ for $n\in \mathbb N$.  identifying the half-line with one of the rays of $\mathcal G$, we may consider $u_n$ as a function in $H^1(\mathcal G)$. Then
\begin{equation}
\begin{aligned}
\left |\int_{\mathcal G} V |u_n|^2\, \mathrm dx \right | &=  \left |\int_{\mathcal G\setminus K_n} V |u_n|^2\, \mathrm dx \right |\\
&\le C \left (\sup_{x\in \mathcal G\setminus K_n} |V_\infty(x)|^2 +\int_{\mathcal G\setminus K_n} |V_2|^2\, \mathrm dx\right )\to 0 \qquad (n\to \infty)  
\end{aligned}
\end{equation}
and we compute
\begin{equation}
\begin{aligned}
E^{(k)}(\mathbb R)&=\lim_{n\to \infty}E^{(k)}(u_n)\\
&= \lim_{n\to \infty} \frac{1}{2}\int_{\mathcal G} |u^{(k)}_n|^2 \, \mathrm dx - \frac{\mu}{p} \int_{\mathcal G} |u|^p\, \mathrm dx\\
&= \lim_{n\to \infty} E^{(k)}(u_n)\ge \widetilde{\Sigma}^{(\mu,k)}.
\end{aligned}
\end{equation}
On the other hand given a minimizing sequence $u_n$ for $\widetilde{\Sigma}^{(\mu,k)}$, such that $\operatorname{supp} u_n \subset \mathcal G \setminus K_n$ then the functions in the sequence are supported on each of the rays and
\begin{equation}\label{eq:goesto0}
\begin{aligned}
&\left |\int_{\mathcal G} V |u_n|^2\, \mathrm dx \right | =  \left |\int_{\mathcal G\setminus K_n} V |u_n|^2\, \mathrm dx \right |\\
&\qquad\le C \left (\sup_{x\in \mathcal G\setminus K_n} |V_\infty(x)|+\left (\int_{\mathcal G\setminus K_n} |V_2|^2\, \mathrm dx\right )^{1/2}\right )\to 0 \qquad (n\to \infty).
\end{aligned}
\end{equation}
By density we can consider a collection of sequences $u_n^{(1)}, \ldots, u_n^{(|\mathcal E_\infty|)}$ in $C_c^\infty(\mathbb R)$ and choose them to have disjoint supports. Then if we define
\begin{equation}
\widetilde{u}_n := \sum_{i=1}^{|\mathcal E_\infty|} u_n^{(i)}.
\end{equation}
Then with \eqref{eq:goesto0} we compute
\begin{equation}
\begin{aligned}
\widetilde{\Sigma}^{(\mu,k)} &= \lim_{n\to \infty} \sum_{i=1}^{\mathcal E_\infty} E^{(k)}(u_n^{(i)})\\
&= \lim_{n\to \infty} E^{(k)}\left ( \widetilde{u}_n \right ) \ge E^{(k)}(\mathbb R). \\
\end{aligned}
\end{equation}
\end{proof}

\begin{remark}\label{rmk:decayingpotential}
Suppose $\mathcal G$ is a locally finite metric graph with at least one ray. Then the inequality
\begin{equation}
\widetilde{\Sigma}^{(\mu,k)}\le  E^{(k)}(\mathbb R)
\end{equation}
can still be shown as in the proof of Lemma \ref{lem:decayingpotential} using the test function argument on the half-line.
\end{remark}

\begin{theorem}\label{thm:decayingpotential}
Let $\mathcal G$ be a finite metric graph. Assume $V\in L^2+L^\infty(\mathcal G)$ satisfies \eqref{eq:decayingpotential}, then $E^{(k)}$ is strictly subadditive and if
\begin{equation}
\widetilde{\Sigma}_0^{(\mu,k)} < E^{(k)}(\mathbb R)
\end{equation}
then there exists a minimizer to the minimization problem.
\end{theorem}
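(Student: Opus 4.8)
The plan is to deduce existence from Corollary~\ref{cor:existence} and to isolate strict subadditivity as the only genuinely new ingredient. All the remaining hypotheses of Theorem~\ref{thm:main1} and Theorem~\ref{thm:main2} are already in place: weak limit superadditivity, $2$-superadditivity with respect to the partition of unity of Example~\ref{ex:unity2}, and continuity of $t\mapsto E_t^{(k)}$ (concave by the scaling identity \eqref{eq:scalingarg2}, hence continuous) were established in Lemma~\ref{lem:preconditions} without any spectral hypothesis, while Lemma~\ref{lem:energyestimateneu} supplies weakly convergent minimizing subsequences. Moreover, Lemma~\ref{lem:decayingpotential} identifies the threshold energy as $\widetilde{\Sigma}^{(\mu,k)}=E^{(k)}(\mathbb{R})$, so that the standing hypothesis $\widetilde{\Sigma}_0^{(\mu,k)}<E^{(k)}(\mathbb{R})$ is exactly the energy inequality $E_1^{(k)}<\widetilde{E_1^{(k)}}$ required by Corollary~\ref{cor:existence}. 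Thus, once $t\mapsto E_t^{(k)}$ is shown to be strictly subadditive, the minimizer is produced at once.

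For strict subadditivity I would follow the scaling/contradiction scheme of Lemma~\ref{lem:preconditions}: concavity together with $E_0^{(k)}=0$ gives $E_t^{(k)}\ge tE_1^{(k)}$, and strictness can only fail if $E_t^{(k)}=tE_1^{(k)}$ for some $t\in(0,1)$. In that case a minimizing sequence $u_n$ for $E_t^{(k)}$ satisfies $\int_{\mathcal G}|u_n|^p\,\mathrm dx\to 0$ as in \eqref{eq:recipe1}, whence $E_t^{(k)}=\lim_n\tfrac12\langle Au_n,u_n\rangle\ge\tfrac{\Sigma_0 t}{2}$ and therefore $E_1^{(k)}\ge\tfrac{\Sigma_0}{2}$. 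Since the nonlinear term is nonpositive one always has $E_1^{(k)}\le\tfrac{\Sigma_0}{2}$, so the whole matter reduces to proving the strict inequality $E_1^{(k)}<\tfrac{\Sigma_0}{2}$, which then delivers the contradiction; the ground-state computation \eqref{eq:recipe2} is exactly the special case in which $\Sigma_0$ is attained.

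To establish $E_1^{(k)}<\tfrac{\Sigma_0}{2}$ I would split into two cases according to whether $A=(-\Delta)^k+V$ admits a ground state. If $A$ admits a ground state $u_0$ with $\|u_0\|_2^2=1$ and $\langle Au_0,u_0\rangle=\Sigma_0$, then $E_1^{(k)}\le\tfrac{\Sigma_0}{2}-\tfrac{\mu}{p}\|u_0\|_p^p<\tfrac{\Sigma_0}{2}$, precisely as in \eqref{eq:recipe2}. If no ground state exists, I claim $\Sigma_0\ge 0$: a linear minimizing sequence $v_n$ with $\|v_n\|_2=1$ and $\langle Av_n,v_n\rangle\to\Sigma_0$ cannot be non-vanishing, since a nonzero weak limit would yield a ground state by the usual mass-splitting argument; hence $v_n\rightharpoonup 0$, its $L^2$-mass escapes onto the rays, and using $\sup_{\mathcal G\setminus K_R}|V_\infty|\to 0$ together with $\|V_2\|_{L^2(\mathcal G\setminus K_R)}\to 0$ and the $H^k$-boundedness of $v_n$ one obtains $\int_{\mathcal G}V|v_n|^2\,\mathrm dx\to 0$, so that $\Sigma_0=\lim_n\int_{\mathcal G}|v_n^{(k)}|^2\,\mathrm dx\ge 0$. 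Combining this with the unconditional bound $E_1^{(k)}=\widetilde{\Sigma}_0^{(\mu,k)}\le\widetilde{\Sigma}^{(\mu,k)}=E^{(k)}(\mathbb{R})<0$, valid by Lemma~\ref{lem:decayingpotential} and Corollary~\ref{cor:mainresult3}, gives $E_1^{(k)}<0\le\tfrac{\Sigma_0}{2}$ in this case as well.

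The main obstacle is the no-ground-state case, namely the justification that a vanishing linear minimizing sequence satisfies $\int_{\mathcal G}V|v_n|^2\,\mathrm dx\to 0$, so that the decaying potential cannot push $\Sigma_0$ below $0$. This is precisely a Persson-type statement asserting $\Sigma=\inf\sigma_{\mathrm{ess}}(A)\ge 0$ under \eqref{eq:decayingpotential}; the delicate point is absorbing the $L^2$-part via a local Sobolev estimate $\bigl|\int_{\mathcal G\setminus K_R}V_2|v_n|^2\bigr|\le\|V_2\|_{L^2(\mathcal G\setminus K_R)}\|v_n\|_{L^4}^2$ while controlling the $\|v_n^{(k)}\|_2$ terms through the relative-boundedness estimate of Lemma~\ref{lem:energyestimateneu} and closing with a diagonal cut-off argument. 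I would either carry out this argument directly or, more economically, invoke the Persson theory developed in §4.6 to conclude $\Sigma\ge 0$.
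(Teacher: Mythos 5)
Your proposal is correct, and it shares the paper's outer architecture: existence is reduced to Corollary~\ref{cor:existence}, with Lemma~\ref{lem:energyestimateneu}, the unconditional parts of Lemma~\ref{lem:preconditions}, and the identification $\widetilde{\Sigma}^{(\mu,k)}=E^{(k)}(\mathbb R)$ of Lemma~\ref{lem:decayingpotential} supplying every hypothesis except strict subadditivity, which both you and the paper then attack through the same scaling contradiction ($E_t^{(k)}=tE_1^{(k)}$ forces $\int_{\mathcal G}|u_n|^p\,\mathrm dx\to 0$ along a minimizing sequence). Where you genuinely diverge is in closing that contradiction. The paper stays at the nonlinear level: since $\int_{\mathcal G}|u_n|^p\,\mathrm dx\to 0$ makes $u_n$ a vanishing sequence, the superadditivity with respect to the partitions of unity of Example~\ref{ex:unity2}, combined with the decay of $V$, gives $E_t^{(k)}\ge 0$ outright, and this contradicts the negativity of $E_t^{(k)}$ (which is bounded above by the threshold at mass $t$, equal to the line energy by Lemma~\ref{lem:decayingpotential} and negative by Corollary~\ref{cor:mainresult3}); no case distinction and no linear spectral input appear. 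You instead reduce to the linear inequality $E_1^{(k)}<\Sigma_0/2$ and split on whether $(-\Delta)^k+V$ admits a ground state: your first case is exactly the computation \eqref{eq:recipe2} of Lemma~\ref{lem:preconditions}, while your second case is the paper's vanishing argument transposed to the quadratic form (giving $\Sigma_0\ge 0$ under \eqref{eq:decayingpotential}), closed by the unconditional bound $E_1^{(k)}\le\widetilde{\Sigma}^{(\mu,k)}=E^{(k)}(\mathbb R)<0$. Both closures rest on the same decay estimate, so your route is sound; what it buys is an argument organized around the ground-state dichotomy of Theorem~\ref{thm:intromainnowcloser}, and what it costs is one extra linear ingredient the paper's proof avoids: either the mass-splitting lemma you invoke (a nonzero weak $H^k$-limit $v$ of a linear minimizing sequence satisfies $a(v,v)=\Sigma_0\|v\|_2^2$, via $a(v_n,v_n)=a(v,v)+a(v_n-v,v_n-v)+o(1)$, $\|v_n\|_2^2=\|v\|_2^2+\|v_n-v\|_2^2+o(1)$ and $a(w,w)\ge\Sigma_0\|w\|_2^2$ --- correct, but it must be written out), or the Persson theory of \S 4.6, where Theorem~\ref{thm:persson} and Proposition~\ref{prop:Iwantthistoendf} give $\Sigma_0\ge\Sigma\ge 0$ whenever $\inf\sigma((-\Delta)^k+V)$ is not an eigenvalue. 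If you write this up, make two identifications explicit: $E_1^{(k)}=\widetilde{\Sigma}_0^{(\mu,k)}$ (and likewise the two versions of the threshold) uses the density of $H^{2k}(\mathcal G)$ in $H^k(\mathcal G)$ from Proposition~\ref{prop:density}, and the tail term $\int_{\mathcal G\setminus K_R}V_2|v_n|^2\,\mathrm dx$ should be absorbed through the Sobolev bound on $\|v_n\|_{L^4}$ exactly as in Proposition~\ref{prop:Iwantthistoendf}.
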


\begin{proof}
By Lemma \ref{lem:energyestimateneu} any minimizing sequence admits a weakly convergent subsequence. By Lemma \ref{lem:preconditions} and Lemma~\ref{lem:decayingpotential}  it suffices to prove the strict subadditivity of $E^{(k)}$.  As in Lemma \ref{lem:preconditions} we can argue by contradiction. Assume namely that
\begin{equation}
E_t^{(k)}= t E_1^{(k)}
\end{equation}
for some $t\in (0,1)$ and let $u_n$ be a minimizing sequence for $E_t^{(k)}$, then in particular
\begin{equation}
\int_{\mathcal G} |u_n|^p \, \mathrm dx \to 0 \qquad (n\to \infty).
\end{equation}
But then $u_n$ is a vanishing sequence and passing to a subsequence still denoted by~$u_n$, we deduce with superaddditivity with respect to a sequence of partitions of unity as defined in Example~\ref{ex:unity2}
\begin{equation}\label{eq:reipse2}
\begin{aligned}
E_t^{(k)} &= \limsup_{n\to \infty} E^{(k)}(\Psi_n u_n) +\limsup_{n\to \infty} E^{(k)}(\widetilde{\Psi_n} u_n) \\
&\ge \frac{1}{2}\lim_{n\to \infty}\inf_{\substack{\phi\in H^1\\ \|u\|_{2}^2=t,\, \operatorname{supp} u \subset \mathcal G\setminus K_n}}\langle A u, u\rangle\\
&\ge -\frac{C}{2} \lim_{n\to \infty} \left (\sup_{x\in \mathcal G\setminus K_n} |V_\infty(x)|+ \left (\int_{\mathcal G\setminus K_n} |V_2|^2\, \mathrm dx\right )^{1/2}\right ) =0,
\end{aligned}
\end{equation}
since $\|u_n\|_{H^1}\le C$ for some $C>0$ by Lemma~\ref{lem:energyestimateneu}. On the other hand, by Lemma~\ref{lem:decayingpotential} and Corollary~\ref{cor:mainresult3}
\begin{equation}
\widetilde{\Sigma}^{(\mu,k)} = E^{(k)}(\mathbb R)<0
\end{equation}
and by contradiction we deduce strict subadditivity.

Hence, the prerequisites of Theorem~\ref{thm:main1} and Theorem~\ref{thm:main2} are satisfied with $X(\mathcal G)= H^k(\mathcal G)$ and $Y(\mathcal G) = \widetilde{C_b^\infty}(\mathcal G)$. Then due to Proposition \ref{prop:second} the energy inequality in Corollary \ref{cor:existence} is satisfied. In particular we deduce existence of a minimizer of $E^{(k)}(\mathcal G)$ under the stated assumptions. 
\end{proof}

\begin{example}\label{ex:shouldbeputinintroduction}
Let $\mathcal G$ be a finite metric graph and let $V\in L^2+L^\infty(\mathcal G)$ satisfy \eqref{eq:decayingpotential}. Similarly as in Lemma~\ref{lem:decayingpotential} we can show
\begin{equation}
\Sigma=\lim_{R\to \infty} \inf_{\substack{\phi \in D_R(A)\\ \|\phi\|_{L^2}^2 =1}} \langle \phi, A\phi\rangle_{L^2}=0.
\end{equation} 
In particular if $\Sigma_0<0$, then by Theorem~\ref{thm:bigresult2} there exists $\hat \mu>0$, such that for $\mu \in (0,\hat\mu]$ there exists a minimizer to $E^{(1)}$. 
As in \cite{cacciapuoti2018existence} one can show due to scaling properties that
\begin{equation}
\Sigma^{(\mu,1)}_0 < \Sigma_0  \le \gamma_p \mu^{\frac{4}{6-p}}= E^{(1)}(\mathbb R)
\end{equation}
for some $\gamma_p<0$ and $0<\mu\le (\Sigma_0/\gamma_p)^{\frac{3}{2}-\frac{p}{4}}$. In particular, we can deduce existence of minimizers for $E^{(1)}$ and
$0<\mu\le(\Sigma_0/\gamma_p)^{\frac{6-p}{4}}$ by Theorem~\ref{thm:decayingpotential}.  
\end{example}

\subsection{On the Threshold condition}\label{sec:onthreshold}
In this section we study the quantities $\Sigma_0$ and $\Sigma$ that appeared in the applications of the previous sections. For details on the definitions and characterizations of the essential spectrum we refer to \cite[§VII]{reedmethods}.

\subsubsection{A Persson type theory for the Polylaplacian on metric graphs} \label{subsubsec:perssontypetheorypoly}
Let $\mathcal G= (\mathcal V, \mathcal E)$ be a connected metric finite metric graph in this first part of the section. In particular $\mathcal G$ consists of a compact core $K$ with rays $\mathcal E_\infty\subset \mathcal E$ attached to $K$. 
Consider the Polylaplacian on $\mathcal G$ defined edgewise on $H^{2k}(\mathcal G)$:
\begin{equation}
A= (-\Delta)^k,\qquad D(A) =H^{2k}(\mathcal G)
\end{equation}
Combining Lemma \ref{lem:IMSunity} and the abstract decomposition formula in Lemma \ref{lem:decomposition2} we have the decomposition formula for the Polylaplacian:
\begin{lemma}\label{lem:decomposition3}
Let $\mathcal G= (V, \mathcal E)$ be a connected finite metric graph and assume $\{\Psi_1, \ldots, \Psi_N\}$ to be a partition of unity subordinate to an open covering $\mathcal O=\{O_1, \ldots, O_n\}$ satisfying
\begin{equation}
\sum_{k=1}^N \Psi_k^2 \equiv 1.
\end{equation}
Then 
\begin{equation}\label{eq:decomposition3}
\begin{aligned}
A\phi&= \sum_{j=1}^k \Psi_j A \Psi_j\phi+ \frac{(-1)^k}{2} \sum_{m=1}^{2k} \sum_{n=1}^{2k-m} \frac{(2k)_{m+n}}{m! n!} \Psi_j^{(m)} \Psi_j^{(n)} \phi^{(2k-m-n)}
\end{aligned}
\end{equation}
for all $\phi \in D(A)$.
\end{lemma}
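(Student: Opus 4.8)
The plan is to read off \eqref{eq:decomposition3} by inserting the explicit commutator computation of Lemma~\ref{lem:decomposition2} into the abstract decomposition of Lemma~\ref{lem:IMSunity}. Both ingredients are already available, so the proof amounts to checking that their hypotheses are compatible and then tracking the signs and index ranges; there is no substantial analytic difficulty.

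First I would confirm that Lemma~\ref{lem:IMSunity} is applicable to $A = (-\Delta)^k$ with $D(A) = H^{2k}(\mathcal G)$ and to the family $\{\Psi_1, \dots, \Psi_N\}$. That lemma requires $0 \le \Psi_j \le 1$, the normalization $\sum_{j=1}^N \Psi_j^2 \equiv 1$, and invariance of $D(A)$ under multiplication by the $\Psi_j \in \widetilde{C_b^\infty}(\mathcal G)$. The first two are hypotheses on the partition of unity, and the invariance is precisely part (i) of Lemma~\ref{lem:decomposition2}. Hence Lemma~\ref{lem:IMSunity} gives
\[
A\phi = \sum_{j=1}^N \Psi_j A \Psi_j \phi - \frac{1}{2}\sum_{j=1}^N \bigl(\Psi_j[A,\Psi_j] - [A,\Psi_j]\Psi_j\bigr)\phi
\]
for all $\phi \in D(A)$.

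Next I would substitute, for each index $j$, the identity established in the proof of Lemma~\ref{lem:decomposition2} with $f = \Psi_j$, namely
\[
\frac{1}{2}\bigl(f[A,f] - [A,f]f\bigr)\phi = \frac{(-1)^{k+1}}{2}\sum_{m=1}^{2k-1}\sum_{n=1}^{2k-m}\frac{(2k)_{m+n}}{m!\,n!}\,f^{(m)}f^{(n)}\phi^{(2k-m-n)}.
\]
Because $-(-1)^{k+1} = (-1)^k$, the correction term becomes the claimed double sum with prefactor $\frac{(-1)^k}{2}$, summed over the partition index $j$ from $1$ to $N$, which is exactly the right-hand side of \eqref{eq:decomposition3}. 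I would also note that extending the $m$-summation up to $2k$ changes nothing, since for $m = 2k$ the inner sum over $n$ from $1$ to $2k-m = 0$ is empty.

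The argument carries no real obstacle; the only points requiring care are purely formal. One must track the sign change from $(-1)^{k+1}$ in Lemma~\ref{lem:decomposition2} to $(-1)^k$ here, and check that every term is well-defined on $H^{2k}(\mathcal G)$: since $m, n \ge 1$, each derivative $\phi^{(2k-m-n)}$ has order at most $2k-2$, and the coefficients $\Psi_j^{(m)}, \Psi_j^{(n)}$ exist to all orders because $\Psi_j \in \widetilde{C_b^\infty}(\mathcal G)$.
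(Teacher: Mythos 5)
Your proposal is correct and matches the paper's own argument: the paper introduces Lemma~\ref{lem:decomposition3} with the words ``Combining Lemma~\ref{lem:IMSunity} and the abstract decomposition formula in Lemma~\ref{lem:decomposition2}'', which is precisely your substitution of the commutator identity (with $f=\Psi_j$) into the abstract formula, including the sign flip $-(-1)^{k+1}=(-1)^k$ and the observation that the $m=2k$ term contributes an empty sum.
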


We recall that $K=\mathcal G\setminus \mathcal E_\infty$ is the core of the graph and that for  $R>0$ 
\begin{align}
D_R&= \{\phi \in D(A) |\operatorname{supp}(\phi) \subset \mathcal G \setminus K_R\}\\
\Sigma_R&= \inf\{\langle \phi, A\phi\rangle|\phi \in D_R, \|\phi\|_{2}^2=1\}.
\end{align} 

Since $D(A)$ is nontrivial and invariant under multiplication by test functions in $\widetilde{C_c^\infty}$ the set $D_R$ is nonempty. 

For $R=0$ we set
\begin{align}
D_0&= D(A)\\
\Sigma_0&= \inf\{\langle \phi, A\phi \rangle |\phi \in D(A), \|\phi\|_{2}^2=1\}
\end{align}
and recall that
\begin{equation}
\Sigma = \lim_{R\to \infty} \Sigma_R=\sup_{R>0} \Sigma_R.
\end{equation}
In the following we characterize the quantities that were central to the existence theorems in the existence results before.
Since $A$ is self-adjoint one can show 
\begin{equation}
\Sigma_0 = \inf \sigma(A).
\end{equation}

\begin{theorem}\label{thm:persson}
Assume $\mathcal G$ is a finite metric graph. Let $A$ be a self-adjoint, nonnegative operator on $L^2(\mathcal G)$ that satisfies the decomposition formula \eqref{eq:decomposition3}. Additionally let $f(A+i)^{-1}$ be compact for all $f\in\widetilde{C_c^\infty}(\mathcal G)$. Then
\begin{equation}
\Sigma= \inf \sigma_{\text{ess}} (A).
\end{equation}
\end{theorem}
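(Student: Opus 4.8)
The plan is to prove the two inequalities $\Sigma \le \inf\sigma_{\text{ess}}(A)$ and $\inf\sigma_{\text{ess}}(A) \le \Sigma$ separately, using throughout the singular (Weyl) sequence characterization of the essential spectrum (see \cite{reedmethods}): $\lambda \in \sigma_{\text{ess}}(A)$ if and only if there is a sequence $\phi_n \in D(A)$ with $\|\phi_n\|_2 = 1$, $\phi_n \rightharpoonup 0$ weakly, and $\|(A-\lambda)\phi_n\|_2 \to 0$.

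First I would establish $\inf\sigma_{\text{ess}}(A) \le \Sigma$ by contradiction. Assuming $\inf\sigma_{\text{ess}}(A) > \Sigma$, choose $\lambda \in (\Sigma, \inf\sigma_{\text{ess}}(A))$, so that $P := \chi_{(-\infty,\lambda)}(A)$ is a finite-rank spectral projection whose range is spanned by finitely many genuine $L^2$-eigenfunctions $e_1,\dots,e_m$ of $A$. For any $\phi \in D_R$ with $\|\phi\|_2=1$ one has $\|P\phi\|_2^2 \le \sum_j \|e_j\|^2_{L^2(\mathcal G\setminus K_R)} \to 0$ as $R\to\infty$, uniformly, since each $e_j \in L^2(\mathcal G)$ has vanishing tail. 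Splitting $\phi = P\phi + (1-P)\phi$ and using $A\ge 0$ together with spectral calculus gives $\langle \phi, A\phi\rangle \ge \lambda\|(1-P)\phi\|_2^2 = \lambda\,(1-\|P\phi\|_2^2)$, whence $\Sigma_R \ge \lambda\,(1 - \sup_{\phi}\|P\phi\|_2^2) \to \lambda$, contradicting $\lambda > \Sigma = \sup_R \Sigma_R$.

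For the reverse inequality $\Sigma \le \inf\sigma_{\text{ess}}(A)$, I would take a singular sequence $\phi_n$ for $\lambda := \inf\sigma_{\text{ess}}(A)$, so that $\langle\phi_n, A\phi_n\rangle \to \lambda$ and $\|A\phi_n\|_2$ stays bounded. Fix $R > 0$ and take the partition of unity $\Psi_1^2 + \Psi_2^2 \equiv 1$ from Lemma~\ref{lem:unity2} and Example~\ref{ex:unity2}, with $\Psi_1 \in \widetilde{C_c^\infty}(\mathcal G)$ supported in $\overline{K_{2R}}$ and $\Psi_2$ supported in $\mathcal G\setminus K_R$. Since $\Psi_1(A+i)^{-1}$ is compact and $(A+i)\phi_n \rightharpoonup 0$ weakly (because $\phi_n\rightharpoonup 0$ and $(A-\lambda)\phi_n\to 0$ strongly), we get $\Psi_1\phi_n \to 0$ strongly in $L^2$, hence $\|\Psi_2\phi_n\|_2^2 \to 1$. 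Applying the decomposition formula \eqref{eq:decomposition3} and discarding the nonnegative term $\langle\Psi_1\phi_n, A\Psi_1\phi_n\rangle \ge 0$ yields
\[
\langle\phi_n, A\phi_n\rangle \ge \Sigma_R\,\|\Psi_2\phi_n\|_2^2 + \mathcal R_n,
\]
where $\mathcal R_n$ collects the localization error terms. Passing to the limit gives $\lambda \ge \Sigma_R$ for every $R$, and therefore $\lambda \ge \Sigma$.

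The main obstacle is controlling the error $\mathcal R_n$. By \eqref{eq:decomposition3} each error term has the form $\Psi_j^{(m)}\Psi_j^{(n)}\phi_n^{(2k-m-n)}$ with $m,n\ge 1$, so it is supported in the precompact annulus $\overline{K_{2R}}\setminus K_R$ (where the cutoffs vary) and involves only derivatives of $\phi_n$ of order at most $2k-2$. Pairing against $\phi_n$ and invoking the local compactness of the embedding (equivalently the compactness of $f(A+i)^{-1}$), the intermediate derivatives $\phi_n^{(2k-m-n)}$ converge to $0$ in $L^2$ of the annulus while $\phi_n$ remains bounded, so Cauchy--Schwarz forces $\mathcal R_n \to 0$. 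I expect verifying this vanishing --- more precisely, that a sequence bounded in the graph norm of $A$ and converging weakly to zero has its lower-order derivatives converging strongly to zero on precompact sets --- to be the one genuinely technical point; everything else reduces to bookkeeping with the decomposition formula and the hypotheses $A\ge 0$ and $f(A+i)^{-1}$ compact.
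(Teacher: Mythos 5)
Your proposal is correct and follows essentially the same route as the paper's proof: the inequality $\Sigma \le \inf\sigma_{\text{ess}}(A)$ via a Weyl sequence, the decomposition formula \eqref{eq:decomposition3}, compactness of $f(A+i)^{-1}$ to show the localized piece carries no mass, and nonnegativity of $A$ to discard it; and the reverse inequality by contradiction through the finite-rank spectral projection below the essential spectrum. The only deviations are implementational and both sound: the paper rescales its cutoffs with $n$ so the commutator errors are $O(1/n^2)$ (using the uniform $H^{2k}$ bound from \eqref{eq:ineedthis} plus a diagonal argument) where you fix the scale $R$, kill the error by strong local convergence of low-order derivatives, and only then let $R\to\infty$; and the paper disposes of the projected part by pairing a weakly null sequence in $D_n$ with the compact operator $\left (A+\Sigma+2\varepsilon\right )P_\Sigma$, where you instead make this quantitative via the $L^2$-tail decay of the finitely many eigenfunctions spanning the range of the projection.
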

\begin{proof}
\textit{$\inf \sigma_{\text{ess}}(A)\ge \Sigma$.} Let $\lambda \in \sigma_{\text{ess}}(A)$ and let $(\phi_n)$ be an associated Weyl sequence satisfying $\|\phi_n\|_2^2=1$. 
Consider the vanishing-compatible sequence of partitions of unity $\Psi_{n}, \widetilde{\Psi_n}$ from Example \ref{ex:unity2}.

Since $\Psi_R^2 (A+i)^{-1}$ is compact for all $R>0$, and since $(A+i)\phi_n\rightharpoonup 0$ as $n\to \infty$ we deduce that
$$\|\Psi_R \phi_n\|_2= \|\Psi_R (A+i)^{-1} (A+i) \phi_n\|_2\to 0\qquad (n\to \infty)$$ 
and passing to a subsequence, still denoted by $\phi_n$, we may assume 
$$\|\Psi_n \phi_n\|_2= \|\Psi_n (A+i)^{-1} (A+i) \phi_n\|_2\to 0.$$
Furthermore, with \eqref{eq:ineedthis} we deduce that
\begin{equation}
\|\phi_n\|_{H^{2k}} \le C |\phi_n|_{H^{2k}}= C \left ( \|A \phi_n\|_2^2+ \|\phi_n\|_2^2\right )^{1/2}
\end{equation}
is uniformly bounded.  Since $\phi_n$ is a Weyl sequence for $\lambda \in \sigma_{\text{ess}}(A)$ with the decomposition formula in Lemma~\ref{lem:decomposition3} we then compute
\begin{equation}
\begin{aligned}
\lambda &= \lim_{n\to \infty} \langle \phi_n, A \phi_n\rangle_{L^2} \\ 
&= \lim_{n\to \infty} \langle \Psi_n\phi_n, A \Psi_n \phi_n\rangle_{L^2} + \langle \widetilde{\Psi_n} \phi_n, A \widetilde{\Psi_n} \phi_n\rangle_{L^2} + O\left (\frac{1}{n^2}\right )  \\
&\ge \lim_{n\to \infty} \sum_{e\in \mathcal E_\infty} \langle \widetilde{\Psi_n} \phi_n, A\widetilde{\Psi_n} \phi_n\rangle_{L^2}\ge  \lim_{n\to \infty} \Sigma_n = \Sigma.
\end{aligned}
\end{equation}
Since $\lambda \in \sigma_{\text{ess}}(A)$ was arbitrary, we conclude $\inf \sigma_{\text{ess}}(A)\ge \Sigma$.

\textit{$\inf \sigma_{\text{ess}}(A)\le \Sigma$.} Assume for a contradiction that $\inf \sigma_{\text{ess}}(A)\ge \Sigma+ 3\varepsilon$ with $\varepsilon >0$. Then $\sigma(A) \cap (-\infty, \Sigma+ 2\varepsilon]$ is discrete and since $A$ is bounded from below, the spectral projector $P_\Sigma:= P_{(-\infty, \Sigma+2\varepsilon]}$ is of finite rank. 
Assume $\phi_n \in D_n(A)$ is a sequence such that
\begin{equation}
\langle \phi_n, A\phi_n\rangle \le \Sigma + \varepsilon
\end{equation}
and $\phi_n \rightharpoonup 0$ in $L^2$. Then since $\left (A +\Sigma + 2\varepsilon\right )P_\Sigma $ is a compact operator and
\begin{equation}
\left (A +\Sigma + 2\varepsilon\right )P_\Sigma \phi_n\to 0 \qquad (n\to \infty).
\end{equation}
Hence
\begin{equation}
\begin{aligned}
\langle \phi_n, A\phi_n\rangle_{L^2} &= \langle \phi_n, A(1-P_{\Sigma})\phi_n\rangle + \langle \phi_n,  A P_\Sigma \phi_n\rangle_{L^2}\\
&\ge (\Sigma + 2\varepsilon) \langle \phi_n, (1-P_{\Sigma})\phi_n\rangle_{L^2} + \langle \phi_n, AP_\Sigma \phi_n\rangle_{L^2} \\
&\ge \Sigma+2\varepsilon + \left \langle \phi_n, \left (A +\Sigma + 2\varepsilon\right )P_\Sigma \phi_n \right \rangle_{L^2}.
\end{aligned}
\end{equation}
Passing to the limit we conclude
\begin{equation}
\liminf_{n\to \infty} \langle \phi_n, A\phi_n\rangle_{L^2} \ge \Sigma+2\varepsilon
\end{equation}
and we infer the statement by contradiction.
\end{proof}

\subsubsection{Sufficient conditions for the threshold condition for the Polylaplacian}

\begin{proposition}\label{thm:lastresultt2}
Let $\mathcal G$ be a finite, connected metric graph and $k\ge 1$. 
Assume $\sigma_\text{ess}((-\Delta)^k+V) \subset [0,\infty)$ and assume additionally either 
\begin{enumerate}[(i)]
\item $V\in L^1(\mathcal G)\cap L^2(\mathcal G)$ and
\begin{equation}
\int_{\mathcal G}  V \, \mathrm dx <0
\end{equation}
 \item or $V<0$ on $\mathcal G$. 
\end{enumerate}
Then $\Sigma_0 < \Sigma$ (as defined in \eqref{eq:numerate02} and \eqref{eq:numerate12}) and there exists $\hat \mu>0$, such that the minimization problem
\begin{equation}\label{eq:minimizerklast}
E^{(k)}=\inf_{\substack{u\in H^1(\mathcal G)\\\|u\|_{2}^2=1}}  E^{(k)}(u)
\end{equation}
admits a minimizer for $\mu \in (0,\hat \mu)$. 
\end{proposition}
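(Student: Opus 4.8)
The plan is to reduce the strict inequality $\Sigma_0<\Sigma$ to producing a single test function of negative energy, and then to invoke the general existence result directly. By the Persson-type identity established above (Theorem~\ref{thm:persson}, and its analogue for the perturbed operator), $\Sigma=\inf\sigma_{\mathrm{ess}}(A)$ for $A=(-\Delta)^k+V$, so the hypothesis $\sigma_{\mathrm{ess}}((-\Delta)^k+V)\subset[0,\infty)$ gives $\Sigma\ge 0$. Since $\Sigma_0=\inf\sigma(A)=\inf\{\langle\phi,A\phi\rangle:\|\phi\|_2^2=1\}$, it therefore suffices to exhibit one $\phi\in H^{2k}(\mathcal G)$ with $\langle\phi,A\phi\rangle<0$; this forces $\Sigma_0<0\le\Sigma$, and the remaining assertion is then immediate from Theorem~\ref{thm:bigresult2} with $c=1$, which yields a minimizer of \eqref{eq:minimizerklast} for $\mu\in(0,\hat\mu)$ with $\hat\mu$ as in Proposition~\ref{prop:second}.

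To build the test function I would use a \emph{spreading} profile adapted to the core--ray structure. Fix a smooth $\Phi\in C^\infty([0,1])$ with $\Phi\equiv 1$ near $0$, $\Phi\equiv 0$ near $1$, and define $\phi_L$ to equal $1$ on the closed neighborhood $\overline{K_L}$ of the core $K=\mathcal G\setminus\mathcal E_\infty$ and, on each ray $e\in\mathcal E_\infty$ parametrized by arclength $x\ge 0$, to equal $\Phi((x-L)/L)$ for $x\in[L,2L]$ and $0$ for $x\ge 2L$. Because $\phi_L$ is locally constant near every vertex, all of its derivatives vanish there, so the continuity and Kirchhoff conditions defining $H^{2k}(\mathcal G)$ hold automatically and $\phi_L\in D(A)$; integration by parts then gives $\langle\phi_L,A\phi_L\rangle=\int_{\mathcal G}|\phi_L^{(k)}|^2\,\mathrm dx+\int_{\mathcal G}V|\phi_L|^2\,\mathrm dx$. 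A change of variables shows the kinetic part lives only on the transition regions and satisfies $\int_{\mathcal G}|\phi_L^{(k)}|^2\,\mathrm dx=|\mathcal E_\infty|\,L^{1-2k}\|\Phi^{(k)}\|_2^2\to 0$ as $L\to\infty$, since $2k-1\ge 1$.

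It remains to control the potential term in the two cases. In case (i), $\phi_L\to 1$ pointwise with $0\le\phi_L\le 1$ and $V\in L^1(\mathcal G)$, so dominated convergence with majorant $|V|$ yields $\int_{\mathcal G}V|\phi_L|^2\,\mathrm dx\to\int_{\mathcal G}V\,\mathrm dx<0$; hence $\langle\phi_L,A\phi_L\rangle\to\int_{\mathcal G}V\,\mathrm dx<0$ and the form is negative for $L$ large. In case (ii) I would avoid assuming integrability: splitting $\int_{\mathcal G}V|\phi_L|^2=\int_{\overline{K_L}}V+\int_{\mathcal G\setminus\overline{K_L}}V|\phi_L|^2$ and using $V<0$, $|\phi_L|^2\ge 0$ shows the second integral is $\le 0$, while the monotonicity of $L\mapsto\int_{\overline{K_L}}V$ gives $\int_{\mathcal G}V|\phi_L|^2\le\int_{\overline{K_1}}V=:-\delta_0<0$ uniformly in $L\ge 1$. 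Combined with the vanishing kinetic term, $\langle\phi_L,A\phi_L\rangle\le|\mathcal E_\infty|\,L^{1-2k}\|\Phi^{(k)}\|_2^2-\delta_0<0$ for $L$ large. Either way $\Sigma_0<0$, and $\Sigma_0<\Sigma$ follows.

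The main obstacle is the potential estimate in case (ii), where $V$ need not lie in $L^1$, so the clean dominated-convergence limit of case (i) is unavailable; the monotonicity/localization bound above is what replaces it, and the hypothesis $\sigma_{\mathrm{ess}}\subset[0,\infty)$ (used only to pin $\Sigma\ge 0$) is precisely what prevents a non-decaying negative $V$ from dragging the essential spectrum below zero and invalidating the comparison. A secondary point to verify carefully is that $\phi_L$ genuinely lies in the operator domain, which the local constancy near vertices guarantees. When $\mathcal G$ is compact there are no rays; then the constant $|\mathcal G|^{-1/2}$ already has energy $|\mathcal G|^{-1}\int_{\mathcal G}V<0$ while $\sigma_{\mathrm{ess}}=\emptyset$, so the claim is trivial and existence follows from the direct method via the compact embedding.
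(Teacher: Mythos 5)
Your proposal is correct and takes essentially the same route as the paper: the paper's proof likewise produces $\Sigma_0<0$ by testing the (normalized) energy with cutoff functions $\Psi_n$ from Example~\ref{ex:unity2} that equal $1$ on a growing neighborhood of the core and decay along the rays, so that the kinetic term vanishes as the scale grows while the potential term remains negative in both cases (i) and (ii), and then combines $\Sigma\ge 0$ (via the Persson-type Theorem~\ref{thm:persson} and the hypothesis on $\sigma_{\mathrm{ess}}$) with Theorem~\ref{thm:bigresult2} to conclude. The only substantive difference is minor: in case (ii) you use a monotonicity/localization bound $\int_{\mathcal G} V|\phi_L|^2\,\mathrm dx\le \int_{\overline{K_1}}V\,\mathrm dx<0$, whereas the paper uses the level-set estimate $\int_{\mathcal G} V|\Psi_n|^2\,\mathrm dx\le -\varepsilon\,|\{x:V(x)\le -\varepsilon\}|$; both are valid.
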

\begin{proof}
The proof is analagous to the proof in Proposition \ref{thm:lastresultt1}. We only need to replace the test functions $\Psi_n$ with the ones in Example \ref{ex:unity2}. Then 
$$\|\Psi_n^{(k)}\|_\infty \le \frac{1}{n^{2k}}C$$ 
with $C$ independent of $n$. We infer the result as in the proof of Proposition \ref{thm:lastresultt1}.

Consider  as test functions $\Psi_n$ as defined as in Example~\ref{ex:unity2}, then we only need to show that for $n$ sufficiently high, the Rayleigh quotient
\begin{equation}
    \mathcal R[\Psi_n]:= \frac{\int_{\mathcal G} |\Psi_n^{(k)}|^2 + V|\Psi_n|^2\, \mathrm dx}{\int_{\mathcal G}|\Psi_n|^2\, \mathrm dx} <0.
\end{equation}
Since $\|\Psi_n'\|_\infty^2 \le \frac{C}{n^2}$ for some $C>0$ we have
\begin{equation}
    \|\Psi'\|_2^2 \le \frac{C |\mathcal E_\infty|}{n}\to 0 \qquad (n\to \infty). 
\end{equation}
If $V<0$ then for sufficiently large $n$ and $\varepsilon>0$ sufficiently small
\begin{equation}
    \int_{\mathcal G} V|\Psi_n|^2\, \mathrm dx \le - |\{x\in \mathcal G: V(x) \le -\varepsilon\}| \varepsilon <0.
\end{equation}
If $\int_{\mathcal G} V\, \mathrm dx <0$, then
\begin{equation}
    \liminf_{n\to \infty}\int_{\mathcal G} V |\Psi_n|^2\, \mathrm dx - \int_{\mathcal G} V\, \mathrm dx<0
\end{equation}
We deduce $R[\Psi_n]<0$ for sufficiently large $n$ and thus $\inf \sigma((-\Delta)^k+V) <0$. Then $\Sigma_0<\Sigma$ and we conclude the existence of a minimizer of \eqref{eq:minimizerklast} by Theorem~\ref{thm:bigresult2}. 
\end{proof}

\begin{remark}
If $V\in L^2+L^\infty(\mathcal G)$ is a relativly compact perturbation of $(-\Delta)^k$, i.e. 
\begin{equation}
V\left ((-\Delta)^k+i\right )^{-1}
\end{equation}
is compact, then $\inf \sigma_\text{ess}\left ((-\Delta)^k+V\right ) =0$  and we deduce
\begin{equation}
\inf \sigma_\text{ess}\left ((-\Delta)^k+V\right )\subset [0,\infty).
\end{equation}
\end{remark}

We finish the section by giving a criterion for the potential $V$ such that
\begin{equation}
\Sigma = \lim_{n\to \infty} \inf_{\substack{u\in D(A) \\ \|u\|_2^2=1, \; \operatorname{supp} u\subset \mathcal G\setminus K_n}} \langle u, Au\rangle \ge 0.
\end{equation}
This in particular implies 
$$\sigma_\text{ess}((-\Delta)^k+V) \subset [0,\infty).$$
Consider decaying potentials $V=V_2+ V_\infty$ with $V_2\in L^2(\mathcal G)$ and $V_\infty\in L^\infty(\mathcal G)$ such that
\begin{equation}\label{eq:newdecayingpotentialsf}
\sup_{x\in \mathcal G\setminus K_n} |V_\infty(x)|\to 0 \qquad (n\to \infty).
\end{equation}
\begin{proposition}\label{prop:Iwantthistoendf}
Let $\mathcal G$ be a finite metric graph.  Assume $V\in L^2+ L^\infty(\mathcal G)$ satisfying \eqref{eq:newdecayingpotentialsf}. Let $A=(-\Delta)^k+V$, then
\begin{equation}
\widetilde{\Sigma} = \lim_{n\to \infty} \inf_{\substack{u\in H^{2k}(\mathcal G)\\ \|u\|_2^2=1, \; \operatorname{supp} u\subset \mathcal G\setminus K_n}} \langle u, Au\rangle_{L^2} = 0.
\end{equation}
\end{proposition}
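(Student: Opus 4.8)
The plan is to establish the two inequalities $\widetilde\Sigma \ge 0$ and $\widetilde\Sigma \le 0$ separately, noting that $\widetilde\Sigma$ coincides with the quantity $\Sigma$ from \eqref{eq:numerate1} and that the argument parallels the proof of Lemma~\ref{lem:decayingpotential}. Throughout I write $V = V_2 + V_\infty$ with $V_2 \in L^2(\mathcal G)$ and $V_\infty \in L^\infty(\mathcal G)$, and abbreviate $a_n := \sup_{x\in\mathcal G\setminus K_n}|V_\infty(x)|$ and $b_n := \|V_2\|_{L^2(\mathcal G\setminus K_n)}$; both tend to $0$ as $n\to\infty$, $a_n\to0$ by the decay hypothesis \eqref{eq:newdecayingpotentialsf} and $b_n\to0$ since $V_2\in L^2(\mathcal G)$.

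For the lower bound I would take any $u\in H^{2k}(\mathcal G)$ with $\|u\|_2^2=1$ and $\operatorname{supp} u\subset\mathcal G\setminus K_n$ and estimate the potential term. The contribution of $V_\infty$ is immediately controlled by $a_n\|u\|_2^2 = a_n$, while for the contribution of $V_2$ I would apply Hölder followed by the higher-order Gagliardo--Nirenberg inequality of Proposition~\ref{prop:higherordergag} with $p=4$, giving
\begin{equation}
\left|\int_{\mathcal G} V_2|u|^2\,\mathrm dx\right| \le b_n\|u\|_4^2 \le C b_n\,\bigl(\|u^{(k)}\|_2^2+1\bigr)^{\frac{1}{4k}},
\end{equation}
where I used $\|u\|_2=1$. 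Writing $t:=\|u^{(k)}\|_2^2\ge0$ this yields
\begin{equation}
\langle u, Au\rangle = \|u^{(k)}\|_2^2 + \int_{\mathcal G} V|u|^2\,\mathrm dx \ge t - a_n - C b_n\,(t+1)^{\frac{1}{4k}}.
\end{equation}
Since the correction grows sublinearly in $t$, for $n$ large enough that $b_n$ is small the right-hand side is nondecreasing in $t\ge0$ and hence minimized at $t=0$, so $\langle u,Au\rangle \ge -a_n - C b_n$. Taking the infimum over admissible $u$ and then $n\to\infty$ gives $\widetilde\Sigma\ge0$.

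For the upper bound I would use a competitor argument on a ray, as in Lemma~\ref{lem:decayingpotential}. Since $\mathcal G$ is noncompact it has at least one ray $e\cong[0,\infty)$; fix $\phi\in C_c^\infty((0,\infty))$ with $\|\phi\|_2=1$ and set $\phi_\lambda(x):=\lambda^{1/2}\phi(\lambda x)$, so that $\|\phi_\lambda\|_2=1$, $\|\phi_\lambda^{(k)}\|_2^2=\lambda^{2k}\|\phi^{(k)}\|_2^2$ and $\|\phi_\lambda\|_4^2=\lambda^{1/2}\|\phi\|_4^2$. As $\lambda\downarrow0$ the support of $\phi_\lambda$, viewed on the ray, escapes to infinity, so for each $n$ I may choose $\lambda_n\downarrow0$ with $\operatorname{supp}\phi_{\lambda_n}\subset\mathcal G\setminus K_n$. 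Then
\begin{equation}
\inf_{\substack{\|u\|_2^2=1\\\operatorname{supp} u\subset\mathcal G\setminus K_n}}\langle u,Au\rangle \le \langle \phi_{\lambda_n},A\phi_{\lambda_n}\rangle \le \lambda_n^{2k}\|\phi^{(k)}\|_2^2 + a_n + b_n\lambda_n^{1/2}\|\phi\|_4^2,
\end{equation}
and every term on the right tends to $0$, whence $\widetilde\Sigma\le0$. Combining the two bounds gives $\widetilde\Sigma=0$.

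The main obstacle is the $V_2\in L^2$ contribution: unlike the $L^\infty$ part it cannot be bounded by the $L^2$-norm of $u$ alone, so one must invoke the Gagliardo--Nirenberg inequality to trade the $L^4$-norm for a sublinear power of the kinetic energy $\|u^{(k)}\|_2^2$, and then verify that this sublinear correction does not spoil the lower bound once the tail $b_n$ is small. Everything else reduces to the routine scaling identities for $\phi_\lambda$ recorded above.
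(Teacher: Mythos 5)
Your proposal is correct and follows essentially the same route as the paper: the lower bound controls the $V_2$-contribution via H\"older together with the higher-order Gagliardo--Nirenberg inequality of Proposition~\ref{prop:higherordergag}, and the upper bound uses the scaled test functions $\phi_\lambda = \lambda^{1/2}\phi(\lambda x)$ supported on a ray with support escaping to infinity, which is exactly the paper's competitor $\phi_n = n^{-1/2}\phi(x/n)$. Your monotonicity-in-$t$ argument is a slightly more self-contained substitute for the paper's step of extracting a uniformly $H^k$-bounded minimizing sequence, but the underlying estimates are the same.
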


\begin{proof}
Assume $u_n$ is a minimizing sequence, such that $\|u_n\|_{L^2}^2$, $\operatorname{supp} u\subset \mathcal G\setminus K_n$ and 
\begin{equation}
\langle u_n, Au_n\rangle_{L^2}\to \Sigma.
\end{equation}
With \eqref{eq:ineedthis} we deduce that
\begin{equation}\label{eq:ineedthis2f}
\|u_n\|_{H^k} \le C \left (\langle u_n, Au_n\rangle_{L^2}^2 + \|u_n\|_2^2\right )
\end{equation}
is uniformly bounded.  Integrating by parts and using \eqref{eq:ineedthis2f} we infer
\begin{equation}
\begin{aligned}
\int_{\mathcal G} \left |u^{(k)}_n\right |^2 + V|u_n|^2\, \mathrm dx&\ge \int_{\mathcal G} \left |u^{(k)}_n\right |^2\, \mathrm dx \\
&\qquad - \widetilde{C} \left ( \left (\int_{\mathcal G\setminus K_n} |V|^2\, \mathrm dx\right )^{1/2}  + \sup_{x\in \mathcal  G\setminus K_n} |V_\infty(x)|\right ).
\end{aligned}
\end{equation}
We have
\begin{equation}
\left ( \left (\int_{\mathcal G\setminus K_n} |V|^2\, \mathrm dx\right )^{1/2}  + \sup_{x\in \mathcal  G\setminus K_n} |V_\infty(x)|\right )\to 0 \qquad (n\to \infty).
\end{equation}
Thus,
\begin{equation}
\Sigma = \lim_{n\to \infty} \langle u_n, Au_n\rangle_{L^2} \ge 0.\qedhere
\end{equation}
In fact, suppose $\phi \in C_c^\infty(\mathbb R)$, then we can define $$\phi_n:= \frac{1}{\sqrt{n}} \phi\left (\frac{x}{n}\right ).$$
We imbed $\phi_n$ to a function $\phi \in \widetilde{C_b^\infty}(\mathcal G)$ on the graph by defining it on one ray and then extending it to the rest of the graph by zero and w.l.o.g. we may assume $\operatorname{supp} \phi_n \subset \mathcal G\setminus K_n$, then we have
\begin{equation}
\begin{aligned}
    \widetilde{\Sigma} &\le \lim_{n\to \infty} \int_{\mathcal G} |\phi_n^{(k)}|^k +  V|\phi_n|^2 \, \mathrm dx \\
    &\le \int_{\mathcal G} \frac{1}{n^k}|\phi^{(k}|\, \mathrm dx - \overline  C \left ( \left (\int_{\mathcal G\setminus K_n} |V|^2\, \mathrm dx\right )^{1/2}  + \sup_{x\in \mathcal  G\setminus K_n} |V_\infty(x)|\right )=0.
\end{aligned}
\end{equation}
\end{proof}

\section{Locally Finite Graphs}
In this section, we study the NLS energy functional with potentials on more general graphs. We show a decomposition formula for the form associated with the magnetic Schrödinger operator and adapt previous arguments by introducing a suitable sequence of partitions of unity in the case of locally finite metric graphs.
\subsection{Formulation of the problem}
Consider the Schödinger operator with potentials $M\in H^1+W^{1,\infty}(\mathcal G)$ and $V\in L^2+L^\infty(\mathcal G)$ satisfying natural vertex conditions on $\mathcal G$:
\begin{equation}\label{eq:schroedinger2}
\begin{gathered}
A=\left (i \frac{\mathrm d}{\mathrm dx}+M\right )^2+V \\
D(A)=\left \{ u\in \widetilde{H^2}(\mathcal G) \bigg | \sum_{e \succ \mathsf v} \left ( i \frac{\partial}{\partial\nu} +  M \right ) u_e (\mathsf v)=0, \qquad \forall \mathsf v \in \mathcal V \right \}
\end{gathered}
\end{equation}
associated to the form $a: H^1(\mathcal G) \times H^1(\mathcal G) \to \mathbb C$ defined via
\begin{equation}\label{eq:schroedingerform2}
a(u,v) := \int_{\mathcal G} \overline{ \left ( i \frac{\mathrm d}{\mathrm dx} + M \right )u}  \left ( i \frac{\mathrm d}{\mathrm dx} + M \right )v+ V \overline u v\, \mathrm dx.
\end{equation}

\begin{theorem}
Let $\mathcal G$ be a metric graph, $M\in H^1+ W^{1,\infty}(\mathcal G)$ and $V\in L^2+L^\infty(\mathcal G)$. Then $A$ is a self-adjoint operator on $L^2(\mathcal G)$ and
\begin{equation}
    \langle Au, v\rangle = a(u,v)
\end{equation}
for all $u,v\in D(A)$.
\end{theorem}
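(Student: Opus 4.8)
The plan is to realise $A$ as the self-adjoint operator associated with the quadratic form $a(\cdot,\cdot)$ of \eqref{eq:schroedingerform2} via Kato's first representation theorem. First I would record that $M\in L^\infty(\mathcal G)$: by the Sobolev inequality \eqref{eq:Sobolevinequality} (with $p=\infty$ and $M\equiv 0$) one has $H^1(\mathcal G)\hookrightarrow L^\infty(\mathcal G)$, while $W^{1,\infty}(\mathcal G)\subset L^\infty(\mathcal G)$ trivially, so $M\in H^1+W^{1,\infty}(\mathcal G)\subset L^\infty(\mathcal G)$. Consequently, for $u\in H^1(\mathcal G)$ the expression $(i\tfrac{\mathrm d}{\mathrm dx}+M)u=iu'+Mu$ lies in $L^2(\mathcal G)$, so $a$ is well defined, densely defined (as $H^1(\mathcal G)$ is dense in $L^2(\mathcal G)$), and manifestly symmetric, $M$ and $V$ being real-valued.

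Next I would show that the form norm $u\mapsto (a(u,u)+\beta\|u\|_2^2)^{1/2}$ is, for a suitable $\beta$, equivalent to the $H^1(\mathcal G)$-norm; since $H^1(\mathcal G)$ is complete this yields at once closedness and semiboundedness of $a$. The upper bound is immediate from $M\in L^\infty$ together with the boundedness estimates for $V$. For the lower bound I would first treat the magnetic part: from $\|u'\|_2\le \|iu'+Mu\|_2+\|M\|_\infty\|u\|_2$ one gets $\|u'\|_2^2\le 2\,\|(i\tfrac{\mathrm d}{\mathrm dx}+M)u\|_2^2+2\|M\|_\infty^2\|u\|_2^2$, so the free magnetic form plus $\|u\|_2^2$ already controls the full $H^1$-norm. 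I would then absorb $V$ as an infinitesimally form-bounded perturbation: splitting $V=V_2+V_\infty$ with $V_2\in L^2$, $V_\infty\in L^\infty$, and further splitting the large values of $V_2$ into an $L^2$-piece of arbitrarily small norm plus a bounded piece, Hölder's inequality together with Proposition~\ref{cor:Gagliardo-nirenberg} and \eqref{eq:Sobolevinequality} (e.g. $\int_{\mathcal G}|V_2|\,|u|^2\,\mathrm dx\le \|V_2\|_2\,\|u\|_4^2\le \varepsilon\|u\|_{H^1}^2$) gives, for every $\varepsilon>0$, a constant $C_\varepsilon$ with $\left|\int_{\mathcal G}V|u|^2\,\mathrm dx\right|\le \varepsilon\,\|(i\tfrac{\mathrm d}{\mathrm dx}+M)u\|_2^2+C_\varepsilon\|u\|_2^2$. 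Choosing $\varepsilon<1$ yields the two-sided bound, hence closedness and boundedness below of $a$.

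With $a$ closed, symmetric, densely defined and bounded below, the first representation theorem produces a unique self-adjoint operator $A$ with form domain $H^1(\mathcal G)$ such that $\langle Au,v\rangle=a(u,v)$ for all $u\in D(A)$ and $v\in H^1(\mathcal G)$ — in particular for $v\in D(A)$, which is the claimed identity. It remains to identify $D(A)$ with the explicit domain in \eqref{eq:schroedinger2}. By definition $u\in D(A)$ iff $u\in H^1(\mathcal G)$ and there is $w\in L^2(\mathcal G)$ with $a(u,v)=\langle w,v\rangle$ for all $v\in H^1(\mathcal G)$. Testing first against $v\in C_c^\infty(I_e)$ supported in the interior of a single edge shows that $(i\tfrac{\mathrm d}{\mathrm dx}+M)^2u_e+Vu_e=w_e$ holds distributionally on each edge; since $M\in H^1+W^{1,\infty}$ (so $M'\in L^2+L^\infty$) and $u\in L^\infty$, all lower-order terms lie in $L^2$, forcing $u_e''\in L^2(I_e)$, i.e. $u\in\widetilde{H^2}(\mathcal G)$, by edgewise elliptic regularity. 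Testing then against arbitrary $v\in H^1(\mathcal G)$ and integrating by parts edgewise via $\langle (i\tfrac{\mathrm d}{\mathrm dx}+M)u,(i\tfrac{\mathrm d}{\mathrm dx}+M)v\rangle_e=\langle (i\tfrac{\mathrm d}{\mathrm dx}+M)^2u,v\rangle_e+ i\,[\,\overline{(i\tfrac{\mathrm d}{\mathrm dx}+M)u}\,v\,]_{\partial e}$ collects, at each vertex $\mathsf v$, a boundary contribution which, since $v$ is continuous across $\mathsf v$ while $v(\mathsf v)$ is otherwise arbitrary, vanishes for all $v\in H^1(\mathcal G)$ if and only if $\sum_{e\succ\mathsf v}(i\tfrac{\partial}{\partial\nu}+M)u_e(\mathsf v)=0$ — exactly the Kirchhoff-type condition in \eqref{eq:schroedinger2}. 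The reverse inclusion follows by reading the same integration by parts backwards.

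The main obstacle I anticipate is this last domain identification on a merely locally finite, possibly infinite graph: one must justify the edgewise integration by parts and the unconditional rearrangement of the resulting vertex sums, and carry out the regularity bootstrap in the presence of the first-order term from $M\in H^1+W^{1,\infty}$, which is only in $L^2+L^\infty$ rather than bounded. The density results of §3.6, namely Proposition~\ref{prop:densitylocfinite} and Remark~\ref{rmk:densitylocfinite}, are precisely what is needed to reduce these manipulations to a dense class of compactly supported functions and to legitimise the test-function arguments.
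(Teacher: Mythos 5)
Your proof is correct, but it takes a genuinely different route from the paper's. The paper works directly at the operator level: edgewise integration by parts shows that $A$ with the explicit domain \eqref{eq:schroedinger2} is symmetric, hence $A\subset A^*$; then, for $u\in D(A^*)$, it tests against $v$ supported in the interior of a single edge to obtain the distributional equation and edgewise $H^2$-regularity, and against arbitrary $v\in D(A)$ to force continuity of $u$ and the magnetic Kirchhoff condition, so that $D(A^*)\subset D(A)$ and $A=A^*$. You instead run the form machinery: closedness and semiboundedness of $a$ (via $M\in L^\infty$, control of $\|u'\|_2$ by the magnetic term, and KLMN-type absorption of $V$ using the splitting $V=V_2+V_\infty$), Kato's first representation theorem, and then identification of the domain of the resulting operator with \eqref{eq:schroedinger2}. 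Note that this last identification consists of essentially the same local computations (edgewise elliptic regularity plus vertex boundary terms) that the paper performs on $D(A^*)$, so your route does not avoid that work; what it buys is information the paper must re-derive separately when needed (cf. the proof of Lemma~\ref{lem:prework}): lower semiboundedness of $A$, equivalence of the form norm with the $H^1(\mathcal G)$-norm, and the identity $\langle Au,v\rangle=a(u,v)$ for all $v\in H^1(\mathcal G)$ rather than only for $v\in D(A)$. Conversely, the paper's argument is more elementary and self-contained: it needs neither the representation theorem nor the Gagliardo--Nirenberg and Sobolev inequalities of \S3.3. Your anticipated difficulty on infinite locally finite graphs is resolved exactly as you propose: reduce to compactly supported test functions via Lemma~\ref{prop:densitycompact}, Proposition~\ref{prop:densitylocfinite} and Remark~\ref{rmk:densitylocfinite}, where only finitely many edges and vertices contribute to the sums, and then conclude by density and the $H^1$-continuity of both sides of the identity.
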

\begin{proof}
For $u,v\in D(A)$ with integration by parts one easily computes
\begin{equation}
\left \langle Au, v\right \rangle_{L^2} = a(u,v) = \left  \langle Au, v \right  \rangle_{L^2}
\end{equation}
and $A$ is symmetric, hence $A\subset A^*$. Let $e\in \mathcal E$ be fix but arbitrary, then suppose $u\in D(A^*)$ and $v\in D(A)$ to be supported on $e$ and $v_e\in C_c^\infty(I_e)$, then
\begin{equation}
\left \langle u_e, (Av)_e \right \rangle_{L^2} = \left \langle  u, Av \right \rangle_{L^2} = \left \langle A^*u, v\right \rangle_{L^2} = \left \langle (A^*u)_e, v_e\right \rangle_{L^2}
\end{equation}
Since $e$ and $v$ were otherwise arbitrary we deduce
\begin{equation}
\left [\left ( i \frac{\mathrm d}{\mathrm dx} + M\right )^2 +V \right ]u_e\in L^2(I_e)
\end{equation}
in the distribution sense. Thus, $u_e\in H^2(I_e)$ for each $e\in \E$. Now, suppose $u\in D(A^*)$ and $v\in D(A)$ arbitrary. Then by partial integration we compute
\begin{equation}
\left \langle u, Av \right \rangle_{L^2} = \left \langle Au, v\right \rangle_{L^2} + \sum_{\mathsf v\in \mathcal V} \sum_{e\succ \mathsf v}   \left (\overline{u_e}  \left ( i \frac{\partial}{\partial \nu} + M \right ) v_e-v_e \overline{\left ( i \frac{\partial}{\partial \nu} +M \right )} u_e \right )(\mathsf v)
\end{equation}
Since the choice of $u,v$ is otherwise arbitrary we deduce $u\in C(\mathcal G)$ and we compute
\begin{equation}
\begin{aligned}
\left \langle A^* u, v\right \rangle &=\left \langle u, Av \right \rangle_{L^2} \\
&=\left \langle Au, v\right \rangle_{L^2}+\sum_{\mathsf v\in \mathcal V} \left (\overline{u(\mathsf v)} \left ( \sum_{e\succ \mathsf v} \left ( i \frac{\partial}{\partial \nu} +M \right ) v_e(\mathsf v)\right )\right .\\
&\qquad \qquad \qquad \qquad \qquad \qquad\left . -  v(\mathsf v) \left (\sum_{e\succ \mathsf v} \overline{\left ( i \frac{\partial}{\partial \nu} + M \right ) u_e(\mathsf v)}\right ) \right )\\
&= \left \langle Au, v\right \rangle_{L^2} + \sum_{v\in \mathcal V} v(\mathsf v) \left (\overline{\sum_{e\succ \mathsf v} \left ( i \frac{\partial}{\partial \nu} + M \right ) u_e(\mathsf v)}\right ).
\end{aligned}
\end{equation}
Hence $u\in D(A)$ and we infer $A=A^*$.
\end{proof}

Consider the NLS functional
\begin{equation}
E_\text{NLS}^{(\mathcal K)} (u) := \frac{1}{2}\int_{\mathcal G} \left |\left (i \frac{\mathrm d}{\mathrm dx}+M\right ) u\right |^2 + V |u|^2\, \mathrm dx- \frac{\mu}{p} \int_{\mathcal K}|u|^p\, \mathrm dx
\end{equation} 
where $V\in L^2+ L^\infty$ and $\mathcal K$ is a not necessarily bounded subgraph of $\mathcal G$. Define the corresponding minimization problem
\begin{equation}
E_{\text{NLS}}^{(\mathcal K)}:=\inf_{\substack{u\in H^1(\mathcal G)\\ \|u\|_2^2=1}} E_{\text{NLS}}^{(\mathcal K)} (u) 
\end{equation} 
similarly as in Section \ref{sec:existence}. We consider two cases:
\begin{itemize}
\item \emph{The localized case,} when $\mathcal K$ is a bounded subgraph of $\mathcal G$;
\item \emph{The global case,} when $\mathcal K= \mathcal G$ is the whole graph. In this case, we drop the argument and simply define
\begin{equation}
E_\text{NLS} (u) := \frac{1}{2}\int_{\mathcal G} \left |\left (i \frac{\mathrm d}{\mathrm dx}+M\right ) u\right |^2 + V |u|^2\, \mathrm dx- \frac{\mu}{p} \int_{\mathcal G}|u|^p\, \mathrm dx
\end{equation} 
and
\begin{equation}
E_{\text{NLS}}:=\inf_{\substack{u\in H^1(\mathcal G)\\ \|u\|_2^2=1}} E_{\text{NLS}} (u).
\end{equation} 
\end{itemize}
We define quantities analogous to \eqref{eq:numerateR}, \eqref{eq:numerate0} and \eqref{eq:numerate1}. 
Given a bounded subgraph of $\mathcal G$ and $R>0$ we define 
\begin{equation}
\begin{aligned}\label{eq:numerateR2}
D_R&:= \{\phi \in D(A) |\operatorname{supp}(\phi) \subset \mathcal G \setminus K_R\}\\
\Sigma_R&:= \inf\{\langle \phi, A\phi\rangle|\phi \in D_R, \|\phi\|_{2}^2=1\},
\end{aligned}
\end{equation} 
where $K_R$ was defined in \eqref{eq:core},
\begin{equation}\label{eq:numerate02}
\begin{aligned}
D_0&:= D(A)\\
\Sigma_0&:= \inf\{\langle \phi, A\phi \rangle |\phi \in D(A), \|\phi\|_{2}^2=1\}
\end{aligned}
\end{equation}
and 
\begin{equation}\label{eq:numerate12}
\Sigma := \lim_{R\to \infty} \Sigma_R=\sup_{R>0} \Sigma_R.
\end{equation}
Most results can be extended simply to this case following the previous proofs. The principal difficulty lies in establishing superadditivity with respect to a suitable sequence of partitions of unity. We give a construction of such a sequence of partitions of unity in the following. 

\subsection{Partitions of unity in $W^{1,\infty}(\mathcal G)$}
Here we give an important example for a partition of unity in $W^{1,\infty}(\mathcal G)= C^{0,1}(\mathcal G)$. Given any partition of unity in $W^{1,\infty}(\mathcal G)$ one can always find a renormalization as in Lemma~\ref{lem:unity2}:

\begin{lemma}\label{lem:unity}
Let $\mathcal G$ be a connected, locally finite metric graph. Consider any finite open covering $\mathcal O$ of $\mathcal G$. Then there exists a partition of unity in $W^{1,\infty}(\mathcal G)$ subordinate to $\mathcal O$ satisfying
\begin{equation}
\sum_{O\in \mathcal O} \Psi^2_O \equiv 1.
\end{equation}
\end{lemma}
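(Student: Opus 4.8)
The plan is to mirror the proof of Lemma~\ref{lem:unity2}, but to work in $W^{1,\infty}(\mathcal G)$ instead of $\widetilde{C_b^\infty}(\mathcal G)$ and to exploit the identification $W^{1,\infty}(\mathcal G)=C^{0,1}_b(\mathcal G)$ furnished by Proposition~\ref{prop:lipschitz}. Membership in $W^{1,\infty}(\mathcal G)$ thus amounts to the functions being \emph{bounded and globally Lipschitz}, and the argument splits into two steps: first produce \emph{some} Lipschitz partition of unity $\{\psi_O\}_{O\in\mathcal O}$ subordinate to $\mathcal O$ with $\sum_{O\in\mathcal O}\psi_O\equiv 1$, and then renormalize it to enforce $\sum_{O\in\mathcal O}\Psi_O^2\equiv 1$.

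For the first step I would use distance functions. For each $O\in\mathcal O$ set $\rho_O(x):=\operatorname{dist}(x,\mathcal G\setminus O)$; every $\rho_O$ is nonnegative, bounded, $1$-Lipschitz, and satisfies $\rho_O(x)>0$ exactly on $O$, so that $\{\rho_O>0\}=O$. Since $\mathcal O$ is a finite cover of $\mathcal G$, the sum $\sigma:=\sum_{O\in\mathcal O}\rho_O$ is strictly positive everywhere and is $N$-Lipschitz, where $N:=|\mathcal O|$. Then $\psi_O:=\rho_O/\sigma$ are nonnegative functions with $0\le\psi_O\le 1$ and $\sum_{O}\psi_O\equiv 1$. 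If one insists on the strict support condition $\operatorname{supp}\psi_O\subset O$ (rather than $\subset\overline O$), I would first pass to a shrinking $\{O'\}_{O\in\mathcal O}$ of $\mathcal O$ that is still an open cover and run the construction on the shrinking.

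For the second step I renormalize exactly as in Lemma~\ref{lem:unity2}. Put $s:=\sum_{O\in\mathcal O}\psi_O^2$ and $\Psi_O:=\psi_O/\sqrt{s}$. Since $0\le\psi_O\le 1$ and $\sum_{O}\psi_O=1$, the Cauchy--Schwarz inequality gives $1=\big(\sum_{O}\psi_O\big)^2\le N s$, while $\psi_O^2\le\psi_O$ gives $s\le 1$; hence $s$ takes values in $[1/N,1]$ and is bounded away from $0$. Consequently $\sqrt{s}$ is a bounded Lipschitz function bounded below by $1/\sqrt N$, so $\Psi_O=\psi_O\,s^{-1/2}$ is again bounded and Lipschitz, i.e. $\Psi_O\in W^{1,\infty}(\mathcal G)$ by Proposition~\ref{prop:lipschitz}. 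The defining properties then follow: $\operatorname{supp}\Psi_O=\operatorname{supp}\psi_O\subset O$, the bound $\Psi_O^2=\psi_O^2/s\le 1$ gives $0\le\Psi_O\le 1$, one has $\bigcup_O\operatorname{supp}\Psi_O=\mathcal G$, and $\sum_{O}\Psi_O^2=s/s\equiv 1$. The last normalization condition is automatic: at any $x\in\operatorname{supp}\Psi_O$ lying in no other $\operatorname{supp}\Psi_{\widehat O}$ we have $\Psi_{\widehat O}(x)=0$ for all $\widehat O\ne O$, whence $1=\sum_{O}\Psi_O^2(x)=\Psi_O^2(x)$ and $\Psi_O(x)=1$.

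The main obstacle is the \emph{uniform} (global) Lipschitz control demanded by $W^{1,\infty}(\mathcal G)$, as opposed to mere local Lipschitz continuity. The quotient estimate yields $\operatorname{Lip}(\psi_O)\le (N+1)/\inf_x\sigma(x)$, so everything hinges on the lower bound $\inf_x\sigma(x)=\inf_x\sum_{O}\operatorname{dist}(x,\mathcal G\setminus O)>0$, i.e. on the sets of $\mathcal O$ overlapping in regions of uniformly positive width; once this holds, the renormalization step passes through verbatim and delivers $\Psi_O\in W^{1,\infty}(\mathcal G)$. This positivity can be delicate for an arbitrary finite cover of a noncompact locally finite graph, but it is automatic for the coverings we actually employ---for instance a fixed-width neighborhood $K_{2n}$ of a bounded set together with the complement $\mathcal G\setminus K_n$ as in Example~\ref{ex:unity2}, where the overlap is an annular region of width comparable to $n$ and $\sigma$ is bounded below.
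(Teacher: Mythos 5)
Your proposal is correct in substance, and its core step---renormalizing an initial partition of unity by $\Psi_O=\psi_O/\sqrt{\sum_{O}\psi_O^2}$ and invoking the identification $W^{1,\infty}(\mathcal G)=C^{0,1}_b(\mathcal G)$ of Proposition~\ref{prop:lipschitz} to conclude that quotients of bounded Lipschitz functions with denominator bounded away from zero are again in $W^{1,\infty}$---is exactly the paper's proof. Where you differ is that you do strictly more: the paper simply \emph{posits} an initial partition of unity subordinate to $\mathcal O$ and renormalizes it, whereas you construct one explicitly from the distance functions $\rho_O(x)=\operatorname{dist}(x,\mathcal G\setminus O)$, and you supply the Cauchy--Schwarz bound $s=\sum_O\psi_O^2\ge 1/N$ (valid once $\sum_O\psi_O\equiv1$ and $0\le\psi_O\le1$) which is what actually guarantees that the renormalizing factor $s^{-1/2}$ is bounded and Lipschitz; the paper never verifies this lower bound.

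The caveat you flag at the end is genuine, and it is worth stressing that it is a defect of the lemma as stated (and of the paper's own proof), not of your argument specifically. For an arbitrary finite open cover of a noncompact graph, no Lipschitz partition of unity subordinate to the cover need exist at all: on the ray $[0,\infty)$, cover by $O_1=\bigcup_{n\ \mathrm{even}}\left (n-\tfrac12-2^{-n},\,n+\tfrac12+2^{-n}\right )$ and by the analogous union $O_2$ over odd $n$. Any family with $\operatorname{supp}\Psi_i\subset O_i$ and $\Psi_1^2+\Psi_2^2\equiv1$ must equal $1$ on the part of $O_1$ not meeting $O_2$ and $0$ outside $O_1$, hence must drop from $1$ to $0$ across overlaps whose width is of order $2^{-n}$, forcing unbounded Lipschitz constants. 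So some uniform-overlap hypothesis, such as your $\inf_x\sigma(x)>0$, is genuinely needed; it is satisfied by the coverings the paper actually uses (Example~\ref{ex:first2}, where the overlap $K_{2n}\setminus K_n$ has width $n$), so the downstream applications are unaffected. Your write-up makes this hidden hypothesis explicit, which is an improvement over the paper's proof rather than a flaw relative to it.
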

\begin{proof}
Consider a partition of unity $\{\psi_O\}_{O\in \mathcal O}$ on the graph subordinate to the open covering $\mathcal O$. Then we define
\begin{equation}
\Psi_O := \frac{\psi_O}{\sqrt{\sum_{O\in \mathcal O}  \psi_O^2}}
\end{equation}
for all $O\in \mathcal O$. As a product of uniformly bounded Lipschitz continuous functions, $\Psi_O$ is also one; and by Proposition \ref{prop:lipschitz}  we conclude $\psi_O\in W^{1,\infty}(\mathcal G)$. Moreover, $\sum_{O\in \mathcal O} \psi_O^2 \equiv 1$ by construction.
\end{proof}

\begin{example}\label{ex:first2}
Let $\mathcal G$ be a locally finite metric graph and let $K$ be some bounded, connected subgraph. Let $X(\mathcal G)= H^1(\mathcal G)$ and $Y(\mathcal G)= W^{1,\infty}(\mathcal G)$. Then $X(\mathcal G),Y(\mathcal G)$ satisfy Assumption \ref{as:assumption1} and Assumption \ref{as:assumption2}.  Consider the partition of unity in $W^{1,\infty}(\mathcal G)$
\begin{equation}
\psi(x) = \max\{\operatorname{dist}(\mathcal G\setminus K_{2}, x), 1\}, \qquad \widetilde {\psi}(x) = 1-\psi.
\end{equation}
We construct a sequence of partitions of unity via
\begin{equation}
\psi_n(x)= \frac{1}{n} \max\{\operatorname{dist}(\mathcal G\setminus K_{2n}, x), n\}, \qquad \widetilde {\psi_n}(x) = 1-\psi_n.
\end{equation}
By Lemma \ref{lem:unity} we can  rescale them in such a way that
\begin{equation}
\Psi_n^2 + \widetilde{\Psi_n}^2 \equiv 1.
\end{equation}
By definition the partitions $K_{2n}, \mathcal K\setminus K_n$ are vanishing-compatible and $\Psi_n, \widetilde{\Psi_n}$ is a vanishing-compatible sequence of partitions of unity.
\end{example}

\begin{definition}\label{df:kirchhoff}
Let $f\in C^{0,1}(\mathcal G)$. We call a point $x\in \mathcal G$ a Kirchhoff point of $f$ if one of the following holds:
\begin{enumerate}[(1)]
\item $x\in \mathcal V$ is a vertex of degree $d_x\neq 2$, the derivatives $f_e'(x)$ exist for all $e\succ x$, and $f$ satisfies the Kirchhoff condition
\begin{equation}
\sum_{e\succ x} \tfrac{\partial}{\partial\nu}f_e(x) =0,
\end{equation}
\item $x\in\mathcal G$ is an interior point of an edge (equivalently, a dummy vertex of degree~$2$), and $f$ is differentiable at $x$.
\end{enumerate}
We call the set
\begin{equation}
\mathcal N_f = \mathcal G\setminus \{x\in \mathcal G: x \text{ is a Kirchhoff point of }f\}
\end{equation}
the non-Kirchoff set of $f$.
\end{definition}

\begin{remark}\label{rmk:important}
The approach in Example~\ref{ex:unity2} to construct vanishing-compatible sequences of partitions of unity is not applicable due to the absense of a core graph here. Instead, We are going to consider the sequence of partitions of unity in Example \ref{ex:first2} instead. This concrete sequence has some interesting properties, such that
for all $n\in \mathbb N$
\begin{equation}\label{eq:interestingprop1}
\|\psi_n'\|_{L^\infty} = \frac{1}{n} \qquad \|\widetilde{\psi_n}'\|_{L^\infty} = \frac{1}{n}
\end{equation}
and in particular
\begin{equation}\label{eq:interestingprop2}
\|\Psi_n'\|_{L^\infty} \le \frac{C}{n} \qquad \|\widetilde{\Psi_n}'\|_{L^\infty} \le \frac{C}{n}
\end{equation}
for a $C=C(\mathcal G)$ only dependent on the graph.
\end{remark}

\subsection{A decomposition formula}
For the Schrödinger operator with magnetic potential 
\begin{equation}\label{eq:test}
\begin{gathered}
\widetilde{A}=\left (i \frac{\mathrm d}{\mathrm dx}+M\right )^{2}\\ 
D(\widetilde A)= \widetilde{H^2}(\mathcal G)
\end{gathered}
\end{equation}
one can show as in Section \ref{sec:decompositionpoly}, see Lemma~\ref{lem:decomposition2}:
\begin{lemma}\label{lem:decompositionmagnetic}
Let $\mathcal G$ be a locally finite connected metric graph. Let 
\begin{equation}
\begin{gathered}
\widetilde A:=\left (i \frac{\mathrm d}{\mathrm dx}+M\right )^{2}\\
D(\widetilde A):= \widetilde{H^2}(\mathcal G)
\end{gathered}
\end{equation}
edgewise defined, i.e. 
$$\left (\widetilde A \phi\right )_e = \widetilde A \phi_e.$$ Then $\widetilde A$ defines a closed operator on $L^2(\mathcal G)$ and satisfies
\begin{enumerate}[(i)]
\item $fD(\widetilde A)\subset D(\widetilde A)$ for all $f\in \widetilde{C^\infty}(\mathcal G)$.
\item Let $f\in \widetilde{C^\infty}(\mathcal G)$, then the operator $f\widetilde Af$ is given by
\begin{equation}\label{eq:formulareduced}
f\widetilde A f= \frac{1}{2} \left (f^2\widetilde A + \widetilde A f^2\right ) + |f'|^2
\end{equation}
\end{enumerate}
\end{lemma}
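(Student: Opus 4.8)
The plan is to follow the three-step strategy of the proof of Lemma~\ref{lem:decomposition2}, replacing the Polylaplacian by $\widetilde A$ and exploiting that the magnetic derivative $D := i\frac{\mathrm d}{\mathrm dx} + M$ has exactly the same commutator with a multiplication operator as the plain derivative. For closedness, first note that $M \in H^1 + W^{1,\infty}(\mathcal G) \subset L^\infty(\mathcal G)$ by the Sobolev inequality \eqref{eq:Sobolevinequality}, so on each edge $\widetilde A_e = D^2$ is a second-order operator with bounded lower-order coefficients and $\widetilde A_e$ with domain $H^2(I_e)$ is closed, with the edgewise elliptic estimate $\|\phi\|_{H^2(I_e)} \le C_e(\|\phi\|_{L^2(I_e)} + \|\widetilde A_e\phi\|_{L^2(I_e)})$. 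Given $\phi_n \to \phi$ and $\widetilde A\phi_n \to \psi$ in $L^2(\mathcal G)$, I would argue edge by edge: on a fixed $I_e$ the sequence is Cauchy in $H^2(I_e)$, so $\phi|_e \in H^2(I_e)$ with $\widetilde A_e\phi|_e = \psi|_e$ and $\phi_n|_e \to \phi|_e$ in $C^1(\overline{I_e})$ by Sobolev embedding. Since the endpoint values converge, the continuity of $\phi_n$ across vertices passes to $\phi$, so $\phi \in \widetilde{H^2}(\mathcal G) = D(\widetilde A)$ and $\widetilde A\phi = \psi$.

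For part (i), let $f \in \widetilde{C^\infty}(\mathcal G)$ and $\phi \in \widetilde{H^2}(\mathcal G)$. Then $f\phi$ is continuous on $\mathcal G$ as a product of continuous functions, and edgewise $H^2$ by the product rule since $f \in W^{2,\infty}(I_e)$ and $\phi \in H^2(I_e)$; because $\sup_e \|f\|_{W^{2,\infty}(I_e)} < \infty$, one gets $\|f\phi\|_{H^2(I_e)} \le C\|\phi\|_{H^2(I_e)}$ with a uniform constant, which preserves summability over the (possibly infinite) edge set. As $\widetilde{H^2}(\mathcal G)$ imposes no vertex condition beyond continuity, this gives $f\phi \in \widetilde{H^2}(\mathcal G)$, i.e. $fD(\widetilde A) \subset D(\widetilde A)$.

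For part (ii), with (i) in hand the abstract identity \eqref{eq:decompositionn} applies to $A = \widetilde A$, so it suffices to compute the commutator. A direct calculation gives $D(f\phi) = if'\phi + fD\phi$, i.e. $[D, f] = if'$ as a multiplication operator, notably independent of $M$. Hence
\[
[\widetilde A, f] = [D^2, f] = D\,(if') + (if')\,D,
\]
and expanding $D(if'\phi) = -f''\phi - f'\phi' + iMf'\phi$ together with $(if')D\phi = -f'\phi' + iMf'\phi$ yields $[\widetilde A, f]\phi = -f''\phi - 2f'\phi' + 2iMf'\phi$. Substituting this into $\tfrac12\bigl(f[\widetilde A, f] - [\widetilde A, f]f\bigr)$, the terms carrying $f''$, $f'\phi'$ and the magnetic contribution $iMf'$ cancel pairwise, leaving exactly $(f')^2\phi = |f'|^2\phi$ (the partition functions $f$ being real-valued). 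Combining this with \eqref{eq:decompositionn} gives \eqref{eq:formulareduced}.

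The one point requiring genuine care is the cancellation of all $M$-dependent terms in $\tfrac12\bigl(f[\widetilde A,f] - [\widetilde A,f]f\bigr)$, so that the correction term is the purely geometric $|f'|^2$ with no magnetic dependence; this is the exact reflection of $[D,f] = if'$ being independent of $M$. The closedness step is otherwise routine but must be organized edgewise to accommodate the possibly infinite edge set of a locally finite graph.
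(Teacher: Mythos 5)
Your proposal is correct and takes essentially the same route as the paper, whose entire proof is the statement that it is ``analogous to the one in Lemma~\ref{lem:decomposition2}'': namely the averaging identity \eqref{eq:decompositionn} combined with an explicit commutator computation, which you organize slightly more cleanly by noting $[D,f]=if'$ for $D=i\frac{\mathrm d}{\mathrm dx}+M$, so that the $M$-dependent terms cancel and the correction term is exactly $|f'|^2$ for real $f$. The only caveat concerns the closedness claim, which you attempt but the paper never proves at all: your edgewise argument yields $\phi|_e\in H^2(I_e)$ and $\widetilde A_e\phi|_e=\psi|_e$ on each edge, but to conclude $\phi\in\widetilde{H^2}(\mathcal G)$ one needs the edgewise elliptic constants $C_e$ to be uniform (e.g.\ edge lengths bounded below, which the paper's definition of locally finite does not guarantee) so that $\sum_e\|\phi\|_{H^2(I_e)}^2<\infty$, and moreover $M'\in L^2+L^\infty(\mathcal G)$ need not be bounded, so ``bounded lower-order coefficients'' should read ``relatively bounded with relative bound zero'' -- with those two repairs your closedness argument goes through under the implicit uniformity assumption.
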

\begin{proof}
The proof is analogous to the one in Lemma \ref{lem:decomposition2}.
\end{proof}
\begin{remark}
\eqref{eq:formulareduced} does not uniquely determine an operator. Indeed  \eqref{eq:formulareduced} is the special case of \eqref{eq:decomposition2} when $k=1$. In particular, formula \eqref{eq:decomposition2} in the case $k=1$ holds for all self-adjoint realizations of the magnetic Schrödinger operators (e.g. \eqref{eq:schroedinger}) and independent of the choice of $M\in H^1+ W^{1,\infty}(\mathcal G)$.
\end{remark}
We will be interested in a decomposition lemma on the form associated to $A$ as given in \eqref{eq:schroedinger2}.
\begin{lemma}\label{lem:lemma5.7}
Let $\mathcal G$ be a locally finite, connected metric graph and $a(\cdot, \cdot)$ be the symmetric bilinearform given by 
\begin{equation}
a(u,v):= \int_{\mathcal G} \overline{\left ( i \frac{\mathrm d}{\mathrm dx}+M\right )u} \left ( i \frac{\mathrm d}{\mathrm dx}+M\right ) v \, \mathrm dx
\end{equation}
for $u,v\in H^1(\mathcal G)$ Then for $f\in W^{1,\infty}(\mathcal G)\cap \widetilde{C^\infty}(\mathcal G)$ we have
\begin{equation}\label{eq:IMSformulapre}
a(fu, fv) = \frac{1}{2} \left (a_{\mathcal G}(u, f^2 v)+ a(f^2 u, v)\right )+\langle |f'|^2 u, v\rangle_{L^2(\mathcal G)}.
\end{equation}
\end{lemma}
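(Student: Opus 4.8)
The plan is to prove \eqref{eq:IMSformulapre} as a pointwise algebraic identity, valid almost everywhere on each edge, and then integrate; in particular no integration by parts is required, which is precisely what makes the form-level statement so clean. Throughout I treat $f$ as real-valued, as is the case for the partition-of-unity functions to which the lemma is applied — this is essential, since for genuinely complex $f$ the correct coefficient of $\overline{Du}\,Dv$ would be $|f|^2$ rather than $f^2$. Abbreviate the magnetic derivative by $D := i\frac{\mathrm d}{\mathrm dx}+M$, so that $a(u,v) = \int_{\mathcal G}\overline{Du}\,Dv\,\mathrm dx$.

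First I would record the product rule for $D$. For $f\in W^{1,\infty}(\mathcal G)\cap\widetilde{C^\infty}(\mathcal G)$ and $w\in H^1(\mathcal G)$, the product $fw$ again lies in $H^1(\mathcal G)$ (it is continuous across vertices because $f$ and $w$ are, and its edgewise derivative is in $L^2$ because $f'\in L^\infty$), and edgewise almost everywhere
\begin{equation}
D(fw) = i(fw)' + Mfw = if'\,w + f\,(iw'+Mw) = if'\,w + f\,Dw.
\end{equation}
Applying this with $w\in\{u,v\}$, and iterating to get $D(f^2w) = 2iff'\,w + f^2\,Dw$, expresses all four quantities $D(fu),D(fv),D(f^2u),D(f^2v)$ in terms of $u,v,Du,Dv$ and the bounded functions $f,f'$.

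The heart of the matter is the pointwise identity
\begin{equation}
\overline{D(fu)}\,D(fv) = |f'|^2\,\overline{u}\,v + \tfrac12\Big(\overline{Du}\,D(f^2v) + \overline{D(f^2u)}\,Dv\Big),
\end{equation}
which I would verify by expanding both sides with the product rule and using $\overline{f}=f$, $\overline{f'}=f'$. The left-hand side equals $(-if'\overline u + f\overline{Du})(if'v+fDv)$, yielding the four terms $|f'|^2\overline u\,v$, $f^2\overline{Du}\,Dv$, and the cross terms $iff'v\overline{Du}$ and $-iff'\overline u\,Dv$. On the right, $\overline{Du}\,D(f^2v)$ contributes $2iff'v\overline{Du}+f^2\overline{Du}\,Dv$ and $\overline{D(f^2u)}\,Dv$ contributes $-2iff'\overline u\,Dv+f^2\overline{Du}\,Dv$; halving and adding $|f'|^2\overline u\,v$ reproduces the left-hand side term by term, the magnetic cross terms matching exactly. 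Integrating over $\mathcal G$ and identifying the three resulting integrals as $a(fu,fv)$, $\tfrac12\big(a(u,f^2v)+a(f^2u,v)\big)$ and $\langle|f'|^2u,v\rangle_{L^2(\mathcal G)}$ then yields \eqref{eq:IMSformulapre}; every integrand is in $L^1$ because $f,f'\in L^\infty$ and $u,v,Du,Dv\in L^2(\mathcal G)$.

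As the identity is purely algebraic there is no serious analytic obstacle. The only genuine bookkeeping is the cancellation of the magnetic cross terms, which is exactly where the reality of $f$ enters (it forces $(f')^2=|f'|^2$ and the coefficient $f^2$ rather than $|f|^2$); everything else is the elementary product rule together with the observation that the form $a$ never exposes a second derivative, so that the second-order terms that would normally be handled by the IMS integration by parts simply do not appear here.
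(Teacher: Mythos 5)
Your proof is correct, but it takes a genuinely different route from the paper's. The paper first reduces to $u,v\in H^2_c(\mathcal G)$ via the density result (Proposition~\ref{prop:densitylocfinite}), then integrates by parts on a bounded subgraph containing the supports, invokes the operator-level decomposition $f\widetilde Af=\tfrac12(f^2\widetilde A+\widetilde Af^2)+|f'|^2$ from Lemma~\ref{lem:decompositionmagnetic}, and checks that the vertex terms arising at the non-Kirchhoff points of $f$ cancel, before concluding by density. You instead prove the pointwise a.e.\ identity
\begin{equation}
\overline{D(fu)}\,D(fv) = |f'|^2\,\overline{u}\,v + \tfrac12\left(\overline{Du}\,D(f^2v) + \overline{D(f^2u)}\,Dv\right),
\qquad D:=i\tfrac{\mathrm d}{\mathrm dx}+M,
\end{equation}
by expanding with the product rule $D(fw)=if'w+fDw$, and then simply integrate; your algebra checks out (the magnetic cross terms $\pm iff'\,\overline{u}\,Dv$, $\pm iff'\,v\,\overline{Du}$ match exactly), and every integrand is integrable since $f,f'\in L^\infty$ and $u,v,Du,Dv\in L^2$. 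What your approach buys: the identity is established directly for all $u,v\in H^1(\mathcal G)$, so no density argument, no $H^2$ regularity, no integration by parts, and no bookkeeping of boundary terms at vertices or at the non-Kirchhoff set of $f$ is needed -- this is arguably the more natural proof of a statement that lives purely at the form level. What the paper's approach buys is the explicit link between the form identity and the operator decomposition formula, which it reuses in the Persson-type theory. One further point in your favor: you make explicit that $f$ must be real-valued (otherwise the coefficient would have to be $|f|^2$ rather than $f^2$ and the cross terms would not cancel), a hypothesis the paper leaves implicit even though its partition-of-unity functions are indeed real; as literally stated for complex-valued $f\in W^{1,\infty}(\mathcal G)\cap\widetilde{C^\infty}(\mathcal G)$, the identity \eqref{eq:IMSformulapre} would be false, so flagging this is a genuine improvement in precision.
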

\begin{proof}
By Proposition~\ref{prop:densitylocfinite} we may assume $u,v\in H^2_{c}(\mathcal G)$ and $fu, fv\in \widetilde{H^2}(\mathcal G)\cap H^1_c(\mathcal G)$. Integrating by parts on an arbitrary bounded subgraph $K$ containing $\operatorname{supp}u$ and $\operatorname{supp}{v}$ we compute 
\begin{equation}
\begin{aligned}
a(fu,fv)&= -\int_{K} \overline {\left (fAf\right ) u} v \, \mathrm dx + \sum_{\mathsf v\in \mathcal N_f\cap K} \sum_{e\succ \mathsf v} \left [\overline{\left ( i \frac{\mathrm d}{\mathrm dx} + M\right ) fu}\right ]_e fv(\mathsf v)\\
&=-\int_{K}  \left (\overline{\frac{1}{2}\left (f^2\widetilde A + \widetilde A f^2\right )u  + |f'|^2 u}\right ) v\, \mathrm dx\\
&\qquad \qquad\quad \qquad\qquad + \sum_{\mathsf v\in \mathcal N_f\cap K} \sum_{e\succ \mathsf v} \left [\overline{\left ( i \frac{\mathrm d}{\mathrm dx} + M\right ) fu}\right ]_e fv(\mathsf v)\\
&= \frac{1}{2}  \left (a(u, f^2 v)+ a(f^2 u, v)\right )+ \int_{\mathcal G} |f'|^2 \overline u v\, \mathrm dx\\
&\qquad\qquad -  \sum_{\mathsf v\in \mathcal N_f\cap K} \sum_{e\succ \mathsf v} \frac{1}{2}\left [\overline{\left ( i \frac{\mathrm d}{\mathrm dx} + M\right ) f^2  u}\right ]_e v(\mathsf v)\\
&\qquad \qquad \qquad-\sum_{\mathsf v\in \mathcal N_f\cap K} \sum_{e\succ \mathsf v} \frac{1}{2}\left [\overline{\left ( i \frac{\mathrm d}{\mathrm dx} + M\right ) u}\right ]_e f^2v(\mathsf v)\\
&\qquad \qquad \qquad \qquad  + \sum_{\mathsf v\in \mathcal N_f\cap K} \sum_{e\succ \mathsf v} \left [\overline{\left ( i \frac{\mathrm d}{\mathrm dx} + M\right ) fu}\right ]_e fv(\mathsf v)\\
&= \frac{1}{2}  \left (a(u, f^2 v)+ a(f^2 u, v)\right )+ \int_{\mathcal G} |f'|^2 \overline u v\, \mathrm dx
\end{aligned}
\end{equation}
and the statement follows by density.
\end{proof}

\subsection{Existence of NLS ground state for a class of Schrödinger operators}
\subsubsection{The localized setting}

In the following we study the localized case. We also remark that some of the lemmas will also apply to the global case. For $t>0$ we define
\begin{equation}\label{eq:minimizationdefloc}
E_t^{(\mathcal K)}:= \inf_{\substack{u\in H^1(\mathcal G)\\\|u\|_{L^2}^2=t}} E_{\text{NLS}}^{(\mathcal K)}.
\end{equation}
\begin{lemma}\label{eq:NLSboundedbelow}
Let $\mathcal G$ be a connected locally finite metric graph. Let $\mathcal K$ be a not necessarily bounded subset of $\mathcal G$. The functional $E_{\text{NLS}}^{(\mathcal K)}$ under $L^2$-constraint $\|\cdot\|_{L^2}^2=1$ is bounded below for $2<p<6$.
\end{lemma}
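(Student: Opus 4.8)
The plan is to follow the strategy of Lemma~\ref{lem:energyestimateneu}, exploiting the $L^2$-subcriticality $2<p<6$ together with the fact that the Gagliardo--Nirenberg constant in Proposition~\ref{cor:Gagliardo-nirenberg} is independent of the magnetic potential $M$. First I would dispose of the localization: since $\mathcal K\subseteq \mathcal G$ we have $\int_{\mathcal K}|u|^p\,\mathrm dx\le \int_{\mathcal G}|u|^p\,\mathrm dx=\|u\|_p^p$, so a lower bound for the \emph{global} functional ($\mathcal K=\mathcal G$) already implies the claim for an arbitrary subgraph $\mathcal K$. This is precisely why no boundedness hypothesis on $\mathcal K$ is required here.

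Abbreviating the magnetic kinetic energy by $T:=\big\|(i\frac{\mathrm d}{\mathrm dx}+M)u\big\|_2^2$ and using the constraint $\|u\|_2^2=1$, Proposition~\ref{cor:Gagliardo-nirenberg} yields $\|u\|_p^p\le C\,T^{\frac{p-2}{4}}$ and, applied with exponent $4$, $\|u\|_4^2\le C' T^{\frac14}$. To control the potential term I would decompose $V=V_2+V_\infty$ with $V_2\in L^2(\mathcal G)$ and $V_\infty\in L^\infty(\mathcal G)$ and estimate, by Hölder's inequality,
\begin{equation}
\Big|\int_{\mathcal G}V|u|^2\,\mathrm dx\Big|\le \|V_2\|_2\,\|u\|_4^2+\|V_\infty\|_\infty\,\|u\|_2^2\le C'\|V_2\|_2\,T^{\frac14}+\|V_\infty\|_\infty.
\end{equation}

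The decisive observation is that both exponents $\frac{p-2}{4}$ and $\frac14$ are \emph{strictly} less than $1$, the former exactly because $p<6$. Hence for any $\delta>0$ Young's inequality produces constants $C_\delta>0$ with $\frac{\mu C}{p}T^{\frac{p-2}{4}}\le \delta T+C_\delta$ and $C'\|V_2\|_2\,T^{\frac14}\le \delta T+C_\delta$. Combining these bounds gives
\begin{equation}
E_{\text{NLS}}^{(\mathcal K)}(u)\ge \tfrac12 T-\tfrac12\big(\delta T+C_\delta+\|V_\infty\|_\infty\big)-\big(\delta T+C_\delta\big)=\Big(\tfrac12-\tfrac32\delta\Big)T-C'_\delta,
\end{equation}
and choosing $\delta<\tfrac13$ makes the coefficient of $T$ nonnegative, so the right-hand side is bounded below by a finite constant independent of $u$ on the constraint sphere. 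The only step requiring genuine care is this subcritical absorption: it is exactly where $2<p<6$ enters, and unlike the critical case $p=6$ (where $\frac{p-2}{4}=1$ and one can absorb the nonlinearity only for sufficiently small $\mu$) here no smallness of $\mu$ is needed, since $\frac{p-2}{4}<1$ holds strictly.
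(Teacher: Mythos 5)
Your proof is correct and follows essentially the same route as the paper: reduce to $\mathcal K=\mathcal G$ via $\int_{\mathcal K}|u|^p\le\int_{\mathcal G}|u|^p$, apply the magnetic Gagliardo--Nirenberg inequality of Proposition~\ref{cor:Gagliardo-nirenberg}, and absorb the subcritical nonlinearity, which is exactly where $2<p<6$ enters. The only difference is one of bookkeeping: you absorb the potential term explicitly into the kinetic energy via H\"older and Young, whereas the paper's (terser) proof folds it into the full quadratic form, implicitly relying on the norm-equivalence argument used in Lemma~\ref{lem:energyestimateneu} and Lemma~\ref{lem:prework}; your version makes that step self-contained.
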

\begin{proof}
From the Gagliardo--Nirenberg inequality \eqref{eq:Gagliardo-nirenberg} we have
\begin{equation}
\begin{aligned}
\int_{\mathcal K} |u|^p\, \mathrm dx &\le \int_{\mathcal G} |u|^p\, \mathrm dx \\
&\le \varepsilon \int_{\mathcal G} \left |\left ( i \frac{\mathrm d}{\mathrm dx} + M \right )u\right |^2+ V |u|^2\, \mathrm dx + C_{\varepsilon} \int_{\mathcal G} |u|^2\, \mathrm dx
\end{aligned}
\end{equation}
and therefore 
\begin{equation}
E_{\text{NLS}}^{(\mathcal K)}(u) \ge(1-\varepsilon) \int_{\mathcal G} \left |\left ( i \frac{\mathrm d}{\mathrm dx} + M \right )u\right |^2+ V |u|^2\, \mathrm dx - C_{\varepsilon} \ge   -C_{\varepsilon}.
\end{equation}
for all $u\in H^1(\mathcal G)$ satisfying $\|u\|_2^2=1$.
\end{proof}

\begin{lemma}\label{lem:energyinequality}
Let $\mathcal G$ be a locally finite, connected  metric graph. Assume $A= \left ( i \frac{\mathrm d}{\mathrm dx} + M\right )^2 + V$ admits a ground state, 
then
$$E_t^{(\mathcal K)}=\inf_{\substack{u\in H^1(\mathcal G)\\\|u\|_{L^2}^2=t}} E_{\text{NLS}}^{(\mathcal K)} \le \frac{\Sigma_0t}{2}.$$
The inequality is strict if the ground state does not vanish identically on $\mathcal K$
\end{lemma}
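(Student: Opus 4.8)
The plan is to estimate $E_t^{(\mathcal K)}$ from above by testing the functional against a scaled ground state of $A$. By hypothesis $A=\left(i\frac{\mathrm d}{\mathrm dx}+M\right)^2+V$ admits a ground state, so there exists $u_0\in D(A)$ with $\|u_0\|_2^2=1$ and $\langle u_0, Au_0\rangle=\Sigma_0=\inf\sigma(A)$. Since $D(A)\subset\widetilde{H^2}(\mathcal G)\subset H^1(\mathcal G)$, this $u_0$ is an admissible competitor for the minimization problem; moreover, by the Sobolev inequality \eqref{eq:Sobolevinequality} it lies in $L^p(\mathcal G)$, so that $\int_{\mathcal K}|u_0|^p\,\mathrm dx$ is finite and nonnegative.

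First I would record that the quadratic part of the functional is exactly the form associated with $A$, namely $\tfrac12\int_{\mathcal G}\left|\left(i\frac{\mathrm d}{\mathrm dx}+M\right)u\right|^2+V|u|^2\,\mathrm dx=\tfrac12\,a(u,u)=\tfrac12\langle u,Au\rangle$ for $u\in D(A)$ by the self-adjointness theorem just proved, so that $E_{\text{NLS}}^{(\mathcal K)}(u)=\tfrac12\langle u,Au\rangle-\tfrac{\mu}{p}\int_{\mathcal K}|u|^p\,\mathrm dx$.

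Next I would rescale. Setting $v:=\sqrt t\,u_0$ gives $\|v\|_2^2=t$, so $v$ is admissible for $E_t^{(\mathcal K)}$, and using the homogeneity of the quadratic form one computes
\begin{equation}
E_{\text{NLS}}^{(\mathcal K)}(v)=\frac{t}{2}\langle u_0,Au_0\rangle-\frac{\mu}{p}\,t^{p/2}\int_{\mathcal K}|u_0|^p\,\mathrm dx=\frac{\Sigma_0 t}{2}-\frac{\mu}{p}\,t^{p/2}\int_{\mathcal K}|u_0|^p\,\mathrm dx.
\end{equation}
Taking the infimum yields $E_t^{(\mathcal K)}\le E_{\text{NLS}}^{(\mathcal K)}(v)$, and since $\mu>0$ and the nonlinear term is nonnegative this already gives $E_t^{(\mathcal K)}\le\frac{\Sigma_0 t}{2}$. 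For the strict inequality I would observe that if $u_0$ does not vanish identically on $\mathcal K$, then $\int_{\mathcal K}|u_0|^p\,\mathrm dx>0$, whence the subtracted term is strictly positive and $E_t^{(\mathcal K)}\le E_{\text{NLS}}^{(\mathcal K)}(v)<\frac{\Sigma_0 t}{2}$.

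There is no serious obstacle here: the argument is a single one-parameter test-function estimate. The only points deserving a word of care are the admissibility of $u_0$ (its membership in $H^1(\mathcal G)$ and the finiteness of its $L^p$-norm over $\mathcal K$) and the identification of the quadratic form with $\langle\cdot,A\cdot\rangle$, both immediate from the definitions and the Sobolev embedding established earlier. Should one prefer not to assume the eigenfunction lies in $D(A)$, one could instead work with an almost-minimizing sequence for the form and pass to the limit, but since the hypothesis explicitly supplies an eigenfunction this refinement is unnecessary.
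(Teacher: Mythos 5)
Your proposal is correct and is essentially the paper's own argument: the paper likewise tests $E_{\text{NLS}}^{(\mathcal K)}$ against a ground state of $A$ normalized to mass $t$ (you normalize to mass $1$ and scale by $\sqrt{t}$, which is the same computation), uses $\tfrac12\langle u,Au\rangle=\tfrac{\Sigma_0 t}{2}$, and obtains strictness from $\int_{\mathcal K}|u|^p\,\mathrm dx>0$ when the ground state does not vanish identically on $\mathcal K$. Your added remarks on admissibility of $u_0$ and the form identity are details the paper leaves implicit, but there is no difference in substance.
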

\begin{proof}
Assume $u$ is a ground state of $A= \left ( i \frac{\mathrm d}{\mathrm dx} + M\right )^2 + V$ with $\|u\|_{L^2}^2=t$, then 
\begin{equation}
E_{\text{NLS}}^{(\mathcal K)}(u) = \frac{\Sigma_0t}{2} - \frac{\mu}{p} \int_{\mathcal K} |u|^{p}\, \mathrm dx\le \frac{\Sigma_0}{2} t
\end{equation}
and the inequality is strict if $u$ is not identically vanishing on $\mathcal K$. In particular
$$\inf_{\substack{u\in H^1\\\|u\|_{L^2}^2=t}} E_{\text{NLS}}^{(\mathcal K)} \le \frac{\Sigma_0t}{2}$$
with strictness in the inequality if there exists a ground state, which is not identically vanishing on $\mathcal K$.
\end{proof}

\begin{lemma}\label{lem:prework}
Let $\mathcal G$ be a locally finite, connected metric graph and let $\mathcal K$ be any subgraph. Assume $A= \left ( i \frac{\mathrm d}{\mathrm dx} + M\right )^2 + V$ admits a ground state that is not identically vanishing on $\mathcal K$, then the functional $E_{\text{NLS}}^{(\mathcal K)}$ is  weak limit superadditive, superadditive with respect to the partition of unity in Example \ref{ex:first2} and $t\mapsto E_t$ as defined in \eqref{eq:minimizationdefloc}   is strictly subadditive. 
\end{lemma}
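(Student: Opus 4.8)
The plan is to establish the three properties in turn, mirroring the proof of Lemma~\ref{lem:preconditions} but now built around the sesquilinear form $a(\cdot,\cdot)$ of \eqref{eq:schroedingerform2} and its localization identity Lemma~\ref{lem:lemma5.7}. As a preliminary normalization I would add a constant to $V$ so that, by the Sobolev inequality \eqref{eq:Sobolevinequality} together with the lower bound of Lemma~\ref{eq:NLSboundedbelow}, the expression $\|u\|^2:=a(u,u)+C\|u\|_2^2$ is an inner-product norm on $H^1(\mathcal G)$ equivalent to the standard one. This makes every minimizing sequence bounded in $H^1(\mathcal G)$, hence weakly precompact, and turns $(H^1(\mathcal G),\langle\cdot,\cdot\rangle)$ into a Hilbert space on which weak convergence splits quadratic quantities.

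For weak limit superadditivity, let $u_n\rightharpoonup u$ weakly in $H^1(\mathcal G)$. Passing to a subsequence, local compactness (Assumption~\ref{as:assumption1}) gives $u_n\to u$ in $L^p_{\mathrm{loc}}$ and almost everywhere, so the Br\'ezis--Lieb Lemma \cite{brezis1983relation} yields $\int_{\mathcal K}|u_n|^p=\int_{\mathcal K}|u|^p+\int_{\mathcal K}|u_n-u|^p+o(1)$, while weak convergence kills the cross term in the quadratic part, giving $a(u_n,u_n)=a(u,u)+a(u_n-u,u_n-u)+o(1)$. Adding these produces $\lim_n E_{\text{NLS}}^{(\mathcal K)}(u_n)=E_{\text{NLS}}^{(\mathcal K)}(u)+\lim_n E_{\text{NLS}}^{(\mathcal K)}(u_n-u)$, which is the desired inequality.

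For superadditivity with respect to the partition $\{\Psi_n,\widetilde{\Psi_n}\}$ of Example~\ref{ex:first2}, I would apply Lemma~\ref{lem:lemma5.7} with $f=\Psi_n$ and $f=\widetilde{\Psi_n}$; summing the two identities and using $\Psi_n^2+\widetilde{\Psi_n}^2\equiv 1$ (and that $V$ splits exactly as a multiplication operator) gives the exact localization formula $a(u,u)=a(\Psi_n u,\Psi_n u)+a(\widetilde{\Psi_n}u,\widetilde{\Psi_n}u)-\langle(|\Psi_n'|^2+|\widetilde{\Psi_n}'|^2)u,u\rangle$. By the gradient bounds \eqref{eq:interestingprop2} of Remark~\ref{rmk:important} the error term is $O(n^{-2})\|u\|_2^2$ and vanishes along any bounded sequence. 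For the nonlinear term, a diagonal argument on a vanishing sequence $u_n$ gives $\int_{K_{2n}}|u_n|^p\to 0$; since $\Psi_n$ is supported in $K_{2n}$ and $\widetilde{\Psi_n}\equiv 1$ off $K_{2n}$, it follows that $\int_{\mathcal K}|\Psi_n u_n|^p\to 0$ and $\int_{\mathcal K}|\widetilde{\Psi_n}u_n|^p\to\lim_n\int_{\mathcal K}|u_n|^p$, whether $\mathcal K$ is bounded or equals $\mathcal G$. Combining the two displays gives $\limsup_n E_{\text{NLS}}^{(\mathcal K)}(u_n)=\limsup_n E_{\text{NLS}}^{(\mathcal K)}(\Psi_n u_n)+\limsup_n E_{\text{NLS}}^{(\mathcal K)}(\widetilde{\Psi_n}u_n)$.

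For strict subadditivity, the scaling $E_{\text{NLS}}^{(\mathcal K)}(\sqrt{t}\,u)=\tfrac t2 a(u,u)-\tfrac\mu p t^{p/2}\int_{\mathcal K}|u|^p$ exhibits $t\mapsto E_t$ as an infimum of concave functions of $t$, hence concave, with $E_0=0$; this already yields $E_{\theta s}\ge\theta E_s$ for $\theta\in(0,1)$ and thus non-strict subadditivity. To upgrade to strict inequality I argue by contradiction: if $E_{\theta s}=\theta E_s$ for some $\theta\in(0,1)$, then writing $E_t=t\,m(t)$ with $m(t)=\inf_{\|u\|_2^2=1}[\tfrac12 a(u,u)-t^{(p-2)/2}\tfrac\mu p\int_{\mathcal K}|u|^p]$ forces the non-increasing function $m$ to be constant on $[\theta s,s]$; testing $m(\theta s)$ with a normalized minimizing sequence and subtracting the defining inequality for $m(s)$ forces $\int_{\mathcal K}|u_n|^p\to 0$, whence $E_{\theta s}\ge\tfrac{\Sigma_0\,\theta s}{2}$. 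This contradicts Lemma~\ref{lem:energyinequality}, which gives the strict bound $E_{\theta s}<\tfrac{\Sigma_0\,\theta s}{2}$ precisely because $A$ admits a ground state not vanishing identically on $\mathcal K$. The resulting strict inequality $E_{\theta s}>\theta E_s$ then gives $E_{t_1}+E_{t_2}>E_{t_1+t_2}$ for all $t_1,t_2>0$ by taking $s=t_1+t_2$. I expect this final contradiction step---correctly locating the strictness in the concavity inequality and matching it against Lemma~\ref{lem:energyinequality}---to be the main point requiring care, whereas the form-localization bookkeeping for the localized nonlinearity is routine once \eqref{eq:interestingprop2} is invoked.
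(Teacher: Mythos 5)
Your proposal is correct and takes essentially the same route as the paper's proof: the same equivalent-norm normalization, the Br\'ezis--Lieb splitting for weak limit superadditivity, the localization identity of Lemma~\ref{lem:lemma5.7} with the partition of Example~\ref{ex:first2} (with error $O(n^{-2})$ by Remark~\ref{rmk:important}) for superadditivity, and the same contradiction for strict subadditivity, where vanishing of the nonlinear term forces $E_{\theta s}\ge \Sigma_0\theta s/2$, contradicting the strict bound of Lemma~\ref{lem:energyinequality}. The only difference is expository: you justify, via monotonicity of $m(t)$, why equality in the concavity inequality forces $\int_{\mathcal K}|u_n|^p\to 0$, a step the paper states more tersely by appealing to the scaling relation \eqref{eq:scalingarg}.
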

\begin{proof}
With the Minkowski inequality we have
\begin{equation}
\begin{aligned}
&\left ( \int_{\mathcal G}|u'|^2\mathrm dx\right )^{1/2}- \left (\int_{\mathcal G} |M|^2 |u|^2\, \mathrm dx\right )^{1/2}\\
&\qquad \qquad \le \left (\int_{\mathcal G} \left |\left ( i \frac{\mathrm d}{\mathrm dx} + M \right )u\right |^2\, \mathrm dx\right )^{1/2}\\
&\qquad \qquad \qquad \qquad\le  \left ( \int_{\mathcal G}|u'|^2\mathrm dx\right )^{1/2}+ \left (\int_{\mathcal G} |M|^2 |u|^2\, \mathrm dx\right )^{1/2}.
\end{aligned}
\end{equation}
Adding a constant to the potential similarly as in the proof of Lemma \ref{lem:preconditions} we may assume
\begin{equation}
\|u\|_{2, M,V} =\left (\int_{\mathcal G} \left |\left ( i \frac{\mathrm d}{\mathrm dx} + M \right )u\right |^2+ V |u|^2\, \mathrm dx\right )^{1/2}
\end{equation}
to define an equivalent norm on $H^1$.

\emph{Weak limit superadditivity.}
Assume $u_n \rightharpoonup u$ weakly in $H^1$, then up to a subsequence by the Brezis--Lieb Lemma and weak limit superadditivity (similarly as in the proof of Lemma \ref{lem:preconditions})
\begin{equation}
\limsup_{n\to \infty} E_{\text{NLS}}^{(\mathcal K)} (u_n) =E_{\text{NLS}}^{(\mathcal K)}(u)+ \limsup_{n\to \infty} E_{\text{NLS}}^{(\mathcal K)} (u-u_n)
\end{equation}
and $E_{\text{NLS}}$ is weak limit superadditive.

\emph{Superaddivity with respect to a sequence of partitions of unity.} 
For the superadditivity, since $u_n$ is vanishing, up to a subsequence 
$$\|\widetilde{\Psi_n} u_n\|_p^p- \|u_n\|_p^p\to 0 \qquad (n\to \infty).$$ 
Then using the decomposition formula \eqref{eq:IMSformulapre} we compute, similarly as in the proof of Lemma \ref{lem:preconditions}:
\begin{equation}
\begin{aligned}
\limsup_{n\to \infty}E_{\text{NLS}}^{(\mathcal K)}(u_n) &= \limsup_{n\to \infty} \frac{1}{2} a(u_n, u_n) - \frac{\mu}{p} \|u_n\|_p^p\\
&\ge \limsup_{n\to \infty} a(\widetilde{\Psi_n} u_n, \widetilde{\Psi_n}u_n) - \frac{\mu}{p} \|\widetilde{\Psi_n} u_n\|_p^p\\
&\qquad \qquad \qquad + a(\Psi_n u_n, u_n) - \frac{\mu}{p} \|\Psi_n u_n\|_p^p\\
&= \limsup_{n\to \infty} E_{\text{NLS}}^{(\mathcal K)}(\widetilde{\Psi_n}u_n)+ E_{\text{NLS}}^{(\mathcal K)}(\Psi_n u_n).
\end{aligned}
\end{equation}

\emph{Subadditivity.} 
To show the subadditivity, note that
\begin{equation}\label{eq:scalingarg}
E_t^{(\mathcal K)} = t \inf_{\substack{u\in H^1\\ \|u\|_{L^2}^2=1}} \left \{\frac{1}{2}\int_{\mathcal G} \left |\left ( i \frac{\mathrm d}{\mathrm dx} + M \right )u\right |^2+ V |u|^2\, \mathrm dx - t^{\frac{p-2}{2}} \frac{\mu}{p} \int_{K} |u|^p\, \mathrm dx \right \}.
\end{equation} 
We deduce the property by showing that $t\mapsto E_t^{(\mathcal K)}$ is a concave function. Indeed, the scaling defines a concave function and in the limit we deduce concavity of the functional. In particular
\begin{equation}\label{eq:concavityarg}
E_t^{(\mathcal K)} \ge t E_1^{(\mathcal K)}, \qquad t\in [0,1].
\end{equation}
Then 
\begin{equation}
E_t^{(\mathcal K)} + E_{1-t}^{(\mathcal K)} \ge E_1^{(\mathcal K)}, \qquad t\in [0,1].
\end{equation}
For the strictness in the inequality it suffices to show strictness in the inequality \eqref{eq:concavityarg}. Suppose for a contradiction
\begin{equation}
E_t^{(\mathcal K)} = t E_1^{(\mathcal K)}
\end{equation}
for some $t\in (0,1)$ and let $u_n$ be a minimizing sequence for $E_t$, then in particular due to \eqref{eq:scalingarg} 
\begin{equation}
\int_{\mathcal K} |u_n|^p\, \mathrm dx \to 0 \qquad (n\to \infty).
\end{equation} 
Then by density we may assume $u_n \in D(A)$ and we infer
\begin{equation}
\begin{aligned}
E_t^{(\mathcal K)} &= \lim_{n\to \infty} E_{NLS}^{(\mathcal K)} (u_n)\\
&\ge \frac{1}{2} \limsup_{n\to \infty} \langle Au_n, u_n\rangle \ge \frac{\Sigma_0 t}{2},
\end{aligned}
\end{equation}
which is a contradiction to the inequality in Lemma \ref{lem:energyinequality}.
\end{proof}

\begin{theorem}\label{thm:bigresulttt}
Let $\mathcal G$ be a connected, locally finite metric graph. Assume $A= \left ( i \frac{\mathrm d}{\mathrm dx} + M\right )^2 + V$ admits a ground state, which is not identically vanishing on $\mathcal K$, then $E_{\text{NLS}}^{(\mathcal K)}$ admits a minimizer for all $\mu >0$.
\end{theorem}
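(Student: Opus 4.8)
The plan is to verify the hypotheses of Corollary~\ref{cor:existence} with $X(\mathcal G)=H^1(\mathcal G)$, $Y(\mathcal G)=W^{1,\infty}(\mathcal G)$ and $c=1$, so that $E_c=E_{\text{NLS}}^{(\mathcal K)}$; concretely one must place oneself in the situation of both Theorem~\ref{thm:main1} and Theorem~\ref{thm:main2} and then establish the strict energy inequality $E_{\text{NLS}}^{(\mathcal K)}<\widetilde{E_{\text{NLS}}^{(\mathcal K)}}$. The structural hypotheses are already in hand: under the standing assumption that $A$ admits a ground state not vanishing identically on $\mathcal K$, Lemma~\ref{lem:prework} shows that $E_{\text{NLS}}^{(\mathcal K)}$ is weak limit superadditive and $2$-superadditive with respect to the vanishing-compatible sequence of partitions of unity from Example~\ref{ex:first2}, and that $t\mapsto E_t^{(\mathcal K)}$ is strictly subadditive; its continuity follows from the concavity established in the same proof. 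Lemma~\ref{eq:NLSboundedbelow} provides boundedness below, and together with the coercivity of the form this makes every minimizing sequence bounded in $H^1(\mathcal G)$, hence weakly convergent along a subsequence, as Theorem~\ref{thm:main1} requires.

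The decisive step is the strict gap, and here the localized structure does the work. First I would bound the ionization threshold from below. Since $\mathcal K$ is bounded, fix $R_0$ with $\mathcal K\subset K_{R_0}$; then for every $R\ge R_0$ and every competitor $u$ with $\operatorname{supp} u\subset\mathcal G\setminus K_R$ and $\|u\|_2^2=1$ the localized nonlinearity simply vanishes, $\int_{\mathcal K}|u|^p\,\mathrm dx=0$, so that
\[
E_{\text{NLS}}^{(\mathcal K)}(u)=\tfrac12 a(u,u)\ge \tfrac12\,\Sigma_0\|u\|_2^2=\tfrac12\,\Sigma_0,
\]
where the quadratic estimate $a(u,u)\ge\Sigma_0\|u\|_2^2$ holds on the whole form domain $H^1(\mathcal G)$ because $D(A)$ is a form core and $\Sigma_0$ is the infimum in~\eqref{eq:numerate02}. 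Taking the infimum over such $u$ and letting $R\to\infty$ gives $\widetilde{E_{\text{NLS}}^{(\mathcal K)}}\ge \tfrac12\Sigma_0$.

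On the other hand, Lemma~\ref{lem:energyinequality} furnishes the matching strict upper bound $E_{\text{NLS}}^{(\mathcal K)}<\tfrac12\Sigma_0$, the strictness being exactly the contribution $-\tfrac{\mu}{p}\int_{\mathcal K}|u|^p\,\mathrm dx<0$ of the ground state, which is nonzero precisely because the ground state does not vanish identically on $\mathcal K$. Chaining the two estimates yields $E_{\text{NLS}}^{(\mathcal K)}<\tfrac12\Sigma_0\le \widetilde{E_{\text{NLS}}^{(\mathcal K)}}$, so Corollary~\ref{cor:existence} applies and produces a minimizer, any minimizing sequence converging in $L^2$ along a subsequence. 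Crucially, the strict inequality of Lemma~\ref{lem:energyinequality} holds for every $\mu>0$, since the sign of the nonlinear term is independent of the size of $\mu$; this is exactly where the localized case improves on the global one and dispenses with any smallness restriction on $\mu$.

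I expect the lower bound $\widetilde{E_{\text{NLS}}^{(\mathcal K)}}\ge\tfrac12\Sigma_0$ to be the step needing the most care. Conceptually it is the remark that localizing the nonlinearity decouples the ionization threshold from the nonlinear term and reduces it to the linear spectral bottom $\Sigma_0=\inf\sigma(A)$; but one should check that the estimate $a(u,u)\ge\Sigma_0\|u\|_2^2$ is invoked on the form domain $H^1(\mathcal G)$ rather than on $D(A)$, and that the nonlinear term genuinely vanishes for all admissible competitors once $R\ge R_0$. Everything else is a bookkeeping assembly of the cited lemmas.
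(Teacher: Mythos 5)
Your proposal is correct and follows essentially the same route as the paper: the structural hypotheses come from Lemma~\ref{lem:prework}, the strict upper bound $E_{\text{NLS}}^{(\mathcal K)}<\tfrac12\Sigma_0$ from Lemma~\ref{lem:energyinequality}, and the lower bound $\widetilde{E_{\text{NLS}}^{(\mathcal K)}}\ge\tfrac12\Sigma_0$ from the vanishing of the localized nonlinearity on competitors supported outside a large neighborhood of $\mathcal K$, after which Corollary~\ref{cor:existence} concludes for every $\mu>0$. If anything, you are slightly more careful than the paper, which estimates the threshold over $D_R(A)$ rather than over the $H^1$ competitors appearing in the definition of $\widetilde{E_c}$; your remark that $a(u,u)\ge\Sigma_0\|u\|_2^2$ must be invoked on the form domain $H^1(\mathcal G)$ (via a form-core argument) closes exactly that small gap.
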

\begin{proof}
For $R>0$ sufficiently large since $\mathcal K$ is considered to be bounded
\begin{equation}
\begin{aligned}
\inf_{\substack{u\in D_R(A) \\ \|u\|_{L^2}^2=1}} E_{\text{NLS}}^{(\mathcal K)}(u)&= \inf_{\substack{u\in D_R(A)\\\|u\|_{L^2}^2=1}} \frac{1}{2}\langle Au, u\rangle\\
&\ge \inf_{\substack{u\in D(A) \\ \|u\|_{L^2}^2=1}} \frac{1}{2}\langle Au, u\rangle= \frac{\Sigma_0}{2}.
\end{aligned}
\end{equation}
In particular with Lemma~\ref{lem:energyinequality} we have
\begin{equation}
E_{\text{NLS}}^{(\mathcal K)} < \lim_{R\to \infty} \inf_{\substack{u\in D_R(A) \\ \|u\|_{L^2}^2=1}} E_{\text{NLS}}^{(\mathcal K)}(u)=: \widetilde{E_{\text{NLS}}^{(\mathcal K)}}.
\end{equation}
Due to Lemma \ref{lem:prework} the requirements of Theorem \ref{thm:main1} and \ref{thm:main2} are satisfied and up to a subsequence any minimizing sequence admits a strong limit in $L^2$ such that the limit achieves the minimum in $E_{\text{NLS}}^{(\mathcal K)}$.
\end{proof}

\begin{remark}\label{rem:importantvanishing}
If $M\equiv 0$ then we can assume that a ground state of $A$ is nonnegative and in fact by Hopf's maximum principle positive everywhere. In particular, any ground state of $A$ is not identically vanishing on any subset of $\mathcal G$. We will see in §5.5 that $\Sigma_0<\Sigma$ implies the existence of ground states of $A$. In particular, this condition becomes obsolete in this case.
\end{remark}

\subsubsection{The global setting $\mathcal K=\mathcal G$}

Consider now the global case, where we consider the functional
\begin{equation}
E_\text{NLS} (\phi) = \frac{1}{2}\int_{\mathcal G} \left |\left ( i \frac{\mathrm d}{\mathrm dx} + M \right )\phi\right |^2+ V |\phi|^2\, \mathrm dx- \frac{\mu}{p} \int_{\mathcal G}|\phi|^p\, \mathrm dx, \qquad \|\phi\|_{L^2}=1.
\end{equation}  
In the global case Lemma~\ref{lem:prework} applies since any ground state of the magnetic Schrödinger operator $A=\left ( i \frac{\mathrm d}{\mathrm dx}+M\right )^2+V$ is not identically zero. 
In the following we give a criterion for existence of ground states with regards to these quantities.
\begin{proposition}\label{prop:first}
Assume $\mathcal G$ is a locally finite, connected metric graph and $\Sigma_0 < \Sigma$. Then there exists $\hat \mu >0$, such that for all $\mu \in (0, \hat \mu)$.
\begin{equation}
\widetilde \Sigma_0^{(\mu)} := \inf_{\phi \in D(A)} E_{\text{NLS}}(\phi) < \lim_{R\to \infty}\inf_{\phi \in D_R(A)} E_{\text{NLS}}(\phi) =: \widetilde \Sigma^{(\mu)}.
\end{equation}
\end{proposition}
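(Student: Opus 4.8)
The plan is to mirror the proof of Proposition~\ref{prop:second}, replacing the higher-order Gagliardo--Nirenberg inequality by its magnetic counterpart (Proposition~\ref{cor:Gagliardo-nirenberg}). First I would reduce to the case $\Sigma_0 > 0$: adding a constant $c$ to $V$ replaces $E_{\text{NLS}}(u)$ by $E_{\text{NLS}}(u) + \tfrac{c}{2}\|u\|_2^2 = E_{\text{NLS}}(u) + \tfrac{c}{2}$ on the constraint $\|u\|_2^2 = 1$, so both $\widetilde\Sigma_0^{(\mu)}$ and $\widetilde\Sigma^{(\mu)}$ are shifted by the same amount $\tfrac{c}{2}$, while $\Sigma_0$ and $\Sigma$ are shifted by $c$. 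Hence neither the target inequality nor the hypothesis $\Sigma_0 < \Sigma$ is affected, and we may assume $\Sigma_0 > 0$. By Lemma~\ref{lem:prework} the form then defines a norm $\|u\|_{2,M,V}^2 = a(u,u)$ equivalent to $\|\cdot\|_{H^1}$, and in particular $\|(i\tfrac{\mathrm d}{\mathrm dx}+M)u\|_2^2 \le C(a(u,u) + \|u\|_2^2)$.

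The core estimate is a lower bound for $E_{\text{NLS}}$ on the unit sphere. Applying Proposition~\ref{cor:Gagliardo-nirenberg} with $\|u\|_2^2 = 1$ gives
\[
\|u\|_p^p \le C \left\|\left(i\tfrac{\mathrm d}{\mathrm dx}+M\right)u\right\|_2^{\frac{p-2}{2}} \le C' \big(a(u,u)+1\big)^{\frac{p-2}{4}},
\]
where the exponent $\tfrac{p-2}{4}$ is strictly less than $1$ precisely because $p<6$. This sublinear growth is the crux: by Young's inequality, for each fixed $\varepsilon > 0$ one may absorb $\tfrac{\mu}{p}\|u\|_p^p$ into $\tfrac{\varepsilon}{2}\,a(u,u)$ plus a constant, and since that constant tends to $0$ as $\mu\to 0$ we obtain, for all $u$ with $\|u\|_2^2=1$ and $\mu$ sufficiently small,
\[
E_{\text{NLS}}(u) \ge \frac{1-\varepsilon}{2}\, a(u,u) - \frac{\widetilde C\varepsilon}{2}.
\]

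From here the conclusion follows as in Proposition~\ref{prop:second}. For $u \in D_R(A)$ with $\|u\|_2^2=1$ one has $a(u,u) \ge \Sigma_R$, so the displayed bound gives $\inf_{D_R} E_{\text{NLS}} \ge \tfrac{1-\varepsilon}{2}\Sigma_R - \tfrac{\widetilde C\varepsilon}{2}$; letting $R\to\infty$ and using \eqref{eq:numerate12} yields $\widetilde\Sigma^{(\mu)} \ge \tfrac{1-\varepsilon}{2}\Sigma - \tfrac{\widetilde C\varepsilon}{2}$. On the other hand, dropping the nonpositive term $-\tfrac{\mu}{p}\|\phi\|_p^p$ gives the trivial upper bound $\widetilde\Sigma_0^{(\mu)} \le \tfrac{1}{2}\inf_{\phi\in D(A),\,\|\phi\|_2^2=1} a(\phi,\phi) = \tfrac{\Sigma_0}{2}$. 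Subtracting,
\[
\widetilde\Sigma^{(\mu)} - \widetilde\Sigma_0^{(\mu)} \ge \frac{1}{2}(\Sigma - \Sigma_0) - \frac{\varepsilon}{2}\big(\widetilde C + \Sigma\big),
\]
and since $\Sigma - \Sigma_0 > 0$ I would fix $\varepsilon$ small enough to make the right-hand side positive, and then choose $\hat\mu$ so that the absorption step is valid for all $\mu\in(0,\hat\mu)$.

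The argument is essentially a transcription of Proposition~\ref{prop:second}, so I anticipate no genuine obstacle. The only points requiring care are the bookkeeping of the quantifier order --- $\varepsilon$ must be fixed before $\mu$, since the admissible range of $\mu$ depends on $\varepsilon$ --- and the verification that the magnetic kinetic term $\|(i\tfrac{\mathrm d}{\mathrm dx}+M)u\|_2^2$ is dominated by $a(\cdot,\cdot)$, which is exactly what the norm equivalence in Lemma~\ref{lem:prework} supplies, uniformly in the choice of $M\in H^1+W^{1,\infty}(\mathcal G)$.
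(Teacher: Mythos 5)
Your proposal is correct and takes essentially the same approach as the paper: the paper's proof is itself a transcription of Proposition~\ref{prop:second} using the magnetic Gagliardo--Nirenberg inequality (Proposition~\ref{cor:Gagliardo-nirenberg}), the trivial bound $\widetilde\Sigma_0^{(\mu)}\le \Sigma_0/2$ obtained by dropping the nonlinear term, and the identical final estimate $\widetilde\Sigma^{(\mu)}-\widetilde\Sigma_0^{(\mu)}\ge \tfrac12(\Sigma-\Sigma_0)-\tfrac{\varepsilon}{2}(\widetilde C+\Sigma)$. Your explicit handling of the quantifier order (fixing $\varepsilon$ before $\hat\mu$) and of the shift argument justifying the reduction to $\Sigma_0>0$ are finer points the paper leaves implicit, but they do not constitute a different method.
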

\begin{proof}
W.l.o.g. $\Sigma_0>0$; otherwise we simply  add a constant to the potential $V$. Let $0<\varepsilon<1$ arbitrary, which we will only fix later. With Proposition~\ref{cor:Gagliardo-nirenberg} we deduce (similarly as in Proposition~\ref{prop:second}) that for sufficiently small $\mu>0$ 
\begin{equation}
E_\text{NLS}(\phi) \ge \frac{1- \varepsilon}{2} \left ( \int_{\mathcal G} \left |\left ( i \frac{\mathrm d}{\mathrm dx} + M \right )\phi\right |^2+ V|\phi|^2\, \mathrm dx\right )- \frac{C\varepsilon}{2}.
\end{equation}
Then
\begin{equation}
\widetilde{\Sigma}^{(\mu)} -\widetilde{\Sigma}_0^{(\mu)} \ge  \frac{1-\varepsilon}{2} \Sigma-\frac{C\varepsilon}{2} - \frac{1}{2}\Sigma_0 = \frac{1}{2} \left (\Sigma - \Sigma_0\right ) -\frac{\varepsilon}{2} \left (\widetilde{C}+ \Sigma\right ). 
\end{equation}
Since $\varepsilon$ can be chosen arbitrarily small, we have for sufficiently small $\mu$
\begin{equation}
\widetilde \Sigma^{(\mu)} > \widetilde \Sigma^{(\mu)}_0.
\end{equation}
\end{proof}

\begin{lemma}\label{lem:energyinequality2}
Let $\mathcal G$ be a locally finite, connected metric graph. Assume $A= \left ( i \frac{\mathrm d}{\mathrm dx} + M\right )^2 + V$ admits a ground state, 
then
$$E_t=\inf_{\substack{u\in H^1(\mathcal G)\\\|u\|_{L^2}^2=t}} E_{\text{NLS}}(u) < \frac{\Sigma_0t}{2}.$$
\end{lemma}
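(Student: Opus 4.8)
The plan is to reduce this to Lemma~\ref{lem:energyinequality} applied with $\mathcal K = \mathcal G$, and to observe that the hypothesis required there for the \emph{strict} inequality is automatically satisfied in the global setting. First I would take a ground state $u$ of $A = \left(i\frac{\mathrm d}{\mathrm dx}+M\right)^2 + V$, which exists by assumption, and rescale it so that $\|u\|_{L^2}^2 = t$; rescaling preserves the property of being a ground state. Since $u$ is an eigenfunction associated with the bottom of the spectrum $\Sigma_0 = \inf\sigma(A)$, we have $Au = \Sigma_0 u$, and hence $a(u,u) = \langle Au, u\rangle_{L^2} = \Sigma_0 \|u\|_{L^2}^2 = \Sigma_0 t$.

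Substituting this into the functional yields
\begin{equation}
E_{\text{NLS}}(u) = \frac{1}{2} a(u,u) - \frac{\mu}{p}\int_{\mathcal G} |u|^p\, \mathrm dx = \frac{\Sigma_0 t}{2} - \frac{\mu}{p}\int_{\mathcal G} |u|^p\, \mathrm dx.
\end{equation}
The decisive point, which is exactly where the global case improves on the localized one, is that a ground state is a nonzero element of $L^2(\mathcal G)$ (indeed $\|u\|_{L^2}^2 = t > 0$), so it cannot vanish identically on $\mathcal G$. As $u \in D(A) \subset H^1(\mathcal G)$ embeds continuously into $L^p(\mathcal G)$, the integral $\int_{\mathcal G}|u|^p\, \mathrm dx$ is finite, and it is strictly positive since $u \not\equiv 0$. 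Because $\mu > 0$ and $p > 2$, the subtracted term is strictly positive, whence $E_{\text{NLS}}(u) < \frac{\Sigma_0 t}{2}$. Taking the infimum over all admissible competitors then gives $E_t \le E_{\text{NLS}}(u) < \frac{\Sigma_0 t}{2}$, as claimed.

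There is no genuine obstacle here: the statement is an immediate specialization of Lemma~\ref{lem:energyinequality} to $\mathcal K = \mathcal G$, the only additional ingredient being the elementary remark that an eigenfunction is nonzero and therefore has strictly positive $L^p$-mass on the whole of $\mathcal G$. The one detail worth recording carefully is that $\int_{\mathcal G}|u|^p\,\mathrm dx > 0$ follows from $u \not\equiv 0$ together with $u \in L^p(\mathcal G)$, so that in this setting the strict inequality holds unconditionally, in contrast with the merely non-strict conclusion of Lemma~\ref{lem:energyinequality} in the general localized case.
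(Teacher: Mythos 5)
Your proposal is correct and follows essentially the same route as the paper: the paper's proof of this lemma simply carries out the computation of Lemma~\ref{lem:energyinequality} with $\mathcal K = \mathcal G$, i.e.\ evaluates $E_{\text{NLS}}$ at a rescaled ground state to get $E_{\text{NLS}}(u) = \frac{\Sigma_0 t}{2} - \frac{\mu}{p}\int_{\mathcal G}|u|^p\,\mathrm dx$, with strictness holding because a ground state is a nonzero eigenfunction and hence cannot vanish identically on all of $\mathcal G$. Your write-up makes explicit the same observation the paper leaves implicit, so there is nothing to correct.
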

\begin{proof}
In the nonlocalized case, we can proceed analogously to before. Given a ground state $u\in H^2$ we simply compute analogously as in Lemma \ref{lem:energyinequality}
$$
E_t < \frac{\Sigma_0 t}{2}.
$$
\end{proof}

\begin{lemma}\label{lem:essential}
Assume $\mathcal G$ is a locally finite, connected metric graph and $\Sigma_0 < \Sigma$. Then $ E_{\text{NLS}}$ is weak limit superadditive, superadditive with respect to the sequence of partitions of unity in Example \ref{ex:unity2} and $t\mapsto E_t$ defines a strictly subadditive functional.
\end{lemma}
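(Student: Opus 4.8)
The plan is to follow the proof of Lemma~\ref{lem:prework} essentially verbatim, specialized to the global case $\mathcal K=\mathcal G$; the only genuinely new ingredient will be the passage from the spectral hypothesis $\Sigma_0<\Sigma$ to the existence of a ground state of $A$, which is what feeds strict subadditivity.

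First I would record that, after adding a constant to $V$ and using the Minkowski inequality to compare $\|(i\frac{\mathrm d}{\mathrm dx}+M)u\|_2$ with $\|u'\|_2$ (exactly as at the start of the proof of Lemma~\ref{lem:prework}), the quantity $\|u\|_{2,M,V}=(\int_{\mathcal G}|(i\frac{\mathrm d}{\mathrm dx}+M)u|^2+V|u|^2\,\mathrm dx)^{1/2}$ defines an equivalent norm on $H^1(\mathcal G)$. Granting this, \emph{weak limit superadditivity} follows immediately from the Br\'ezis--Lieb Lemma together with weak convergence, precisely as in Lemma~\ref{lem:prework}, and makes no use of the gap $\Sigma_0<\Sigma$. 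Similarly, \emph{superadditivity with respect to the sequence of partitions of unity} of Example~\ref{ex:first2} follows from the decomposition formula \eqref{eq:IMSformulapre} of Lemma~\ref{lem:lemma5.7}: for a vanishing sequence $u_n$ the error terms $\langle|\Psi_n'|^2u_n,u_n\rangle$ and $\langle|\widetilde{\Psi_n}'|^2u_n,u_n\rangle$ vanish in the limit thanks to the bounds $\|\Psi_n'\|_{L^\infty},\|\widetilde{\Psi_n}'\|_{L^\infty}\le C/n$ recorded in Remark~\ref{rmk:important}, while the nonlinear term splits since $\|\widetilde{\Psi_n}u_n\|_p^p-\|u_n\|_p^p\to 0$. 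This step too is independent of the spectral hypothesis.

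The hypothesis $\Sigma_0<\Sigma$ is used only for \emph{strict subadditivity}. As in Lemma~\ref{lem:prework}, the scaling identity \eqref{eq:scalingarg} shows $t\mapsto E_t$ is concave, hence subadditive, so strictness reduces to excluding the equality $E_t=tE_1$ for some $t\in(0,1)$. Assuming such an equality for contradiction produces a minimizing sequence $u_n$ for $E_t$ with $\int_{\mathcal G}|u_n|^p\,\mathrm dx\to 0$, and after approximating by elements of $D(A)$ by density one obtains $E_t\ge\tfrac{1}{2}\limsup_n\langle Au_n,u_n\rangle\ge\tfrac{\Sigma_0 t}{2}$. To reach a contradiction I need the \emph{strict} inequality $E_t<\tfrac{\Sigma_0 t}{2}$ of Lemma~\ref{lem:energyinequality2}, and that lemma requires $A$ to admit a ground state.

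Supplying this ground state is the main obstacle, and I expect it to be resolved by the Persson-type theory of §5.5. By the magnetic analogue of Theorem~\ref{thm:persson} one has $\Sigma=\inf\sigma_{\mathrm{ess}}(A)$, whereas $\Sigma_0=\inf\sigma(A)$, so the assumption $\Sigma_0<\Sigma$ reads exactly $\inf\sigma(A)<\inf\sigma_{\mathrm{ess}}(A)$; hence $\inf\sigma(A)$ is an isolated eigenvalue below the essential spectrum and a ground state exists. In the global case $\mathcal K=\mathcal G$ this ground state is trivially not identically vanishing on $\mathcal K$, so Lemma~\ref{lem:energyinequality2} applies and yields $E_t<\tfrac{\Sigma_0 t}{2}$, contradicting the bound above and establishing strict subadditivity. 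Collecting the three properties then completes the argument.
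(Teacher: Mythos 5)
Your proposal is correct and follows essentially the same route as the paper: the paper's proof likewise reduces everything to the argument of Lemma~\ref{lem:prework} with $\mathcal K$ replaced by $\mathcal G$, and uses Theorem~\ref{thm:persson2} to convert the hypothesis $\Sigma_0<\Sigma$ into the spectral gap $\inf\sigma(A)<\inf\sigma_{\text{ess}}(A)$, hence into the existence of a ground state feeding the strict subadditivity step. Your observation that the relevant partition of unity is the one from Example~\ref{ex:first2} (rather than Example~\ref{ex:unity2} as written in the lemma's statement) is also consistent with how the paper actually argues in the locally finite setting.
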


\begin{proof}
The proof is analogous to the one in Lemma \ref{lem:prework} by simply replacing $\mathcal K$ with the whole graph. $\Sigma_0 < \Sigma$, as we will see later, implies by Theorem \ref{thm:persson2}
\begin{equation}
\inf \sigma\left ( \left ( i \frac{\mathrm d}{\mathrm dx} +M\right )^2 +V\right ) < \inf \sigma_{\text{ess}}\left ( \left ( i \frac{\mathrm d}{\mathrm dx} +M\right )^2 +V\right ).
\end{equation}
In particular, there exist discrete eigenvalues below the essential spectrum and $A$ admits a ground state. 
\end{proof}

\begin{theorem}\label{thm:bigresultt}
Let $\mathcal G$ be a locally finite, connected metric graph.
Assume $\Sigma_0 < \Sigma$, then for $\mu \in (0, \hat \mu)$ as in Proposition \ref{prop:second} 
$$E_\text{NLS} = \inf_{\substack{\phi \in H^1(\mathcal G)\\ \|u\|_2^2=1}} \frac{1}{2}\int_{\mathcal G} \left |\left ( i \frac{\mathrm d}{\mathrm dx} + M \right )\phi\right |^2+ V |\phi|^2\, \mathrm dx- \frac{\mu}{p} \int_{\mathcal G}|\phi|^p\, \mathrm dx$$
admits a minimizer.
\end{theorem}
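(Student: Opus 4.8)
The plan is to read off that $E_{\text{NLS}}$ satisfies all the hypotheses of the abstract existence result Corollary~\ref{cor:existence}, with $X(\mathcal G)=H^1(\mathcal G)$ and $Y(\mathcal G)=W^{1,\infty}(\mathcal G)$ (which meet Assumption~\ref{as:assumption1} and Assumption~\ref{as:assumption2} by Example~\ref{ex:first2}), and then to supply the strict energy gap $E_{\text{NLS}}<\widetilde{E_{\text{NLS}}}$ out of the spectral hypothesis $\Sigma_0<\Sigma$. Since the global case is covered by the localized machinery with $\mathcal K=\mathcal G$, most of the work is already done and the proof should be an assembly of earlier lemmas.

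First I would secure weak compactness of minimizing sequences. By Lemma~\ref{eq:NLSboundedbelow} the functional is bounded below, and the coercivity estimate there, combined with the fact that $\|\cdot\|_{2,M,V}$ is an equivalent norm on $H^1(\mathcal G)$ after adding a constant to $V$ (as in the proof of Lemma~\ref{lem:prework}), shows that any minimizing sequence $u_n$ with $\|u_n\|_2^2=1$ is bounded in $H^1(\mathcal G)$. Passing to a subsequence gives $u_n\rightharpoonup u$ weakly in $H^1(\mathcal G)$, which is exactly the weak-convergence input required to enter Theorem~\ref{thm:main1}.

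The structural hypotheses are then handed to me by Lemma~\ref{lem:essential}: under $\Sigma_0<\Sigma$ the functional $E_{\text{NLS}}$ is weak limit superadditive, is superadditive with respect to the vanishing-compatible sequence of partitions of unity of Example~\ref{ex:first2}, and $t\mapsto E_t$ is strictly subadditive (and concave, hence continuous on $[0,1]$, as needed for both Theorem~\ref{thm:main1} and Theorem~\ref{thm:main2}). The one genuinely nontrivial input is the strict subadditivity: it rests on $A=(i\frac{\mathrm d}{\mathrm dx}+M)^2+V$ admitting a ground state, and this is precisely what $\Sigma_0<\Sigma$ provides, since by the magnetic Persson theorem (Theorem~\ref{thm:persson2}) this inequality reads $\inf\sigma(A)<\inf\sigma_{\text{ess}}(A)$, forcing a discrete eigenvalue at the bottom of the spectrum whose eigenfunction cannot vanish identically on $\mathcal G$.

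Finally I would produce the strict inequality and conclude. By density of $D(A)$ in $H^1(\mathcal G)$ (Proposition~\ref{prop:densitylocfinite}) the abstract ionization quantities coincide with the spectral ones, namely $E_{\text{NLS}}=\widetilde\Sigma_0^{(\mu)}$ and $\widetilde{E_{\text{NLS}}}=\widetilde\Sigma^{(\mu)}$, so Proposition~\ref{prop:first} yields $E_{\text{NLS}}<\widetilde{E_{\text{NLS}}}$ for every $\mu\in(0,\hat\mu)$, with $\hat\mu$ the threshold produced there (this is the $\hat\mu$ intended in the statement). With the hypotheses of Theorems~\ref{thm:main1} and~\ref{thm:main2} in force and this strict gap, Corollary~\ref{cor:existence} excludes vanishing minimizing sequences and forces strong $L^2$-convergence of a subsequence to a minimizer. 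I expect the main obstacle to be bookkeeping rather than analysis: one must take care that the partition-of-unity superadditivity invoked from Lemma~\ref{lem:essential} is the locally-finite version built on the coverings $K_{2n}$, $\mathcal G\setminus K_n$ of Example~\ref{ex:first2} (not the core-based coverings of Example~\ref{ex:unity2}), and that the multiplier is drawn from Proposition~\ref{prop:first} rather than its finite-graph counterpart.
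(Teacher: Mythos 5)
Your proposal is correct and follows essentially the same route as the paper's own proof, which likewise assembles Lemma~\ref{lem:essential} (for weak limit superadditivity, superadditivity with respect to the partitions of unity of Example~\ref{ex:first2}, and strict subadditivity via the ground state supplied by $\Sigma_0<\Sigma$ and Theorem~\ref{thm:persson2}) with Proposition~\ref{prop:first} for the energy gap, and concludes via Corollary~\ref{cor:existence}. Your bookkeeping remarks are also apt: the paper's references to Proposition~\ref{prop:second} and to the core-based partition of Example~\ref{ex:unity2} are slips, and the correct inputs are indeed Proposition~\ref{prop:first} and Example~\ref{ex:first2}, exactly as you identified.
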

\begin{proof}
By Lemma~\ref{lem:essential} the requirements of Theorem \ref{thm:main2} are satisfied. Furthermore the energy inequality in Corollary~\ref{cor:existence} is satisfied by Proposition~\ref{prop:first} and we infer the statement.
\end{proof}

\subsection{On the Threshold condition}\label{sec:onthreshold2}
In this section we study the quantities $\Sigma_0$ and $\Sigma$ that appeared in the applications of the previous sections. For details on the definitions and characterizations of the essential spectrum we refer to \cite[§VII]{reedmethods}.

\subsubsection{Schrödinger operators and IMS formula on locally finite metric graphs}
Let $\mathcal G=(\mathcal V, \mathcal E)$ be a locally finite metric graph  throughout the rest of the section. Consider the Schrödinger operators with potentials $M$ and $V$ satisfying natural vertex conditions on $\mathcal G$:
\begin{equation}\label{eq:schroedinger}
\begin{gathered}
A=\left (i \frac{\mathrm d}{\mathrm dx}+M\right )^2+V \\
D(A)=\left \{ u\in \widetilde{H^2}(\mathcal G) \bigg | \sum_{e \succ \mathsf v} \left ( i \frac{\partial}{\partial\nu} +  M \right ) u_e (\mathsf v)=0, \qquad \forall \mathsf v \in \mathcal V \right \}
\end{gathered}
\end{equation} 
In the case of magnetic Schrödinger operators 
$$A= \left ( i \frac{\mathrm d}{\mathrm dx}+M\right )^2 + V$$ 
on domains $\Omega \subset \mathbb R^N$ the decomposition formula can be obtained via the IMS formula (see also Example~\ref{ex:NLSclassicR})
\begin{equation}
A = \sum_{j=1}^k \Psi_k A \Psi_k  + |\Psi_k'|^2 \quad \text{where}\quad \sum_{j=1}^k \Psi_k^2 \equiv 1.
\end{equation}
When considering locally finite metric graphs, we may not use the approach as before because general locally finite metric graphs do not necessarily contain a core and we need to adapt the theory using sequences of partitions of unity as in Example \ref{ex:first2}.

\begin{definition}[IMS formula on locally finite metric graphs]\label{df:IMS}
Let $\mathcal G$ be a locally finite, connected metric graph. Let $A: D(A) \subset L^2(\mathcal G) \to L^2(\mathcal G)$ be a densely defined, self-adjoint operator and assume $a(\cdot, \cdot)$ is the associated symmetric, sesquilinear form, defined on $H^1(\mathcal G)$. We say $A$ satisfies the IMS formula if for all $f\in W^{1,\infty}(\mathcal G)\cap \widetilde{C^\infty}(\mathcal G)$ 
\begin{multline}\label{eq:IMSformula}
a(fu,fv) = \frac{1}{2} \left (a(u, f^2 v)+ a(f^2 u, v)\right )+\langle |f'|^2 u, v\rangle_{L^2}, \qquad \forall u,v\in D(A).
\end{multline}
\end{definition}

We showed in Lemma \ref{lem:lemma5.7} that the magnetic Schrödinger operator in \eqref{eq:schroedinger2} satisfies the IMS formula \eqref{eq:IMSformula}; and in particular the following result applies:

\begin{theorem}\label{thm:persson2}
Assume $\mathcal G$ is a locally finite, connected metric graph. Let $A$ be a self-adjoint, nonnegative operator on $L^2(\mathcal G)$ that satisfies the IMS formula \eqref{eq:IMSformula}. Additionally let $f(A+i)^{-1}$ be compact for all $f\in C_c^{0,1}\cap \widetilde{C^\infty}$ then
\begin{equation}
\Sigma= \inf \sigma_{\text{ess}} (A).
\end{equation}
\end{theorem}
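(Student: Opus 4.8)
The plan is to establish the two inequalities $\inf\sigma_{\text{ess}}(A) \ge \Sigma$ and $\inf\sigma_{\text{ess}}(A) \le \Sigma$ separately, following the structure of the proof of Theorem~\ref{thm:persson} but replacing the core-based partitions of unity of Example~\ref{ex:unity2} with the sequence $\{\Psi_n, \widetilde{\Psi_n}\}$ adapted to locally finite graphs from Example~\ref{ex:first2} (renormalized via Lemma~\ref{lem:unity} so that $\Psi_n^2 + \widetilde{\Psi_n}^2 \equiv 1$, with $\Psi_n$ localizing to the bounded set $K_{2n}$ and $\widetilde{\Psi_n}$ supported in $\mathcal G\setminus K_n$). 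The structural inputs that drive the argument are the IMS formula \eqref{eq:IMSformula}, available for $A$ by Lemma~\ref{lem:lemma5.7}, together with the derivative decay $\|\Psi_n'\|_{L^\infty}, \|\widetilde{\Psi_n}'\|_{L^\infty} \le C/n$ recorded in \eqref{eq:interestingprop2}.

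For the lower bound $\inf\sigma_{\text{ess}}(A) \ge \Sigma$, I would fix $\lambda \in \sigma_{\text{ess}}(A)$ and choose a singular Weyl sequence $(\phi_n)$ with $\|\phi_n\|_2 = 1$, $\phi_n \rightharpoonup 0$, and $(A-\lambda)\phi_n \to 0$. Since each $\Psi_n$ is supported in the bounded set $K_{2n}$ and lies in $C_c^{0,1}\cap\widetilde{C^\infty}$, the operator $\Psi_R(A+i)^{-1}$ is compact for each fixed $R$; combined with $(A+i)\phi_n \rightharpoonup 0$ this yields $\|\Psi_R\phi_n\|_2 \to 0$, and a diagonal extraction produces a subsequence with $\|\Psi_n\phi_n\|_2 \to 0$, hence $\|\widetilde{\Psi_n}\phi_n\|_2^2 \to 1$. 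Summing \eqref{eq:IMSformula} over the partition of unity gives the localization identity $\langle\phi_n, A\phi_n\rangle = \langle\Psi_n\phi_n, A\Psi_n\phi_n\rangle + \langle\widetilde{\Psi_n}\phi_n, A\widetilde{\Psi_n}\phi_n\rangle - \langle(|\Psi_n'|^2 + |\widetilde{\Psi_n}'|^2)\phi_n, \phi_n\rangle$, whose last term is $O(n^{-2})$ by \eqref{eq:interestingprop2}. Because $A \ge 0$ the first term is nonnegative, while $\widetilde{\Psi_n}\phi_n \in D_n$ gives $\langle\widetilde{\Psi_n}\phi_n, A\widetilde{\Psi_n}\phi_n\rangle \ge \Sigma_n\|\widetilde{\Psi_n}\phi_n\|_2^2$; letting $n\to\infty$ and using $\Sigma_n \to \Sigma$ yields $\lambda \ge \Sigma$, and since $\lambda$ was arbitrary, $\inf\sigma_{\text{ess}}(A)\ge\Sigma$.

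For the reverse inequality I would argue by contradiction, exactly as in Theorem~\ref{thm:persson}: suppose $\inf\sigma_{\text{ess}}(A) \ge \Sigma + 3\varepsilon$ for some $\varepsilon>0$. Then $\sigma(A)\cap(-\infty, \Sigma+2\varepsilon]$ is discrete and, $A$ being bounded below, the spectral projection $P := P_{(-\infty,\Sigma+2\varepsilon]}$ has finite rank. Using $\Sigma = \sup_R \Sigma_R$, pick $\phi_n \in D_n$ with $\|\phi_n\|_2 = 1$ and $\langle\phi_n, A\phi_n\rangle \le \Sigma_n + \varepsilon \le \Sigma + \varepsilon$; since $\operatorname{supp}\phi_n \subset \mathcal G \setminus K_n$ with $K_n$ exhausting $\mathcal G$, one has $\phi_n \rightharpoonup 0$ in $L^2$. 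As $(A-(\Sigma+2\varepsilon))P$ is finite rank, $(A-(\Sigma+2\varepsilon))P\phi_n \to 0$ strongly; splitting $\langle\phi_n, A\phi_n\rangle$ along $P$ and $1-P$ and using $A \ge \Sigma+2\varepsilon$ on $\operatorname{ran}(1-P)$ gives $\liminf_n\langle\phi_n, A\phi_n\rangle \ge \Sigma + 2\varepsilon$, contradicting the choice of $\phi_n$. Hence $\inf\sigma_{\text{ess}}(A)\le\Sigma$, and combining the two inequalities gives $\Sigma = \inf\sigma_{\text{ess}}(A)$.

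The main obstacle is not the spectral bookkeeping, which is essentially identical to the finite-graph case, but the verification that the locally finite partition of unity from Example~\ref{ex:first2} is admissible for the IMS formula in the absence of a core. One must ensure $\Psi_n, \widetilde{\Psi_n} \in W^{1,\infty}(\mathcal G)\cap\widetilde{C^\infty}(\mathcal G)$ (smoothing the piecewise-linear distance cut-offs on each edge if necessary, which affects neither the support nor the bound \eqref{eq:interestingprop2}), that $D(A)$ is invariant under multiplication by them so that $\Psi_n\phi_n, \widetilde{\Psi_n}\phi_n \in D(A)$, and that the compactness hypothesis $f(A+i)^{-1} \in \mathcal K(L^2)$ indeed applies to $f = \Psi_n$, which has bounded support. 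Once these points are secured the two estimates above close the argument.
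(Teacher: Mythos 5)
Your proof is correct and follows essentially the same route as the paper: the lower bound via a singular Weyl sequence, compactness of $\Psi_R(A+i)^{-1}$, a diagonal extraction, and the IMS localization with $O(n^{-2})$ error from \eqref{eq:interestingprop2}, and the upper bound via the finite-rank spectral-projection contradiction argument that the paper simply imports from Theorem~\ref{thm:persson}. If anything, your write-up is more careful on two points the paper glosses over: the normalization $\|\widetilde{\Psi_n}\phi_n\|_2^2\to 1$ needed when passing from $a(\widetilde{\Psi_n}\phi_n,\widetilde{\Psi_n}\phi_n)\ge \Sigma_n\|\widetilde{\Psi_n}\phi_n\|_2^2$ to $\lambda\ge\Sigma$, and the admissibility (edgewise smoothness and domain invariance) of the Lipschitz cut-offs from Example~\ref{ex:first2} in the IMS formula.
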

\begin{proof}
$\inf \sigma_{\text{ess}}(A)\le \Sigma$ follows by an abstract argument analogous to the argument in Theorem~\ref{thm:persson}. To establish  
$$\inf \sigma_{\text{ess}}(A)\le \Sigma$$
consider $\lambda \in \sigma_{\text{ess}}(A)$ and let $(\phi_n)$ be an associated Weyl sequence satisfying 
$$\|\phi_n\|_2^2=1.$$ 
Consider the vanishing-compatible sequence of partitions of unity $\Psi_{n}, \widetilde{\Psi_n}$ from Example \ref{ex:first2}. 

Since $\Psi_R^2 (A+i)^{-1}$ is compact for all $R>0$, and since $(A+i)\phi_n\rightharpoonup 0$ as $n\to \infty$ we deduce
$$\|\Psi_R \phi_n\|_2= \|\Psi_R (A+i)^{-1} (A+i) \phi_n\|_2\to 0\qquad (n\to \infty)$$ 
and passing to a subsequence, still denoted by $\phi_n$, we may assume 
$$\|\Psi_n \phi_n\|_2= \|\Psi_n (A+i)^{-1} (A+i) \phi_n\|_2\to 0.$$

 Since $\phi_n$ is a Weyl sequence for $\lambda \in \sigma_{\text{ess}}(A)$, with the decomposition formula in Lemma~\ref{lem:decomposition3} we then compute
\begin{equation}
\begin{aligned}
\lambda &= \lim_{n\to \infty} \langle \phi_n, A \phi_n\rangle_{L^2} \\ 
&= \lim_{n\to \infty} a(\Psi_n \phi_n, \Psi_n \phi_n) + a(\widetilde{\Psi_n} \phi_n, \widetilde{\Psi_n}\phi_n) + O\left (\frac{1}{n^2}\right )  \\
&\ge \lim_{n\to \infty} \sum_{e\in \mathcal E_\infty} a(\widetilde{\Psi_n}\phi_n, \widetilde{\Psi_n}\phi_n)\ge  \lim_{n\to \infty} \Sigma_n = \Sigma.
\end{aligned}
\end{equation}
Since $\lambda \in \sigma_{\text{ess}}(A)$ was arbitrary, we conclude $\inf \sigma_{\text{ess}}(A)\ge \Sigma$.
\end{proof}

\subsubsection{Sufficient conditions for the threshold condition for the Schrödinger operators without magnetic potentials}\label{sec:subsecsuff}
In this section we obtain criteria for the threshold condition for the operator
\begin{equation}
\begin{gathered}
A= -\Delta + V\\
D(A)=H^{2}(\mathcal G).
\end{gathered}
\end{equation}
defined on general locally finite metric graphs satisfying a volume growth assumption.
\begin{proposition}\label{thm:lastresultt1}
Let $\mathcal G$ be a locally finite, connected metric graph and let $K$ be a connected, precompact subgraph. We suppose additionally the volume assumption
\begin{equation}\label{eq:volumegrowthassumpt}
\left |K_{2n} \setminus K_{n}\right |=o(n^2) \qquad (n\to \infty).
\end{equation}
Assume $\sigma_\text{ess}(-\Delta+V) \subset [0, \infty)$ and assume additionally either 
\begin{enumerate}[(i)]
\item $V\in L^1(\mathcal G)\cap L^2(\mathcal G)$ and
\begin{equation}
\int_{\mathcal G}  V \, \mathrm dx <0
\end{equation}
\item or $V<0$ on $\mathcal G$. 
\end{enumerate}
Then $\Sigma_0 < \Sigma$ (as defined in \eqref{eq:numerate02} and \eqref{eq:numerate12}) and there exists $\hat \mu>0$ such that the minimization problem
\begin{equation}\label{eq:minimizerlast}
E_{\text{NLS}}^V=\inf_{\substack{u\in H^1(\mathcal G)\\\|u\|_{2}^2=1}}  E_{\text{NLS}}^V(u)
\end{equation}
admits a minimizer for $\mu \in (0,\hat \mu)$.
\end{proposition}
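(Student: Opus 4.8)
The plan is to establish first the strict spectral inequality $\Sigma_0<\Sigma$ and then to feed it into the existence machinery already developed for the locally finite NLS functional. For the spectral part, I would note that by Theorem~\ref{thm:persson2} the hypothesis $\sigma_{\text{ess}}(-\Delta+V)\subset[0,\infty)$ gives $\Sigma=\inf\sigma_{\text{ess}}(A)\ge 0$, so it suffices to produce a single trial state with strictly negative Rayleigh quotient; this forces $\Sigma_0=\inf\sigma(A)<0\le\Sigma$.

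First I would take the cut-off sequence $\Psi_n$ from Example~\ref{ex:first2}, which satisfies $0\le\Psi_n\le 1$, $\Psi_n\equiv 1$ on $K_n$, $\operatorname{supp}\Psi_n\subset\overline{K_{2n}}$ (whence $\Psi_n\to 1$ pointwise), $\operatorname{supp}\Psi_n'\subset\overline{K_{2n}\setminus K_n}$, and $\|\Psi_n'\|_{L^\infty}\le C/n$ by \eqref{eq:interestingprop2}. Since $\Psi_n\in H^1(\mathcal G)$ with $\int_{\mathcal G}|\Psi_n|^2\,\mathrm dx\ge|K_n|>0$, the variational characterization of $\Sigma_0=\inf\sigma(A)$ over the form domain $H^1(\mathcal G)$ gives $\Sigma_0\le\mathcal R[\Psi_n]$, where
\[
\mathcal R[\Psi_n]=\frac{\int_{\mathcal G}|\Psi_n'|^2+V|\Psi_n|^2\,\mathrm dx}{\int_{\mathcal G}|\Psi_n|^2\,\mathrm dx},
\]
so the whole argument reduces to making the numerator negative for $n$ large.

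The key step, and the one I expect to be the main obstacle, is to control the Dirichlet term. Because $\Psi_n'$ is supported on the annulus $K_{2n}\setminus K_n$ with $\|\Psi_n'\|_{L^\infty}\le C/n$, I would estimate
\[
\int_{\mathcal G}|\Psi_n'|^2\,\mathrm dx\le\frac{C^2}{n^2}\,|K_{2n}\setminus K_n|=o(1)\qquad(n\to\infty),
\]
and it is exactly the volume-growth hypothesis \eqref{eq:volumegrowthassumpt} that absorbs the factor $n^{-2}$: on a general locally finite graph the annulus can carry arbitrarily much length, and without $|K_{2n}\setminus K_n|=o(n^2)$ the kinetic cost of the cut-off could outweigh the potential gain. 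For the potential term I would split into the two cases. In case (i), dominated convergence (using $|\Psi_n|^2\le 1$, $\Psi_n^2\to 1$ and $V\in L^1(\mathcal G)$) yields $\int_{\mathcal G}V|\Psi_n|^2\,\mathrm dx\to\int_{\mathcal G}V\,\mathrm dx<0$. In case (ii), fixing $\varepsilon>0$ with $|\{V\le-\varepsilon\}|>0$, for $n$ large this set lies in $K_n$ where $\Psi_n\equiv 1$, so $\int_{\mathcal G}V|\Psi_n|^2\,\mathrm dx\le-\varepsilon\,|\{V\le-\varepsilon\}\cap K_n|$ stays below a fixed negative constant. In either case the Dirichlet term vanishes while the potential term is bounded above by a negative quantity, so the numerator is negative for $n$ large and therefore $\Sigma_0<0\le\Sigma$.

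Finally, with $\Sigma_0<\Sigma$ in hand, the existence of a minimizer for all sufficiently small $\mu>0$ follows directly from Theorem~\ref{thm:bigresultt}. Concretely, $\Sigma_0<\Sigma$ places a discrete eigenvalue below $\sigma_{\text{ess}}(A)$, i.e.\ $A$ admits a ground state, which by Remark~\ref{rem:importantvanishing} is strictly positive since $M\equiv 0$; Lemma~\ref{lem:essential} then supplies the hypotheses of Theorem~\ref{thm:main2}, Proposition~\ref{prop:first} gives the threshold inequality $E_{\text{NLS}}^V<\widetilde{E_{\text{NLS}}^V}$, and Corollary~\ref{cor:existence} delivers the minimizer for $\mu\in(0,\hat\mu)$.
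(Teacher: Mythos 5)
Your proposal is correct and follows essentially the same route as the paper's own proof: the same test functions $\Psi_n$ from Example~\ref{ex:first2}, the same use of the volume-growth hypothesis \eqref{eq:volumegrowthassumpt} to kill the Dirichlet term $\|\Psi_n'\|_2^2 \le C^2 n^{-2}|K_{2n}\setminus K_n|$, the same two-case treatment of the potential term (dominated convergence in case (i), a level-set bound in case (ii)), and the same conclusion via Theorem~\ref{thm:bigresultt}. Your write-up is in fact slightly more careful than the paper's in two spots: you make explicit that $\Sigma\ge 0$ comes from Theorem~\ref{thm:persson2}, and in case (ii) you intersect the level set $\{V\le-\varepsilon\}$ with $K_n$, avoiding the paper's implicit assumption that this set has finite measure.
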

\begin{proof}
Consider as a test function $\Psi_n$ as defined in Example \ref{ex:first2}, then we only need to show that for $n$ sufficiently high, the Rayleigh quotient
\begin{equation}
\mathcal R[\Psi_n] := \frac{\int_{\mathcal G} |\Psi'_n|^2 + V |\Psi_n|^2\, \mathrm dx }{\int_{\mathcal G} |\Psi_n|^2\, \mathrm dx}<0.
\end{equation}
Indeed, since $\|\Psi'_n\|_{\infty}^2\le O(\frac{1}{n^2})$ as $n\to \infty$ we deduce
$$\|\Psi'_n\|_2^2\le \|\Psi'_n\|_{\infty}^2 |K_{2n}\setminus K_n|\to 0 \qquad (n\to \infty).$$ 
If $V<0$ then for sufficiently large $n$ and $\varepsilon>0$ sufficiently small
\begin{equation}
\int_{\mathcal G} V |\Psi_n|^2\, \mathrm dx\le -\left |\left \{x\in \mathcal G: V(x)\le -\varepsilon\right \}\right | \varepsilon<0.
\end{equation} 
If $\int_{\mathcal G}  V \, \mathrm dx <0$, then
\begin{equation}
\liminf_{n\to \infty}\int_{\mathcal G} V |\Psi_n|^2 \, \mathrm dx= \int_{\mathcal G}  V\, \mathrm dx<0
\end{equation}
by dominated convergence. In particular for $n$ large enough
\begin{equation}
\int_{\mathcal G} V|\Psi_n|^2 \le \frac{1}{2} \int_{\mathcal G} V\, \mathrm dx <0.
\end{equation}
We deduce $R[\Psi_n]<0$ and thus $\inf \sigma(-\Delta+V)<0$. Then $\Sigma_0 < \Sigma$  and we conclude the existence of minimizers of \eqref{eq:minimizerlast} by Theorem \ref{thm:bigresultt}.
\end{proof}

We finish the section by giving a criterion for the potential $V$ such that
\begin{equation}
\Sigma = \lim_{n\to \infty} \inf_{\substack{u\in D(A) \\ \|u\|_2^2=1, \; \operatorname{supp} u\subset \mathcal G\setminus K_n}} \langle u, Au\rangle \ge 0.
\end{equation}
This in particular implies 
$$\sigma_\text{ess}(-\Delta+V) \subset [0,\infty).$$
Consider decaying potentials $V=V_2+ V_\infty$ with $V_2\in L^2(\mathcal G)$ and $V_\infty\in L^\infty(\mathcal G)$ such that
\begin{equation}\label{eq:newdecayingpotentials}
\sup_{x\in \mathcal G\setminus K_n} |V_\infty(x)|\to 0 \qquad (n\to \infty).
\end{equation}
\begin{proposition}\label{prop:Iwantthistoend}
Let $\mathcal G$ be a locally finite metric graph.  Assume $V\in L^2+ L^\infty(\mathcal G)$ satisfying \eqref{eq:newdecayingpotentials}. Let $A=-\Delta+V$, then
\begin{equation}
\widetilde{\Sigma} = \lim_{n\to \infty} \inf_{\substack{u\in H^{2k}(\mathcal G)\\ \|u\|_2^2=1, \; \operatorname{supp} u\subset \mathcal G\setminus K_n}} \langle u, Au\rangle_{L^2} \ge 0.
\end{equation}
\end{proposition}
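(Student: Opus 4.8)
The plan is to estimate, for each fixed $n$, the truncated infimum
\begin{equation}
\widetilde\Sigma_n := \inf_{\substack{u\in H^2(\mathcal G),\ \|u\|_2^2=1\\ \operatorname{supp} u\subset \mathcal G\setminus K_n}} \langle u, Au\rangle_{L^2}
\end{equation}
from below, and then to pass to the limit. Since enlarging $K_n$ shrinks the admissible set, $\widetilde\Sigma_n$ is nondecreasing in $n$, so $\widetilde\Sigma=\lim_{n\to\infty}\widetilde\Sigma_n$ exists. For an admissible $u$, integration by parts (the boundary terms vanish by the Kirchhoff vertex conditions built into $D(A)=H^2(\mathcal G)$) gives $\langle u, Au\rangle_{L^2}=\int_{\mathcal G}|u'|^2\,\mathrm dx+\int_{\mathcal G\setminus K_n}V|u|^2\,\mathrm dx$, where the potential integral runs only over $\mathcal G\setminus K_n$ because $\operatorname{supp} u$ lies there. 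I would then split $V=V_2+V_\infty$ and treat the two parts separately.

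The bounded part is immediate: writing $\delta_n:=\sup_{x\in\mathcal G\setminus K_n}|V_\infty(x)|$, one has $\big|\int_{\mathcal G\setminus K_n}V_\infty|u|^2\,\mathrm dx\big|\le\delta_n\|u\|_2^2=\delta_n$, and $\delta_n\to0$ by the decay hypothesis \eqref{eq:newdecayingpotentials}. The genuine work lies in the $L^2$-part. Setting $\epsilon_n:=\big(\int_{\mathcal G\setminus K_n}|V_2|^2\,\mathrm dx\big)^{1/2}$, which tends to $0$ as $n\to\infty$ by dominated convergence (since $V_2\in L^2(\mathcal G)$ and $\mathbf 1_{\mathcal G\setminus K_n}\to0$ pointwise on the locally finite graph), I would apply Cauchy--Schwarz followed by the Sobolev inequality \eqref{eq:Sobolevinequality} with $M\equiv0$ and $p=4$ to obtain
\begin{equation}
\left|\int_{\mathcal G\setminus K_n}V_2|u|^2\,\mathrm dx\right|\le\epsilon_n\|u\|_4^2\le C\epsilon_n\big(\|u'\|_2^2+\|u\|_2^2\big)=C\epsilon_n\big(\|u'\|_2^2+1\big),
\end{equation}
with $C>0$ independent of $u$ and $n$. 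Combining the two estimates yields
\begin{equation}
\langle u, Au\rangle_{L^2}\ge(1-C\epsilon_n)\|u'\|_2^2-C\epsilon_n-\delta_n.
\end{equation}

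The decisive step, and the one I expect to be the main obstacle, is precisely this absorption: because $V_2$ is merely $L^2$ and not bounded, its contribution cannot be controlled by a supremum, and one must instead exploit the smallness of $\epsilon_n$ together with the Sobolev inequality to swallow into the kinetic term the factor $\|u'\|_2^2$ that the estimate produces. For $n$ large enough that $C\epsilon_n\le1$ the coefficient $1-C\epsilon_n$ is nonnegative, so the kinetic term may be discarded and $\langle u, Au\rangle_{L^2}\ge-C\epsilon_n-\delta_n$ holds uniformly over all admissible $u$; taking the infimum gives $\widetilde\Sigma_n\ge-C\epsilon_n-\delta_n$. Letting $n\to\infty$ and using $\epsilon_n,\delta_n\to0$ then gives $\widetilde\Sigma=\lim_{n\to\infty}\widetilde\Sigma_n\ge0$, as claimed. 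In particular this argument also shows $\widetilde\Sigma>-\infty$, so no separate finiteness argument is needed.
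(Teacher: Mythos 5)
Your proof is correct and takes essentially the same route as the paper: split $V=V_2+V_\infty$, control the $V_\infty$ part by its supremum on $\mathcal G\setminus K_n$, control the $V_2$ part by Cauchy--Schwarz together with the Sobolev inequality \eqref{eq:Sobolevinequality}, and let both tail quantities vanish as $n\to\infty$. The only difference is organizational — the paper argues along a diagonal minimizing sequence after first asserting a uniform $H^1$ bound, while you absorb the kinetic term directly via the factor $(1-C\epsilon_n)$, giving a bound uniform over the whole constraint set; this is, if anything, a cleaner execution of the same estimates.
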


\begin{proof}
Assume $u_n$ is a minimizing sequence, such that $\|u_n\|_{L^2}^2$, $\operatorname{supp} u\subset \mathcal G\setminus K_n$ and 
\begin{equation}
\langle u_n, Au_n\rangle_{L^2}\to \Sigma.
\end{equation}
With integration by parts we deduce that
\begin{equation}\label{eq:ineedthis2}
\|u_n\|_{H^k} = \langle u_n, Au_n\rangle_{L^2}^2 + \|u_n\|_2^2
\end{equation}
is uniformly bounded.  Integrating by parts we infer
\begin{equation}
\begin{aligned}
\int_{\mathcal G} \left |u_n'\right |^2 + V|u_n|^2\, \mathrm dx&\ge \int_{\mathcal G} \left |u_n'\right |^2\, \mathrm dx \\
&\qquad - \widetilde{C} \left ( \left (\int_{\mathcal G\setminus K_n} |V|^2\, \mathrm dx\right )^{1/2}  + \sup_{x\in \mathcal  G\setminus K_n} |V_\infty(x)|\right ).
\end{aligned}
\end{equation}
We have
\begin{equation}
\left ( \left (\int_{\mathcal G\setminus K_n} |V|^2\, \mathrm dx\right )^{1/2}  + \sup_{x\in \mathcal  G\setminus K_n} |V_\infty(x)|\right )\to 0 \qquad (n\to \infty).
\end{equation}
Thus,
\begin{equation}
\Sigma = \lim_{n\to \infty} \langle u_n, Au_n\rangle_{L^2} \ge 0.\qedhere
\end{equation}
\end{proof}

\begin{theorem}\label{thm:updatefill}
Let $\mu>0$ and $2<p<6$. Let $\mathcal G$ be a locally finite metric graph with at least one ray, and suppose $V\in L^2+ L^\infty(\mathcal G)$, then $E_{\text{NLS}}^V$ is strictly subadditive and
\begin{equation}
    E_{\text{NLS}}^V= \inf_{u\in D(E_{\text{NLS}}^V)}\frac{1}{2} \int_{\mathcal G} |u'|^2+ V|u|^2\, \mathrm dx - \frac{\mu}{p} \int_{\mathcal G}|u|^p\, \mathrm dx 
\end{equation}
admits a minimizer if
\begin{equation}
    E_{\text{NLS}}^V < \widetilde{E_{\text{NLS}}^0}.
\end{equation}
\end{theorem}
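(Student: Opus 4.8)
The plan is to check that $E_{\text{NLS}}^V$ satisfies the hypotheses of Theorem~\ref{thm:main1} and Theorem~\ref{thm:main2} with $X(\mathcal G)=H^1(\mathcal G)$, $Y(\mathcal G)=W^{1,\infty}(\mathcal G)$ and the vanishing-compatible sequence of partitions of unity $\{\Psi_n,\widetilde{\Psi_n}\}$ from Example~\ref{ex:first2}, and then to invoke Corollary~\ref{cor:existence}. Since $2<p<6$, Lemma~\ref{eq:NLSboundedbelow} shows the functional is bounded below and (after adding a constant to $V$ so that $\big(\int_{\mathcal G}|u'|^2+V|u|^2\,\mathrm dx\big)^{1/2}$ is an equivalent norm on $H^1$, as in Lemma~\ref{lem:prework}) that every minimizing sequence is bounded in $H^1(\mathcal G)$, hence has a weakly convergent subsequence. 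Weak limit superadditivity then follows from the Br\'ezis--Lieb lemma exactly as in Lemma~\ref{lem:prework}, while superadditivity with respect to $\{\Psi_n,\widetilde{\Psi_n}\}$ follows from the decomposition formula of Lemma~\ref{lem:lemma5.7} (here $M\equiv 0$) together with the gradient bounds $\|\Psi_n'\|_\infty,\|\widetilde{\Psi_n}'\|_\infty\le C/n$ from \eqref{eq:interestingprop2}, which send the cross term $\int|\Psi_n'|^2|u_n|^2$ to zero along the bounded vanishing sequence.

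First I would establish the strict subadditivity asserted unconditionally. As in \eqref{eq:scalingarg}--\eqref{eq:concavityarg}, the identity $E_t=t\inf_{\|u\|_2^2=1}\big\{\tfrac12\int_{\mathcal G}|u'|^2+V|u|^2\,\mathrm dx-t^{(p-2)/2}\tfrac\mu p\int_{\mathcal G}|u|^p\,\mathrm dx\big\}$ exhibits $t\mapsto E_t$ as an infimum of concave functions, hence concave (in particular continuous); since $E_0=0$ this makes $t\mapsto E_t/t$ non-increasing with $\lim_{t\to 0^+}E_t/t=\tfrac12\Sigma_0$, where $\Sigma_0=\inf\sigma(A)$ and $A=-\Delta+V$. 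Consequently strict subadditivity is equivalent to the single strict inequality $E_1<\tfrac12\Sigma_0$. If $\Sigma_0<\Sigma$ then by Theorem~\ref{thm:persson2} $\Sigma_0<\inf\sigma_{\text{ess}}(A)$, so $A$ has a ground state $\phi$ with $\|\phi\|_2^2=1$; as $M\equiv0$ we may take $\phi>0$ (Remark~\ref{rem:importantvanishing}), whence $E_1\le\tfrac12\Sigma_0-\tfrac\mu p\int_{\mathcal G}|\phi|^p\,\mathrm dx<\tfrac12\Sigma_0$. If instead $\Sigma_0=\Sigma$, I would exploit a ray: testing $E_{\text{NLS}}^V$ against a rescaled soliton $w_\lambda(x)=\lambda^{1/2}\phi(\lambda(x-T))$ pushed out along a half-line where $V_\infty$ is near its asymptotic infimum, and using that the nonlinear gain $\lambda^{(p-2)/2}$ beats the kinetic cost $\lambda^2$ as $\lambda\to0$ (this is where $2<p<6$ enters), produces a competitor with energy strictly below $\tfrac12\Sigma=\tfrac12\Sigma_0$; since $E_1\le\widetilde{E_{\text{NLS}}^V}$ this again gives $E_1<\tfrac12\Sigma_0$.

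For existence I would run the dichotomy: by Theorem~\ref{thm:main1} a minimizing sequence either produces a minimizer (the non-vanishing alternative) or is vanishing, and by Theorem~\ref{thm:main2} a vanishing minimizing sequence forces $E_{\text{NLS}}^V=\widetilde{E_{\text{NLS}}^V}$, so it suffices to prove $E_{\text{NLS}}^V<\widetilde{E_{\text{NLS}}^V}$. For $u$ supported in $\mathcal G\setminus K_R$ one has $E_{\text{NLS}}^V(u)=E_{\text{NLS}}^0(u)+\tfrac12\int_{\mathcal G\setminus K_R}V|u|^2\,\mathrm dx$, and splitting $V=V_2+V_\infty$, H\"older together with the Gagliardo--Nirenberg estimate \eqref{eq:Gagliardo-nirenberg} gives $\big|\int_{\mathcal G\setminus K_R}V_2|u|^2\,\mathrm dx\big|\le\|V_2\|_{L^2(\mathcal G\setminus K_R)}\|u\|_{L^4}^2\to0$ uniformly over the bounded minimizing family as $R\to\infty$; the surviving $V_\infty$ contribution is the delicate term, and controlling it on the rays is what secures $\widetilde{E_{\text{NLS}}^V}\ge\widetilde{E_{\text{NLS}}^0}$. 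Combined with the standing hypothesis this yields $E_{\text{NLS}}^V<\widetilde{E_{\text{NLS}}^0}\le\widetilde{E_{\text{NLS}}^V}$, so the vanishing case is excluded and Corollary~\ref{cor:existence} provides a minimizer, together with $L^2$-convergence of a subsequence.

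The hard part, in both the $\Sigma_0=\Sigma$ case of strict subadditivity and the threshold comparison $\widetilde{E_{\text{NLS}}^V}\ge\widetilde{E_{\text{NLS}}^0}$, is the same: the behavior of the bounded part $V_\infty$ far out along the rays, where no compactness is available and where $V_\infty$ is not required to decay. This is the step I expect to demand the most care, and the one where the structural assumptions on $V$ (e.g.\ an implicit decay or asymptotic sign condition of the flavour of Theorem~\ref{thm:introlastlastlast}, under which $\widetilde{E_{\text{NLS}}^V}=\widetilde{E_{\text{NLS}}^0}=E_{\text{NLS}}(\mathbb R)$) genuinely enter.
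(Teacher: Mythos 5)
Your proposal is correct in substance and follows the same skeleton as the paper's proof: verify the hypotheses of Theorem~\ref{thm:main1} and Theorem~\ref{thm:main2} with $X(\mathcal G)=H^1(\mathcal G)$, $Y(\mathcal G)=W^{1,\infty}(\mathcal G)$ and the partition of unity of Example~\ref{ex:first2} (Br\'ezis--Lieb for weak limit superadditivity, Lemma~\ref{lem:lemma5.7} for superadditivity along vanishing sequences), prove strict subadditivity by ruling out the degenerate case $E_t=tE_1$, and invoke Corollary~\ref{cor:existence} via $E_{\text{NLS}}^V<\widetilde{E_{\text{NLS}}^0}\le\widetilde{E_{\text{NLS}}^V}$. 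The one place you genuinely diverge is the mechanism for strict subadditivity: you reduce it to the single inequality $E_1<\tfrac12\Sigma_0$ and split into the cases $\Sigma_0<\Sigma$ (Persson theory, Theorem~\ref{thm:persson2}, plus positivity of the ground state as in Remark~\ref{rem:importantvanishing}) and $\Sigma_0=\Sigma$ (rescaled solitons pushed out along a ray). The paper instead runs a single contradiction: a vanishing minimizing sequence for $E_t$, together with superadditivity and Proposition~\ref{prop:Iwantthistoend}, forces $E_t\ge\widetilde{\Sigma}\ge 0$, while Remark~\ref{rmk:decayingpotential} (soliton test functions on the ray) gives $E_t=tE_1\le t\,E_{\text{NLS}}(\mathbb R)<0$. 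Both arguments rest on the same two facts, nonnegativity of the far-out linear energy and strict negativity of the energy achievable on a ray; your version is organized spectrally and additionally invokes Persson's theorem, which the paper's proof of this particular statement does not need.

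Concerning the point you flag as the hard part: you are right that both the comparison $\widetilde{E_{\text{NLS}}^V}\ge\widetilde{E_{\text{NLS}}^0}$ and the ray-competitor argument require control of $V_\infty$ far out on the rays, and that this does not follow from $V\in L^2+L^\infty(\mathcal G)$ alone. This is not a gap in your argument relative to the paper: the paper's own proof has exactly the same dependence, since Proposition~\ref{prop:Iwantthistoend} and Remark~\ref{rmk:decayingpotential} are established under the decay condition \eqref{eq:newdecayingpotentials}, and the final display of the paper's proof simply asserts that $\bigl(\|V\|_{L^2(\mathcal G\setminus K_n)}+\sup_{x\in\mathcal G\setminus K_n}|V_\infty(x)|\bigr)\to 0$ as $n\to\infty$. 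In other words, \eqref{eq:newdecayingpotentials} is an implicit standing hypothesis of the theorem, and your closing paragraph correctly locates where it enters rather than revealing a missing idea.
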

\begin{proof}
As in Lemma~\ref{lem:prework} we have weak limit superadditivity and superadditivity with respect to the partition of unity in Example~\ref{ex:first2}. For the strict subadditivity, analagous to the approach in the proof of Lemma~\ref{lem:prework} it is sufficient to show $E_t < t E_1$ for all $t\in(0,1)$ with $E_t$ defined via
\begin{equation}
    t\mapsto E_t:= \inf_{\substack{u\in H^1(\mathcal G)\\ \|u\|_2^2=t}}  E_{\text{NLS}}^V (u).
\end{equation}
Suppose for a contradiction $E_t= t E_1$ for some $t\in(0,1)$, then as in the proof of Lemma~\ref{lem:prework} we infer that a mininizing sequence $u\in D(E_{\text{NLS}}^V)$ needs to satisfy
\begin{equation}
    \int_{\mathcal G} |u_n|^p\, \mathrm dx \to 0 \qquad (n\to \infty).
\end{equation}
Hence, $u_n \rightharpoonup 0$ in $H^1(\mathcal G)$
and by superadditivity with respect to the partition of unity in Example~\ref{ex:first2} and Proposition~\ref{prop:Iwantthistoend} we infer
\begin{equation}
    E_1= \lim_{n\to \infty} E_{\text{NLS}}^V(u_n) \ge \widetilde{\Sigma}\ge 0,
\end{equation}
but by Remark~\ref{rmk:decayingpotential} we have
\begin{equation}
    E_{\text{NLS}}^V\le E_{\text{NLS}}(\mathbb R) <0
\end{equation}
and the statement follows since 
\begin{equation}
    \widetilde{E_{\text{NLS}}^V} = \widetilde{E_{\text{NLS}}^0} 
\end{equation}
since 
\begin{equation}
\left ( \left (\int_{\mathcal G\setminus K_n} |V|^2\, \mathrm dx\right )^{1/2}  + \sup_{x\in \mathcal  G\setminus K_n} |V_\infty(x)|\right )\to 0 \qquad (n\to \infty).
\end{equation}
\end{proof}

\begin{example}\label{ex:NLSclassic}
Let $\mathcal G$ be a locally finite, connected noncompact metric graph with at least one ray. Consider the NLS energy functional as considered in \cite{adami2016threshold}
\begin{equation}
E_{\text{NLS}}(u,\mathcal G) = \int_{\mathcal G} |u'|^2\, \mathrm dx - \frac{\mu}{p} \int_{\mathcal G} |u|^p\, \mathrm dx.
\end{equation}
Consider the minimization problem
\begin{equation}\label{eq:exeq}
E_{\text{NLS}}(\mathcal G) := \inf_{\substack{u\in H^1(\mathcal G) \\\|u\|_2^2=1}} E_{\text{NLS}}(u, \mathcal G).
\end{equation}
Then by Theorem~\ref{thm:updatefill} the functional $E_{\text{NLS}}$ satisfies the prerequisites of Theorem~\ref{thm:main1} and Theorem~\ref{thm:main2}. 
As discussed in Remark~\ref{rmk:decayingpotential} we have
\begin{equation}\label{eq:excomp}
\widetilde{E_{\text{NLS}}}(\mathcal G):=\lim_{n\to \infty}\inf_{\substack{u\in H^1(\mathcal G) \\\|u\|_2^2=1,\; \operatorname{supp} u \subset \mathcal G\setminus K_n}} E_{\text{NLS}}(u, \mathcal G) \le E_{\text{NLS}}(\mathbb R). 
\end{equation}
If $\mathcal G$ is a finite metric graph we have equality in \eqref{eq:excomp} due to Lemma~\ref{lem:decayingpotential}.
In particular Corollary~\ref{cor:existence} gives a generalization of Theorem~\ref{thm:introast2016}. Indeed, in \cite{adami2016threshold} was shown that if $\mathcal G$ is finite then minimizers of \eqref{eq:exeq} exist if
\begin{equation}\label{eq:inequality}
E_{\text{NLS}}(\mathcal G) < E_{\text{NLS}}(\mathbb R).
\end{equation}
Since \eqref{eq:excomp} does not guarantee existence by Corollary~\ref{cor:existence} under the assumption \eqref{eq:inequality} one cannot necessarily extend this result to general locally finite metric graphs. But as we will see in Example~\ref{ex:unrootedtrees}, for a class of infinite tree graphs, one can show the reverse inequality 
\begin{equation}\label{eq:excomp2}
\widetilde{E_{\text{NLS}}}(\mathcal G) \ge E_{\text{NLS}}(\mathbb R).
\end{equation} 
In particular one can derive for such graphs satisfying \eqref{eq:excomp2} existence of minimizers of $E_{\text{NLS}}(\mathcal G)$ under assumption \eqref{eq:inequality}.
\end{example}

\subsection{Application: Schrödinger operators with magnetic potentials on infinite tree graphs}
In certain cases as discussed in \cite[§2.6]{berkolaiko2013introduction} the gauge transform $G$, as defined below, unitarily transforms the Schrödinger operator with magnetic potential into a Schrödinger operator without magnetic potential, and the NLS functional under gauge transform reduces to a problem without magnetic potential. We may use the results from Section \ref{sec:subsecsuff} to show existence of minimizers of the NLS functional with magnetic potential. 

For infinite tree graphs in the context of locally finite, connected metric graphs it is particularly easy to see this. In this context, let $\mathcal G$ be an infinite tree graph. Given a vertex $\mathsf v$ we can define the gauge transform $G$ radially. For any $x\in \mathcal G$, let $\gamma$ be a simple path from $\mathsf v$ to $x$ parametrized by arc length, then
\begin{equation}
G:u(x)\mapsto e^{i \int_{\operatorname{im} \gamma} M\, \mathrm d\gamma} u(x).
\end{equation}

Assume $A^{M}= (i \frac{\mathrm d}{\mathrm dx} + M)^2+V$ admits a ground state. In this particular case since
\begin{equation}
G^{-1} A^{M} G = -\Delta + V= A^{0},
\end{equation}
this is equivalent to the assertion that $A^{0}$ admits a ground state. Indeed, let $u_M$ be a ground state to $A^{M}$, then 
\begin{equation}
A^{0} G^{-1} u_0 =G^{-1} A^{M} u_M = \Sigma G^{-1} u_M
\end{equation}
and $G^{-1} u_M$ is a ground state of $A^{0}$. Then we may assume $u_0>0$ by phase invariance and the maximum principle. Then $u_M$ does not vanish anywhere. 
In particular independent of $M\in H^1+W^{1,\infty}(\mathcal G)$
\begin{equation}\label{eq:unitaryequivalence}
\begin{aligned}
\Sigma_0^{M} &= \inf_{\substack{u\in D(A^{M})\\\|u\|_{2}^2=1}}\left \langle A^{M}u, u\right \rangle= \inf_{\substack{u\in D(A^{0})\\\|u\|_{2}^2=1}}\left \langle A^{0}u, u\right \rangle= \Sigma_0\\
\Sigma_{R}^{M} &= \inf_{\substack{u\in D_R(A^{M})\\\|u\|_{2}^2=1}}\left \langle A^{M}u, u\right \rangle= \inf_{\substack{u\in D_R(A^{0})\\\|u\|_{2}^2=1}}\left \langle A^{0}u, u\right \rangle= \Sigma_R\\
\Sigma^{M}&= \lim_{R\to \infty} \Sigma_R^M = \lim_{R\to \infty} \Sigma_R= \Sigma
\end{aligned}
\end{equation} 
and in Section \ref{sec:subsecsuff} we gave sufficient conditions for $\Sigma_0 < \Sigma$.
\begin{proposition}\label{prop:app1}
Assume $\mathcal G$ is an infinite tree graph, connected and locally finite. Assume $\mathcal K$ is a bounded subgraph of $\mathcal G$ and $-\Delta+V$ admits a ground state, then the infimization problem
\begin{equation}
E_{\text{NLS}}^{(\mathcal K)}=\inf_{\substack{u\in H^1(\mathcal G)\\ \|u\|_2^2=1}} \frac{1}{2}\int_{\mathcal G} \left |\left ( i \frac{\mathrm d}{\mathrm dx} + M \right )u\right |^2+ V |u|^2\, \mathrm dx - \frac{\mu}{p} \int_{\mathcal K} |u|^p\, \mathrm dx
\end{equation}
admits a minimizer for all $\mu  >0$.
\end{proposition}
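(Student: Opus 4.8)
The plan is to reduce the magnetic problem to the non-magnetic one via the gauge transform $G$ introduced at the start of this section, and then to invoke the localized existence result Theorem~\ref{thm:bigresulttt}. Concretely, the whole proof amounts to verifying the single hypothesis of that theorem: that $A^M = \left(i\frac{\mathrm d}{\mathrm dx}+M\right)^2+V$ admits a ground state which is not identically vanishing on the bounded subgraph $\mathcal K$.

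First I would use that, on a tree graph, the radially defined gauge transform $G$ is a well-defined unitary on $L^2(\mathcal G)$ mapping $D(A^M)$ onto $D(A^0)$ and satisfying $G^{-1}A^M G = -\Delta+V = A^0$, as recorded in the unitary equivalence \eqref{eq:unitaryequivalence}. Since by hypothesis $A^0=-\Delta+V$ admits a ground state $u_0$, it follows that $u_M := G u_0$ is a ground state of $A^M$, with $|u_M| = |u_0|$ pointwise because $G$ acts by multiplication by a unimodular phase.

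Next I would establish the non-vanishing of the ground state. As $A^0$ has no magnetic potential, by phase invariance $u_0$ may be taken real-valued and nonnegative, and then by Hopf's maximum principle (exactly as noted in Remark~\ref{rem:importantvanishing}) in fact $u_0>0$ everywhere on $\mathcal G$. Hence $|u_M| = |u_0| > 0$ everywhere, so $u_M$ does not vanish at any point of $\mathcal G$; in particular it is not identically vanishing on the nonempty bounded subgraph $\mathcal K$. With this, the hypotheses of Theorem~\ref{thm:bigresulttt} are met, and that theorem produces a minimizer of $E_{\text{NLS}}^{(\mathcal K)}$ for every $\mu>0$, completing the argument.

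The main obstacle is precisely the non-vanishing claim, and this is where the tree hypothesis is essential: only on a tree is $G$ globally well-defined, since the line integral $\int_{\operatorname{im}\gamma} M\,\mathrm d\gamma$ is path-independent (there is a unique simple path between any two points), which is what permits the reduction to the case $M\equiv 0$ and thereby the use of positivity of the non-magnetic ground state. Everything else—the unitary equivalence of the spectral data and the application of Theorem~\ref{thm:bigresulttt}—is bookkeeping.
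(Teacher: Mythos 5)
Your proposal is correct and follows essentially the same route as the paper: reduce to $A^0=-\Delta+V$ via the radially defined gauge transform on the tree, deduce from phase invariance and the maximum principle that the (gauge-transformed) ground state of $A^M$ vanishes nowhere, and then apply Theorem~\ref{thm:bigresulttt}. The paper's proof is just a one-line citation of that theorem plus the unitary equivalence, with the details you wrote out appearing in the discussion preceding the proposition.
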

\begin{proof}
This follows immediately from Theorem \ref{thm:bigresulttt} and the unitary equivalence of the problem in absence of a magnetic potential under the gauge transform.
\end{proof}

\begin{proposition}\label{prop:app2}
Assume $\mathcal G$ is a infinite tree graph, locally finite and connected. Assume $\mathcal K$ is any unbounded subgraph and $\Sigma_0 < \Sigma$ then there exists $\hat \mu>0$, such that the infimization problem
\begin{equation}
E_{\text{NLS}}=\inf_{\substack{\phi\in H^1(\mathcal G)\\ \|\phi \|_2^2=1}} \frac{1}{2}\int_{\mathcal G} \left |\left ( i \frac{\mathrm d}{\mathrm dx} + M \right )\phi\right |^2+ V |\phi|^2\, \mathrm dx- \frac{\mu}{p}\int_{\mathcal K}  |\phi|^p\, \mathrm dx.
\end{equation}
admits a minimizer for all $\mu\in (0,\hat \mu)$.
\end{proposition}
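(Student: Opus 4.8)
The plan is to remove the magnetic potential by the gauge transform $G$ and then run the existence machinery of the global case. First I would record how $G$ acts: on a tree graph $G$ is a unitary of $H^1(\mathcal G)$ satisfying $|Gv|=|v|$ pointwise, with $\big(i\tfrac{\mathrm d}{\mathrm dx}+M\big)(Gv)=G(iv')$ and $G^{-1}A^M G=-\Delta+V=A^0$, and it preserves the $L^2$-norm. Since the nonlinear term $\int_{\mathcal K}|u|^p\,\mathrm dx$ depends only on $|u|$ it is gauge invariant, whence
\begin{equation}
E_{\text{NLS}}^{(\mathcal K)}(Gv)=\frac12\int_{\mathcal G}|v'|^2+V|v|^2\,\mathrm dx-\frac{\mu}{p}\int_{\mathcal K}|v|^p\,\mathrm dx,
\end{equation}
so that $u$ minimizes the magnetic functional under $\|u\|_2^2=1$ precisely when $G^{-1}u$ minimizes the same functional with $M\equiv 0$. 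By \eqref{eq:unitaryequivalence} the threshold quantities are unchanged, so $\Sigma_0<\Sigma$ persists for $A^0$, and it suffices to treat $M\equiv 0$.

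Next I would produce a positive ground state. Because $A^0$ satisfies the IMS formula (Lemma~\ref{lem:lemma5.7} with $M\equiv 0$), Theorem~\ref{thm:persson2} gives $\Sigma=\inf\sigma_{\text{ess}}(A^0)$, while $\Sigma_0=\inf\sigma(A^0)$; hence $\Sigma_0<\Sigma$ forces an eigenvalue strictly below the essential spectrum, i.e.\ a ground state. By Remark~\ref{rem:importantvanishing} this ground state may be taken strictly positive everywhere via Hopf's maximum principle, so in particular it does not vanish identically on the subgraph $\mathcal K$. This is exactly the hypothesis of Lemma~\ref{lem:prework}, which then supplies weak limit superadditivity, superadditivity with respect to the partition of unity of Example~\ref{ex:first2}, and strict subadditivity of $t\mapsto E_t$; these are the structural prerequisites of Theorem~\ref{thm:main1} and Theorem~\ref{thm:main2} with $X(\mathcal G)=H^1(\mathcal G)$ and $Y(\mathcal G)=W^{1,\infty}(\mathcal G)$.

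The decisive remaining point is the strict energy inequality $E_{\text{NLS}}^{(\mathcal K)}<\widetilde{E_{\text{NLS}}^{(\mathcal K)}}$ demanded by Corollary~\ref{cor:existence}, and here the unboundedness of $\mathcal K$ is what makes the argument nontrivial: the nonlinearity need not vanish as the support escapes to infinity, so I cannot evaluate the ionization threshold as $\Sigma/2$ directly. Assuming (after adding a constant) $\Sigma_0>0$, I would argue as in Proposition~\ref{prop:first}, using $\int_{\mathcal K}|u|^p\,\mathrm dx\le\int_{\mathcal G}|u|^p\,\mathrm dx$ and the Gagliardo--Nirenberg inequality of Proposition~\ref{cor:Gagliardo-nirenberg} to obtain, for each small $\varepsilon>0$, all $\mu$ small, and all $u$ with $\|u\|_2^2=1$ supported in $\mathcal G\setminus K_R$,
\begin{equation}
E_{\text{NLS}}^{(\mathcal K)}(u)\ge\frac{1-\varepsilon}{2}\left(\int_{\mathcal G}|u'|^2+V|u|^2\,\mathrm dx\right)-\frac{C\varepsilon}{2}\ge\frac{1-\varepsilon}{2}\,\Sigma_R-\frac{C\varepsilon}{2},
\end{equation}
so that $\widetilde{E_{\text{NLS}}^{(\mathcal K)}}\ge\tfrac{1-\varepsilon}{2}\Sigma-\tfrac{C\varepsilon}{2}$ after $R\to\infty$. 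On the other hand, testing with the positive ground state $u_0$ and using $\int_{\mathcal K}|u_0|^p\,\mathrm dx>0$ yields $E_{\text{NLS}}^{(\mathcal K)}\le E_{\text{NLS}}^{(\mathcal K)}(u_0)<\tfrac{\Sigma_0}{2}$, as in Lemma~\ref{lem:energyinequality2}. Choosing $\varepsilon$ so small that $\tfrac12(\Sigma-\Sigma_0)>\tfrac{\varepsilon}{2}(\Sigma+C)$ pins down a threshold $\hat\mu>0$ for which the two bounds give $E_{\text{NLS}}^{(\mathcal K)}<\widetilde{E_{\text{NLS}}^{(\mathcal K)}}$ when $\mu\in(0,\hat\mu)$; Corollary~\ref{cor:existence} then yields a minimizer, which transports back through $G$ to solve the magnetic problem. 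I expect this control of the ionization threshold for unbounded $\mathcal K$ to be the main obstacle, since it is the single step where the smallness of $\mu$ is genuinely needed and where the case differs from both the bounded-$\mathcal K$ result (Theorem~\ref{thm:bigresulttt}) and the global case $\mathcal K=\mathcal G$.
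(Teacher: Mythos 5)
Your proposal is correct and follows essentially the same route as the paper: reduce to $M\equiv 0$ via the gauge transform (using \eqref{eq:unitaryequivalence} to preserve $\Sigma_0<\Sigma$) and then invoke the global-case machinery behind Theorem~\ref{thm:bigresultt} (Lemma~\ref{lem:prework}, Proposition~\ref{prop:first}, Corollary~\ref{cor:existence}). The paper's proof is a one-line citation of Theorem~\ref{thm:bigresultt} plus unitary equivalence; your write-up merely makes explicit the adaptation that the paper leaves implicit, namely that the perturbation bound of Proposition~\ref{prop:first} still controls the ionization threshold when the nonlinearity is integrated only over an unbounded $\mathcal K\subsetneq\mathcal G$, via $\int_{\mathcal K}|u|^p\,\mathrm dx\le\int_{\mathcal G}|u|^p\,\mathrm dx$ and the positivity of the gauge-reduced ground state on $\mathcal K$.
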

\begin{proof}
This follows immediately from Theorem \ref{thm:bigresultt} and the unitary equivalence of the problem in absence of a magnetic potential under the gauge transform
\end{proof}

For decaying potentials in §5.5 we discussed criteria such that $\Sigma_0< \Sigma$ is satisfied. Indeed, for any given locally finite metric graph, one can construct decaying potentials in the following way:
\begin{example}
Let $\mathcal G$ be a locally finite, connected graph and $K$ a bounded, connected subgraph. Consider the higher-order Schrödinger operator $A=(-\Delta)^k+V$ with potential $V$. We define a potential $V$ a.e. via
\begin{equation}
\begin{aligned}
V\bigg |_{K_1} &\equiv -\frac{1}{2}\\
V\bigg |_{K_{2n}\setminus K_n} &\equiv -\frac{1}{2^n |K_{2n} \setminus K_n|}, \quad n\ge 2
\end{aligned}
\end{equation} 
on each ``annulus'' $K_{2n}\setminus K_n$. Then $V\in L^2 \cap L^1(\mathcal G)$,
\begin{equation}
\int_{\mathcal G} V\, \mathrm d\mu= -\sum_{n=0}^\infty \frac{1}{2^n}  <0
\end{equation} 
and by Proposition~\ref{prop:Iwantthistoend} we infer $\inf \sigma_{\text{ess}}(A)\ge 0$. In particular, if $\mathcal G$ is an infinite tree graph satisfying the volume growth assumption \eqref{eq:volumegrowthassumpt}, then the prerequisites in Proposition~\ref{thm:lastresultt1} are satisfied as well and we have
\begin{equation}
\Sigma_0 < \Sigma.
\end{equation} 
In particular Proposition \ref{prop:app1} and Proposition \ref{prop:app2} are applicable and there exists $\hat \mu >0$ such that
\begin{equation}
E_{\text{NLS}}^{(\mathcal K)}=\inf_{\substack{u\in H^1(\mathcal G)\\ \|u\|_2^2=1}} \frac{1}{2}\int_{\mathcal G} \left |\left ( i \frac{\mathrm d}{\mathrm dx} + M \right )u\right |^2+ V |u|^2\, \mathrm dx - \frac{\mu}{p} \int_{\mathcal K} |u|^p\, \mathrm dx
\end{equation}
admits a minimizer for $\mu \in (0, \hat \mu)$. If $\mathcal K\subset \mathcal G$ is precompact, then minimizers exist for~all~$\mu>0$.
\end{example}

For a certain class of infinite tree graphs we can in a similar way as in Example~\ref{ex:shouldbeputinintroduction} give an explicit $\hat \mu$ such that for $\mu\in (0,\hat \mu]$ the minimization problem $E_{NLS}$ admits a minimizer. 

\begin{example}\label{ex:unrootedtrees}
Consider an \emph{unrooted tree graph} $\mathcal G$ as considered for instance in \cite{dovetta2019nls}, i.e. there are no vertices of degree $1$ apart of vertices at infinity. Such trees in particular satisfy the \textbf{(H)}-condition formulated in \cite{adami2015nls} in the special case of finite metric graphs:
\begin{itemize}
\item[\textbf{(H)}] For every point $x\in \mathcal G$, there exist two injective curves $\gamma_1, \gamma_2:[0,+\infty) \to \mathcal G$ parametrized by arc length, with disjoint images except on a discrete set of points, and such that $\gamma_1(0)=\gamma_2(0)=x$.
\end{itemize}
and by rearrangement methods one can show for decaying potentials $V= V_2+V_\infty$ with $V_2\in L^2(\mathcal G)$ and $V_\infty\in L^\infty(\mathcal G)$  satisfying
\begin{equation}
\sup_{x\in \mathcal G\setminus K_n} |V_\infty(x)|\to 0 \qquad (n\to \infty)
\end{equation}
that
\begin{equation}
\begin{aligned}
\widetilde{\Sigma}^{(\mu)} &= \lim_{n\to \infty} \inf_{\substack{u\in H^1(\mathcal G\\\|u\|_2^2=1, \, \operatorname{supp} u\subset \mathcal G\setminus K_n}} E_{\text{NLS}}^V(u)\\
&\ge \lim_{n\to \infty} \inf_{\substack{u\in H^1(\mathcal G\\\|u\|_2^2=1, \, \operatorname{supp} u\subset \mathcal G\setminus K_n}} E_{\text{NLS}}^0(u) \ge E_{\text{NLS}}(\mathbb R),
\end{aligned}
\end{equation}
where by Remark~\ref{rmk:decayingpotential} one has equality if $\mathcal G$ contains a half line.

When $V\equiv 0$, by strictness in the rearrangement inequality one can prove nonexistence results similarly as in \cite{adami2015nls}. On the other hand, under the assumption
\begin{equation}
\Sigma_0 = \inf \sigma(-\Delta +V) <0,
\end{equation}
as discussed in Example~\ref{ex:shouldbeputinintroduction} we have thus the existence of minimizers of $E_{\text{NLS}}$ for 
\begin{equation}
\mu \in \left [ 0, \left ( \frac{\Sigma_0}{\gamma_p} \right )^{\frac{6-p}{4}} \right ]
\end{equation}
as in Example~\ref{ex:shouldbeputinintroduction}.
\end{example}

\begin{remark}\label{rmk:unrooted}
The arguments in Example~\ref{ex:unrootedtrees} can be applied to all graphs that satisfy the \textbf{(H)}-condition. One can even consider more general graphs as long they satisfy the following weaker version of the \textbf{(H)}-condition:
\begin{itemize}
\item[\textbf{(\=H)}] There exists a precompact set $K\subset \mathcal G$ such that for every point $x\in \mathcal G\setminus K$ each connected component of $\mathcal G\setminus \{x\}$ is either unbounded or contains $K$.
\end{itemize}
In particular, for each $x$ there exist two injective, simple curves $\gamma_1:[0,1]\to \mathcal G, \gamma_2:[0,+\infty) \to \mathcal G$ with disjoint images except on a discrete set of points, such that $\gamma_1(0)=x=\gamma_2(0)$ and $\gamma_1(1)\in K$. In particular, this property is satisfied for any finite noncompact graph.
\end{remark}

\begin{example}
Consider the graph consisting of two half-lines and a pendant edge joined at a single vertex (see also Figure~\ref{fig:donedone}), then the graph satisfies the \textbf{(\=H)}-condition but not the \textbf{(H)}-condition and the existence result from Example~\ref{ex:unrootedtrees} as discussed in Remark~\ref{rmk:unrooted} is still applicable.
\begin{figure}[H]
\centering
\includegraphics{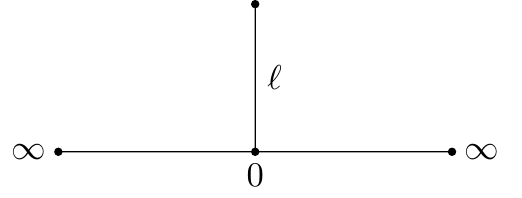}
\caption{The graph consisting of two half-lines and a pendant edge as an example of a graph that satisfies the \textbf{(\=H)}-condition but not the \textbf{(H)}-condition.}
\label{fig:donedone}
\end{figure}
\end{example}

We finish this section by proving Theorem~\ref{thm:introlastlastlast}:
\begin{proof}[Proof of Theorem~\ref{thm:introlastlastlast}]
Let $\mathcal G$ be a locally finite metric tree graph that contains at most finitely many vertices of degree $1$.  Then there exists a connected, precompact set $K\subset \mathcal G$ that contains all vertices of degree $1$ by assumption. 
Consider the set $\overline{\mathcal G}$ of points $x\in \mathcal G$, such that there exist two injective curves $\gamma_1, \gamma_2:[0,+\infty)\to \mathcal G$ parametrized by arc length, with disjoint images except on a discrete set of points, and such that $\gamma_1(0)= \gamma_2(0)=x$. In particular, if $x\in \overline{\mathcal G}$, then 
$$\operatorname{im} \gamma_1,\; \operatorname{im} \gamma_2\subset \overline{\mathcal G}.$$ 

\emph{1st Case: $\overline{\mathcal G}\neq \emptyset$.} Then by assumption 
$\mathcal G\setminus \overline{\mathcal G}$ 
contains at most finitely many connected components. Moreover of the connected components is precompact. Otherwise one could construct an injective curve  $\gamma_1:[0, +\infty)\to \mathcal G\setminus \overline{\mathcal G}$ for all $x\in \mathcal G\setminus \overline{\mathcal G}$ and since we assumed $\overline{\mathcal G}\neq \emptyset$, we can construct $\gamma_2:[0,+\infty)\to \mathcal G\setminus \overline{\mathcal G}$. This would then imply that $\mathcal G\setminus \overline{\mathcal G}$ is necessarily precompact.  Since $\mathcal G$ is a tree graph, this also implies that each connected component of $\mathcal G\setminus \overline{\mathcal G}$ contains necessarily a vertex of degree $1$.  In particular, $\mathcal G\setminus \overline{\mathcal G}$ admits at most finitely many connected components and is precompact. By construction, $\overline{\mathcal G}$ satisfies the \textbf{(H)}-condition and hence $\mathcal G$ satisfies the \textbf{(\=H)}-condition. Then as in Example~\ref{ex:unrootedtrees} we have
\begin{equation}
\begin{aligned}
\widetilde{\Sigma}^{(\mu)} &= \lim_{n\to \infty} \inf_{\substack{u\in H^1(\mathcal G\\\|u\|_2^2=1, \, \operatorname{supp} u\subset \mathcal G\setminus K_n}} E_{\text{NLS}}^V(u)\\
&\ge \lim_{n\to \infty} \inf_{\substack{u\in H^1(\mathcal G\\\|u\|_2^2=1, \, \operatorname{supp} u\subset \mathcal G\setminus K_n}} E_{\text{NLS}}^0(u) \ge E_{\text{NLS}}(\mathbb R)
\end{aligned}
\end{equation}
and we obtain existence of minimizers of $E_{\text{NLS}}$ for
\begin{equation}
\mu \in \left [ 0, \left ( \frac{\Sigma_0}{\gamma_p} \right )^{\frac{6-p}{4}} \right ].
\end{equation}

\emph{2nd Case: $\overline{\mathcal G}=\emptyset$.} In particular for each $x\in \mathcal G$ there exists only one connected component of $\mathcal G$ that contains a vertex at infinity. Assume $K$ is a precompact set that contains all vertices of degree $1$, then by assumption for any $x\in \mathcal G\setminus K$ the connected components of $\mathcal G\setminus \{x\}$ consist of a compact core graph containing all vertices of degree $1$ and a half-line. In particular, $\mathcal G$ is a finite metric graph and Example~\ref{ex:shouldbeputinintroduction} yields the existence of minimizers of $E_{\text{NLS}}$ for
\begin{equation}
\mu \in \left [ 0, \left ( \frac{\Sigma_0}{\gamma_p} \right )^{\frac{6-p}{4}} \right ].
\end{equation}
\end{proof}

\appendix 

\bibliographystyle{alpha}

\end{document}